\newtheorem{remark}{Remark}[section]
\def\nab{{\nabla}}
\def\reff#1{\eqref{#1}}
\def\no{{\nonumber}}
\def\div{{\mbox{\rm div\,}}}
\def\nab{\nabla}
\def\be{\mathbf{e}}
\def\bu{\mathbf{u}}
\def\bv{\mathbf{v}}
\def\bV{\mathbf{V}}
\def\bV{\mathbf{V}}
\begin{document}
	
	
	\title{Analysis of Multiphysics Finite Element Method for  quasi-static  Thermo-Poroelasticity with a nonlinear convective transport term\footnote{Last update: \today}}

	\author{
				Zhihao Ge\thanks{School of Mathematics and Statistics, Henan University, Kaifeng, 475004, P.R. China ({\tt Email: zhihaoge@henu.edu.cn}).
					The work was supported by the National Natural Science Foundation of China under grant No. 12371393 and  11971150. }
		\and
				Dandan Xu\thanks{School of Mathematics and Statistics, Henan University, Kaifeng, 475004, P.R. China.}
		%
	}
	
	\maketitle
	
	
	\setcounter{page}{1}
	
	
	\begin{abstract}
		In this paper, we propose a multiphysics finite element method for a quasi-static  thermo-poroelasticity model with a nonlinear convective transport term. To design some stable numerical methods and reveal the multi-physical processes of deformation, diffusion and heat, we introduce three new variables to reformulate the original model into a fluid coupled problem. Then, we introduce an Newton's iterative algorithm by replacing the convective transport term with $\nabla T^{i}\cdot(\bm{K}\nabla p^{i-1})$, $\nabla T^{i-1}\cdot(\bm{K}\nabla p^{i})$ and $\nabla T^{i-1}\cdot(\bm{K}\nabla p^{i-1})$, and apply the Banach fixed point theorem to prove the convergence of the proposed method. Then, we propose a multiphysics finite element method with Newton's iterative algorithm, which is equivalent to a stabilized method, can effectively overcome the numerical oscillation caused by the nonlinear thermal convection term. Also, we prove that the fully discrete multiphysics finite element method has an optimal convergence order. Finally, we draw conclusions to summarize the main results of this paper.
		
	\end{abstract}

	\begin{keywords}
		Nonlinear thermo-poroelasticity; Multiphysics finite element method; Optimal convergence order.
	\end{keywords}
	\section{Introduction}\label{sec-1}
		Thermo-poroelasticity model is a fluid-solid-heat interaction system at pore scale, which can be regarded as an extension of the porous elasticity model  in non isothermal states \cite{coussy2004}, and it has important applications in many fields such as modeling and optimizing control of carbon dioxide storage, reservoir engineering,  biomechanics and so on.  In the process of carbon dioxide storage, carbon dioxide is affected by many factors such as permeability, deformation displacement, temperature, and pressure, one can refer to \cite{Bachu2007,Teng1979,Ju2021,Sasaki2008}. In reservoir engineering, oil recovery is enhanced by capturing carbon dioxide from the atmosphere for oil displacement, while reducing the atmospheric carbon dioxide content \cite{coussy2004,Mortezaei2017}.  Thermo-poroelasticity model is used to simulate geothermal extraction and utilization, frozen soil dynamics \cite{Selvadurai2016,Duijn2020}, etc. In biomechanics, it can simulate the mechanism of tumor growth, the distribution of brain pressure after external force damage, and provide assistance for auxiliary diagnosis and treatment\cite{Selvadurai2016,Andreozzi2019,Ge2022}. The poroelastic parameters derived under isothermal conditions initially derived by Biot\cite{Biot1941}, followed by Rice and Cleary \cite{Rice1976}, and Zimmerman et al.\cite{Zimmerman1986}, which have been extended to account for temperature effects on the pore fluid and the matrix\cite{McTigue1986}. Theoretical derivations and experiments have shown that undrained thermal loadings in low-permeability materials, such as shales or cement pastes, not only result in strain variation, but also lead to pressure variation\cite{Ghabezloo2008}. 
		The authors of \cite{BrunBerre2018} derive a nonlinear  thermo-poroelasticity model by the conservation of energy equation, which is coupled to momentum and mass equations, the governing equations are then given by 
	\begin{align}
		\partial_{t}(a_{0}T-b_{0}p+\beta\nabla\cdot\textbf{u})-\nabla T\cdot(\bm{K}\nabla p)-\nabla\cdot(\bm{\Theta}\nabla T)=\phi~~~~&{\rm in}~\Omega_{\tau}=\Omega\times(0, \tau),\label{eq-2-1}\\
		-(\lambda+\mu)\nabla(\nabla\cdot \textbf{u})-\mu\nabla^2\textbf{u}+\alpha\nabla p+\beta\nabla T=\textbf{f}~~~~&{\rm in}~\Omega_{\tau},\label{eq-2-2}\\
		\partial_{t}(c_{0}p-b_{0}T+\alpha\nabla\cdot\textbf{u})-\nabla\cdot(\bm{K}\nabla p)=g~~~~&{\rm in}~\Omega_{\tau},\label{eq-2-3}
	\end{align}	
	where $\Omega\subset \mathbb{R}^{d}(d=2,3)$ is a bounded polygonal domain with the boundary $\partial\Omega$, $\partial_{t}$ is the derivative with respect to time, $a_{0}$ is the effective thermal capacity, $b_{0}$ is the thermal dilation coefficient, $\beta$ is thermal stress coefficient, $\bm{K}=(K_{ij})_{i,j=1}^{d}$ is the permeability tensor, $\bm{\Theta}=(\Theta_{ij})_{i,j=1}^{d}$ is the effective thermal conductivity, $\mu$ and $\lambda$ are the Lam\'{e} parameters, $\alpha$ is the Biot-Willis constant and $c_{0}$ is the specific storage coefficient. The primary variables are the temperature distribution $T$, displacement $\textbf{u}$ and fluid pressure $p$. The source terms $f$, $\phi$, $g$ are given functions.
	
	Note that the problem \reff{eq-2-1}-\reff{eq-2-3} includes a nonlinear convective transport term of $\nabla T\cdot(\bm{K}\nabla p)$. The presence of this nonlinear coupling term strongly complicates the problem compared to the linear case\cite{Chen2022}. As for the PDE analysis and numerical methods for the problem \reff{eq-2-1}-\reff{eq-2-3}, the authors of \cite{Brun2019} investigate, in the context of mixed formulations, the existence and uniqueness of a weak solution to this model problem. A monolithic and splitting-based iterative procedures for the coupled nonlinear thermo-poroelasticity model problem can be find in \cite{Brun2020}. The simulation of \reff{eq-2-1}-\reff{eq-2-3} has the following two difficulties: how to deal with the nonlinear term in PDE analysis and numerical analysis; numerical oscillation phenomenon of pressure and temperature, locking phenomenon for displacement. In this paper, we borrow the idea of \cite{Feng2018} to reformulate the problem \reff{eq-2-1}-\reff{eq-2-3} into a fluid coupled system to reveal the underlying multiphysics processes in the original model and propose a stable finite element method base on the multiphysics model. To prove the well-posedness of the reformulated nonlinear thermo-poroelasticity model, we firstly analyze a linearized version of the reformulated model, i.e., replace the convective transport term with  $\bm{L_1}\cdot\nabla T $, $\bm{L_2}\cdot\nabla p $ and $\bm{L_3}$ for some given $\bm{L_1}, \bm{L_2}, \bm{L_3} \in  L^{\infty}$. Then, we introduce an Newton's iterative algorithm by replacing the convective transport term with $\nabla T^{i}\cdot(\bm{K}\nabla p^{i-1})$, $\nabla T^{i-1}\cdot(\bm{K}\nabla p^{i})$ and $\nabla T^{i-1}\cdot(\bm{K}\nabla p^{i-1})$, where $i\geq1$ is the iteration index. Finally, we use the Banach fixed point theorem to prove the convergence of Newton's iterative algorithm. As for the numerical methods, we propose a time-stepping algorithm--multiphysics finite element method with Newton's iterative method. Also, we prove that the proposed method has an optimal convergence order. In a word, this paper has three main innovations: in PDE analysis, we introduce an Newton's iterative algorithm to replace the convective transport term with $\nabla T^{i}\cdot(\bm{K}\nabla p^{i-1})$, $\nabla  T^{i-1}\cdot(\bm{K}\nabla p^{i})$ and $\nabla T^{i-1}\cdot(\bm{K}\nabla p^{i-1})$, and apply the Banach fixed point theorem to prove the convergence of the proposed method; we propose a multiphysics finite element method with Newton's iterative algorithm, which is equivalent to a stabilized method, can effectively overcome the numerical oscillation caused by the nonlinear thermal convection term; 
	 we introduce three new variables to not only overcome the pressure and temperature oscillations and the "locking" of the displacement $\textbf{u}$ when $\lambda\rightarrow\infty$, but also clearly reveal the underlying multi-physical processes of temperature, deformation and pressure in the original model.
	
	The remaining parts of this paper is organized as follows. In Section \ref{sec-2}, we introduce the thermo-poroelasticity model and give the PDE analysis. In Section \ref{sec-3}, we propose a multiphysics finite element method with Newton's iterative method and prove the optimal order error estimates. In Section \ref{sec-4}, we show some several numerical experiments to verify the theoretical results. Finally, we draw conclusions to summarize the main results and of this paper.
	
	\section{PDE analysis}\label{sec-2}
	\subsection{Multiphysics reformulation}		
	To close the problem (\ref{eq-2-1})-(\ref{eq-2-3}), we prescribe the following boundary conditions:	
	\begin{eqnarray}
		&&\bm{\Theta}\nabla T\cdot\textbf{n}=\phi_{1}~~~~{\rm on}~\partial\Omega_{\tau}:=\partial\Omega\times(0,\tau),\label{eq-2-4}\\
		&&\sigma(\textbf{u})\textbf{n}-(\alpha p+\beta T)\textbf{n}=\bm{{\rm f}}_{1}~~~~{\rm on}~\partial\Omega_{\tau},\label{eq-2-5}\\
		&&\bm{K}\nabla p\cdot\textbf{n}=g_{1}~~~~{\rm on}~\partial\Omega_{\tau},\label{eq-2-6}
	\end{eqnarray}
	where $
\sigma(\textbf{u}):=\mu \varepsilon(\textbf{u})+\lambda \operatorname{div} \textbf{u} I, \quad \varepsilon(\textbf{u}):=\frac{1}{2}(\nabla \bm{u}+\nabla \textbf{u}^{t})$.

	Next, we introduce a new variable $q=\nabla\cdot \textbf{u}$, and denote  
	\begin{eqnarray*}
		\gamma:=a_{0}T-b_{0}p+\beta q,~ \xi:=\alpha p+\beta T-\lambda q, ~\eta:=c_{0}p-b_{0}T+\alpha q.
	\end{eqnarray*}
	It is easy to check that
	\begin{eqnarray}\label{eq-2-7}
		T=k_{1}\xi+k_{2}\eta+k_{3}\gamma,~~~~
		p=k_{4}\xi+k_{5}\eta+k_{2}\gamma,~~~~
		q=-k_{6}\xi+k_{4}\eta+k_{1}\gamma,
	\end{eqnarray}
where
\begin{eqnarray*}
	k_{1}=\frac{\alpha\beta c_{0}+\alpha^{2}b_{0}}{\mathcal{M}},~~
	k_{2}=\frac{\alpha b_{0} \lambda-\alpha^{2}\beta}{\mathcal{M}},~~
	k_{3}=\frac{\alpha^{3}+\alpha c_{0}\lambda}{\mathcal{M}},\\
	k_{4}=\frac{a_{0}\alpha^{2}+\alpha\beta b_{0}}{\mathcal{M}},~~
	k_{5}=\frac{a_{0}\alpha\lambda+\alpha\beta^{2}}{\mathcal{M}},~~
	k_{6}=\frac{\alpha c_{0}a_{0}-\alpha b^{2}_{0}}{\mathcal{M}},\\
	\mathcal{M}=\alpha c_{0}\beta^{2}+2\alpha^{2}\beta b_{0}+a_{0}\alpha^{3}+(c_{0}a_{0}\alpha-b^{2}_{0}\alpha)\lambda.
\end{eqnarray*}
Thus, using the above notations, we can reformulate the problem \reff{eq-2-1}-\reff{eq-2-3} into a fluid-fluid-fluid coupled problem: find $(\textbf{u}, \xi, \eta, \gamma)$ satisfying
	\begin{eqnarray}
		&&-\mu \operatorname{div}(\varepsilon(\textbf{u}))+\nabla\xi=\textbf{f}~~~~{\rm  in}~\Omega_{\tau},\label{eq-2-8}\\
		&&\nabla\cdot \textbf{u}+k_{6}\xi=k_{4}\eta+k_{1}\gamma~~~~{\rm  in}~\Omega_{\tau},\label{eq-2-9}\\
		&&\eta_{t}-\nabla\cdot(\bm{K}\nabla(k_{4}\xi+k_{5}\eta+k_{2}\gamma))=g~~~~{\rm  in}~\Omega_{\tau},\label{eq-2-10}\\
		&&\gamma_{t}-\nabla T\cdot(\bm{K}\nabla (k_{4}\xi+k_{5}\eta+k_{2}\gamma) )-\nabla\cdot(\bm{\Theta}\nabla(k_{1}\xi+k_{2}\eta+k_{3}\gamma))=\phi~~~~{\rm  in}~\Omega_{\tau},\label{eq-2-11}\\
		&&\sigma(\textbf{u})\textbf{n}-(\alpha (k_{4}\xi+k_{5}\eta+k_{2}\gamma)+\beta (k_{1}\xi+k_{2}\eta+k_{3}\gamma))\textbf{n}=\textbf{f}_{1}~~~~{\rm  on}~\partial\Omega_{\tau},\label{eq-2-12}\\
		&&\bm{K}\nabla (k_{4}\xi+k_{5}\eta+k_{2}\gamma)\cdot\textbf{n}=g_{1}~~~~{\rm  on}~\partial\Omega_{\tau},\label{eq-2-13}\\
		&&\bm{\Theta}\nabla (k_{1}\xi+k_{2}\eta+k_{3}\gamma)\cdot\textbf{n}=\phi_{1}~~~~{\rm  on}~\partial\Omega_{\tau}:=\partial\Omega\times(0,\tau),\label{eq-2-14}\\
		&&\textbf{u}(\cdot,0)=\textbf{u}_{0},~~\gamma(\cdot,0)=\gamma_{0},~~\eta=\eta_{0}~~~~{\rm  in} ~\Omega\times \{t=0\},\label{eq-2-15}
	\end{eqnarray}
	where $p, T$ and $q$ are related to $\xi, \eta$ and $\gamma$ through the algebraic equations in ({\ref{eq-2-7}}), for the sake of notation brevity later, we use $\mu$ instead of $2\mu$ in (\ref{eq-2-8}).
	
	Throughout the paper, we assume that the following conditions hold:
	
	A1: $\bm{K}: \mathbb{R}^{d} \rightarrow \mathbb{R}^{d \times d}$ is symmetric and uniformly positive definite in the sense that there exist positive constants $k_{m}>0$ and $k_{M}>0$ such that $k_{m}|\zeta|^{2} \leq \zeta^{\top} \bm{K}(x) \zeta \leq k_{M}|\zeta|^2, \forall \zeta \in \mathbb{R}^{d} \backslash\{0\}$.
	
	A2: $\boldsymbol{\bm{\Theta}}: \mathbb{R}^{d} \rightarrow \mathbb{R}^{d \times d}$ is symmetric and uniformly positive definite in the sense that there exist positive constants $\theta_{m}>0$ and $\theta_{M}>0$ such that $\theta_{m}|\zeta|^{2} \leq \zeta^{\top} \boldsymbol{\bm{\Theta}}(x) \zeta \leq \theta_{M}|\zeta|^2, \forall \zeta \in \mathbb{R}^{d} \backslash\{0\}$.
	
	A3: The coefficients $a_{0}, b_{0}, c_{0},\alpha$ and $\beta$ are nonnegative constants, and $c_{0}-b_{0}>0$, $a_{0}-b_{0}>0$.
	
	A4: The source terms $g,\phi\in L^2(0,\tau;L^2(\Omega))$, and $\textbf{f}\in H^1(0,\tau;L^2(\Omega))$.
	For $1\leq p<\infty$, let $L^{p}(\Omega)=\{u:\Omega\rightarrow \mathbb{R}:\int_{\Omega}|u|^{p}dx<\infty\}$, with the associate norm$\|\cdot\|_{L^{p}(\Omega)}$. In particular, $L^{2}(\Omega)$ is the Hilbert space of square integrable functions defined on $\Omega$, endowed with the inner product$(\cdot,\cdot)$. For $p=\infty, L^{\infty}(\Omega)$ is the space of uniformly bounded measurable function defined on $\Omega$, i.e. $L^{\infty}(\Omega)=\{u:\Omega\rightarrow\mathbb{R}: {\rm ess\sup}_{x\in\Omega}|u|\leq\infty\},$ endowed with the norm $\|u\|_{\infty}=\inf\{C:|u|\leq C ~a.e.~{\rm in }~\Omega\}$. For any Banach space $\textbf{B}$, we let $\textbf{B}=[B]^{d}$ and use $\textbf{B}'$ to denote its dual space. In particular, 
	and $\|\cdot\|_{L^{p}(B)}$ is a shorthand notation for $\|\cdot\|_{L^{p}((0,\tau);B)}$. Also, we introduce the following notations:
\begin{align}
	&\textbf{RM}:=\{\bm{r}=\bm{a}+\bm{b}\times x;~\bm{a},\bm{b}, x \in \mathbb{R}^{3}; \bm{r}=\bm{a}+b\left(x_{2},-x_{1}\right)^{t}, \bm{a} \in \mathbb{R}^{2}, b \in \mathbb{R}\},\\
	&L^{2}_{0}(\Omega):=\{q\in L^{2}(\Omega);~(q,1)=0\}.
\end{align}
	\textbf{RM}	denotes the space of infinitesimal rigid motions. Let $\textbf{L}_{\bot}^{2}(\partial\Omega)$ and $\textbf{H}_{\bot}^{1}(\Omega)$ denote, respectively, the subspace of $\textbf{L}^{2}(\partial\Omega)$ and $\textbf{H}^{1}(\Omega)$, which are orthogonal to \textbf{RM}, that is
	\begin{eqnarray*}
		\textbf{H}_{\bot}^{1}(\Omega):=\{\textbf{v}\in \textbf{H}^{1}(\Omega); (\textbf{v},\textbf{r})=0~\forall \textbf{r}\in \textbf{RM}\},\\
		\textbf{L}_{\bot}^{2}(\partial\Omega):=\{\textbf{g}\in\textbf{L}^{2}(\partial\Omega); <\textbf{g},\textbf{r}>=0~\forall \textbf{r}\in \textbf{RM} \}.
	\end{eqnarray*}
	It is well known that $\textbf{RM}$ is the kernel of the strain operator $\varepsilon$, that is, $\textbf{r} \in \textbf{RM}$ if and only if $\varepsilon(\textbf{r})=0$. Hence, we have
	\begin{equation}
		\varepsilon(\textbf{r})=0,~~~~\operatorname{div}\textbf{r}=0 ~~~~~~ \forall \textbf{r}\in \textbf{RM}.
	\end{equation}
	From \cite{Lions1990}, we know that there exists a constant $c_{1}>0$ such that
	\begin{equation*}
		\inf_{\textbf{r}\in \textbf{RM}}\|\textbf{v}+\textbf{r}\|_{L^{2}(\Omega)}\leqslant c_{1}\|\varepsilon(\textbf{v})\|_{L^{2}(\Omega)}~~~~\forall \textbf{v}\in \textbf{H}^{1}(\Omega).
	\end{equation*}
	Hence, for each $\textbf{v} \in \textbf{H}_{\perp}^{1}(\Omega)$ there holds
	\begin{eqnarray}
		\|\textbf{v}\|_{L^{2}(\Omega)}=\inf _{\textbf{r} \in \textbf{RM}} \sqrt{\|\textbf{v}+\textbf{r}\|_{L^{2}(\Omega)}^{2}-\|\textbf{r}\|_{L^{2}(\Omega)}^{2}} \leq c_{1}\|\varepsilon(\textbf{v})\|_{L^{2}(\Omega)}.\label{eq210601-1}
	\end{eqnarray}
	Using (\ref{eq210601-1}) and the Korn's inequality (cf. \cite{Lions1990}),
	 we know that there exists  $c_{2}>0$ such that
	\begin{align}
		\|\textbf{v}\|_{H^{1}(\Omega)} & \leq c_{2}\left[\|\textbf{v}\|_{L^{2}(\Omega)}+\|\varepsilon(\textbf{v})\|_{L^{2}(\Omega)}\right] \nonumber\\
		& \leq c_{2}\left(1+c_{1}\right)\|\varepsilon(\textbf{v})\|_{L^{2}(\Omega)} \quad \forall \textbf{v} \in \textbf{H}_{\perp}^{1}(\Omega).
	\end{align}
	\begin{definition}\label{de-2-1}
	Suppose $\bm{{\rm u}}_{0}\in \bm{{\rm H}}_{\perp}^{1}(\Omega),\bm{{\rm f}}\in H^1(0,\tau;L^2(\Omega)),\bm{{\rm f}}_1\in H^1(0,\tau;L^2(\partial\Omega)),g,\phi\in L^2(0,\tau;L^2(\Omega))$ 
	and $g_1,\phi_1\in L^2(0,\tau;L^2(\partial\Omega))$. Assume that $(\bm{{\rm f}},\bm{{\rm v}})+\langle\bm{{\rm f}}_1,\bm{{\rm v}}\rangle=0$ for any $\bm{{\rm v}}\in\bm{{\rm RM}}$. Given $\tau>0$, a tuple $(\bm{{\rm u}},p,T)$ with
	\begin{eqnarray*}
		&&\bm{{\rm u}}\in L^{\infty}(0,\tau;\bm{{\rm H}}_{\perp}^{1}(\Omega)),~~~~~~~~~~~~~~p,T\in L^{\infty}(0,\tau;{\rm L}^{2}(\Omega))\cap {\rm L}^{2}(0,\tau;H^{1}(\Omega)),\\
		&&p_{t},(\operatorname{div} \bm{{\rm u}})_{t}, T_{t}\in L^{2}(0,\tau; H^{1}(\Omega)^{'})
	\end{eqnarray*}
	is called a weak solution to the problem (\ref{eq-2-1})-(\ref{eq-2-3}) with (\ref{eq-2-4})-(\ref{eq-2-5}) if there hold for almost every $t\in[0,\tau]$
\end{definition}
\begin{align}
	\mu(\varepsilon(\textbf{u}),\varepsilon(\textbf{v}))-\alpha(p,\nabla\cdot \textbf{v})-\beta(T,\nabla\cdot \textbf{v})+\lambda(\nabla\cdot \textbf{u},\nabla\cdot \textbf{v})\nonumber\\
	=(\textbf{f},\textbf{v})+\langle\textbf{f}_{1},\textbf{v}\rangle~~~&\forall \textbf{v}\in \textbf{H}^{1}(\Omega),\label{eq-2-16}\\
	(\partial_{t}(c_{0}p-b_{0}T+\alpha\nabla\cdot \textbf{u}),\varphi)+(K\nabla p,\nabla \varphi)
	=(g,\varphi)+\langle g_{1},\varphi\rangle~~~&\forall \varphi\in H^{1}(\Omega),\label{eq-2-17}\\
	(\partial_{t}(a_{0}T-b_{0}p+\beta\nabla\cdot \textbf{u}),\psi)-(\nabla T\cdot(\bm{K}\nabla p),\psi)+(\bm{\Theta}\nabla T,\nabla\psi)\nonumber\\
	=(\phi,\psi)+\langle \phi_{1},\psi\rangle~~~&\forall \psi\in H^{1}(\Omega),\label{eq-2-18}
	\\
	\textbf{u}(0)=\textbf{u}_{0},~~~~p(0)=p_{0},~~~~T(0)=T_{0}.\label{eq-2-19}
\end{align}

\begin{definition}\label{de-2-2}
	Suppose $\bm{{\rm u}}_{0}\in \bm{{\rm H}}_{\perp}^{1}(\Omega),\bm{{\rm f}}\in H^1(0,\tau;L^2(\Omega)),\bm{{\rm f}}_1\in H^1(0,\tau;L^2(\partial\Omega)),g,\phi\in L^2(0,\tau;L^2(\Omega))$ 
	and $g_1,\phi_1\in L^2(0,\tau;L^2(\partial\Omega))$. Assume that $(\bm{{\rm f}},\bm{{\rm v}})+\langle\bm{{\rm f}}_1,\bm{{\rm v}}\rangle=0$ for any $\bm{{\rm v}}\in\bm{{\rm RM}}$. Given $\tau>0$, a 6-tuple $(\bm{{\rm u}},\xi,\eta,\gamma,p,T)$ with
	\begin{eqnarray*}
		&&\bm{{\rm u}}\in L^{\infty}(0,\tau;\bm{{\rm H}}_{\perp}^{1}(\Omega)), \quad\xi\in L^{\infty}(0,\tau;L^{2}(\Omega))\cap L^{2}(0,\tau;H^{1}(\Omega)),\\
		&&\eta\in L^{\infty}(0,\tau;L^{2}(\Omega))\cap  H^{1}(0,\tau;H^{1}(\Omega)^{'}),\quad\gamma\in L^{\infty}(0,\tau;L^{2}(\Omega))\cap  H^{1}(0,\tau;H^{1}(\Omega)^{'}),\\
		&&p,T\in L^{\infty}(0,\tau;{\rm L}^{2}(\Omega))\cap {\rm L}^{2}(0,\tau;H^{1}(\Omega)),\quad q\in L^{\infty}(0,\tau;L^{2}(\Omega))
	\end{eqnarray*}
	is called a weak solution to the problem (\ref{eq-2-8})-(\ref{eq-2-15}), such that
\end{definition}	
\begin{align}
	\mu(\varepsilon(\bm{{\rm u}}),\varepsilon(\bm{{\rm v}}))-(\xi, \nabla\cdot \textbf{v})=(\bm{{\rm f}},\bm{{\rm v}})+\langle \bm{{\rm f}}_{1},\bm{{\rm v}}\rangle \quad&\forall \bm{{\rm v}}\in \bm{{\rm H}}^{1}(\Omega),\label{eq-2-20}\\
	k_{6}(\xi,\varphi)+(\nabla\cdot \bm{{\rm u}},\varphi) =k_{4}(\eta,\varphi)+k_{1}(\gamma,\varphi) \quad&\forall \varphi\in L^{2}(\Omega),\label{eq-2-21}\\
	(\eta_{t},y)+(\bm{K}\nabla p,\nabla y)=(g,y)+\langle g_{1},y\rangle \quad&\forall y\in H^{1}(\Omega),\label{eq-2-22}\\
	(\gamma_{t},z)+(\bm{\Theta}\nabla T,\nabla z)-(\nabla T\cdot(\bm{K}\nabla p),z)
	=(\phi,z)+\langle \phi_{1},z\rangle \quad&\forall z\in H^{1}(\Omega).\label{eq-2-23-1}
\end{align}	

\subsection{PDE analysis of thermo-poroelasticity model}
Firstly, we introduce the resulting linear problem which reads: find $(\mathbf{u}(t)$, $\xi(t)$, $\eta(t)$, $\gamma(t)$, $p(t)$, $T(t))$ $\in$ $\bm{{\rm H}}^{1}(\Omega)$ $\times$ $L^{2}(\Omega)$ $\times$ $L^{2}(\Omega)$ $\times$ $L^{2}(\Omega)$ $\times$ $H^{1}(\Omega)$ $\times$ $H^{1}(\Omega)$, such that for a.e. $t \in [0,\tau]$ there holds
\begin{align}
	\mu(\varepsilon(\bm{{\rm u}}),\varepsilon(\bm{{\rm v}}))-(\xi,\nabla\cdot\bm{{\rm v}})=(\bm{{\rm f}},\bm{{\rm v}})+\langle \bm{{\rm f}}_{1},\bm{{\rm v}}\rangle \quad&\forall \bm{{\rm v}}\in \bm{{\rm H}}^{1}(\Omega),\label{eq-2-23}\\
	k_{6}(\xi,\varphi)+(\nabla\cdot \bm{{\rm u}},\varphi) =k_{4}(\eta,\varphi)+k_{1}(\gamma,\varphi) \quad&\forall \varphi\in L^{2}(\Omega),\label{eq-2-24}\\
	(\eta_{t},y)+(\bm{K}\nabla p,\nabla y)=(g,y)+\langle g_{1},y\rangle \quad&\forall y\in H^{1}(\Omega),\label{eq-2-25}\\
	(\gamma_{t},z)+(\bm{\Theta}\nabla T,\nabla z)-(\boldsymbol{L_1}\cdot\nabla T,z)-(\boldsymbol{L_2}\cdot(\bm{K}\nabla p),z)\no\\+(\boldsymbol{L_3},z)
	=(\phi,z)+\langle \phi_{1},z\rangle \quad&\forall z\in H^{1}(\Omega),\label{eq-2-26}
\end{align}	
The remaining part of this section is devoted to prove the well-posedness of the problem \reff{eq-2-23}-\reff{eq-2-26}.  We denote by $l_{1}=\|\boldsymbol{L_1}\|_{\infty},l_{2}=\|\boldsymbol{L_2}\|_{\infty}$ and $l_{3}=\|\boldsymbol{L_3}\|_{\infty}$.
\begin{theorem}\label{th-2-3}
	Assume that the conditions of A1, A2, A3 and A4 hold, $\bm{{\rm f}}\in H^{1}(0,\tau;L^2(\Omega))\cap L^{\infty}(0,\tau;L^2(\Omega))$, $\bm{{\rm f}}_{1}\in H^{1}(0,\tau;L^2(\partial\Omega))\cap L^{\infty}(0,\tau;L^2(\partial\Omega))$, $g$,  $\phi\in L^{2}(0,\tau;L^2(\Omega))$, $g_{1}$, $\phi_{1}\in L^{2}(L^2(0,\tau;\partial\Omega))$, then there exists a positive constant $C_{1}=C_{1}(\|\varepsilon(\textbf{u}(0))\|_{L^2(\Omega)}$, $\|\xi_{0}\|_{L^2(\Omega)}$, $\|\eta_{0}|_{L^2(\Omega)}$,$\|\gamma_{0}\|_{L^2(\Omega)}$, $\|\bm{{\rm f}}\|_{H^{1}(0,\tau;L^2(\Omega))}$, $\|\bm{{\rm f}}_1\|_{H^{1}(L^2(0,\tau;\partial\Omega))}$, $\|g\|_{L^{2}(0,\tau;L^2(\Omega))}$, $\|g_1\|_{L^{2}(L^2(0,\tau;\partial\Omega))}$, $\|\phi\|_{L^{2}(0,\tau;L^2(\Omega))}$, $\|\phi_1\|_{L^{2}(L^2(0,\tau;\partial\Omega))})$, and $C_2=C_2(C_1, \|p(0)\|^{2}_{H^{1}(\Omega)}$, $\|T(0)\|^{2}_{H^{1}(\Omega)})$, such that	
	\begin{align}\label{eq-2-27}
		&\frac{\sqrt{\mu}}{2}\|\varepsilon(\bm{{\rm u}})\|_{L^{\infty}(0,\tau;L^2(\Omega))}+ \sqrt{\frac{k_6}{2}}\|\xi\|_{L^{\infty}(L^2(0,\tau;\Omega))}+\sqrt{\frac{{k_{5}-k_{2}}}{2}}\|\eta\|_{L^{\infty}(0,\tau;L^2(\Omega))}\nonumber\\
		&+\sqrt{\frac{{k_{3}-k_{2}}}{2}}\|\gamma\|_{L^{\infty}(0,\tau;L^2(\Omega))}
		+\sqrt{\frac{k_m}{2}}\|\nabla p\|_{L^{2}(0,\tau;L^2(\Omega))}\no\\
		&+\sqrt{\frac{\theta_m}{2}}\|\nabla T\|_{L^{2}(0,\tau;L^2(\Omega))}\leq
		C_1,
	\end{align}	
	\begin{align}\label{eq-2-28}
		&\sqrt{\frac{\mu}{2}}\|\varepsilon(\bm{{\rm u}}_{t})\|_{L^{2}(0,\tau;L^2(\Omega))}+\sqrt{\frac{k_6}{2}}\|\xi_{t}\|_{L^{2}(0,\tau;L^2(\Omega))}
		+\sqrt{\frac{k_{5}-k_{2}}{2}}\|\eta_{t}\|_{L^{\infty}(0,\tau;L^2(\Omega))}\nonumber\\
		&+\sqrt{\frac{k_{3}-k_{2}}{2}}\|\gamma_{t}\|_{L^{\infty}(0,\tau;L^2(\Omega))}+\sqrt{\frac{k_m}{2}}\|\nabla p(t)\|_{L^{\infty}(0,\tau;L^2(\Omega))}\no\\
		&+\sqrt{\frac{\theta_m}{2}}\|\nabla T(t)\|_{L^{\infty}(0,\tau;L^2(\Omega))}\leq C_2,
	\end{align}
	\begin{align}
		&\|\bm{{\rm u}}\|_{L^{\infty}(0,\tau,L^2(\Omega))}\leq\frac{2c_{1}}{\sqrt{\mu}}C_{1},\label{eq-2-51-2}\\
		&\|p\|_{L^{\infty}(0,\tau,L^2(\Omega))}\leq C_{1}\big(k_{4}(\frac{k_{6}}{2})^{-\frac{1}{2}}+k_{5}(\frac{k_{5}-k_{2}}{2})^{-\frac{1}{2}}+k_{2}(\frac{k_{3}-k_{2}}{2})^{-\frac{1}{2}}\big),\label{eq-2-52-1}\\			&\|p\|_{L^{2}(0,\tau;L^{2}(\Omega))}\leq\frac{C_{1}\sqrt{2k_{m}}}{k_m},~~~~\|\xi\|_{L^{2}(0,\tau;L^{2}(\Omega))}\leq k_4^{-1}\frac{C_{1}\sqrt{2k_{m}}}{k_m}.\label{eq-2-53-1}
	\end{align}	
\end{theorem}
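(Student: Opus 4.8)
The plan is the standard Galerkin/energy argument: one first proves the bounds for a finite-dimensional Galerkin approximation of \reff{eq-2-23}--\reff{eq-2-26}, where all the manipulations below are legitimate, and then passes to the limit; I only indicate the formal a priori estimates. To obtain \reff{eq-2-27} I take $\bv=\bu_t$ in \reff{eq-2-23}, differentiate \reff{eq-2-24} in time and take $\varphi=\xi$, take $y=p=k_4\xi+k_5\eta+k_2\gamma$ in \reff{eq-2-25}, and $z=T=k_1\xi+k_2\eta+k_3\gamma$ in \reff{eq-2-26}, then add the first two identities and subtract the last two. The terms $\pm(\xi,\nabla\cdot\bu_t)$ cancel, and after rewriting $(\eta_t,p)$ and $(\gamma_t,T)$ via $p_t=k_4\xi_t+k_5\eta_t+k_2\gamma_t$ and $T_t=k_1\xi_t+k_2\eta_t+k_3\gamma_t$ the coupling terms $k_4(\eta_t,\xi)$ and $k_1(\gamma_t,\xi)$ cancel as well, leaving
\begin{align*}
&\tfrac12\tfrac{d}{dt}\Big[\mu\|\varepsilon(\bu)\|_{L^2}^2+k_6\|\xi\|_{L^2}^2+k_5\|\eta\|_{L^2}^2+2k_2(\eta,\gamma)+k_3\|\gamma\|_{L^2}^2\Big]+(\bm{K}\nabla p,\nabla p)+(\bm{\Theta}\nabla T,\nabla T)\\
&\quad=(\bbf,\bu_t)+\langle\bbf_1,\bu_t\rangle+(g,p)+\langle g_1,p\rangle+(\phi,T)+\langle\phi_1,T\rangle+(\bm{L_1}\cdot\nabla T,T)+(\bm{L_2}\cdot(\bm{K}\nabla p),T)-(\bm{L_3},T).
\end{align*}
From the explicit formulas for the $k_i$ and A3 one checks $\mathcal{M}>0$, $k_6>0$, $k_5-k_2>0$, $k_3-k_2>0$, so the bracket dominates $\|\varepsilon(\bu)\|^2+\|\xi\|^2+\|\eta\|^2+\|\gamma\|^2$, while A1--A2 make the two diffusion terms bound $k_m\|\nabla p\|^2$ and $\theta_m\|\nabla T\|^2$ from below.

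Next I integrate over $(0,t)$ and estimate the right-hand side. For $(\bbf,\bu_t)$ and $\langle\bbf_1,\bu_t\rangle$ I integrate by parts in time, e.g. $\int_0^t(\bbf,\bu_t)=(\bbf(t),\bu(t))-(\bbf(0),\bu(0))-\int_0^t(\bbf_t,\bu)$, and then use \reff{eq210601-1} (applied to the representative $\bu(t)\in\textbf{H}^1_\perp(\Omega)$), the trace theorem, the hypotheses $\bbf\in H^1\cap L^\infty$, $\bbf_1\in H^1\cap L^\infty$ in time, and Young's inequality to absorb a small multiple of $\|\varepsilon(\bu(t))\|^2$ into the left side; this is precisely why those $H^1\cap L^\infty$ hypotheses on $\bbf,\bbf_1$ appear, since this estimate gives no direct control of $\bu_t$. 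For $(g,p)$, $(\phi,T)$, $\langle g_1,p\rangle$, $\langle\phi_1,T\rangle$ I express $p,T$ through \reff{eq-2-7}, use the trace theorem, and absorb small multiples of $\|\nabla p\|^2,\|\nabla T\|^2$ into the dissipation, leaving $\|\xi\|^2+\|\eta\|^2+\|\gamma\|^2$ for Gronwall; the three $\bm{L_i}$ terms are bounded by $l_1\|\nabla T\|\|T\|$, $l_2 k_M\|\nabla p\|\|T\|$, $l_3|\Omega|^{1/2}\|T\|$ and absorbed in the same way. Gronwall's inequality then gives \reff{eq-2-27} with $C_1$ depending exactly on the quantities listed.

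The estimate \reff{eq-2-28} follows from the same scheme applied after differentiating \reff{eq-2-23}--\reff{eq-2-24} once in time: test the differentiated \reff{eq-2-23} with $\bv=\bu_t$, the differentiated \reff{eq-2-24} with $\varphi=\xi_t$, and \reff{eq-2-25}, \reff{eq-2-26} with $y=p_t$, $z=T_t$, then combine as before. The cross terms cancel and one obtains the coercive quantities $\mu\|\varepsilon(\bu_t)\|^2$, $k_6\|\xi_t\|^2$, $(k_5-k_2)\|\eta_t\|^2$, $(k_3-k_2)\|\gamma_t\|^2$ (integrated in $t$) together with $\tfrac12\tfrac{d}{dt}\big[(\bm{K}\nabla p,\nabla p)+(\bm{\Theta}\nabla T,\nabla T)\big]$, which yields the $L^\infty(0,\tau;L^2)$ bounds on $\nabla p,\nabla T$; on the right, $(g,p_t)$, $(\phi,T_t)$ are handled through $\|p_t\|,\|T_t\|\le C(\|\xi_t\|+\|\eta_t\|+\|\gamma_t\|)$ and absorbed, $(\bbf_t,\bu_t)$, $\langle\bbf_{1,t},\bu_t\rangle$ use $\bbf,\bbf_1\in H^1$, and the $\bm{L_i}$ terms are treated as above. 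Integrating in $t$, inserting the bounds from the first estimate and the data on $p(0),T(0)$, and applying Gronwall gives \reff{eq-2-28} (the scheme naturally delivers $L^2(0,\tau;L^2)$ control of $\eta_t,\gamma_t$; the stated $L^\infty$ bound would in addition require an elliptic-regularity estimate on $p,T$ so as to read $\eta_t,\gamma_t$ off \reff{eq-2-22}, \reff{eq-2-23-1} pointwise in $t$). Finally \reff{eq-2-51-2} is immediate from \reff{eq210601-1} and \reff{eq-2-27}, and \reff{eq-2-52-1}, \reff{eq-2-53-1} follow by writing $p$ (and $\xi$) through \reff{eq-2-7}, the triangle inequality, and the bounds just established.

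The main obstacles I expect are: (i) making the cancellation of the coupling terms exact, which hinges on the precise values of the $k_i$ in \reff{eq-2-7}, and verifying from A3 that $k_5-k_2>0$ and $k_3-k_2>0$, so that the quadratic form $k_5\|\eta\|^2+2k_2(\eta,\gamma)+k_3\|\gamma\|^2$ is positive definite; and (ii) the time-integration by parts needed for $(\bbf,\bu_t)$ in the first estimate, which forces the $H^1\cap L^\infty$ in time hypotheses on $\bbf,\bbf_1$. The remaining work -- tracking which data and initial-data norms enter $C_1$ and $C_2$, and the various Young and trace bookkeeping -- is routine.
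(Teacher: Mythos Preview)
Your proposal is correct and follows essentially the same route as the paper: the same choice of test functions ($\bv=\bu_t$, $\varphi=\xi$ after differentiating \reff{eq-2-24}, $y=p$, $z=T$) for \reff{eq-2-27}, the same time-differentiation scheme with $\bv=\bu_t$, $\varphi=\xi_t$, $y=p_t$, $z=T_t$ for \reff{eq-2-28}, the same integration by parts in time for $(\bbf,\bu_t)$, and the same use of Young/Gronwall and of \reff{eq210601-1} and \reff{eq-2-7} for the remaining bounds. One minor slip: you wrote ``add the first two identities and subtract the last two'', but the identity you display is in fact the \emph{sum} of all four (after moving the right-hand side of the differentiated \reff{eq-2-24} across), exactly as in the paper. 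Your parenthetical remark that the scheme only gives $L^2(0,\tau;L^2)$ control of $\eta_t,\gamma_t$ is accurate---the paper's proof likewise yields only the $L^2$-in-time bound, so the $L^\infty$ in the statement of \reff{eq-2-28} for $\eta_t,\gamma_t$ appears to be a typo there.
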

\begin{proof}
	Differentiating (\ref{eq-2-24}) with respect to $t$, taking $\textbf{v}=\textbf{u}_{t}, \varphi=\xi, y=p=k_{4}\xi+k_{5}\eta+k_{2}\gamma$ and $z=T=k_{1}\xi+k_{2}\eta+k_{3}\gamma$ in (\ref{eq-2-23})-(\ref{eq-2-26}) respectively, we have
	\begin{align}
		&\frac{1}{2}(\mu\frac{d}{dt}\|\varepsilon(\textbf{u})\|^{2}_{L^2(\Omega)}+k_{6}\frac{d}{dt}\|\xi\|^{2}_{L^2(\Omega)}+k_{5}\frac{d}{dt}\|\eta\|^{2}_{L^2(\Omega)}+k_{3}\frac{d}{dt}\|\gamma\|^{2}_{L^2(\Omega)})\nonumber\\
		&+k_{2}(\eta_{t},\gamma)+k_{2}(\gamma_{t},\eta)+(\bm{K}\nabla p,\nabla p)+(\bm{\Theta} \nabla T,\nabla T)-(\boldsymbol{L_1}\cdot\nabla T, T)\nonumber\\
		&-(\boldsymbol{L_2}\cdot(\bm{K}\nabla p), T)+(\boldsymbol{L_3}, T)
		=(g,p)+\langle g_{1},p\rangle+(\phi,T)+\langle\phi_{1},T\rangle+(\textbf{f},\textbf{u}_{t})+\langle\textbf{f}_{1},\textbf{u}_{t}\rangle.
	\end{align}
	It is easy to check
	\begin{align}
		(\textbf{f},\textbf{u}_{t})=\frac{d}{dt}(\textbf{f},\textbf{u})-(\textbf{f}_{t},\textbf{u}),~~~~
		\langle\textbf{f}_{1},\textbf{u}_{t}\rangle=\frac{d}{dt}\langle\textbf{f}_{1},\textbf{u}\rangle-\langle\textbf{f}_{1t},\textbf{u}\rangle.
	\end{align}
	Integrating from 0 to $t$, using  Cauchy-Schwarz inequality, Young inequality and Korn's inequality, \reff{eq-2-7}, we have
	\begin{align}
		&\frac{1}{2}(\mu\|\varepsilon(\textbf{u}(t))\|^{2}_{L^2(\Omega)}+k_{6}\|\xi(t)\|^{2}_{L^2(\Omega)}+(k_{5}-k_{2})\|\eta(t)\|^{2}_{L^2(\Omega)}+(k_{3}-k_{2})\|\gamma(t)\|^{2}_{L^2(\Omega)})\no\\
		& +\int_{0}^{t}\big[k_{m}\|\nabla p(s)\|^2_{L^2(\Omega)}+\theta_{m}\|\nabla T(s)\|^2_{L^2(\Omega)}-\frac{3l_1^2}{2\epsilon_1}\|\nabla T\|^{2}_{L^2(\Omega)}
		-\frac{3l_2^2k_M^2}{2\epsilon_2}\|\nabla p\|^{2}_{L^2(\Omega)}\big]~ds\nonumber\\
		&\leq\int_{0}^{t}\big[\frac{(\epsilon_1+\epsilon_2)k_{1}^{2}}{2}\|\xi\|^{2}_{L^2(\Omega)}
		+\frac{\epsilon_1+\epsilon_2}{2}k_{2}^{2}\|\eta\|^{2}_{L^2(\Omega)}+\frac{\epsilon_1+\epsilon_2}{2}k_{3}^{2}\|\gamma\|^{2}_{L^2(\Omega)}+\frac{3}{2}\|\boldsymbol{L_3}\|^{2}_{L^2(\Omega)}\nonumber\\
		&+\frac{k_{1}^{2}}{2}\|\xi\|^{2}_{L^2(\Omega)}+\frac{k_{2}^{2}}{2}\|\eta\|^{2}_{L^2(\Omega)}+\frac{k_{3}^{2}}{2}\|\gamma\|^{2}_{L^2(\Omega)}+\frac{1}{2}\big(3\|g\|^{2}_{L^2(\Omega)}+3\|g_1\|^{2}_{L^2(\partial\Omega)}
		+2k_{4}^2\|\xi\|^{2}_{L^2(\Omega)}\no\\
		&+2k_{5}^{2}\|\eta\|^{2}_{L^2(\Omega)}+2k_{2}^{2}\|\gamma\|^{2}_{L^2(\Omega)}+3\|\phi\|^{2}_{L^2(\Omega)}+3\|\phi_{1}\|^{2}_{L^2(\partial\Omega)}+2k_{1}^{2}\|\xi\|^{2}_{L^2(\Omega)}+2k_{2}^{2}\|\eta\|^{2}_{L^2(\Omega)}\nonumber\\
		&+2k_{3}^{2}\|\gamma\|^{2}_{L^2(\Omega)}+\|\textbf{f}_{t}\|^{2}_{L^2(\Omega)}+\|\textbf{f}_{1t}\|^{2}_{L^2(\partial\Omega)}
		+2c_{1}\|\varepsilon(\textbf{u}(t))\|^{2}_{L^2(\Omega)}\big)\big]~ds+\frac{\epsilon_3}{2}(\|\textbf{f}(t)\|^{2}_{L^2(\Omega)}\nonumber\\
		&+\|\textbf{f}_{1}(t)\|^{2}_{L^2(\partial\Omega)})+\frac{\epsilon_3}{2}(\|\textbf{f}(0)\|^{2}_{L^2(\Omega)}+\|\textbf{f}_{1}(0)\|^{2}_{L^2(\partial\Omega)})+\frac{c_{1}}{\epsilon_3}(\|\varepsilon(\textbf{u}(t))\|^{2}_{L^2(\Omega)}+\|\varepsilon(\textbf{u}(0))\|^{2}_{L^2(\Omega)})\nonumber\\
		&+\frac{1}{2}\big(\mu\|\varepsilon(\textbf{u}(0))\|^{2}_{L^2(\Omega)}+k_{6}\|\xi(0)\|^{2}_{L^2(\Omega)}+(k_{5}+k_{2})\|\eta(0)\|^{2}_{L^2(\Omega)}+(k_{3}+k_{2})\|\gamma(0)\|^{2}_{L^2(\Omega)}\big).\nonumber
	\end{align}
	Choosing $ \epsilon_1=\frac{3l_1^2}{\theta_m}$ , $ \epsilon_2=\frac{3l_2^2k_M^2}{k_m}$,$ \epsilon_3=\frac{4c_1}{\mu}$ and using  Gronwall's inequality, we get
	\begin{align}
		&\frac{1}{4}\mu\|\varepsilon(\textbf{u}(t))\|^{2}_{L^2(\Omega)}+\frac{1}{2}\big( k_{6}\|\xi(t)\|^{2}_{L^2(\Omega)}+(k_{5}-k_{2})\|\eta(t)\|^{2}_{L^2(\Omega)}+(k_{3}-k_{2})\|\gamma(t)\|^{2}_{L^2(\Omega)}\big)\nonumber\\
		&+\int_{0}^{t}\frac{k_m}{2}\|\nabla p(s)\|^{2}_{L^2(\Omega)}+\frac{\theta_{m}}{2}\|\nabla T(s)\|^{2}_{L^2(\Omega)}~ds \leq C\big[\int_{0}^{t}\frac{1}{2}\big(3\|\boldsymbol{L_3}\|^{2}_{L^2(\Omega)}+3\|g\|^{2}_{L^2(\Omega)}+3\|g_1\|^{2}_{L^2(\partial\Omega)}\nonumber\\
		&+3\|\phi\|^{2}_{L^2(\Omega)}+3\|\phi_{1}\|^{2}_{L^2(\partial\Omega)}+\|\textbf{f}_{t}\|^{2}_{L^2(\Omega)}+\|\textbf{f}_{1t}\|^{2}_{L^2(\partial\Omega)}\big)ds
		+\frac{3\mu}{4}\|\varepsilon(\textbf{u}(0))\|^{2}_{L^2(\Omega)}+\frac{1}{2}\big(k_{6}\|\xi(0)\|^{2}_{L^2(\Omega)}\nonumber\\
		&+(k_{5}+k_{2})\|\eta(0)\|^{2}_{L^2(\Omega)}+(k_{3}+k_{2})\|\gamma(0)\|^{2}_{L^2(\Omega)}\big)+\frac{2c_1}{\mu}(\|\textbf{f}(t)\|^{2}_{L^2(\Omega)}+\|\textbf{f}_{1}(t)\|^{2}_{L^2(\partial\Omega)}\no\\
		&+\|\textbf{f}(0)\|^{2}_{L^2(\Omega)}+\|\textbf{f}_{1}(0)\|^{2}_{L^2(\partial\Omega)})\big],
	\end{align}
which implies that \reff{eq-2-27} holds.
	
	Differentiating (\ref{eq-2-23}) and (\ref{eq-2-24}) with respect to $t$, taking $\textbf{v}=\textbf{u}_{t}, \varphi=\xi_{t}, y=p_{t}=k_{4}\xi_{t}+k_{5}\eta_{t}+k_{2}\gamma_{t}$ and $z=T_{t}=k_{1}\xi_{t}+k_{2}\eta_{t}+k_{3}\gamma_{t}$ in (\ref{eq-2-23}), (\ref{eq-2-24}), (\ref{eq-2-25}) and (\ref{eq-2-26}), we yield
	\begin{align}\label{eq-2-33}
		&\mu\|\varepsilon(\textbf{u}_{t})\|^{2}_{L^2(\Omega)}+k_{6}\|\xi_{t}\|^{2}_{L^2(\Omega)}+k_{5}\|\eta_{t}\|^{2}_{L^2(\Omega)}+k_{3}\|\gamma_{t}\|^{2}_{L^2(\Omega)}+2k_{2}(\gamma_{t},\eta_{t})\nonumber\\
		&+(\bm{K}\nabla p,\nabla p_{t})+(\bm{\Theta}\nabla T,\nabla T_t)-(\boldsymbol{L_1}\cdot\nabla T,T_t)-(\boldsymbol{L_2}\cdot(\bm{K}\nabla p),T_t)+(\boldsymbol{L_3},T_t)\nonumber\\
		&=(\textbf{f}_{t}, \textbf{u}_{t})+\langle \textbf{f}_{1t},  \textbf{u}_{t}\rangle+(g,p_{t})+\langle g_{1},p_{t}\rangle+(\phi,T_{t})+\langle \phi_{1},T_{t}\rangle.
	\end{align}
	Using the Cauchy-Schwarz inequality and Young inequality, we obtain
	\begin{eqnarray}\label{eq-2-34}
		2k_{2}(\gamma_{t},\eta_{t})\leq k_{2}\|\gamma_{t}\|^{2}_{L^2(\Omega)}+k_{2}\|\eta_{t}\|^{2}_{L^2(\Omega)}.
	\end{eqnarray}
	Substituting \reff{eq-2-34} into \reff{eq-2-33} and integrating from $0$ to $t$, we have
	\begin{align}\label{eq-2-35}
		&\int_{0}^{t}\mu\|\varepsilon(\textbf{u}_{t})\|^{2}_{L^2(\Omega)}+k_{6}\|\xi_{t}\|^{2}_{L^2(\Omega)}+(k_{5}-k_{2})\|\eta_{t}\|^{2}_{L^2(\Omega)}+(k_{3}-k_{2})\|\gamma_{t}\|^{2}_{L^2(\Omega)}~ds\nonumber\\
		&+\frac{k_{m}}{2}\|\nabla p(t)\|^{2}_{L^2(\Omega)}
		+\frac{\theta_{m}}{2}\|\nabla T(t)\|^{2}_{L^2(\Omega)}\leq \int_{0}^{t}(\boldsymbol{L_1}\cdot\nabla T,T_t)+(\boldsymbol{L_2}\cdot(\bm{K}\nabla p),T_t) \no\\
		&+(\boldsymbol{L_3},T_t)+(f_{t}, \textbf{u}_{t})+\langle f_{1t},  \textbf{u}_{t}\rangle+(g,p_{t})
		+\langle g_{1},p_{t}\rangle+(\phi,T_{t})+\langle \phi_{1},T_{t}\rangle~ds\nonumber\\
		&+\frac{k_{M}}{2}\|\nabla p(0)\|^{2}_{L^2(\Omega)}+\frac{\theta_{M}}{2}\|\nabla T(0)\|^{2}_{L^2(\Omega)}.
	\end{align}
	Using \reff{eq-2-7}, we have
	\begin{align}\label{eq-2-3500}
		&\int_{0}^{t}\mu\|\varepsilon(\textbf{u}_{t})\|^{2}_{L^2(\Omega)}+k_{6}\|\xi_{t}\|^{2}_{L^2(\Omega)}+(k_{5}-k_{2})\|\eta_{t}\|^{2}_{L^2(\Omega)}+(k_{3}-k_{2})\|\gamma_{t}\|^{2}_{L^2(\Omega)}~ds\no\\
		&+\frac{k_{m}}{2}\|\nabla p(t)\|^{2}_{L^2(\Omega)}
		+\frac{\theta_{m}}{2}\|\nabla T(t)\|^{2}_{L^2(\Omega)}\leq \int_{0}^{t}(\boldsymbol{L_1}\cdot\nabla T,k_{1}\xi_{t}+k_{2}\eta_{t}+k_{3}\gamma_{t})\nonumber\\
		&+(\boldsymbol{L_2}\cdot(\bm{K}\nabla p),k_{1}\xi_{t}+k_{2}\eta_{t}+k_{3}\gamma_{t})+(\boldsymbol{L_3},,k_{1}\xi_{t}+k_{2}\eta_{t}+k_{3}\gamma_{t})+(\textbf{f}_{t}, \textbf{u}_{t})\no\\
		&+\langle\textbf{f}_{1t},  \textbf{u}_{t}\rangle+(g,k_{4}\xi_{t}+k_{5}\eta_{t}+k_{2}\gamma_{t})+\langle g_{1},k_{4}\xi_{t}+k_{5}\eta_{t}+k_{2}\gamma_{t}\rangle\no\\
		&+(\phi,k_{1}\xi_{t}+k_{2}\eta_{t}+k_{3}\gamma_{t})+\langle \phi_{1},k_{1}\xi_{t}+k_{2}\eta_{t}+k_{3}\gamma_{t}\rangle~ds\nonumber\\
		&+\frac{k_{M}}{2}\|\nabla p(0)\|^{2}_{L^2(\Omega)}+\frac{\theta_{M}}{2}\|\nabla T(0)\|^{2}_{L^2(\Omega)}.
	\end{align}
	Applying Cauchy-Schwarz inequality and Young inequality, we have
	\begin{align}
		&\int_{0}^{t}\mu\|\varepsilon(\textbf{u}_{t})\|^{2}_{L^2(\Omega)}+k_{6}\|\xi_{t}\|^{2}_{L^2(\Omega)}+(k_{5}-k_{2})\|\eta_{t}\|^{2}_{L^2(\Omega)}+(k_{3}-k_{2})\|\gamma_{t}\|^{2}_{L^2(\Omega)}~ds\nonumber\\
		&+\frac{k_{m}}{2}\|\nabla p(t)\|^{2}_{L^2(\Omega)}
		+\frac{\theta_{m}}{2}\|\nabla T(t)\|^{2}_{L^2(\Omega)}\leq \int_{0}^{t}(\frac{l_{1}^2}{2\epsilon_4}+\frac{l_{1}^2}{2\epsilon_5}+\frac{l_{1}^2}{2\epsilon_6})\|\nabla T\|^{2}_{L^2(\Omega)}\nonumber\\
		&	(\frac{l_{2}^2k_M^2}{2\epsilon_7}+\frac{l_{2}^2k_M^2}{2\epsilon_8}+\frac{l_{2}^2k_M^2}{2\epsilon_9})\|\nabla P\|^{2}_{L^2(\Omega)}+(\frac{1}{2\epsilon_{10}}+\frac{1}{2\epsilon_{11}}+\frac{1}{2\epsilon_{12}})\|\boldsymbol{L_3}\|^{2}_{L^2(\Omega)}\nonumber\\
		&+\frac{\epsilon_{13}}{2}(\|\textbf{f}_{t}\|^{2}_{L^2(\Omega)}+\|\textbf{f}_{1t}\|^{2}_{L^2(\Omega)})+\frac{c_{1}}{\epsilon_{13}}\|\varepsilon(\textbf{u}_{t})\|^{2}_{L^2(\Omega)}
		+(\frac{1}{2\epsilon_{14}}+\frac{1}{2\epsilon_{15}}+\frac{1}{2\epsilon_{16}})\|g\|^{2}_{L^2(\Omega)}\nonumber\\
		&+(\frac{1}{2\epsilon_{17}}+\frac{1}{2\epsilon_{18}}+\frac{1}{2\epsilon_{19}})\|g_1\|^{2}_{L^2(\partial\Omega)}
		+(\frac{1}{2\epsilon_{20}}+\frac{1}{2\epsilon_{21}}+\frac{1}{2\epsilon_{22}})\|\phi\|^{2}_{L^2(\partial\Omega)}\nonumber\\
		&+(\frac{1}{2\epsilon_{23}}+\frac{1}{2\epsilon_{24}}+\frac{1}{2\epsilon_{25}})\|\phi_1\|^{2}_{L^2(\partial\Omega)}+\frac{k^2_{1}(\epsilon_{4}+\epsilon_{7}+\epsilon_{10}+\epsilon_{20}+\epsilon_{23})+k^{2}_{4}(\epsilon_{14}+\epsilon_{17})}{2}\|\xi_{t}\|^{2}_{L^2(\Omega)}\nonumber\\
		&+\frac{k^2_{2}(\epsilon_{5}+\epsilon_{8}+\epsilon_{11}+\epsilon_{21}+\epsilon_{24})+k^{2}_{5}(\epsilon_{15}+\epsilon_{18})}{2}\|\eta_{t}\|^{2}_{L^2(\Omega)}\nonumber\\
		&+\frac{k^2_{3}(\epsilon_{6}+\epsilon_{9}+\epsilon_{12}+\epsilon_{22}+\epsilon_{25})+k^{2}_{2}(\epsilon_{16}+\epsilon_{19})}{2}\|\gamma_{t}\|^{2}_{L^2(\Omega)}~ds\no\\
		&+\frac{k_{M}}{2}\|\nabla p(0)\|^{2}_{L^2(\Omega)}+\frac{\theta_{M}}{2}\|\nabla T(0)\|^{2}_{L^2(\Omega)}.
	\end{align}
	Choosing $\epsilon_{13}=\frac{2c_{1}}{\mu}, \epsilon_{4}=\epsilon_{7}=\epsilon_{10}=\epsilon_{20}=\epsilon_{23}=\frac{k_6}{10k^{2}_{1}},\epsilon_{14}=\epsilon_{17}=\frac{k_6}{4k^{2}_{4}},\epsilon_{5}=\epsilon_{8}=\epsilon_{11}=\epsilon_{21}=\epsilon_{24}=\frac{k_5-k_2}{10k^{2}_{2}},\epsilon_{15}=\epsilon_{18}=\frac{k_5-k_2}{4k^{2}_{5}},\epsilon_{6}=\epsilon_{9}=\epsilon_{12}=\epsilon_{22}=\epsilon_{25}=\frac{k_3-k_2}{10k^{2}_{3}},\epsilon_{16}=\epsilon_{19}=\frac{k_3-k_2}{4k^{2}_{2}} $ and using Gronwall's inequality, we deduce
	\begin{align}
		&\int_{0}^{t}\frac{\mu}{2}\|\varepsilon(\textbf{u}_{t})\|^{2}_{L^2(\Omega)}+\frac{k_{6}}{2}\|\xi_{t}\|^{2}_{L^2(\Omega)}+\frac{(k_{5}-k_{2})}{2}\|\eta_{t}\|^{2}_{L^2(\Omega)}+\frac{(k_{3}-k_{2})}{2}\|\gamma_{t}\|^{2}_{L^2(\Omega)}~ds\nonumber\\
		&+\frac{k_{m}}{2}\|\nabla p(t)\|^{2}_{L^2(\Omega)}
		+\frac{\theta_{m}}{2}\|\nabla T(t)\|^{2}_{L^2(\Omega)}\leq C\big[\int_{0}^{t}\big(\|\textbf{f}_{t}\|^{2}_{L^2(\Omega)}+\|\textbf{f}_{1t}\|^{2}_{L^2(\Omega)}+\|\boldsymbol{L_3}\|^{2}_{L^2(\Omega)}\nonumber\\
		&+\|g\|^{2}_{L^2(\Omega)}+\|g_1\|^{2}_{L^2(\partial\Omega)}+\|\phi\|^{2}_{L^2(\Omega)}+\|\phi_1\|^{2}_{L^2(\Omega)}\big)~ds\nonumber\\
		&+\frac{k_{M}}{2}\|\nabla p(0)\|^{2}_{L^2(\Omega)}+\frac{\theta_{M}}{2}\|\nabla T(0)\|^{2}_{L^2(\Omega)}\big],
	\end{align}
	which implies that \reff{eq-2-28} holds.\\
	Using (\ref{eq-2-27}), we obtain 
	\begin{align}\label{eq-2-3501}
		\|\varepsilon(\bm{{\rm u}}(t))\|_{L^{\infty}(0,\tau;L^2(\Omega))}\leq\frac{2}{\sqrt{\mu}}C_{1}.
	\end{align}
	Using (\ref{eq210601-1}) and (\ref{eq-2-3501}), we get
	\begin{align}
		\|\bm{{\rm u}}(t)\|_{L^{\infty}(0,\tau;L^2(\Omega))}\leq	c_{1}\|\varepsilon(\bm{{\rm u}}(t))\|_{L^{\infty}(0,\tau;L^2(\Omega))}\leq\frac{2c_{1}}{\sqrt{\mu}}C_{1}.
	\end{align}
	Using \reff{eq-2-28} and \reff{eq-2-7}, we have
	\begin{align}
		&\|p\|_{L^{\infty}(0,\tau;L^2(\Omega))}\leq k_{4}\|\xi\|_{L^{\infty}(0,\tau;L^2(\Omega))}+k_{5}\|\eta\|_{L^{\infty}(0,\tau;L^2(\Omega))}++k_{2}\|\gamma\|_{L^{\infty}(0,\tau;L^2(\Omega))}\no\\
		&\leq\big(k_{4}(\frac{k_{6}}{2})^{-\frac{1}{2}}+k_{5}(\frac{k_{5}-k_{2}}{2})^{-\frac{1}{2}}+k_{2}(\frac{k_{3}-k_{2}}{2})^{-\frac{1}{2}})\big(\sqrt{\frac{k_{6}}{2}}\|\xi\|_{L^{\infty}(0,\tau;L^2(\Omega))}\no\\
		&+\sqrt{\frac{k_{5}-k_{2}}{2}}\|\eta\|_{L^{\infty}(0,\tau;L^2(\Omega))}+\sqrt{\frac{k_{3}-k_{2}}{2}}\|\gamma\|_{L^{\infty}(0,\tau;L^2(\Omega))}\big)\no\\
		&\leq C_{1}\big(k_{4}(\frac{k_{6}}{2})^{-\frac{1}{2}}+k_{5}(\frac{k_{5}-k_{2}}{2})^{-\frac{1}{2}}+k_{2}(\frac{k_{3}-k_{2}}{2})^{-\frac{1}{2}}\big).
	\end{align}
	Using Poincar$ \acute{\rm e}$ inequality, we get
	\begin{align}
		\|p\|_{L^{2}(0,\tau;L^2(\Omega))}\leq\|\nabla p\|_{L^{2}(0,\tau;L^2(\Omega))}\leq\frac{C_{1}\sqrt{2k_{m}}}{k_m}.
	\end{align}
	It easy to check that 
	\begin{align}
		\|\xi\|_{L^{2}(0,\tau;L^2(\Omega))}\leq k^{-1}_{4}	\|k_{4}\xi+k_{5}\eta+k_{2}\gamma\|_{L^{2}(0,\tau;L^2(\Omega))}\leq k_4^{-1}\frac{C_{1}\sqrt{2k_{m}}}{k_m}.
	\end{align}
	This proof is complete.
\end{proof}
\begin{theorem}\label{th-2-2-3-1}
	Assume that the conditions of A1, A2, A3, A4 hold and $\bm{{\rm f}}\in H^{2}(0,\tau;L^2(\Omega))\cap W^{1,\infty}(0,\tau;L^2(\Omega))$, $\bm{{\rm f}}_{1}\in H^{2}(0,\tau;\textbf{L}^2(\partial\Omega))\cap W^{1,\infty}(0,\tau;L^2(\partial\Omega))$, $g$, $\phi\in H^{1}(0,\tau;L^2(\Omega))$, $g_{1}$, $\phi_{1}\in H^{1}(0,\tau;L^2(\partial\Omega))$, then there exists a positive constant $C_3=C_3(\|\varepsilon(\textbf{u}_t(0))\|_{L^2(\Omega)}$, $\|\xi_{t}(0)\|_{L^2(\Omega)}$, $\|\eta_{t}(0)\|_{L^2(\Omega)}$, $\|\gamma_{t}(0)\|_{L^2(\Omega)}$,
	$\|\bm{{\rm f}}\|_{H^{2}(0,\tau;L^2(\Omega))}$, $\|\bm{{\rm f}}_1\|_{H^{2}(0,\tau;L^2(\partial\Omega))}$, $\|g\|_{H^{1}(0,\tau;L^2(\Omega))}$, $\|g_1\|_{H^{1}(0,\tau;L^2(\partial\Omega))}$, $\|\phi\|_{H^{1}(0,\tau;L^2(\Omega))}$, $\|\phi_1\|_{H^{1}(0,\tau;L^2(\partial\Omega))})$, such that 
	\begin{eqnarray}\label{eq-2-29}	
		&&\frac{\sqrt{\mu}}{2}\|\varepsilon(\bm{{\rm u}}_{t})\|_{L^{\infty}(0,\tau;L^2(\Omega))}+
		\sqrt{\frac{k_6}{2}}\|\xi_{t}\|_{L^{\infty}(0,\tau;L^2(\Omega))}
		+\sqrt{\frac{k_5-k_2}{2}}\|\eta_{t}\|_{L^{\infty}(0,\tau;L^2(\Omega))}\nonumber\\	
		&&+\sqrt{\frac{k_3-k_2}{2}}\|\gamma_{t}\|_{L^{\infty}(0,\tau;L^2(\Omega))}+\sqrt{\frac{{k_m}}{2}}\|\nabla p\|_{H^{1}(0,\tau;L^2(\Omega))}\nonumber\\
		&&+\sqrt{\frac{\theta_{m}}{2}}\|\nabla T\|_{H^{1}(0,\tau;L^2(\Omega))}\leq C_3.
	\end{eqnarray}	
\end{theorem}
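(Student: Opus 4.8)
The plan is to differentiate the system once more in time and test with the second time derivatives of the unknowns, exactly mirroring the two-level energy argument already used to establish \reff{eq-2-27} and \reff{eq-2-28} in Theorem \ref{th-2-3}, but now one derivative higher. Concretely, I would differentiate \reff{eq-2-23} and \reff{eq-2-24} twice with respect to $t$ and \reff{eq-2-25}, \reff{eq-2-26} once with respect to $t$, then take the test functions $\bm{{\rm v}}=\bm{{\rm u}}_{tt}$, $\varphi=\xi_{tt}$, $y=p_{tt}=k_4\xi_{tt}+k_5\eta_{tt}+k_2\gamma_{tt}$ and $z=T_{tt}=k_1\xi_{tt}+k_2\eta_{tt}+k_3\gamma_{tt}$, using the algebraic relations \reff{eq-2-7} throughout. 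The cross term $2k_2(\gamma_{tt},\eta_{tt})$ is absorbed by $k_2\|\gamma_{tt}\|_{L^2(\Omega)}^2+k_2\|\eta_{tt}\|_{L^2(\Omega)}^2$ as in \reff{eq-2-34}, which is precisely where the hypotheses $c_0-b_0>0$ and $a_0-b_0>0$ (through $k_5-k_2>0$, $k_3-k_2>0$) are needed to keep the resulting coefficients positive. The diffusion terms $(\bm{K}\nabla p_t,\nabla p_{tt})$ and $(\bm{\Theta}\nabla T_t,\nabla T_{tt})$ integrate in time to give $\tfrac{k_m}{2}\|\nabla p_t(t)\|^2_{L^2(\Omega)}+\tfrac{\theta_m}{2}\|\nabla T_t(t)\|^2_{L^2(\Omega)}$ up to their values at $t=0$, which are controlled by the new regularity hypotheses $\bm{{\rm f}}\in H^2$, $g,\phi\in H^1$, etc., through the equations at $t=0$.

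Next I would bound the right-hand side. The forcing contributions $(\bm{{\rm f}}_{tt},\bm{{\rm u}}_{tt})$, $\langle\bm{{\rm f}}_{1tt},\bm{{\rm u}}_{tt}\rangle$, $(g_t,p_{tt})$, $\langle g_{1t},p_{tt}\rangle$, $(\phi_t,T_{tt})$, $\langle\phi_{1t},T_{tt}\rangle$ are handled by Cauchy--Schwarz and Young, expanding $p_{tt}$, $T_{tt}$ via \reff{eq-2-7} and then using Korn's inequality to dominate $\|\bm{{\rm u}}_{tt}\|$ by $\|\varepsilon(\bm{{\rm u}}_{tt})\|$; a portion of $\|\varepsilon(\bm{{\rm u}}_{tt})\|^2$ and small multiples of $\|\xi_{tt}\|^2$, $\|\eta_{tt}\|^2$, $\|\gamma_{tt}\|^2$ are then kicked back to the left by choosing the Young parameters small, exactly as the long $\epsilon_4,\dots,\epsilon_{25}$ bookkeeping in the proof of \reff{eq-2-28}. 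The genuinely new terms are the differentiated convective-type contributions $(\bm{L_1}\cdot\nabla T_t,T_{tt})$, $(\bm{L_2}\cdot(\bm{K}\nabla p_t),T_{tt})$ and $(\bm{L_3},T_{tt})$: since $\bm{L_1},\bm{L_2},\bm{L_3}\in L^\infty$ are \emph{fixed} data (this is still the linearized problem), these are estimated by $l_1\|\nabla T_t\|_{L^2(\Omega)}\|T_{tt}\|_{L^2(\Omega)}$, $l_2 k_M\|\nabla p_t\|_{L^2(\Omega)}\|T_{tt}\|_{L^2(\Omega)}$ and $l_3|\Omega|^{1/2}\|T_{tt}\|_{L^2(\Omega)}$, then split by Young so that $\|\nabla T_t\|^2$ and $\|\nabla p_t\|^2$ appear with coefficients strictly smaller than $\theta_m/2$ and $k_m/2$ and can be absorbed on the left, while the $\|T_{tt}\|^2$ contributions (again expanded through \reff{eq-2-7}) are absorbed into the coercive $L^2$-in-time norms of $\xi_{tt},\eta_{tt},\gamma_{tt}$.

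After collecting terms, the inequality has the form
\begin{align*}
	&\tfrac{\mu}{4}\|\varepsilon(\bm{{\rm u}}_{tt}(t))\|^2_{L^2(\Omega)}+c\bigl(\|\xi_{tt}(t)\|^2_{L^2(\Omega)}+\|\eta_{tt}(t)\|^2_{L^2(\Omega)}+\|\gamma_{tt}(t)\|^2_{L^2(\Omega)}\bigr)\\
	&\quad+\int_0^t\bigl(\tfrac{k_m}{2}\|\nabla p_t\|^2_{L^2(\Omega)}+\tfrac{\theta_m}{2}\|\nabla T_t\|^2_{L^2(\Omega)}\bigr)\,ds\\
	&\le C\int_0^t\bigl(\|\xi_{tt}\|^2_{L^2(\Omega)}+\|\eta_{tt}\|^2_{L^2(\Omega)}+\|\gamma_{tt}\|^2_{L^2(\Omega)}\bigr)\,ds+C\,\mathcal{D},
\end{align*}
where $\mathcal{D}$ collects the data norms from the hypotheses plus the $t=0$ values $\|\varepsilon(\bm{{\rm u}}_t(0))\|$, $\|\xi_t(0)\|$, $\|\eta_t(0)\|$, $\|\gamma_t(0)\|$, $\|\nabla p(0)\|$, $\|\nabla T(0)\|$; Gronwall's inequality then closes the estimate and, combined with \reff{eq-2-28} to control $\|\nabla p\|_{L^2(L^2)}$ and $\|\nabla T\|_{L^2(L^2)}$, yields the $H^1(0,\tau;L^2(\Omega))$ bounds on $\nabla p$ and $\nabla T$ and hence \reff{eq-2-29} with $C_3$ depending only on the stated quantities. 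The main obstacle is purely organizational rather than conceptual: as in the proof of \reff{eq-2-28}, one must choose the (now even more numerous) Young parameters so that every absorbed coefficient is simultaneously below the available coercivity margins $\mu$, $k_6$, $k_5-k_2$, $k_3-k_2$, $k_m/2$, $\theta_m/2$ — and one must justify that the initial-time quantities $\xi_t(0),\eta_t(0),\gamma_t(0),\bm{{\rm u}}_{tt}(0)$ are well defined, which follows by reading off the equations \reff{eq-2-23}--\reff{eq-2-26} at $t=0$ under the strengthened regularity on $\bm{{\rm f}},\bm{{\rm f}}_1,g,g_1,\phi,\phi_1$.
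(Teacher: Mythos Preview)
Your strategy is the \reff{eq-2-28}-type argument lifted one derivative higher, whereas the paper lifts the \reff{eq-2-27}-type argument instead. Concretely, the paper differentiates \reff{eq-2-23}, \reff{eq-2-25}, \reff{eq-2-26} once and \reff{eq-2-24} twice, then tests with $\bv=\bu_{tt}$, $\varphi=\xi_t$, $y=p_t$, $z=T_t$. This produces $\tfrac{1}{2}\tfrac{d}{dt}\bigl(\mu\|\varepsilon(\bu_t)\|^2+k_6\|\xi_t\|^2+k_5\|\eta_t\|^2+k_3\|\gamma_t\|^2\bigr)$ together with $k_m\|\nabla p_t\|^2+\theta_m\|\nabla T_t\|^2$, so after one time integration the $L^\infty(0,\tau;L^2)$ bounds on $\varepsilon(\bu_t),\xi_t,\eta_t,\gamma_t$ and the $L^2(0,\tau;L^2)$ bound on $\nabla p_t,\nabla T_t$ fall out directly---exactly \reff{eq-2-29}. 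The forcing $(\bbf_t,\bu_{tt})$ is handled via $\tfrac{d}{dt}(\bbf_t,\bu_t)-(\bbf_{tt},\bu_t)$, which is where $\bbf\in H^2\cap W^{1,\infty}$ enters.

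Your route can be made to work too, and would in fact yield the slightly stronger information $\bu_{tt},\xi_{tt},\eta_{tt},\gamma_{tt}\in L^2(0,\tau;L^2)$ and $\nabla p_t,\nabla T_t\in L^\infty(0,\tau;L^2)$, from which the $L^\infty$ bounds on first time derivatives follow by integrating $\xi_t(t)=\xi_t(0)+\int_0^t\xi_{tt}\,ds$. However, the displayed inequality in your proposal has the structure \emph{inverted}: with your test functions $\bv=\bu_{tt}$, $\varphi=\xi_{tt}$, $y=p_{tt}$, $z=T_{tt}$, the term $\mu(\varepsilon(\bu_{tt}),\varepsilon(\bu_{tt}))=\mu\|\varepsilon(\bu_{tt})\|^2$ is not a total time derivative and hence sits under $\int_0^t$ after integration, whereas $(\bm{K}\nabla p_t,\nabla p_{tt})=\tfrac12\tfrac{d}{dt}(\bm{K}\nabla p_t,\nabla p_t)$ \emph{is} a total derivative and gives $\tfrac{k_m}{2}\|\nabla p_t(t)\|^2$ pointwise. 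The correct form is
\[
\int_0^t\bigl[\mu\|\varepsilon(\bu_{tt})\|^2+k_6\|\xi_{tt}\|^2+\cdots\bigr]\,ds+\tfrac{k_m}{2}\|\nabla p_t(t)\|^2+\tfrac{\theta_m}{2}\|\nabla T_t(t)\|^2\le C\int_0^t\bigl[\|\nabla p_t\|^2+\|\nabla T_t\|^2\bigr]\,ds+C\mathcal{D},
\]
and Gronwall then applies to $\|\nabla p_t(t)\|^2+\|\nabla T_t(t)\|^2$, not to the $tt$-quantities. (Also, once \reff{eq-2-26} is differentiated in $t$, the term $(\boldsymbol{L_3},z)$ drops out since $\boldsymbol{L_3}$ is fixed; it should not appear among your ``genuinely new terms''.) With these corrections your argument closes, but the paper's choice of test functions reaches \reff{eq-2-29} without the detour through second time derivatives.
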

\begin{proof}
	Differentiating (\ref{eq-2-23}), (\ref{eq-2-25}) and (\ref{eq-2-26}) one time with respect to $t$ and setting $\textbf{v}=\textbf{u}_{tt},\varphi =\xi_{t},y=p_{t}=k_{4}\xi_{t}+k_{5}\eta_{t}+k_{2}\gamma_{t},z=T_{t}=k_{1}\xi_{t}+k_{2}\eta_{t}+k_{3}\gamma_{t}$, differentiating (\ref{eq-2-24}) twice with respect to $t$ and setting $\varphi=\xi_{t}$, we get
	\begin{eqnarray}         
		&&\frac{1}{2}\big(\mu\frac{d}{dt}\|\varepsilon(\textbf{u}_{t})\|^{2}_{L^2(\Omega)}+k_{6}\frac{d}{dt}\|\xi_{t}\|^{2}_{L^2(\Omega)}+k_{5}\frac{d}{dt}\|\eta_{t}\|^{2}_{L^2(\Omega)}\nonumber\\
		&&+k_{3}\frac{d}{dt}\|\gamma_{t}\|^{2}_{L^2(\Omega)} \big)+k_{2}(\eta_{tt},\gamma_{t})+k_{2}(\gamma_{tt},\eta_{t})\nonumber\\
		&&+(\bm{K}\nabla p_{t},\nabla p_{t})+(\bm{\Theta}\nabla T_{t},\nabla T_{t})-(\boldsymbol{L_1}\cdot\nabla T_t, T_t)
		-(\boldsymbol{L_2}\cdot(\bm{K}\nabla P_t), T_t)\nonumber\\
		&&=(f_{t}, \textbf{u}_{tt})+\langle f_{1t},  \textbf{u}_{tt}\rangle+(g_{t},p_{t})+\langle g_{1t},p_{t}\rangle+(\phi_{t},T_{t})+\langle \phi_{1t},T_{t}\rangle.
	\end{eqnarray}
	Applying  Cauchy-Schwarz inequality, Young inequality and integrating from $0$ to $t$, we get
	\begin{align}	    
		&\frac{1}{2}(\mu\|\varepsilon(\textbf{u}_{t}(t))\|^{2}_{L^2(\Omega)}+k_{6}\|\xi_{t}(t)\|^{2}_{L^2(\Omega)}
		+(k_{5}-k_{2})\|\eta_{t}(t)\|^{2}_{L^2(\Omega)}+(k_{3}-k_{2})\|\gamma_{t}(t)\|^{2}_{L^2(\Omega)})\nonumber\\	
		&+\int_{0}^{t}k_{m}\|\nabla p_{t}(s)\|^{2}_{L^2(\Omega)}+\theta_{m}\|\nabla T_{t}(s)\|^{2}_{L^2(\Omega)}-\frac{3l_1^2}{2\epsilon_{1}}\|\nabla T_t\|^{2}_{L^2(\Omega)}-\frac{3l_2^2k_M^2}{2\epsilon_{2}}\|\nabla p_t\|^{2}_{L^2(\Omega)}~ds\leq
		\int_{0}^{t}\frac{\epsilon_{1}}{6}\|T_t\|^{2}_{L^2(\Omega)}\nonumber\\
		&+\frac{\epsilon_{2}}{6}\|T_t\|^{2}_{L^2(\Omega)}+\frac{1}{2}\big(\|g_{t}\|^{2}_{L^2(\Omega)}+\|g_{1t}\|^{2}_{L^2(\partial\Omega)}+2\|p_{t}\|^{2}_{L^2(\Omega)}+\|\phi_{t}\|^{2}_{L^2(\Omega)}+\|\phi_{1t}\|^{2}_{L^2(\partial\Omega)}+2\|T_{t}\|^{2}_{L^2(\Omega)}\nonumber\\	&+\|\textbf{f}_{tt}\|^{2}_{L^2(\Omega)}+\|\textbf{f}_{1tt}\|^{2}_{L^2(\partial\Omega)}+2c_{1}\|\varepsilon(\textbf{u}_{t}(s))\|^{2}_{L^2(\Omega)}\big)~ds+\frac{\epsilon_{3}}{2}(\|\textbf{f}_{t}(t)\|^{2}_{L^2(\Omega)}+\|\textbf{f}_{1t}(t)\|^{2}_{L^2(\partial\Omega)})\nonumber\\
		&+\frac{\epsilon_{3}}{2}(\|\textbf{f}_{t}(0)\|^{2}_{L^2(\Omega)}+\|\textbf{f}_{1t}(0)\|^{2}_{L^2(\partial\Omega)})+\frac{c_{1}}{\epsilon_{3}}(\|\varepsilon(\textbf{u}_{t}(t))\|^{2}_{L^2(\Omega)}+\|\varepsilon(\textbf{u}_{t}(0))\|^{2}_{L^2(\Omega)})\nonumber\\
		&+ \frac{1}{2}\big(\mu\|\varepsilon(\textbf{u}_{t}(0))\|^{2}_{L^2(\Omega)}+k_{6}\|\xi_{t}(0)\|^{2}_{L^2(\Omega)}
		+(k_{5}+k_2)\|\eta_{t}(0)\|^{2}_{L^2(\Omega)}+(k_{3}+k_2)\|\gamma_{t}(0)\|^{2}_{L^2(\Omega)}\big).
	\end{align}
	Choosing $\epsilon_{1}=\frac{3l_1^2}{\theta_m} $, $\epsilon_{2}=\frac{3l_2^2k_M^2}{k_m} $ and $\epsilon_{3}=\frac{4c_1}{\mu } $, using \reff{eq-2-7} and Gronwall's inequality, we obtain estimate \reff{eq-2-29}.
\end{proof}

%

\begin{theorem}
	Let $\bm{{\rm f}}\in H^{1}(0,\tau;L^2(\Omega))\cap L^{\infty}(0,\tau;L^2(\Omega))$,$\bm{{\rm f}}_{1}\in H^{1}(0,\tau;L^2(\partial\Omega))\cap L^{\infty}(0,\tau;L^2(\partial\Omega))$, $g$,  $\phi\in L^{2}(0,\tau;L^2(\Omega))$, $g_{1}$, $\phi_{1}\in L^{2}(L^2(0,\tau;\partial\Omega))$. Then there exists a unique weak solution to the problem (\ref{eq-2-23})-(\ref{eq-2-26}) .
\end{theorem}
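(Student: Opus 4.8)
The plan is to obtain existence by a Faedo--Galerkin construction whose finite-dimensional approximations inherit the a priori bounds of Theorem~\ref{th-2-3}, and then to read off uniqueness from the same energy identity applied to the difference of two weak solutions. Because the system \eqref{eq-2-23}--\eqref{eq-2-26} is linear in $(\bm{{\rm u}},\xi,\eta,\gamma)$, no compactness (Aubin--Lions) argument is needed to pass to the limit: weak convergence plus continuity of all the (linear) terms will suffice.

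First I would set up the Galerkin scheme. Fix a basis $\{\boldsymbol{\phi}_j\}_{j\ge1}$ of $\bm{{\rm H}}_{\perp}^{1}(\Omega)$ and a single $H^1$-conforming basis $\{\chi_j\}_{j\ge1}$ of $H^1(\Omega)$ serving all three scalar unknowns; let $\bm{V}_m=\mathrm{span}\{\boldsymbol{\phi}_1,\dots,\boldsymbol{\phi}_m\}$, $W_m=\mathrm{span}\{\chi_1,\dots,\chi_m\}$, and seek $\bm{{\rm u}}_m(t)\in\bm{V}_m$, $\xi_m(t),\eta_m(t),\gamma_m(t)\in W_m$ satisfying \eqref{eq-2-23}--\eqref{eq-2-26} tested against $\bm{V}_m$ and $W_m$, with $p_m:=k_4\xi_m+k_5\eta_m+k_2\gamma_m$ and $T_m:=k_1\xi_m+k_2\eta_m+k_3\gamma_m$ --- both then in $W_m$, hence admissible as test functions (testing \eqref{eq-2-23} against $\bm{{\rm H}}_{\perp}^{1}(\Omega)$ rather than $\bm{{\rm H}}^{1}(\Omega)$ is harmless, since on $\bm{{\rm RM}}$ both sides of \eqref{eq-2-23} vanish by the compatibility hypothesis). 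The key observation is that \eqref{eq-2-23}--\eqref{eq-2-24} contain no time derivative of $(\bm{{\rm u}}_m,\xi_m)$: eliminating $\xi_m$ via \eqref{eq-2-24} (legitimate since $k_6>0$ under A3), the residual bilinear form $a(\bm{{\rm u}},\bm{{\rm v}}):=\mu(\varepsilon(\bm{{\rm u}}),\varepsilon(\bm{{\rm v}}))+k_6^{-1}(\nabla\cdot\bm{{\rm u}},\nabla\cdot\bm{{\rm v}})$ is coercive on $\bm{{\rm H}}_{\perp}^{1}(\Omega)$, hence on $\bm{V}_m$, by Korn's inequality, so $(\bm{{\rm u}}_m,\xi_m)$ is uniquely and affinely determined by $(\eta_m,\gamma_m)$ and the data. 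Substituting this representation into \eqref{eq-2-25}--\eqref{eq-2-26} turns the Galerkin problem into a linear first-order ODE system for the coefficients of $(\eta_m,\gamma_m)$ with bounded coefficients and $L^2$-in-time forcing; being linear, it has a unique solution on all of $[0,\tau]$ by the Carath\'eodory theorem. I would take $\eta_m(0),\gamma_m(0)$ to be the $W_m$-projections of $\eta_0,\gamma_0$ and let the constraint \eqref{eq-2-23}--\eqref{eq-2-24} at $t=0$ fix $\bm{{\rm u}}_m(0),\xi_m(0)$.

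Next I would check that the computation in the proof of Theorem~\ref{th-2-3} carries over verbatim with $(\bm{{\rm u}},\xi,\eta,\gamma,p,T)$ replaced by the Galerkin functions, because $\bm{{\rm u}}_{m,t}\in\bm{V}_m$ and $\xi_m,\xi_{m,t},p_m,p_{m,t},T_m,T_{m,t}\in W_m$ are exactly the test functions used there; the differentiation of \eqref{eq-2-24} in $t$, the Young/Korn/Gronwall steps, and the choices of the $\epsilon_i$ all apply, giving bounds uniform in $m$ for $\bm{{\rm u}}_m$ in $L^\infty(0,\tau;\bm{{\rm H}}_{\perp}^{1})$, for $\xi_m,\eta_m,\gamma_m$ in $L^\infty(0,\tau;L^2(\Omega))$, and for $p_m,T_m$ in $L^2(0,\tau;H^1(\Omega))$; bounding $\eta_{m,t},\gamma_{m,t}$ in $L^2(0,\tau;H^1(\Omega)')$ directly from \eqref{eq-2-25}--\eqref{eq-2-26} completes the list. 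Along a subsequence one then has $\bm{{\rm u}}_m\rightharpoonup\bm{{\rm u}}$, $\xi_m,\eta_m,\gamma_m\rightharpoonup\xi,\eta,\gamma$ weakly-$*$, $p_m,T_m\rightharpoonup p,T$ weakly, and $\eta_{m,t},\gamma_{m,t}\rightharpoonup\eta_t,\gamma_t$ weakly, in the respective spaces. Since every term of \eqref{eq-2-23}--\eqref{eq-2-26} is linear and continuous for these topologies --- for the convective terms one uses $\boldsymbol{L_1},\boldsymbol{L_2}\in L^\infty$, so that e.g.\ $\int_0^\tau(\boldsymbol{L_1}\cdot\nabla T_m,\chi)\psi\,dt\to\int_0^\tau(\boldsymbol{L_1}\cdot\nabla T,\chi)\psi\,dt$ for $\chi\in W_k$, $\psi\in C_c^\infty(0,\tau)$ --- testing with a fixed element of $W_k$ (resp.\ $\bm{V}_k$) times such a $\psi$, integrating in time and letting $m\to\infty$ shows that $(\bm{{\rm u}},\xi,\eta,\gamma,p,T)$ satisfies the weak formulation for all test functions in $\bigcup_k W_k$ (resp.\ $\bigcup_k\bm{V}_k$), hence by density for all admissible ones; the relations \eqref{eq-2-7} and the initial conditions pass to the limit in the usual way, producing a weak solution with the regularity of Definition~\ref{de-2-2}.

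Finally, for uniqueness I would take the difference $(\bm{{\rm u}},\xi,\eta,\gamma,p,T)$ of two weak solutions, which solves \eqref{eq-2-23}--\eqref{eq-2-26} with $\bm{{\rm f}}=\bm{{\rm f}}_1=g=g_1=\phi=\phi_1=\boldsymbol{L_3}=0$ (the coefficients $\boldsymbol{L_1},\boldsymbol{L_2}$ unchanged) and zero initial data, and rerun the energy computation of Theorem~\ref{th-2-3} with $\bm{{\rm v}}=\bm{{\rm u}}_t$, $\varphi=\xi$, $y=p$, $z=T$ --- made rigorous for weak solutions by a standard time-mollification (Steklov averaging), which also removes the need for $\bm{{\rm u}}_t$ to exist pointwise --- absorbing the $\boldsymbol{L_1},\boldsymbol{L_2}$ terms into $k_m\|\nabla p\|_{L^2(\Omega)}^2+\theta_m\|\nabla T\|_{L^2(\Omega)}^2$ by Young's inequality exactly as there. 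One then reaches
\[
\mu\|\varepsilon(\bm{{\rm u}}(t))\|_{L^2(\Omega)}^2+k_6\|\xi(t)\|_{L^2(\Omega)}^2+(k_5-k_2)\|\eta(t)\|_{L^2(\Omega)}^2+(k_3-k_2)\|\gamma(t)\|_{L^2(\Omega)}^2\le C\int_0^t\!\big(\|\xi\|_{L^2(\Omega)}^2+\|\eta\|_{L^2(\Omega)}^2+\|\gamma\|_{L^2(\Omega)}^2\big)ds,
\]
and since $k_6,k_5-k_2,k_3-k_2>0$ by A3 with vanishing initial data, Gronwall's inequality forces $\bm{{\rm u}}\equiv\xi\equiv\eta\equiv\gamma\equiv0$, hence $p\equiv T\equiv0$ by \eqref{eq-2-7}. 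The step I expect to be the main obstacle is the differential--algebraic structure: \eqref{eq-2-23}--\eqref{eq-2-24} are constraints, not evolution equations, so one must establish their unique solvability at the discrete level (where $k_6>0$ and Korn's inequality enter) and arrange that the derived fields $p_m,T_m$ stay in the discrete test space --- which is precisely why a common $H^1$ basis is used for $\xi,\eta,\gamma$; once that is handled, what remains is the routine linear Galerkin limit combined with the estimates already proved.
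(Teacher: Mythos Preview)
Your proposal is correct and, for existence, follows the same Galerkin route the paper sketches (the paper merely cites it as standard and omits details); your additional remark that linearity obviates the Aubin--Lions step is a valid refinement of the paper's invocation of a ``compactness argument''. For uniqueness the two arguments diverge slightly: the paper differentiates the error equations \eqref{eq-2-41}--\eqref{eq-2-42} in time and tests with $(e_{\bm{{\rm u}}_t}, e_{\xi_t}, e_{p_t}, e_{T_t})$ --- the identity behind estimate \eqref{eq-2-28} --- concluding that the time derivatives vanish and hence that the errors are constants equal to their (zero) initial values; you instead test with $(\bm{{\rm u}}_t, \xi, p, T)$ --- the identity behind estimate \eqref{eq-2-27} --- and apply Gronwall directly to $\|\varepsilon(\bm{{\rm u}})\|^2$, $\|\xi\|^2$, $\|\eta\|^2$, $\|\gamma\|^2$. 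Your route is marginally more economical in that it avoids the time-differentiated system and the test functions $e_{p_t}, e_{T_t}$, while both approaches require the same regularity justification (your Steklov-averaging remark applies equally to the paper's choice of test functions).
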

\begin{proof}
	Since the problem \reff{eq-2-23}-\reff{eq-2-26} is linear, so the existence can be easily proved by using the Galerkin method and the compactness argument \cite{Temam1977}. Theorem \ref{th-2-3} provides the necessary uniform estimates for Galerkin approximate solutions, since the derivation is standard, here we omit the details.
	
	We now prove the uniqueness of the weak solution of the problem (\ref{eq-2-23})$-$(\ref{eq-2-26}). Suppose $(\textbf{u}_{1},p_{1},T_{1},\xi_{1},\eta_{1},\gamma_{1})$ and $(\textbf{u}_{2},p_{2},T_{2},\xi_{2},\eta_{2},\gamma_{2})$ are two solutions to the problem (\ref{eq-2-23})-(\ref{eq-2-26}). Let $e_{\textbf{u}}:=\textbf{u}_{1}-\textbf{u}_{2},e_{p}:=p_{1}-p_{2},e_{T}:=T_{1}-T_{2},e_{\xi}:=\xi_{1}-\xi_{2},e_{\eta}:=\eta_{1}-\eta_{2} $ and $ e_{\gamma}:=\gamma_{1}-\gamma_{2} $. Then $(e_{\textbf{u}},e_{p},e_{T},e_{\xi},e_{\eta},e_{\gamma})$ satisfies
	\begin{eqnarray}
		\mu(\varepsilon(e_{\textbf{u}}),\varepsilon(\textbf{v}))-(e_{\xi},\nabla\cdot \textbf{v})=0~~~~&&\forall ~~\textbf{v}\in \textbf{H}^{1}(\Omega),\label{eq-2-41}\\
		k_{6}(e_{\xi},\varphi)+(\nabla\cdot e_{\textbf{u}},\varphi) =k_{4}(e_{\eta},\varphi)+k_{1}(e_{\gamma},\varphi)~~~~&&\forall~~ \varphi\in L^{2}(\Omega),\label{eq-2-42}\\
		(e_{\eta_{t}},y)+(\bm{K}\nabla(e_{p}),\nabla y)=0~~~~&&\forall ~~y\in H^{1}(\Omega),\label{eq-2-43}\\
		(e_{\gamma_{t}},z)+(\bm{\Theta}\nabla e_{T},\nabla z)-(\boldsymbol{L_1}\cdot(\nabla e_T),z)-(\boldsymbol{L_2}\cdot(\bm{K}\nabla e_p),z)=0~~~~&&\forall ~~z\in H^{1}(\Omega),\label{eq-2-44}\\
		e_{\textbf{u}}(0)=e_{p}(0)=e_{T}(0)=e_{\xi}(0)=e_{\eta}(0)=e_{\gamma}(0)=0.
	\end{eqnarray}
	Differentiating \reff{eq-2-41}-\reff{eq-2-42} with respect to $t$, taking $\textbf{v}=e_{\textbf{u}_{t}}=\textbf{u}_{1t}-\textbf{u}_{2t}, \varphi=e_{\xi_{t}}=\xi_{1t}-\xi_{2t}, y=e_{p_{t}}=p_{1t}-p_{2t}$ and $z=e_{T_{t}}=T_{1t}-T_{2t}$ in (\ref{eq-2-41})-(\ref{eq-2-44}) respectively, we have
	\begin{align}
		&\mu\|\varepsilon(e_{\textbf{u}_{t}})\|^{2}_{L^2(\Omega)}+k_{6}\|e_{\xi_{t}}\|^{2}_{L^2(\Omega)}+k_{5}\|e_{\eta_{t}}\|^{2}_{L^2(\Omega)}+k_{3}\|e_{\gamma_{t}}\|^{2}_{L^2(\Omega)})+2k_{2}(e_{\eta_{t}},e_{\gamma_{t}})\nonumber\\
		&+(\bm{K}\nabla e_p,\nabla e_{p_{t}})+(\bm{\Theta} \nabla e_T,\nabla e_{T_{t}})-(\boldsymbol{L_1}\cdot\nabla e_T,e_{T_{t}})-(\boldsymbol{L_2}\cdot(\bm{K}\nabla e_p),e_{T_{t}})=0.
	\end{align}
	Integrating from 0 to $t$, using Cauchy-Schwarz inequality and Young inequality, we obtain
	\begin{align}
		\int_{0}^{t}\mu\|\varepsilon(e_{\textbf{u}_{t}}(s))\|^{2}_{L^2(\Omega)}+k_{6}\|e_{\xi_{t}}(s)\|^{2}_{L^2(\Omega)}+(k_{5}-k_2)\|e_{\eta_{t}}(s)\|^{2}_{L^2(\Omega)}\nonumber\\
		+(k_{3}-k_2)\|e_{\gamma_{t}}(s)\|^{2}_{L^2(\Omega)}~ds
		+\frac{1}{2}\big(k_{m}\|\nabla e_{p}\|^{2}_{L^2(\Omega)}+\theta_{m}\|\nabla e_{T}\|^{2}_{L^2(\Omega)}\big)\nonumber\\
		-\int_{0}^{t}\epsilon_{26}k^{2}_{1}\|e_{\xi_{t}}(s)\|^{2}_{L^2(\Omega)}+\epsilon_{27}k^{2}_{2}\|e_{\eta_{t}}(s)\|^{2}_{L^2(\Omega)}+\epsilon_{28}k^{2}_{3}\|e_{\gamma_{t}}(s)\|^{2}_{L^2(\Omega)}~ds\nonumber\\
		\leq \int_{0}^{t}(\frac{l_1^2}{2\epsilon_{26}}+\frac{l_1^2}{2\epsilon_{27}}+\frac{l_1^2}{2\epsilon_{28}})\|\nabla e_T(s)\|^{2}_{L^2(\Omega)} \no\\
		+(\frac{l_2^2k_M^2}{2\epsilon_{26}}+\frac{l_2^2k_M^2}{2\epsilon_{27}}+\frac{l_2^2k_M^2}{2\epsilon_{28}})\|\nabla e_p(s)\|^{2}_{L^2(\Omega)}~ds.
	\end{align}
	Using Gronwall inequality, choosing $\epsilon_{26} = \frac{k_{6}}{2k^{2}_{1}}, \epsilon_{27} = \frac{(k_{5}-k_{3})}{2k^{2}_{2}}, \epsilon_{28} = \frac{(k_{3}-k_{2})}{2k^{2}_{3}}$ we yields $e_{\textbf{u}_{t}}(t)=e_{\xi_{t}}(t)=e_{\eta_{t}}(t)=e_{\gamma_{t}}(t)=0$, which implies $e_{\textbf{u}}(t)$, $e_{\xi}(t)$, $e_{\eta}(t)$, $e_{\gamma}(t)$ are constants for any $\in[0,\tau]$. Since $e_{\textbf{u}}(0)=e_{\xi}(0)=e_{\eta}(0)=e_{\gamma}(0)=0$, we obtain the uniqueness of a weak solution to problem \reff{eq-2-23}-\reff{eq-2-26}. The proof is complete.
\end{proof}
\subsection{ Analysis of  the non-linear problem (\ref{eq-2-20})-(\ref{eq-2-23-1})}
We use the Newton iterative algorithm as follows: let $i \geq 1$, and at the iteration $i$, we solve for
$(\textbf{u}^{i}, \xi^{i}, \eta^{i}, \gamma^{i}, p^{i}, T^{i}) \in \bm{{\rm H}}_{\perp}^{1}(\Omega)\times L^{2}(\Omega)\times L^{2}(\Omega)\times L^{2}(\Omega) \times  H^{1}(\Omega) \times  H^{1}(\Omega)$
\begin{align}
	\mu(\varepsilon(\bm{{\rm u}}^{i}),\varepsilon(\bm{{\rm v}}))-(\xi^{i},\nabla\cdot \bm{{\rm v}})=(\bm{{\rm f}},\bm{{\rm v}})+\langle \bm{{\rm f}}_{1},\bm{{\rm v}}\rangle \quad&\forall \bm{{\rm v}}\in \bm{{\rm H}}^{1}(\Omega),\label{eq-2-48}\\
	k_{6}(\xi^{i},\varphi)+(\nabla\cdot \bm{{\rm u}}^{i},\varphi) =k_{4}(\eta^{i},\varphi)+k_{1}(\gamma^{i},\varphi) \quad&\forall \varphi\in L^{2}(\Omega),\label{eq-2-49}\\
	(\eta^{i}_{t},y)+(\bm{K}\nabla p^{i},\nabla y)=(g,y)+\langle g_{1},y\rangle \quad&\forall y\in H^{1}(\Omega),\label{eq-2-50}\\
	(\gamma^{i}_{t},z)+(\bm{\Theta}\nabla T^{i},\nabla z)-(\nabla T^{i}\cdot(\bm{K}\nabla p^{i-1}),z)\nonumber\\-(\nabla T^{i-1}\cdot(\bm{K}\nabla p^{i}),z)+(\nabla T^{i-1}\cdot(\bm{K}\nabla p^{i-1}),z)\no\\
	=(\phi,z)+\langle \phi_{1},z\rangle \quad&\forall z\in H^{1}(\Omega)\label{eq-2-51},\\
	T^{i}=k_{1}\xi^{i}+k_{2}\eta^{i}+k_{3}\gamma^{i},~~p^{i}=k_{4}\xi^{i}+k_{5}\eta^{i}+k_{2}\gamma^{i}.\label{eq-22-52}
\end{align}
\begin{remark}
	We suppose that for all $i\geq1$ and $t\in[0,\tau]$, $\nabla p^{i}(t)$, $\nabla T^{i}(t)\in L^{\infty}(\Omega)$. 
	The above hypothesis is reasonable, and it is necessary for the solution to the iterative procedure \reff{eq-2-48}-\reff{eq-22-52} to be well-defined for each $i\geq1$. 
	 This hypothesis is satisfied with sufficiently regular data and domain boundary. From Theorem \ref{th-2-3} and the theory of linear parabolic equations (see \cite{Evans2010}) to get $p^{i},T^{i} \in H^{2}(\Omega)$. Then, we can get $\nabla p^{i}$, $\nabla T^{i}\in L^{\infty}(\Omega)$, from Sobolev space embedding theorem \cite{Evans2010}.
\end{remark}

\begin{theorem}\label{2.6}
	Assume that $\bm{{\rm f}}\in H^{2}(0,\tau;L^2(\Omega))$,$\bm{{\rm f}}_{1}\in H^{1}(0,\tau;L^2(\partial\Omega)$, $g$,  $\phi\in H^{1}(0,\tau;L^2(\Omega))$, $g_{1}$,  $\phi_{1}\in H^{1}(L^2(0,\tau;\partial\Omega)$,
	$p_{0},T_{0} \in H^{1}_{0}$ and $\bm{{\rm u}}\in L^2(\Omega)$, then the \reff{eq-2-48}-\reff{eq-22-52} defines a unique sequence of iterates
	$$
	\begin{array}{l}
		\left(T^{i}, p^{i}\right) \in L^{\infty}\left(0, \tau; H^{1}(\Omega)\right)\cap H^{1}\left(0, \tau; H^{1}(\Omega)\right),\\
		\left(\xi^{i}, \eta^{i}, \gamma^{i}\right) \in W^{1, \infty}\left(0, \tau; L^{2}(\Omega)\right) \cap H^{1}(0, \tau; L^{2}(\Omega)), \\
		\mathbf{u}^{i} \in W^{1, \infty}\left(0, \tau; H^{1}(\Omega)\right)\cap H^{1}\left(0, \tau; H^{1}(\Omega)\right),
	\end{array}
	$$
	that converges to the weak solution $(\textbf{u}, \xi, \eta, \gamma, p, T)$ of \reff{eq-2-20}-\reff{eq-2-23-1}, admitting the following regularity
	$$
	\begin{array}{l}
		(T, p) \in L^{\infty}\left(0, \tau; H^{1}(\Omega)\right)\cap L^{2}\left(0, \tau; H^{1}(\Omega)\right),\\
		(\xi, \eta, \gamma) \in L^{\infty}\left(0, \tau; L^{2}(\Omega)\right) \cap H^{1}\left(0, \tau; L^{2}(\Omega)\right),\\
		\bm{{\rm u}} \in L^{\infty}\left(0, \tau; H^{1}_{\perp}(\Omega)\right) \cap H^{1}\left(0, \tau; H^{1}_{\perp}(\Omega)\right).
	\end{array}
	$$
\end{theorem}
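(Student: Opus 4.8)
The plan is to read each Newton step \reff{eq-2-48}--\reff{eq-22-52} as a particular instance of the linear problem \reff{eq-2-23}--\reff{eq-2-26} and then to run a Banach fixed-point argument for the solution map $\Phi:(p^{i-1},T^{i-1})\mapsto(p^{i},T^{i})$, whose unique fixed point is identified with the weak solution of \reff{eq-2-20}--\reff{eq-2-23-1}. Concretely, for $(p^{i-1},T^{i-1})$ with $\nabla p^{i-1},\nabla T^{i-1}\in L^{\infty}(\Omega)$ (legitimate by the Remark), equation \reff{eq-2-51} is exactly \reff{eq-2-26} with $\boldsymbol{L_1}=\bm{K}\nabla p^{i-1}$, $\boldsymbol{L_2}=\nabla T^{i-1}$ and $\boldsymbol{L_3}=\nabla T^{i-1}\cdot(\bm{K}\nabla p^{i-1})$, all in $L^{\infty}(\Omega)$; hence the well-posedness theorem for \reff{eq-2-23}--\reff{eq-2-26} (existence and uniqueness), together with the quantitative estimates of Theorems \ref{th-2-3} and \ref{th-2-2-3-1}, furnishes a unique $(\bm{{\rm u}}^{i},\xi^{i},\eta^{i},\gamma^{i},p^{i},T^{i})$ enjoying exactly the regularity stated in the theorem, and linear parabolic regularity (as in the Remark) promotes it to $p^{i},T^{i}\in H^{2}(\Omega)$, so that $\nabla p^{i},\nabla T^{i}\in L^{\infty}(\Omega)$. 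This already shows the scheme is well posed and defines a unique sequence of iterates.

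Next I would confine $\Phi$ to an invariant ball. Since the constants $C_{1},C_{2},C_{3}$ of Theorems \ref{th-2-3} and \ref{th-2-2-3-1} depend on $(p^{i-1},T^{i-1})$ only through $l_{1}=\|\bm{K}\nabla p^{i-1}\|_{\infty}$, $l_{2}=\|\nabla T^{i-1}\|_{\infty}$ and $l_{3}=\|\boldsymbol{L_3}\|_{\infty}$, combining those stability bounds with the parabolic-regularity estimate of the Remark produces a radius $M$, depending only on the data, such that $\Phi(\mathcal{B})\subseteq\mathcal{B}$ for
\[
\mathcal{B}:=\bigl\{(p,T)\in L^{2}(0,\tau;H^{1}(\Omega))^{2}:\ \|\nabla p\|_{L^{\infty}(\Omega_{\tau})}+\|\nabla T\|_{L^{\infty}(\Omega_{\tau})}\le M\bigr\}.
\]
On $\mathcal{B}$ all the a priori estimates are then uniform in $i$, which is what makes the contraction constant controllable.

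The core is the contraction estimate. Subtracting \reff{eq-2-48}--\reff{eq-22-52} at two consecutive levels and setting $e_{\bm{{\rm u}}}^{i}=\bm{{\rm u}}^{i}-\bm{{\rm u}}^{i-1}$, $e_{\xi}^{i}=\xi^{i}-\xi^{i-1}$, $e_{p}^{i}=p^{i}-p^{i-1}$, and so on, the errors solve the linear system \reff{eq-2-23}--\reff{eq-2-26} with $\boldsymbol{L_1}=\bm{K}\nabla p^{i-1}$, $\boldsymbol{L_2}=\nabla T^{i-1}$, vanishing external and initial data, and a single forcing term: adding and subtracting collapses the difference of the two linearized convective terms to
\[
\nabla e_{T}^{i}\cdot(\bm{K}\nabla p^{i-1})+\nabla T^{i-1}\cdot(\bm{K}\nabla e_{p}^{i})+\nabla e_{T}^{i-1}\cdot(\bm{K}\nabla e_{p}^{i-1}),
\]
whose first two pieces carry gradients of the \emph{current} error against $L^{\infty}$ coefficients and are absorbed into the $\bm{\Theta}$- and $\bm{K}$-diffusion precisely as $\boldsymbol{L_1}\cdot\nabla T$ and $\boldsymbol{L_2}\cdot(\bm{K}\nabla p)$ are absorbed in the proof of Theorem \ref{th-2-3}, while the third is a genuine right-hand side with $L^{2}(0,\tau;L^{2}(\Omega))$-norm bounded on $\mathcal{B}$ by $2Mk_{M}\|\nabla e_{p}^{i-1}\|_{L^{2}(0,\tau;L^{2}(\Omega))}$. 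Running the energy computation of Theorem \ref{th-2-3} on this error system — differentiate in $t$, test with the time derivatives of the errors, integrate, and use Cauchy--Schwarz, Young and Gronwall — yields $\|(e_{p}^{i},e_{T}^{i})\|_{\star}\le q\,\|(e_{p}^{i-1},e_{T}^{i-1})\|_{\star}$ for a suitable energy norm $\|\cdot\|_{\star}$; to force $q<1$ for arbitrary $\tau$ I would carry this estimate with an exponential time weight $e^{-2\omega t}$ (so the weight dominates the Gronwall growth and drives $q$ below $1$ once $\omega$ is large; alternatively one works on a short interval and continues). Banach's theorem then gives a unique fixed point of $\Phi$ in $\mathcal{B}$, the iterates converge to it — with $(\bm{{\rm u}}^{i},\xi^{i},\eta^{i},\gamma^{i})$ converging accordingly through \reff{eq-2-48}--\reff{eq-2-49} — and passing to the limit in \reff{eq-2-48}--\reff{eq-22-52}, where $\nabla T^{i}\cdot(\bm{K}\nabla p^{i-1})+\nabla T^{i-1}\cdot(\bm{K}\nabla p^{i})-\nabla T^{i-1}\cdot(\bm{K}\nabla p^{i-1})\to\nabla T\cdot(\bm{K}\nabla p)$ because the gradients converge in $L^{2}$ and stay uniformly $L^{\infty}$-bounded, identifies the limit as the unique weak solution of \reff{eq-2-20}--\reff{eq-2-23-1}; the stated regularity is inherited from the $i$-uniform bounds of Theorems \ref{th-2-3} and \ref{th-2-2-3-1} by weak-$*$ lower semicontinuity of the norms.

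The step I expect to be the main obstacle is extracting a genuine contraction constant $q<1$. The tension is that the energy estimate controls $e^{i}$ in $L^{\infty}(0,\tau;\cdot)$ and in $L^{2}(0,\tau;\cdot)$-norms of its time derivatives, whereas the only forcing $\nabla e_{T}^{i-1}\cdot(\bm{K}\nabla e_{p}^{i-1})$ is seen only through $\nabla e_{p}^{i-1}$ (or $\nabla e_{T}^{i-1}$) in $L^{2}(0,\tau;L^{2}(\Omega))$, with a coefficient proportional to $M$ and multiplied by a Gronwall factor that itself grows with $M^{2}\tau$; reconciling these through the exponential time weight (or a subinterval continuation) while simultaneously maintaining $\Phi(\mathcal{B})\subseteq\mathcal{B}$ — so that $M$, and hence every constant, stays independent of $i$ — is the delicate bookkeeping the argument has to manage.
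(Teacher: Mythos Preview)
Your proposal is correct and follows essentially the same route as the paper: each Newton step is recognized as an instance of the linear problem \reff{eq-2-23}--\reff{eq-2-26}, error equations for consecutive iterates are derived, the energy computation (including the test with time derivatives) yields the contraction, and Banach's fixed-point theorem closes the argument; your three-term regrouping of the convective difference is algebraically equivalent to the paper's four-term form \reff{eq-2-55}. The paper chooses the short-interval-plus-continuation option you mention (taking $t_1=1/(2C_k)$) rather than the exponential weight, and it does not build an explicit invariant ball $\mathcal{B}$ but simply invokes the Remark preceding the theorem to posit a uniform $L^\infty$ bound $\delta_1$ on $\nabla p^i,\nabla T^i$.
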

\begin{proof}
	According to Theorem \ref{th-2-3}, the sequence $\left(\textbf{u}^{i}, \xi^{i},
	\eta^{i}, \gamma^{i}, p^{i}, T^{i}\right)$ are well-defined for all $i \geq 1$ and $p^{i}, T^{i} \in H^{1}(0, \tau; H^{1}(\Omega))$, this guarantees continuity in time for the sequence. We define $\sup _{t \in [0,\tau]}\left\|p^{i}(t)\right\|^{2}\leq\delta_{1}$ and $\sup _{t \in [0,\tau]}\left\|T^{i}(t)\right\|^{2}\leq\delta_{1}$. It remains to show the convergence of the iterates to the weak solution of\reff{eq-2-20}-\reff{eq-2-23-1} in suitable norms. To this end, let $i \geq 2$, and take the difference of equations \reff{eq-2-48}-\reff{eq-22-52} at the iteration step $i$ with the corresponding equations at iteration step $i-1$ to obtain the following problem: find $\left(e_{\textbf{u}}^{i}, e_{\xi}^{i}, e_{\gamma}^{i}, e_{\eta}^{i}, e_{p}^{i}, e_{T}^{i}\right)  \in \bm{{\rm H}}_{\perp}^{1}(\Omega)\times L^{2}(\Omega)\times L^{2}(\Omega)\times L^{2}(\Omega) \times  H^{1}(\Omega) \times  H^{1}(\Omega)$ such that 
	\begin{align}
		\mu(\varepsilon(e_{\textbf{u}}^{i}),\varepsilon(\bm{{\rm v}}))-(e_{\xi}^{i},\nabla\cdot \bm{{\rm v}})=0 \quad&\forall \bm{{\rm v}}\in \bm{{\rm H}}^{1}(\Omega),\label{eq-2-52}\\
		k_{6}(e_{\xi}^{i},\varphi)+(\nabla\cdot e_{\textbf{u}}^{i},\varphi) =k_{4}(e_{\eta}^{i},\varphi)+k_{1}(e_{\gamma}^{i},\varphi) \quad&\forall \varphi\in L^{2}(\Omega),\label{eq-2-53}\\
		(e_{\eta_{t}}^{i},y)+(\bm{K}\nabla(e_{p}^{i}),\nabla y)=0\quad&\forall y\in H^{1}(\Omega),\label{eq-2-54}\\
		(e_{\gamma_t}^{i},z)+(\bm{\Theta}\nabla e^i_T,\nabla z)-(\nabla e_{T}^{i}\cdot(\bm{K}\nabla p^{i-1}),z)\nonumber\\
		-(\nabla e_{T}^{i-1}\cdot(\bm{K}\nabla p^{i}),z)+(\nabla e_{T}^{i-1}\cdot(\bm{K}\nabla p^{i-2}),z)\nonumber\\
		-(\nabla T^{i-2}\cdot(\bm{K}\nabla e_{p}^{i}),z)=0 \quad&\forall z\in H^{1}(\Omega).\label{eq-2-55}
	\end{align}
	Differentiating (\ref{eq-2-53}) one time with respect to $t$ and setting $\textbf{v}=e^{i}_{\textbf{u}} , \varphi=e^{i}_{\xi}, y=e^{i}_{p},z=e^{i}_{T}$ in \reff{eq-2-52}-\reff{eq-2-55}, we get
	\begin{align}
		&\frac{1}{2}\frac{d}{dt}(\mu\|\varepsilon(e^{i}_{\textbf{u}})\|^{2}_{L^2(\Omega)}+k_{6}\|e_{\xi}^{i}\|^{2}_{L^2(\Omega)}+k_{5}\|e_{\eta}^{i}\|^{2}_{L^2(\Omega)}+k_{3}\|e_{\gamma}^{i}\|^{2}_{L^2(\Omega)})\nonumber\\
		&+k_{2}(e_{\eta_{t}}^{i},e_{\gamma}^{i})+k_{2}(e_{\gamma_{t}}^{i},e^{i}_{\eta})+(\bm{K}\nabla e_{p}^{i},\nabla e_{p}^{i})+(\bm{\Theta}\nabla e_{T}^{i},\nabla e_{T}^{i})\nonumber\\
		&=(\nabla e_{T}^{i}\cdot(\bm{K}\nabla p^{i-1}),e_{T}^{i})
		+(\nabla e_{T}^{i-1}\cdot(\bm{K}\nabla p^{i}),e_{T}^{i})\nonumber\\
		&-(\nabla e_{T}^{i-1}\cdot(\bm{K}\nabla p^{i-2}),e_{T}^{i})
		+(\nabla T^{i-2}\cdot(\bm{K}\nabla e_{p}^{i}),e_{T}^{i}).
	\end{align}
	Integrating from $0$ to $t$, using Cauchy-Schwarz inequality and Young inequality, we obtain
	\begin{align}
		&\frac{1}{2}(\mu\|\varepsilon(e^{i}_{\textbf{u}})\|^{2}_{L^2(\Omega)}+k_{6}\|e_{\xi}^{i}\|^{2}_{L^2(\Omega)}+(k_{5}-k_{2})\|e_{\eta}^{i}\|^{2}_{L^2(\Omega)}+(k_{3}-k_{2})\|e_{\gamma}^{i}\|^{2}_{L^2(\Omega)})\nonumber\\
		&+\int^{t}_{0} k_{m}\|\nabla e_{p}^{i}(s)\|^{2}_{L^2(\Omega)}+\theta_{m}\|\nabla e_{T}^{i}(s)\|^{2}_{L^2(\Omega)}~ds\nonumber\\
		&\leq \int^{t}_{0}\frac{k_{M}^2\delta_{1}^2}{2\epsilon_{29}}\|\nabla e_{T}^{i}\|^{2}_{L^2(\Omega)}+\frac{\epsilon_{29}}{2}\|e^{i}_{T}\|^{2}_{L^2(\Omega)}+k_{M}^2\delta_{1}^2\|\nabla e_{T}^{i-1}\|^{2}_{L^2(\Omega)}\no\\
		&+\|e^{i}_{T}\|^{2}_{L^2(\Omega)}+\frac{k_{M}^2\delta_{1}^2}{2\epsilon_{30}}\|\nabla e_{p}^{i}\|^{2}_{L^2(\Omega)}+\frac{\epsilon_{30}}{2}\|e^{i}_{T}\|^{2}_{L^2(\Omega)}~ds.
	\end{align}
	Choosing $\epsilon_{29}=\frac{k_{M}^2\delta_{1}^2}{\theta_{m}}$, $\epsilon_{30}=\frac{k_{M}^2\delta_{1}^2}{k_{m}}$, using Gronwall's inequality and  \reff{eq-2-7}, we get
	\begin{align}\label{eq-22-65}
		&\frac{1}{2}(\mu\|\varepsilon(e^{i}_{\textbf{u}})\|^{2}_{L^2(\Omega)}+k_{6}\|e_{\xi}^{i}\|^{2}_{L^2(\Omega)}+(k_{5}-k_{2})\|e_{\eta}^{i}\|^{2}_{L^2(\Omega)}+(k_{3}-k_{2})\|e_{\gamma}^{i}\|^{2}_{L^2(\Omega)})\nonumber\\
		&+\int^{t}_{0} \frac{k_{m}}{2}\|\nabla e_{p}^{i}(s)\|^{2}_{L^2(\Omega)}+\frac{\theta_{m}}{2}\|\nabla e_{T}^{i}(s)\|^{2}_{L^2(\Omega)}~ds\nonumber\\
		&\leq C\int^{t}_{0}\|\nabla e_{T}^{i-1}\|^{2}_{L^2(\Omega)}~ds,
	\end{align}
	where C is a positive constant.\\
	Differentiating \reff{eq-2-52}  and \reff{eq-2-53} with respect to $t$, taking $\textbf{v}=e^{i}_{\textbf{u}_t}, \varphi=e^{i}_{\xi_t}, y=e^{i}_{p_{t}}$ and $z=e^{i}_{T_{t}}$ in \reff{eq-2-52}-\reff{eq-2-55} respectively, we have
	\begin{align}
		&\mu\|\varepsilon(e^{i}_{\textbf{u}_t})\|^{2}_{L^2(\Omega)}+k_{6}\|e_{\xi_{t}}^{i}\|^{2}_{L^2(\Omega)}+k_{5}\|e_{\eta_{t}}^{i}\|^{2}_{L^2(\Omega)}+k_{3}\|e_{\gamma_{t}}^{i}\|^{2}_{L^2(\Omega)}\nonumber\\
		&+k_{2}(e_{\eta_{t}}^{i},e_{\gamma_{t}}^{i})+k_{2}(e_{\gamma_{t}}^{i},e^{i}_{\eta_{t}})+\frac{1}{2}\frac{d}{dt}(\bm{K}\nabla e_{p}^{i},\nabla e_{p}^{i})+\frac{1}{2}\frac{d}{dt}(\bm{\Theta}\nabla e_{T}^{i},\nabla e_{T}^{i})\nonumber\\
		&=(\nabla e_{T}^{i}\cdot(\bm{K}\nabla p^{i-1}),e_{T_t}^{i})
		+(\nabla e_{T}^{i-1}\cdot(\bm{K}\nabla p^{i}),e_{T_t}^{i})\nonumber\\
		&-(\nabla e_{T}^{i-1}\cdot(\bm{K}\nabla p^{i-2}),e_{T_t}^{i})
		+(\nabla T^{i-2}\cdot(\bm{K}\nabla e_{p}^{i}),e_{T_t}^{i}).
	\end{align}
	Integrating in $t$, we get for $t\in[0,\tau]$ and applying Cauchy-Schwarz inequality, Young inequality and \reff{eq-2-7}, we infer
	\begin{align}\label{eq-22-72}
		&\int^{t}_{0}\mu\|\varepsilon(e^{i}_{\textbf{u}_t})\|^{2}_{L^2(\Omega)}+k_{6}\|e_{\xi_{t}}^{i}\|^{2}_{L^2(\Omega)}+(k_{5}-k_{2})\|e_{\eta_{t}}^{i}\|^{2}_{L^2(\Omega)}+(k_{3}-k_{2})\|e_{\gamma_{t}}^{i}\|^{2}_{L^2(\Omega)}~ds\nonumber\\
		&+\frac{k_{m}}{2}\|\nabla e_{p}^{i}\|^{2}_{L^2(\Omega)}+\frac{\theta_{m}}{2}\|\nabla e_{T}^{i}\|^{2}_{L^2(\Omega)}\nonumber\\
		&\leq\int_{0}^{t}(\nabla e_{T}^{i}\cdot(\bm{K}\nabla p^{i-1}), k_{1}e^{i}_{\xi_{t}}+k_{2}e^{i}_{\eta_{t}}+k_{3}e^{i}_{\gamma_{t}})\nonumber\\
		&	+(\nabla e_{T}^{i-1}\cdot(\bm{K}\nabla p^{i}), k_{1}e^{i}_{\xi_{t}}+k_{2}e^{i}_{\eta_{t}}+k_{3}e^{i}_{\gamma_{t}})\nonumber\\
		&-(\nabla e_{T}^{i-1}\cdot(\bm{K}\nabla p^{i-2}), k_{1}e^{i}_{\xi_{t}}+k_{2}e^{i}_{\eta_{t}}+k_{3}e^{i}_{\gamma_{t}})\nonumber\\
		&+(\nabla T^{i-2}\cdot(\bm{K}\nabla p^{i}), k_{1}e^{i}_{\xi_{t}}+k_{2}e^{i}_{\eta_{t}}+k_{3}e^{i}_{\gamma_{t}})~ds\nonumber\\
		&\leq\int_{0}^{t}(\frac{k_{M}^2\delta_{1}^2}{2\epsilon_{14}}+\frac{k_{M}^2\delta_{1}^2}{2\epsilon_{15}}+\frac{k_{M}^2\delta_{1}^2}{2\epsilon_{16}})\|\nabla e^{i}_{T}\|^{2}_{L^2(\Omega)}\nonumber\\
		&+(\frac{k_{M}^2\delta_{1}^2}{\epsilon_{14}}+\frac{k_{M}^2\delta_{1}^2}{\epsilon_{15}}+\frac{k_{M}^2\delta_{1}^2}{\epsilon_{16}})\|\nabla e^{i-1}_{T}\|^{2}_{L^2(\Omega)}\nonumber\\
		&+(\frac{k_{M}^2\delta_{1}^2}{2\epsilon_{14}}+\frac{k_{M}^2\delta_{1}^2}{2\epsilon_{15}}+\frac{k_{M}^2\delta_{1}^2}{2\epsilon_{16}})\|\nabla e^{i}_{p}\|^{2}_{L^2(\Omega)}\nonumber\\
		&+2k^{2}_{1}\epsilon_{14}\|e_{\xi_{t}}^{i}\|^{2}_{L^2(\Omega)}+2k^{2}_{2}\epsilon_{15}\|e_{\eta_{t}}^{i}\|^{2}_{L^2(\Omega)}+2k^{2}_{3}\epsilon_{16}\|e_{\gamma_{t}}^{i}\|^{2}_{L^2(\Omega)}~ds.
	\end{align}
	Choosing $\epsilon_{14}=\frac{k_{6}}{4k^{2}_{1}}, \epsilon_{15}=\frac{k_{5}-k_{2}}{4k^{2}_{2}}, \epsilon_{16}=\frac{k_{3}-k_{2}}{4k^{2}_{3}}$ in (\ref{eq-22-72}), we yield
	\begin{align}\label{eq-22-73}
		&\int^{t}_{0}\mu\|\varepsilon(e^{i}_{\textbf{u}_t})\|^{2}_{L^2(\Omega)}+\frac{k_{6}}{2}\|e_{\xi_{t}}^{i}\|^{2}_{L^2(\Omega)}+\frac{(k_{5}-k_{2})}{2}\|e_{\eta_{t}}^{i}\|^{2}_{L^2(\Omega)}+\frac{(k_{3}-k_{2})}{2}\|e_{\gamma_{t}}^{i}\|^{2}_{L^2(\Omega)}~ds\nonumber\\
		&+\frac{k_{m}}{2}\|\nabla e_{p}^{i}\|^{2}_{L^2(\Omega)}+\frac{\theta_{m}}{2}\|\nabla e_{T}^{i}\|^{2}_{L^2(\Omega)}\nonumber\\
		&\leq\int_{0}^{t}C_{4}(k_{M}^2\delta_{1}^2\|\nabla e^{i}_{T}\|^{2}_{L^2(\Omega)}+2k_{M}^2\delta_{1}^2\|\nabla e^{i-1}_{T}\|^{2}_{L^2(\Omega)}
		+k_{M}^2\delta_{1}^2\|\nabla e^{i}_{p}\|^{2}_{L^2(\Omega)})~ds.
	\end{align}
	Using Gronwall's inequality and \reff{eq-22-73}, we have
	\begin{align}
		&\int^{t}_{0}\mu\|\varepsilon(e^{i}_{\textbf{u}_t})\|^{2}_{L^2(\Omega)}+\frac{k_{6}}{2}\|e_{\xi_{t}}^{i}\|^{2}_{L^2(\Omega)}+\frac{(k_{5}-k_{2})}{2}\|e_{\eta_{t}}^{i}\|^{2}_{L^2(\Omega)}+\frac{(k_{3}-k_{2})}{2}\|e_{\gamma_{t}}^{i}\|^{2}_{L^2(\Omega)}~ds\nonumber\\
		&+\frac{k_{m}}{2}\|\nabla e_{p}^{i}\|^{2}_{L^2(\Omega)}+\frac{\theta_{m}}{2}\|\nabla e_{T}^{i}\|^{2}_{L^2(\Omega)}\nonumber\\
		&\leq2 C_{4}k_{M}^{2}\delta_{1}^{2}\exp(C_{1}k_{M}^{2}\delta_{1}^{2}\tau)\int_{0}^{t}\|\nabla e^{i-1}_{T}\|^{2}_{L^2(\Omega)}~ds\nonumber\\
		&\leq 2C_{4}k_{M}^{2}\delta_{1}^{2}\exp(C_{1}k_{M}^{2}\delta_{1}^{2}\tau)\int_{0}^{t_1
		}\|\nabla e^{i-1}_{T}\|^{2}_{L^2(\Omega)}~ds,
	\end{align}
	for $t\leq t_{1}$ where $t_{1}>0$ will be fixed later,  and where $C_{4}=\frac{2k^{2}_{1}(k_{5}-k_{2})(k_{3}-k_{2})+2k_{6}k^{2}_{2}(k_{3}-k_{2})+2k^{2}_{3}k_{6}(k_{5}-k_{2})}{k_{6}(k_{5}-k_{2})(k_{3}-k_{2})}$. Integrating  in time once more from 0 to $t_{1}$ yields
	\begin{eqnarray}\label{1.111}
		\int^{t_{1}}_{0} \|e^{i}_{T}(s)\|^{2}_{H^1(\Omega)} ds \leq t_{1}C_{k}\int^{t_{1}}_{0} \|e^{i-1}_{T}(s)\|^{2}_{H^1(\Omega)} ds,
	\end{eqnarray}
	where the constant $C_{k} = 2C_{4}k_{M}^{2}\delta_{1}^{2}\exp(C_{1}k_{M}^{2}\delta_{1}^{2}\tau)$ is independent of $i$ and of the local final time $t_{1} .$ Thus, for $t_{1}=\frac{1}{2 C_{k}}$ the above expression implies that the map $\mathbf{e}_{T}^{i-1}(t) \mapsto$ $\mathbf{e}_{T}^{i}(t)$ is a contraction map for $t \in\left(0, t_{1}\right].$ In particular, this implies that as $i \rightarrow \infty$ we have from \reff{eq-22-65}, \reff{1.111} and the Banach Fixed Point Theorem the following convergences  \\  	
	$\mathbf{e}_{\mathbf{\xi}}^{i}, \mathbf{e}_{\mathbf{\eta}}^{i}, \mathbf{e}_{\mathbf{\gamma}}^{i}\rightarrow 0$ in \ $L^{ \infty}\left(0, t_{1} ; L^{2}(\Omega)\right) \cap H^{1}\left(0, t_{1} ; L^{2}(\Omega)\right)$,\\
	$e_{p}^{i}, e_{T}^{i} \rightarrow 0$ in \ $L^{\infty}\left(0, t_{1} ; H^{1}(\Omega)\right)\cap L^{2}\left(0, t_{1} ; H^{1}(\Omega)\right)$,\\
	$\mathbf{e}_{\mathbf{u}}^{i} \rightarrow 0$ in \ $L^{\infty}\left(0, t_{1} ; H^{1}_{\perp}(\Omega)\right)\cap H^{1}\left(0, t_{1} ; H^{1}_{\perp}(\Omega)\right)$.\\
	Observe that the time $t_{1}>0$ depends only upon the several constants, We can therefore repeat the argument above to extend our solution to the time interval $[t_{1}, 2t_{1}]$ Continuing. After finitely many steps, we construct
	a weak solution existing on the full interval $[0, \tau]$. The proof is complete.
\end{proof} 	
	\section{Fully discrete multiphysics finite element method}\label{sec-3}
	Assume that $\Omega\in \mathbb{R}^{d}(d=2,3)$ is a polygonal domain. Let $\mathcal{T}_{h}$ be a uniform triangulation or rectangular partition of $\Omega$ with mesh size $h$ and $\overline{\Omega}=\cup_{E\in\mathcal{T}_{h}}\overline{E}$. Also, let $(\textbf{X}_{h},M_{h})$ be a stable mixed finite element pair, that is, $\textbf{X}_{h}\subset \textbf{H}^{1}(\Omega) $~and~$ M_{h}\subset L^{2}(\Omega)$ satisfy the inf-sup condition
	\begin{eqnarray}\label{eq-3-1}
		\sup_{\textbf{v}_{h}\in \textbf{X}_{h}} \frac{(\operatorname{div} \textbf{v}_{h},\varphi_{h})}{\|v_{h}\|_{H^{1}(\Omega)}}\geq \beta_{0}\|\varphi_{h}\|_{L^{2}(\Omega)} ~~~~~\forall \varphi_{h}\in M_{0h}:=M_{h}\cap L^{2}_{0}(\Omega), \beta_{0}>0.
	\end{eqnarray}
	A well-known example that satisfies ($\ref{eq-3-1}$) is the following  Taylor-Hood element (cf. \cite{Bercovier1979,Roberts1991}):
	\begin{eqnarray*}
		\textbf{X}_{h}=\{\textbf{v}_{h}\in \textbf{C}^{0}(\overline{\Omega});~\textbf{v}_{h}|_{E}\in \bm{P}_{2}(E)~~ \forall E \in \mathcal{T}_{h}\},\\
		M_{h}=\{\varphi_{h}\in C^{0}(\overline{\Omega});~\varphi_{h}|_{E}\in P_{1}(E)~~\forall E\in \mathcal{T}_{h}\}.
	\end{eqnarray*}
	The finite element approximation space $W_h$ for $\eta$ variable and $Z_h$ for $\gamma$ also are $M_h$. Recall the definition of $\textbf{RM}$, it's easy to see that $\textbf{RM} \subset \textbf{X}_h$.
	Moreover, we define
	$$
	\textbf{V}_{h}:=\left\{\textbf{v}_{h} \in \textbf{X}_{h};~\left(\textbf{v}_{h}, \textbf{r}\right)=0 \quad\forall \textbf{r} \in \textbf{RM}\right\}.
	$$
	It is easy to check that $\bm{X}_{h}=\bm{V}_{h} \oplus \textbf{RM}$. It was proved in \cite{Feng2010} that there holds the following alternative version of  inf-sup condition:
	$$
	\sup _{\textbf{v}_{h} \in \bm{V}_{h}} \frac{\left(\operatorname{div} \textbf{v}_{h}, \varphi_{h}\right)}{\left\|\textbf{v}_{h}\right\|_{H^{1}(\Omega)}} \geq \beta_{1}\left\|\varphi_{h}\right\|_{L^{2}(\Omega)} \quad \forall \varphi_{h} \in M_{0 h}, \quad \beta_{1}>0.
	$$
	We recall the following inverse inequality for the polynomial functions (cf. \cite{Ciarlet1978}):
	\begin{eqnarray}\label{eq-3-111}
		\left\|\nabla \varphi_{h}\right\|_{L^{2}(E)} \leq c_{1} h^{-1}\left\|\varphi_{h}\right\|_{L^{2}(E)} \quad \forall \varphi_{h} \in P_{k}(E), E \in\mathcal{T}_{h}.
	\end{eqnarray}
	The cut-off operator $\mathcal{N}$ (cf. \cite{Sun2005}) is defined as is uniformly Lipschitz continuous
		\begin{align}
		\mathcal{N}(c)(x) &=\min (c(x), N) \\
		\mathcal{N}(\mathbf{u})(x) &=\left\{\begin{array}{ll}
			\mathbf{u}(x) & \text { if }|\mathbf{u}(x)| \leqslant N, \\
			N \mathbf{u}(x) /|\mathbf{u}(x)| & \text { if }|\mathbf{u}(x)|>N,
		\end{array}\right.\label{eq-33-4}
	\end{align}
    where $N$ is a large positive constant. 
    
	Next, we propose the multiphysics finite element algorithm as follows:\\
	\textbf{Multiphysics finite element method (MFEM):}\\
	(i) Compute $\textbf{u}^{0}_{h}\in \textbf{V}_{h}$ and $q^{0}_{h}\in W_{h}$ by
	\begin{eqnarray}\label{eq-33-3}
	&&\textbf{u}^{0}_{h}=\textbf{u}_{0},~~~~~~~p^{0}_{h}=p_{0},~~~~~~~~T^{0}_{h}=T_{0}.\nonumber\\
	&&\xi^{0}_{h}=\alpha p^{0}_{h}+\beta T^{0}_{h}-\lambda q^{0}_{h},~~
	\gamma^0_h=a_{0}T^{0}_{h}-b_{0}p^{0}_{h}+\beta q^{0}_{h},\nonumber\\
	&&\eta^0_h=c_{0}p^{0}_{h}-b_{0}T^{0}_{h}+\alpha q^{0}_{h},
\end{eqnarray}
	where $\mathcal{Q}_{h}$ is the $L^2$-projection operator defined by (\ref{eq210607-1}).\\
	(ii) For $n=0,1,2,...,$ do the following two steps.\\
	Step 1: Solve for $(\textbf{u}^{n+1}_{h},\xi^{n+1}_{h},\eta^{n+1}_{h},\gamma^{n+1}_{h})\in \textbf{V}_h\times M_h\times W_h\times Z_h$
		\begin{align}
		&\mu(\varepsilon(\textbf{u}^{n+1}_{h}),\varepsilon(\textbf{v}_{h}))-(\xi^{n+1}_{h},\nabla\cdot \textbf{v}_{h})=(\textbf{f},\textbf{v}_{h})+\langle \textbf{f}_{1},\textbf{v}_h \rangle~~~\forall ~~\textbf{v}_{h}\in \bm{V}_{h},\label{eq-3-113}\\
		&k_{6}(\xi^{n+1}_{h},\varphi_{h})+(\nabla\cdot \textbf{u}^{n+1}_{h},\varphi_{h}) =k_{4}(\eta^{n+\theta}_{h},\varphi_{h})+k_{1}(\gamma^{n+\theta}_{h},\varphi_{h})~~~\forall~~ \varphi_{h}\in M_h,\label{eq-3-114}\\
		&(d_{t}\eta^{n+1}_{h},y_{h})+(\bm{K}\nabla(k_{4}\xi^{n+1}_{h}+k_{5}\eta^{n+1}_{h}+k_{2}\gamma^{n+1}_{h}),\nabla y_{h})\nonumber\\
		&=(g,y_{h})+\langle g_{1},y_{h}\rangle~~~\forall ~~y_{h}\in W_h,\label{eq-3-115}\\
		&(d_{t}\gamma^{n+1}_{h},z_{h})+(\bm{\Theta}\nabla(k_{1}\xi^{n+1}_{h}+k_{2}\eta^{n+1}_{h}+k_{3}\gamma^{n+1}_{h}),\nabla z_{h})\nonumber\\
		&+(-\nabla(k_{1}\xi^{n+1}_{h}+k_{2}\eta^{n+1}_{h}+k_{3}\gamma^{n+1}_{h})\cdot(\bm{K}\nabla(k_{4}\xi^{n+1}_{h}+k_{5}\eta^{n+1}_{h}+k_{2}\gamma^{n+1}_{h}),z_{h})\nonumber\\
		&=(\phi,z_{h})+\langle \phi_{1},z_{h}\rangle~~~\forall ~~z_{h}\in Z_h,\label{eq-3-116}
	\end{align}
	where $\theta=0$ or $1$, $d_{t}\eta^{n}_{h}=\frac{\eta^{n}_{h}-\eta^{n-1}_{h}}{\Delta t}$.\\
	Step 2: Update $p^{n+1}_{h}$, $T^{n+1}_{h}$ and $q^{n+1}_{h}$ by
	\begin{align}\label{100.3}
		&p^{n+1}_{h} = k_{4}\xi^{n+1}_{h}+k_{5}\eta^{n+\theta}_{h}+k_{2}\gamma^{n+\theta}_{h},
		T^{n+1}_{h} = k_{1}\xi^{n+1}_{h}+k_{2}\eta^{n+\theta}_{h}+k_{3}\gamma^{n+\theta}_{h},\nonumber\\
		&q^{n+1}_{h}=-k_{6}\xi^{n+1}_{h}+k_{4}\eta^{n+1}_{h}+k_{1}\gamma^{n+1}_{h}.
	\end{align}

	\begin{remark}
		\rm In the first step of the algorithm, the problem \reff{eq-3-115}-\reff{eq-3-116} is nonlinear and it can be solved by the Newton¡¯s method. Now let ${\eta^{n+1,i}_{h}, \gamma^{n+1,i}_{h}}$ denote that fully discrete solution at the $ith$ step within the Newton method at the time $t_{n+1}$, we can obtain the Newton's method of \reff{eq-3-113}-\reff{eq-3-116}
		\begin{align}
			&\mu(\varepsilon(\textbf{u}^{n+1,i}_{h}),\varepsilon(\textbf{v}_{h}))-(\xi^{n+1,i}_{h},\nabla\cdot \textbf{v}_{h})=(\textbf{f},\textbf{v}_{h})+\langle \textbf{f}_{1},\textbf{v}_h \rangle~~~\forall ~~\textbf{v}_{h}\in \bm{V}_{h},\label{eq-3-3}\\
			&k_{6}(\xi^{n+1,i}_{h},\varphi_{h})+(\nabla\cdot \textbf{u}^{n+1,i}_{h},\varphi_{h}) =k_{4}(\eta^{n+\theta,i}_{h},\varphi_{h})+k_{1}(\gamma^{n+\theta,i}_{h},\varphi_{h})~~~\forall~~ \varphi_{h}\in M_h,\label{eq-3-4}\\
			&(d_{t}\eta^{n+1,i}_{h},y_{h})+(\bm{K}\nabla(k_{4}\xi^{n+1,i}_{h}+k_{5}\eta^{n+1,i}_{h}+k_{2}\gamma^{n+1,i}_{h}),\nabla y_{h})\nonumber\\
			&=(g,y_{h})+\langle g_{1},y_{h}\rangle~~~\forall ~~y_{h}\in W_h,\label{eq-3-5}\\
			&(d_{t}\gamma^{n+1,i}_{h},z_{h})+(\bm{\Theta}\nabla(k_{1}\xi^{n+1,i}_{h}+k_{2}\eta^{n+1,i}_{h}+k_{3}\gamma^{n+1,i}_{h}),\nabla z_{h})\nonumber\\
			&-(\nabla(k_{1}\xi^{n+1,i}_{h}+k_{2}\eta^{n+1,i}_{h}+k_{3}\gamma^{n+1,i}_{h})\cdot(\bm{K}\nabla(k_{4}\xi^{n+1,i-1}_{h}+k_{5}\eta^{n+1,i-1}_{h}+k_{2}\gamma^{n+1,i-1}_{h}),z_{h})\nonumber\no\\
			&-(\nabla(k_{1}\xi^{n+1,i-1}_{h}+k_{2}\eta^{n+1,i-1}_{h}+k_{3}\gamma^{n+1,i-1}_{h})\cdot(\bm{K}\nabla(k_{4}\xi^{n+1,i}_{h}+k_{5}\eta^{n+1,i}_{h}+k_{2}\gamma^{n+1,i}_{h}),z_{h})\nonumber\no\\
			&+(\nabla(k_{1}\xi^{n+1,i-1}_{h}+k_{2}\eta^{n+1,i-1}_{h}+k_{3}\gamma^{n+1,i-1}_{h})\cdot(\bm{K}\nabla(k_{4}\xi^{n+1,i-1}_{h}+k_{5}\eta^{n+1,i-1}_{h}+k_{2}\gamma^{n+1,i-1}_{h}),z_{h})\nonumber\no\\
			&=(\phi,z_{h})+\langle \phi_{1},z_{h}\rangle~~~\forall ~~z_{h}\in Z_h.\label{eq-3-6}
		\end{align}
		The scheme is $L$-type iterative scheme\cite{Brun2020}.
	\end{remark}
\begin{theorem}
\rm	Assume that $A1$--$A4$ hold, the solution of the problem (\ref{eq-3-113})--(\ref{eq-3-116}) is unique. 
\end{theorem}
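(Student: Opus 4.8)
The plan is to adapt the energy argument used for the uniqueness of the linear problem \reff{eq-2-23}--\reff{eq-2-26} to the fully discrete nonlinear scheme, with the role of the time derivative now played by the difference quotient $d_t$. Fix a time level $n\ge 0$, assume the data at level $n$ is given, and suppose $(\textbf{u}^{n+1}_{h,1},\xi^{n+1}_{h,1},\eta^{n+1}_{h,1},\gamma^{n+1}_{h,1})$ and $(\textbf{u}^{n+1}_{h,2},\xi^{n+1}_{h,2},\eta^{n+1}_{h,2},\gamma^{n+1}_{h,2})$ both solve \reff{eq-3-113}--\reff{eq-3-116}. I would denote the differences by $e_{\textbf{u}},e_\xi,e_\eta,e_\gamma$ and set $e_p:=k_4e_\xi+k_5e_\eta+k_2e_\gamma$ and $e_T:=k_1e_\xi+k_2e_\eta+k_3e_\gamma$ (for $\theta=1$ these are the differences of $p^{n+1}_h$ and $T^{n+1}_h$; the case $\theta=0$ is analogous and in fact simpler, since then the right-hand side of the difference of \reff{eq-3-114} vanishes and $e_{\textbf{u}}=e_\xi=0$ at once). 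Subtracting the two systems, \reff{eq-3-113}--\reff{eq-3-115} become their homogeneous versions, while in \reff{eq-3-116} the nonlinear term contributes, via the splitting $\nabla T_1\cdot(\bm{K}\nabla p_1)-\nabla T_2\cdot(\bm{K}\nabla p_2)=\nabla e_T\cdot(\bm{K}\nabla p^{n+1}_{h,1})+\nabla T^{n+1}_{h,2}\cdot(\bm{K}\nabla e_p)$, the extra terms $(\nabla e_T\cdot(\bm{K}\nabla p^{n+1}_{h,1}),z_h)+(\nabla T^{n+1}_{h,2}\cdot(\bm{K}\nabla e_p),z_h)$.

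Next I would test the error equations with $\textbf{v}_h=e_{\textbf{u}}$, $\varphi_h=e_\xi$, $y_h=e_p$, $z_h=e_T$ and add them. Adding the homogeneous versions of \reff{eq-3-113} and \reff{eq-3-114}, the cross terms $\pm(\nabla\cdot e_{\textbf{u}},e_\xi)$ cancel and yield $\mu\|\varepsilon(e_{\textbf{u}})\|_{L^2(\Omega)}^2+k_6\|e_\xi\|_{L^2(\Omega)}^2=k_4(e_\eta,e_\xi)+k_1(e_\gamma,e_\xi)$; inserting this into the expansion of $(e_\eta,e_p)+(e_\gamma,e_T)$ coming from \reff{eq-3-115}--\reff{eq-3-116}, and using $2k_2(e_\eta,e_\gamma)\ge -k_2\bigl(\|e_\eta\|_{L^2(\Omega)}^2+\|e_\gamma\|_{L^2(\Omega)}^2\bigr)$ together with A1--A3, I obtain
\begin{align*}
&\frac{1}{\Delta t}\bigl[\mu\|\varepsilon(e_{\textbf{u}})\|_{L^2(\Omega)}^2+k_6\|e_\xi\|_{L^2(\Omega)}^2+(k_5-k_2)\|e_\eta\|_{L^2(\Omega)}^2+(k_3-k_2)\|e_\gamma\|_{L^2(\Omega)}^2\bigr]\\
&\qquad+k_m\|\nabla e_p\|_{L^2(\Omega)}^2+\theta_m\|\nabla e_T\|_{L^2(\Omega)}^2\le(\nabla e_T\cdot(\bm{K}\nabla p^{n+1}_{h,1}),e_T)+(\nabla T^{n+1}_{h,2}\cdot(\bm{K}\nabla e_p),e_T).
\end{align*}
To control the right-hand side I would use that $\nabla p^{n+1}_{h,1}$ and $\nabla T^{n+1}_{h,2}$, being gradients of finite element functions, lie in $\textbf{L}^\infty(\Omega)$ — bounded either through the inverse inequality \reff{eq-3-111} (with a constant depending on the fixed mesh size $h$) or, if the cut-off operator $\mathcal{N}$ of \reff{eq-33-4} is used, with the $h$-independent bound $N$. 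Cauchy--Schwarz, Hölder's and Young's inequalities then absorb $\tfrac{k_m}{2}\|\nabla e_p\|_{L^2(\Omega)}^2$ and $\tfrac{\theta_m}{2}\|\nabla e_T\|_{L^2(\Omega)}^2$ into the left-hand side and, after $\|e_T\|_{L^2(\Omega)}^2\le 3\bigl(k_1^2\|e_\xi\|_{L^2(\Omega)}^2+k_2^2\|e_\eta\|_{L^2(\Omega)}^2+k_3^2\|e_\gamma\|_{L^2(\Omega)}^2\bigr)$, leave a right-hand side of the form $C\bigl(\|e_\xi\|_{L^2(\Omega)}^2+\|e_\eta\|_{L^2(\Omega)}^2+\|e_\gamma\|_{L^2(\Omega)}^2\bigr)$, with $C$ depending only on the coefficients, on $k_M$, and on the $\textbf{L}^\infty$ bound of the discrete fluxes.

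Since $k_6,k_5-k_2,k_3-k_2>0$, that term is bounded by $\widetilde{C}$ times the bracket on the left, so the estimate reduces to $\bigl(\tfrac{1}{\Delta t}-\widetilde{C}\bigr)\bigl[\mu\|\varepsilon(e_{\textbf{u}})\|_{L^2(\Omega)}^2+k_6\|e_\xi\|_{L^2(\Omega)}^2+(k_5-k_2)\|e_\eta\|_{L^2(\Omega)}^2+(k_3-k_2)\|e_\gamma\|_{L^2(\Omega)}^2\bigr]+\tfrac{k_m}{2}\|\nabla e_p\|_{L^2(\Omega)}^2+\tfrac{\theta_m}{2}\|\nabla e_T\|_{L^2(\Omega)}^2\le 0$. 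Choosing $\Delta t$ small enough that $\widetilde{C}\,\Delta t<1$ forces $e_{\textbf{u}}=e_\xi=e_\eta=e_\gamma=0$, hence $\nabla e_p=\nabla e_T=0$, and then $e_p=e_T=0$ by \reff{100.3}; uniqueness at level $n+1$ follows, and the claim follows by induction starting from the prescribed $\textbf{u}^0_h,p^0_h,T^0_h$. (Existence of a discrete solution at each step, which is not asserted in the statement, can be obtained separately by a Brouwer fixed point argument on the finite-dimensional nonlinear system.) The one genuinely new difficulty relative to the linear analysis is the nonlinear convective term: it is what forces the $\textbf{L}^\infty$ control of the discrete gradients and the mild smallness condition on $\Delta t$, while everything else — the cancellation of the saddle-point cross terms, coercivity from A1--A2, and the handling of the $2k_2(e_\eta,e_\gamma)$ term exactly as in Theorem \ref{th-2-3} — is routine.
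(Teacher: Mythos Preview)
Your argument is correct, but it follows a genuinely different route from the paper's proof.

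The paper does not prove uniqueness directly. Instead, it replaces the convective term by its cut-off version $(\mathcal{N}(\nabla T_h)\cdot\mathcal{N}(\bm{K}\nabla p_h),z_h)$ and then analyzes the Newton iterative scheme \reff{eq-3-3}--\reff{eq-3-6}: writing the differences $e_\bullet^{n+1,i}=\bullet_h^{n+1,i}-\bullet_h^{n+1,i-1}$ between successive iterates, testing with $d_t e_{\textbf u}^{n+1,i}$, $e_\xi^{n+1,i}$, $e_p^{n+1,i}$, $e_T^{n+1,i}$ (and, in a second round, with their $d_t$-versions), summing over time levels $n=0,\dots,l$, and invoking the inf--sup condition \reff{eq-3-1} together with the inverse inequality \reff{eq-3-111} to control the extra terms generated by the index shift between $\xi^{n+1}$ and $(\eta^n,\gamma^n)$ in the $\theta=0$ case. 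The outcome is a contraction estimate of the form $\Delta t\sum_{n\le l_1}\|e_T^{n,i}\|_{H^1}^2\le l_1C\,\Delta t^2\sum_{n\le l_1}\|e_T^{n,i-1}\|_{H^1}^2$, and uniqueness (together with existence) follows from the Banach fixed point theorem for $\Delta t$ small.

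Your approach is more elementary and, for uniqueness alone, cleaner: you stay at a single time level, subtract two putative solutions, test with the differences themselves, and close the estimate in one step. You correctly identify that the only new obstacle is the nonlinear convective term, handled either by the cut-off bound $N$ (which is what the paper actually uses) or, less uniformly, by the inverse inequality. What you lose relative to the paper is existence and the constructive iteration; what you gain is a much shorter argument that avoids the summation-in-time machinery, the inf--sup trick \reff{eq-3-1}, and the second round of $d_t$-testing. Both proofs require a smallness condition on $\Delta t$; the paper's condition (for $\theta=0$) additionally couples $\Delta t$ to $h^2$ through the inverse inequality, whereas your condition depends only on the coefficients and the cut-off parameter $N$.
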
	
	\begin{proof}
		In the above scheme, we use $(\mathcal{N}(\nabla(k_{1}\xi^{n+1}_{h}+k_{2}\eta^{n+1}_{h}+k_{3}\gamma^{n+1}_{h}))\cdot\mathcal{N}((\bm{K}\nabla(k_{4}\xi^{n+1}_{h}+k_{5}\eta^{n+1}_{h}+k_{2}\gamma^{n+1}_{h})),z_{h})$ for the approximation of the convective coupling term instead of the original $(\nabla(k_{1}\xi^{n+1}_{h}+k_{2}\eta^{n+1}_{h}+k_{3}\gamma^{n+1}_{h})\cdot\bm{K}\nabla(k_{4}\xi^{n+1}_{h}+k_{5}\eta^{n+1}_{h}+k_{2}\gamma^{n+1}_{h})),z_{h})$.  Obviously, if the exict fluxes are bounded , i.e., $\nabla(k_{1}\xi^{n+1}_{h}+k_{2}\eta^{n+1}_{h}+k_{3}\gamma^{n+1}_{h}),\bm{K}\nabla(k_{4}\xi^{n+1}_{h}+k_{5}\eta^{n+1}_{h}+k_{2}\gamma^{n+1}_{h}) \in (L^\infty(\Omega))^d$, then if we picked $N$ large enough, we have practically $\mathcal{N}(\nabla(k_{1}\xi^{n+1}_{h}+k_{2}\eta^{n+1}_{h}+k_{3}\gamma^{n+1}_{h})=\nabla(k_{1}\xi^{n+1}_{h}+k_{2}\eta^{n+1}_{h}+k_{3}\gamma^{n+1}_{h})$ and $\mathcal{N}(\bm{K}\nabla(k_{4}\xi^{n+1}_{h}+k_{5}\eta^{n+1}_{h}+k_{2}\gamma^{n+1}_{h}))=\bm{K}\nabla(k_{4}\xi^{n+1}_{h}+k_{5}\eta^{n+1}_{h}+k_{2}\gamma^{n+1}_{h})$.
As for the case of $\theta=1$, the proof ideas is similar to the continuous case, therefore,
we just prove as for the case of $\theta=0$.
		let $\be_\bu^{n+1,i}=\bu_h^{n+1,i}-\bu_h^{n+1,i-1},e_\xi^{n+1,i}=\xi_h^{n+1,i}-\xi_h^{n+1,i-1},e_\eta^{n+1,i}=\eta_h^{n+1,i}-\eta_h^{n+1,i-1},e_\gamma^{n+1,i}=\gamma_h^{n+1,i}-\gamma_h^{n+1,i-1},e_T^{n+1,i}=T_h^{n+1,i}-T_h^{n+1,i-1},	e_p^{n+1,i}=p_h^{n+1,i}-p_h^{n+1,i-1}$£¬we begin by deriving the error equations satisfied by  ($\be_\bu^{n+1,i}$,$e_\xi^{n+1,i}$,$e_\eta^{n+1,i}$,$e_\gamma^{n+1,i}$,$e_p^{n+1,i}$,$e_T^{n+1,i}$), i.e. subtract the equations (\ref{eq-3-3})--(\ref{eq-3-6}) for $i$ from the ones for $i-1$, and obtain 
		\begin{alignat}{2}\label{101.4}
		&	\mu\left(\varepsilon(\be_\bu^{n+1,i}) ,\varepsilon(\bv_h)\right) -\left( e_\xi^{n+1,i},\nab\cdot \bv_h\right)=0
			\qquad\forall \bv_h\in \bV_h ,\\  
		&k_6(e_\xi^{n+1,i},\varphi_h)+\left( \nab\cdot \be_\bu^{n+1,i},\varphi_h\right) =k_4\left( e_\eta^{n,i},\varphi_h\right)+k_1\left(e_\gamma^{n,i},\varphi_h\right)\qquad \forall \varphi_h\in M_h,\label{101.5} \\
		&	\left(d_te_\eta^{n+1,i},y_h \right)+
			\left(\bm{K}\nabla e_p^{n+1,i},\nabla y_h\right)
			=0\qquad \forall\psi_h\in W_h,\label{101.6}\\
		&	\left(d_te_\gamma^{n+1,i},z_h\right)+\left(\bm{\Theta} \nab e_T^{n+1,i},\nabla z_h\right)
			-(\nab e_T^{n+1,i}\cdot\mathcal{N}(\bm{K} \nabla p_h^{n+1,i-1}) ,z_h)\no\\
		&	-(\nab e_T^{n+1,i-1}\cdot\mathcal{N}(\bm{K} \nabla p_h^{n+1,i}) ,z_h)-(\mathcal{N}(\nab T_h^{n+1,i-2})\cdot\bm{K}\nab e_p^{n+1,i},z_h)~~~~~~~~~~~~~~~~~~~~\no\\
		&	+(\nabla e_T^{n+1,i-1}\cdot\mathcal{N}( \bm{K} \nabla p_h^{n+1,i-2}),z_h)=0\qquad\forall z_h\in Z_h\label{101.7}.
		\end{alignat}
From (\ref{100.3}), let $e_p^{0,i}=0$, $e_T^{0,i}=0$ and $e_\xi^{0,i}=0$, we can define
		\begin{alignat}{2}\label{112.4}
			(p_h^{0,i},y_h)=k_4(\xi_h^{0,i},y_h)+k_5(\eta_h^{-1,i},y_h)+k_2(\gamma_h^{-1,i},y_h),\\
			(T_h^{0,i},z_h)=k_1(\xi_h^{0,i},z_h)+k_2(\eta_h^{-1,i},z_h)+k_3(\gamma_h^{-1,i},z_h).
		\end{alignat}
		Setting $\bv_h=d_t\be_u^{n+1,i}$ in (\ref{101.4}), $\varphi_h=e_\xi^{n+1,i}$ in (\ref{101.5}) (after using operator $d_t$), $y_h=e_p^{n+1,i}$ in  (\ref{101.6})
		and $z_h=e_T^{n+1,i}$ in (\ref{101.7}), after lowing the super-index from $n+1$ to $n$ on the both sides of (\ref{101.6}) and (\ref{101.7}), we get
		\begin{alignat}{2}\label{113.4}
		&	\mu\left(\varepsilon(\be_\bu^{n+1,i}) ,\varepsilon(d_t\be_\bu^{n+1,i})\right) -\left( e_\xi^{n+1,i},\nabla\cdot (d_t\be_\bu^{n+1,i})\right)=0,\\  
	&	k_6\left(d_te_\xi^{n+1,i},e_\xi^{n+1,i} \right) +\left( \nabla\cdot (d_t\be_\bu^{n+1,i}),e_\xi^{n+1,i}\right) \label{113.5}\\
	&	=k_4\left(d_te_\eta^{n,i},e_\xi^{n+1,i}\right)+k_1\left(d_te_\gamma^{n,i},e_\xi^{n+1,i}\right),\no \\
		&	\left(d_te_\eta^{n,i},k_5e_\eta^{n,i} \right)+\left(d_te_\eta^{n,i},k_4e_\xi^{n+1,i} \right)+\left(d_te_\eta^{n,i},k_2e_\gamma^{n,i} \right) \label{113.6}\\
		&	+
			\left(\bm{K}\nabla e_p^{n,i},\nabla e_p^{n+1,i}\right) =0,\no\\
		&	\left(d_te_\gamma^{n,i},k_1e_\xi^{n+1,i}\right)+\left(d_te_\gamma^{n,i},k_2e_\eta^{n,i}\right)
			+\left(d_te_\gamma^{n,i},k_3e_\gamma^{n,i}\right)
				+\left(\bm{\Theta} \nab e_T^{n,i},\nabla e_T^{n+1,i}\right)\no\\
			&-(\nab e_T^{n,i}\cdot\mathcal{N}(\bm{K} \nabla p_h^{n,i-1}) ,e_T^{n+1,i})
			-(\nab e_T^{n,i-1}\cdot\mathcal{N}(\bm{K} \nabla p_h^{n,i}) ,e_T^{n+1,i})\no\\
			&-(\mathcal{N}(\nab T_h^{n,i-2})\cdot\bm{K}\nab e_p^{n,i},e_T^{n+1,i})
			+(\nabla e_T^{n,i-1}\cdot\mathcal{N}( \bm{K} \nabla p_h^{n,i-2}),e_T^{n+1,i})=0\label{113.7}.
		\end{alignat}
	The first term of the left-hand of (\ref {113.4}) can be rewritten as
	\begin{alignat}{2}\label{103.4}
		\mu\left(\varepsilon(\be_\bu^{n+1,i}) ,\varepsilon(d_t\be_\bu^{n+1,i})\right)=\frac{\mu}{2}d_t\lVert\varepsilon(\be_\bu^{n+1,i})\rVert_{L^2(\Omega)}^2+\frac{\mu}{2}\Delta t\lVert\varepsilon(d_t\be_\bu^{n+1,i})\rVert_{L^2(\Omega)}^2.
	\end{alignat}
	Using (\ref {113.5}), (\ref {113.6}) and (\ref {113.7}), we have
	\begin{alignat}{2}\label{103.5}
	&	k_6(d_te_\xi^{n+1,i},e_\xi^{n+1,i})=\frac{k_6}{2}d_t\lVert e_\xi^{n+1,i}\rVert_{L^2(\Omega)}^2+\frac{k_6}{2}\Delta t\lVert d_te_\xi^{n+1,i}\rVert_{L^2(\Omega)}^2,\\
	&	k_5(d_te_\eta^{n,i},e_\eta^{n,i})=\frac{k_5}{2}d_t\lVert e_\eta^{n,i}\rVert_{L^2(\Omega)}^2+\frac{k_5}{2}\Delta t\lVert d_te_\eta^{n,i}\rVert_{L^2(\Omega)}^2,\\\label{103.6}
	&	k_3(d_te_\gamma^{n,i},e_\gamma^{n,i})=\frac{k_3}{2}d_t\lVert e_\gamma^{n,i}\rVert_{L^2(\Omega)}^2+\frac{k_3}{2}\Delta t\lVert d_te_\gamma^{n,i}\rVert_{L^2(\Omega)}^2,\\\label{103.7}
	&
	k_2(d_te_\eta^{n,i},e_\gamma^{n,i})+k_2(e_\eta^{n,i},d_te_\gamma^{n,i})=k_{2}d_t(e_\eta^{n,i},e_\gamma^{n,i})+k_{2}\Delta t(d_{t}e_\eta^{n,i},d_{t}e_\gamma^{n,i}).
	\end{alignat}
		Moreover, it is easy to check that
		\begin{alignat}{2}\label{114.4}
			(\bm{K}\nabla e_p^{n,i},\nabla e_p^{n+1,i})=(\bm{K}\nabla e_p^{n,i},\nabla e_p^{n,i})+k_4\Delta t(\bm{K}\nabla e_p^{n,i},d_t\nab e_\xi^{n+1,i}),\\\label{114.5}
			(\bm{\Theta}\nabla e_T^{n,i},\nabla e_T^{n+1,i})=(\bm{\Theta}\nabla e_T^{n,i},\nabla e_T^{n,i})+k_1\Delta t(\bm{\Theta}\nabla e_T^{n,i},d_t\nab e_\xi^{n+1,i}).
		\end{alignat}
		Adding (\ref{113.4})--(\ref{113.7}), using (\ref{103.4})--(\ref{103.7}) and (\ref{114.4})--(\ref{114.5}), Cauchy-Schwarz inequality and Young inequality, we get
		\begin{alignat}{2}\label{115.4}
			&	\frac{\mu}{2}d_t\lVert\varepsilon(\be_\bu^{n+1,i})\rVert _{L^2(\Omega)}^2+\frac{\mu}{2}\Delta t\lVert\varepsilon(d_t\be_\bu^{n+1,i})\rVert_{L^2(\Omega)}^2+\frac{k_6}{2}d_t\lVert e_\xi^{n+1,i}\rVert_{L^2(\Omega)}^2+\frac{k_6}{2}\Delta t\lVert d_te_\xi^{n+1,i}\rVert_{L^2(\Omega)}^2\no\\
			&	+\frac{k_5-k_2}{2}d_t\lVert e_\eta^{n,i}\rVert_{L^2(\Omega)}^2+\frac{k_5-k_2}{2}\Delta t\lVert d_te_\eta^{n,i}\rVert_{L^2(\Omega)}^2+\frac{k_3-k_2}{2}d_t\lVert e_\gamma^{n,i}\rVert_{L^2(\Omega)}^2~~~~~~~~~~~~~~\no\\
			&	+\frac{k_3-k_2}{2}\Delta t\lVert d_te_\gamma^{n,i}\rVert_{L^2(\Omega)}^2
			+(\bm{K}\nabla e_p^{n,i},\nabla e_p^{n,i})+k_4\Delta t(\bm{K}\nabla e_p^{n,i},d_t\nab e_\xi^{n+1,i})\no\\
			&+(\bm{\Theta}\nabla e_T^{n,i},\nabla e_T^{n,i})+k_1\Delta t(\bm{\Theta}\nabla e_T^{n,i},d_t\nab e_\xi^{n+1,i})\no\\
			&\leq(\nab e_T^{n,i}\cdot\mathcal{N}(\bm{K} \nabla p_h^{n,i-1}) ,e_T^{n+1,i})+(\nab e_T^{n,i-1}\cdot\mathcal{N}(\bm{K} \nabla p_h^{n,i}) ,e_T^{n+1,i})\no\\
			&	+(\mathcal{N}(\nab T_h^{n,i-2})\cdot\bm{K}\nab e_p^{n,i},e_T^{n+1,i})
			-(\nabla e_T^{n,i-1}\cdot\mathcal{N}( \bm{K} \nabla p_h^{n,i-2}),e_T^{n+1,i}).
		\end{alignat}
		Applying the summation operator $\Delta t\sum_{n=0}^l$ to the both sides of (\ref {115.4}), we get
		\begin{alignat}{2}\label{116.4}
			&	\frac{\mu}{2}\lVert\varepsilon(\be_\bu^{l+1,i})\rVert_{L^2(\Omega)}^2+\frac{k_6}{2}\lVert e_\xi^{l+1,i}\rVert_{L^2(\Omega)}^2
			+\frac{k_5-k_2}{2}\lVert e_\eta^{l,i}\rVert_{L^2(\Omega)}^2+\frac{k_3-k_2}{2}\lVert e_\gamma^{l,i}\rVert_{L^2(\Omega)}^2\\
			&+\Delta t\sum_{n=0}^l[\frac{\mu}{2}\Delta t\lVert\varepsilon(d_t\be_\bu^{n+1,i})\rVert_{L^2(\Omega)}^2+\frac{k_6}{2}\Delta t\lVert d_te_\xi^{n+1,i}\rVert_{L^2(\Omega)}^2+\frac{k_5-k_2}{2}\Delta t\lVert d_te_\eta^{n,i}\rVert_{L^2(\Omega)}^2\no\\
			&+\frac{k_3-k_2}{2}\Delta t\lVert d_te_\gamma^{n,i}\rVert_{L^2(\Omega)}^2]+\Delta t\sum_{n=0}^l
			[k_m\lVert\nabla e_p^{n,i}\rVert_{L^2(\Omega)}^2+\theta_m\lVert \nab e_T^{n,i}\rVert_{L^2(\Omega)}^2]\no\\
			&\leq\Delta t\sum_{n=0}^l[-k_4\Delta t(\bm{K}\nabla e_p^{n,i},d_t\nab e_\xi^{n+1,i})-k_1\Delta t(\bm{\Theta}\nabla e_T^{n,i},d_t\nab e_\xi^{n+1,i})+
			\frac{N^2}{2\epsilon_{31}}\lVert\nabla e_T^{n,i}\rVert_{L^2(\Omega)}^2\no\\
			&
			+\frac{\epsilon_{31}}{2}\lVert e_T^{n+1,i}\rVert_{L^2(\Omega)}^2+N^2\lVert\nabla e_T^{n,i-1}\rVert_{L^2(\Omega)}^2+\lVert e_T^{n+1,i}\rVert_{L^2(\Omega)}^2+\frac{N^2}{2\epsilon_{32}}\lVert\nabla e_p^{n,i}\rVert_{L^2(\Omega)}^2+\frac{\epsilon_{32}}{2}\lVert e_T^{n+1,i}\rVert_{L^2(\Omega)}^2].
			\no
		\end{alignat}
		Using Cauchy-Schwarz inequality, Young inequality and (\ref{eq-3-111}), we get
		\begin{alignat}{2}\label{117.5}
			k_4\Delta t(\bm{K}\nab e_p^{n,i},d_t\nab e_\xi^{n+1,i})\leq\frac{k_4^2}{2\epsilon_{33}}\lVert\nab e_\xi^{n+1,i}-\nab e_\xi^{n,i}\rVert_{L^2(\Omega)}^2
			+\frac{\epsilon_{33}}{2}\lVert \bm{K}\nab e_p^{n,i}\rVert_{L^2(\Omega)}^2\no\\
			\leq\frac{c_1^2k_4^2}{2\epsilon_{33}h^2}\lVert e_\xi^{n+1,i}- e_\xi^{n,i}\rVert_{L^2(\Omega)}^2
			+\frac{\epsilon_{33}k_M^2}{2}\lVert \nab e_p^{n,i}\rVert_{L^2(\Omega)}^2,\\
			k_1\Delta t(\bm{\Theta}\nab e_T^{n,i},d_t\nab e_\xi^{n+1,i})\leq\frac{k_1^2}{2\epsilon_{34}}\lVert\nab e_\xi^{n+1,i}-\nab e_\xi^{n,i}\rVert_{L^2(\Omega)}^2
			+\frac{\epsilon_{34}}{2}\lVert \bm{\Theta}\nab e_T^{n,i}\rVert_{L^2(\Omega)}^2\no\\
			\leq\frac{c_1^2\kappa_1^2}{2\epsilon_{34}h^2}\lVert e_\xi^{n+1,i}- e_\xi^{n,i}\rVert_{L^2(\Omega)}^2
			+\frac{\epsilon_{34}\theta_M^2}{2}\lVert \nab e_T^{n,i}\rVert_{L^2(\Omega)}^2.\label{117.6}
		\end{alignat}
		To bound the first term on the right-hand side of (\ref{117.5}) and (\ref{117.6}), we use the inf-sup condition \reff{eq-3-1} and get
		\begin{alignat}{2}\label{118.6}
			&\|e_\xi^{n+1,i}-e_\xi^{n,i}\|_{L^{2}(\Omega)}\no\\
			&\leq\frac{1}{\beta_1}\sup_{\bv_h\in\bV_h}\frac{\left( \div\bv_h,e_\xi^{n+1,i}-e_\xi^{n,i}\right) }{\|\bv_h\|_{H^1(\Omega)}}\no\\
			&\leq\frac{1}{\beta_1}\sup_{\bv_h\in\bV_h}\frac{\left( \div\bv_h,e_\xi^{n+1,i}-e_\xi^{n,i}\right) }{\|\nabla\bv_h\|_{L^2(\Omega)}}\no\\
			&=\frac{1}{\beta_1}\sup_{\bv_h\in\bV_h}\frac{\mu\left( \varepsilon(\be_\bu^{n+1,i}),\varepsilon(\bv_h)\right) -\mu\left( \varepsilon(\be_\bu^{n,i}),\varepsilon(\bv_h)\right) }{\|\nabla\bv_h\|_{L^2(\Omega)}}\no\\
			&=\frac{\mu}{\beta_1}\sup_{\bv_h\in\bV_h}\frac{\left( \varepsilon(\be_\bu^{n+1,i})-\varepsilon(\be_\bu^{n,i}),\varepsilon(\bv_h)\right) }{\|\nabla\bv_h\|_{L^2(\Omega)}}\no\\
			&=\frac{\mu\Delta t}{\beta_1}\sup_{\bv_h\in\bV_h}\frac{\left( d_t\varepsilon(\be_\bu^{n+1,i}),\varepsilon(\bv_h)\right) }{\|\nabla\bv_h\|_{L^2(\Omega)}}\no\\
			&\leq\frac{\mu\Delta t}{\beta_1}\|d_t\varepsilon(\be_\bu^{n+1,i})\|_{L^2(\Omega)}.
		\end{alignat}
		Substituting (\ref{118.6}) into  (\ref{117.5}) and (\ref{117.6}), we have
		\begin{alignat}{2}\label{119.6}
			k_4\Delta t(K\nab e_p^{n,i},d_t \nab e_\xi^{n+1,i})\leq\frac{c_1^2k_4^2\mu^2\Delta t^2}{2\epsilon_{33}h^2\beta_1^2}\lVert\varepsilon(d_t\be_{\bu }^{n+1,i})\rVert_{L^2(\Omega)}^2+\frac{\epsilon_{33}\kappa_M^2}{2}\lVert\nab e_p^{n,i}\rVert_{L^2(\Omega)}^2,\\\label{120.4}
			k_1\Delta t(\Theta\nab e_T^{n,i},d_t\nab e_\xi^{n+1,i})\leq\frac{c_1^2k_1^2\mu^2\Delta t^2}{2\epsilon_{34}h^2\beta_1^2}\lVert\varepsilon(d_t\be_{\bu }^{n+1,i})\rVert_{L^2(\Omega)}^2+\frac{\epsilon_{34}\theta_M^2}{2}\lVert\nab e_T^{n,i}\rVert_{L^2(\Omega)}^2.
		\end{alignat}
		Let $\epsilon_{31}=\frac{2N^2}{\theta_m}$, $\epsilon_{32}=\frac{2N^2}{k_m}$, $\epsilon_{33}=\frac{k_m}{2k_M^2}$ and $\epsilon_{34}=\frac{\theta_m}{2\theta_M^2}$ , combining (\ref{119.6}) and (\ref{120.4}) with (\ref{116.4}), applying  Gronwall's inequality, we get
		\begin{alignat}{2}\label{121.5}
			\frac{\mu}{2}\lVert\varepsilon(\be_\bu^{l+1,i})\rVert_{L^2(\Omega)}^2+\frac{k_6}{2}\lVert e_\xi^{l+1,i}\rVert_{L^2(\Omega)}^2
			+\frac{k_5-k_2}{2}\lVert e_\eta^{l,i}\rVert_{L^2(\Omega)}^2+\frac{k_3-k_2}{2}\lVert e_\gamma^{l,i}\rVert_{L^2(\Omega)}^2\no\\
			+\Delta t\sum_{n=0}^l[(\frac{\mu}{2}\Delta t-\frac{c_1^2k_4^2\mu^2\Delta t^2k_M^2}{h^2\beta_1^2k_m}
			-\frac{c_1^2k_1^2\mu^2\Delta t^2\theta_M^2}{h^2\beta_1^2\theta_m})
			\lVert\varepsilon(d_t\be_\bu^{n+1,i})\rVert_{L^2(\Omega)}^2~~~~~~~~~~~~~~\no\\
			+\frac{k_6}{2}\Delta t\lVert d_te_\xi^{n+1,i}\rVert_{L^2(\Omega)}^2+\frac{k_5-k_2}{2}\Delta t\lVert d_te_\eta^{n,i}\rVert_{L^2(\Omega)}^2
			+\frac{k_3-k_2}{2}\Delta t\lVert d_te_\gamma^{n,i}\rVert_{L^2(\Omega)}^2]\no\\
			+\Delta t\sum_{n=0}^l
			[\frac{k_m}{2}\lVert\nabla e_p^{n,i}\rVert_{L^2(\Omega)}^2+\frac{\theta_m}{2}\lVert \nabla e_T^{n,i}\rVert_{L^2(\Omega)}^2]
			\leq C\Delta t\sum_{n=0}^l\lVert\nab e_T^{n,i-1}\rVert_{L^2(\Omega)}^2,
		\end{alignat}
		where $C$ is a positive constant, we deduce that (\ref{121.5}) holds if
		$\Delta t<\frac{h^2\beta_1^2k_m\theta_m}{2c_1^2\mu(k_4^2k_M^2\theta_m+k_1^2\theta_M^2k_m)}$.\\
		
		Set $\bv_h=d_t\be_u^{n+1,i}$ in (\ref{101.4})(after using operator $d_t$), $\varphi_{h}=d_te_\xi^{n+1,i}$ in (\ref{101.5})(after using operator $d_t$), $y_h=d_te_p^{n+1,i}$ in  (\ref{101.6})
		and $z_h=d_te_T^{n+1,i}$,  after lowing the super-index from $n+1$ to $n$ on the both sides of (\ref{101.6}) and (\ref{101.7}), we yield
		\begin{alignat}{2}\label{122.4}
			&\mu\lVert\varepsilon(d_t\be_\bu^{n+1,i})\rVert^2+k_6 \lVert d_te_\xi^{n+1,i}\rVert^2+k_5\lVert d_te_\eta^{n,i}\rVert^2\lVert +k_3\lVert d_te_\gamma^{n,i}\rVert^2\
			+k_2(d_te_\eta^{n,i},d_te_\gamma^{n,i})\no\\	
			&	+k_2(d_te_\gamma^{n,i},d_te_\eta^{n,i})
			+(K\nabla e_p^{n,i},d_t\nabla e_p^{n+1,i})
			+(\Theta\nabla e_T^{n,i},d_t\nabla e_T^{n+1,i})~~~~~~~~~~~~~\no\\
			&	=(\nab e_T^{n,i}\cdot\mathcal{N}(\bm{K} \nabla p_h^{n,i-1}) ,d_te_T^{n+1,i})+(\nab e_T^{n,i-1}\cdot\mathcal{N}(\bm{K} \nabla p_h^{n,i}) ,d_te_T^{n+1,i})\no\\
			&	+(\mathcal{N}(\nab T_h^{n,i-2})\cdot\bm{K}\nab e_p^{n,i},d_te_T^{n+1,i})
			-(\nabla e_T^{n,i-1}\cdot\mathcal{N}( \bm{K} \nabla p_h^{n,i-2}),d_te_T^{n+1,i}).
		\end{alignat}
		Moreover, it is easy to check that
		\begin{alignat}{2}\label{122.5}
			(\bm{K}\nabla e_p^{n,i},d_t\nabla e_p^{n+1,i})=(\bm{K}\nabla e_p^{n,i},d_t\nabla e_p^{n,i})+k_4\Delta t(\bm{K}\nabla e_p^{n,i},d_t^2\nab e_{\xi}^{n+1,i}),\\\label{122.6}
			(\bm{\Theta}\nabla e_T^{n,i},d_t\nabla e_T^{n+1,i})=(\bm{\Theta}\nabla e_T^{n,i},d_t\nabla e_T^{n,i})+k_1\Delta t(\bm{\Theta}\nabla e_T^{n,i},d_t^2\nab e_{\xi}^{n+1,i}).
		\end{alignat}
		Adding (\ref{122.5})--(\ref{122.6}) and (\ref{122.4}), we get
		\begin{alignat}{2}\label{122.114}
			&\mu\lVert\varepsilon(d_t\be_\bu^{n+1,i})\rVert^2+k_6 \lVert d_te_\xi^{n+1,i}\rVert^2+k_5\lVert d_te_\eta^{n,i}\rVert^2\lVert +k_3\lVert d_te_\gamma^{n,i}\rVert^2\
			+k_2(d_te_\eta^{n,i},d_te_\gamma^{n,i})\no\\	
			&	+k_2(d_te_\gamma^{n,i},d_te_\eta^{n,i})
			+(\bm{K}\nabla e_p^{n,i},d_t\nabla e_p^{n,i})+k_4\Delta t(\bm{K}\nabla e_p^{n,i},d_t^2\nab \xi_h^{n+1,i})\no\\
			&	+(\bm{\Theta}\nabla e_T^{n,i},d_t\nabla e_T^{n,i})+k_1\Delta t(\bm{\Theta}\nabla e_T^{n,i},d_t^2\nab \xi_h^{n+1,i})\no\\
			&	=(\nab e_T^{n,i}\cdot\mathcal{N}(\bm{K} \nabla p_h^{n,i-1}) ,d_te_T^{n+1,i})+(\nab e_T^{n,i-1}\cdot\mathcal{N}(\bm{K} \nabla p_h^{n,i}) ,d_te_T^{n+1,i})\no\\
			&	+(\mathcal{N}(\nab T_h^{n,i-2})\cdot\bm{K}\nab e_p^{n,i},d_te_T^{n+1,i})
			-(\nabla e_T^{n,i-1}\cdot\mathcal{N}( \bm{K} \nabla p_h^{n,i-2}),d_te_T^{n+1,i}).
		\end{alignat}
		Applying the summation operator $\Delta t\sum_{n=0}^l$ to the both sides of (\ref {122.114}), using Cauchy-Schwarz inequality, Young inequality and (\ref{100.3}), we get
		\begin{alignat}{2}\label{123.4}
			&	\Delta t\sum_{n=0}^l[\mu \lVert\varepsilon(d_t\be_{\bu}^{n+1,i})\rVert_{L^2(\Omega)}^2+k_6\lVert d_te_\xi^{n+1,i}\rVert_{L^2(\Omega)}^2+(k_5-k_2)\lVert d_te_\eta^{n,i}\rVert_{L^2(\Omega)}^2+(k_3-k_2)\lVert d_te_\gamma^{n,i}\rVert_{L^2(\Omega)}^2\no\\
			&+\frac{k_m\Delta t}{2}\lVert d_t\nab e_p^{n,i}\rVert_{L^2(\Omega)}^2+\frac{\theta_m\Delta t}{2}\lVert d_t\nab e_T^{n,i}\rVert_{L^2(\Omega)}^2]+\frac{k_m}{2}\lVert \nab e_p^{n,i}\rVert_{L^2(\Omega)}^2+\frac{\theta_m}{2}\lVert \nab e_T^{n,i}\rVert_{L^2(\Omega)}^2\no\\
			&\leq\Delta t\sum_{n=0}^l[-k_4\Delta t(\bm{K}\nab e_p^{n,i},d_t^2\nab e_\xi^{n+1,i})
			-k_1\Delta t(\Theta\nab e_T^{n,i},d_t^2\nab e_\xi^{n+1,i})\no\\
			&	+(\frac{N^2}{2\epsilon_{35}}
			+\frac{N^2}{2\epsilon_{15}}+\frac{N^2}{2\epsilon_{16}})\lVert\nabla e_T^{n,i}\rVert_{L^2(\Omega)}^2+(\frac{k_M^2N^2}{2\epsilon_{35}}+\frac{k_M^2N^2}{2\epsilon_{15}}+\frac{k_M^2N^2}{2\epsilon_{16}})\lVert\nabla e_p^{n,i}\rVert_{L^2(\Omega)}^2\no\\
			&	+(\frac{N^2}{\epsilon_{35}}+\frac{N^2}{\epsilon_{15}}+\frac{N^2}{\epsilon_{16}})\lVert\nabla e_T^{n,i-1}\rVert_{L^2(\Omega)}^2+2k_1^2\epsilon_{35}\lVert d_te_\xi^{n+1,i}\rVert_{L^2(\Omega)}^2\no\\
			&+2k_2^2\epsilon_{15}\lVert d_te_\eta^{n,i}\rVert_{L^2(\Omega)}^2+
			2k_3^2\epsilon_{16}\lVert d_te_\gamma^{n,i}\rVert_{L^2(\Omega)}^2.
		\end{alignat}
		Moreover, it is easy to check that
		\begin{alignat}{2}\label{123.5}
			&\sum_{n=0}^l(\bm{K}\nab e_p^{n,i},d_t^2\nab e_\xi^{n+1,i})=\sum_{n=0}^l(\bm{K}\nab e_p^{n,i},\frac{d_t\nab e_\xi^{n+1,i}-d_t\nab e_\xi^{n,i}}{\Delta t})\no\\
			&	=\frac{1}{\Delta t}\sum_{n=0}^l(\bm{K}\nab e_p^{n,i},d_t\nab e_\xi^{n+1,i})-\frac{1}{\Delta t}\sum_{n=0}^l(\bm{K}\nab e_p^{n,i}-\bm{K}\nab e_p^{n-1,i}+\bm{K}\nab e_p^{n-1,i},d_t\nab e_\xi^{n,i})\no\\
			&=\frac{1}{\Delta t}\sum_{n=0}^l(\bm{K}\nab e_p^{n,i},d_t\nab e_\xi^{n+1,i})-\frac{1}{\Delta t}\sum_{n=0}^l(\bm{K}\nab e_p^{n-1,i},d_t\nab e_\xi^{n,i})-\sum_{n=0}^l(\bm{K}d_t\nab e_p^{n,i},d_t\nab e_\xi^{n,i})\no\\
			&=\frac{1}{\Delta t}(\bm{K}\nab e_p^{l,i},d_t\nab e_\xi^{l+1,i})-\sum_{n=0}^l(\bm{K}d_t\nab e_p^{n,i},d_t\nab e_\xi^{n,i}).
		\end{alignat}
		It is easy to check that
		\begin{alignat}{2}\label{123.6}
			\sum_{n=0}^l(\bm{\Theta}\nab e_T^{n,i},d_t^2\nab e_\xi^{n+1,i})=\frac{1}{\Delta t}(\bm{\Theta}\nab e_T^{l,i},d_t\nab e_\xi^{l+1,i})-\sum_{n=0}^l(\bm{\Theta} d_t\nab e_T^{n,i},d_t\nab e_\xi^{n,i}).
		\end{alignat}
		So, we get 
		\begin{alignat}{2}\label{128.5}
			\Delta t\sum_{n=0}^l k_4\Delta t(\bm{K}\nab e_p^{n,i},d_t^2\nab e_\xi^{n+1,i})
			&	=\Delta t k_4(\bm{K}\nab e_p^{l,i},d_t\nab e_\xi^{l+1,i})
			-\Delta t\sum_{n=0}^l\Delta tk_4(\bm{K}d_t\nab e_p^{n,i},d_t\nab e_\xi^{n,i})\no\\
			&\leq \lVert\sqrt{\Delta t}k_4\bm{K}\nab e_p^{l,i}\rVert_{L^2(\Omega)}\lVert\sqrt{\Delta t}d_t\nab e_\xi^{l+1,i}\rVert_{L^2(\Omega)}+\Delta t\sum_{n=0}^l\lVert\Delta tk_4\bm{K}d_t\nab e_p^{n,i}\rVert_{L^2(\Omega)}\no\\
			&\lVert d_t\nab e_\xi^{n,i}\rVert_{L^2(\Omega)}\leq\frac{k_4^2\epsilon\Delta tk_M^2}{2}\lVert\nab e_p^{l,i}\rVert_{L^2(\Omega)}^2+\frac{\Delta t}{2\epsilon}\lVert d_t\nab e_\xi^{l+1,i}\rVert_{L^2(\Omega)}^2\no\\
			&+\Delta t\sum_{n=0}^l[\frac{k_4^2\epsilon\Delta t^2k_M^2}{2}\lVert d_t\nab e_p^{n,i}\rVert_{L^2(\Omega)}^2+\frac{1}{2\epsilon}\lVert d_t\nab e_\xi^{n,i}\rVert_{L^2(\Omega)}^2]\no\\
			&\leq\frac{k_4^2\epsilon k_M^2\Delta t}{2}\lVert\nab e_p^{l,i}\rVert_{L^2(\Omega)}^2+\Delta t\sum_{n=0}^l[\frac{k_4^2\epsilon k_M^2}{2}\lVert \nab e_p^{n,i}\rVert_{L^2(\Omega)}^2\no\\
			&+\frac{k_4^2\epsilon k_M^2}{2}\lVert \nab e_p^{n-1,i}\rVert_{L^2(\Omega)}^2]
			+\Delta t\sum_{n=0}^l\frac{1}{2\epsilon}\lVert d_t\nab e_\xi^{n+1,i}\rVert_{L^2(\Omega)}^2\no\\
			&\leq\Delta t\sum_{n=0}^l[k_4^2\epsilon k_M^2\lVert\nab e_p^{n,i}\rVert_{L^2(\Omega)}^2+\frac{1}{2\epsilon}\lVert d_t\nab e_\xi^{n+1,i}\rVert_{L^2(\Omega)}^2]\no\\
			&\leq\Delta t\sum_{n=0}^l[k_4^2\epsilon k_M^2\lVert\nab e_p^{n,i}\rVert_{L^2(\Omega)}^2+\frac{1}{2\epsilon}\cdot\frac{c_{1}^2}{h^2}\lVert d_t e_\xi^{n+1,i}\rVert_{L^2(\Omega)}^2].
		\end{alignat}
		Similarly, we have
		\begin{alignat}{2}\label{128.6}
			\Delta t\sum_{n=0}^l k_1\Delta t(\Theta\nab e_T^{n,i},d_t^2\nab e_\xi^{n+1,i})\leq\Delta t\sum_{n=0}^l[k_1^2\epsilon\theta_M^2\lVert\nab e_T^{n,i}\rVert_{L^2(\Omega)}^2+\frac{1}{2\epsilon}\cdot\frac{c_{1}^2}{h^2}\lVert d_t e_\xi^{n+1,i}\rVert_{L^2(\Omega)}^2].
		\end{alignat}
		Let $\epsilon_{35}=\frac{k_6}{8k_1^2}$, $\epsilon_{15}=\frac{k_5-k_2}{4k_2^2}$, $\epsilon_{16}=\frac{k_3-k_2}{4k_3^2}$ and $\epsilon=\frac{4c_{1}^2}{h^2k_6}$,
		using 
	(\ref{123.4})--(\ref{128.6}), applying the Gronwall's inequality, we get
		\begin{alignat}{2}\label{129.4}
			&	\Delta t\sum_{n=0}^l[\mu\lVert\varepsilon(d_t\be_\bu^{n+1,i})\rVert_{L^2(\Omega)}^2+\frac{k_6}{2} \lVert d_te_\xi^{n+1,i}\rVert_{L^2(\Omega)}^2+\frac{k_5-k_2}{2} \lVert d_te_\eta^{n,i}\rVert_{L^2(\Omega)}^2 \\
			&+\frac{k_3-k_2}{2} \lVert d_te_\gamma^{n,i}\rVert_{L^2(\Omega)}^2]	
			+\Delta t\sum_{n=1}^l[\frac{k_m}{2}\Delta t\lVert d_t\nabla e_p^{n,i}\rVert_{L^2(\Omega)}^2+\frac{\theta_m}{2}\Delta t\lVert \nabla e_T^{n,i}\rVert_{L^2(\Omega)}^2]\no\\
			&	+\frac{k_m}{2}\lVert \nabla e_T^{l,i}\rVert_{L^2(\Omega)}^2
			+\frac{\theta_m}{2}\lVert \nabla e_T^{l,i}\rVert_{L^2(\Omega)}^2\no\\
			&	\leq
			C \Delta t\sum_{n=0}^{l}\lVert\nabla e_T^{n,i-1}\rVert_{L^2(\Omega)}^2\no\\
			&\leq
			C \Delta t\sum_{n=0}^{l_1}\lVert\nabla e_T^{n,i-1}\rVert_{L^2(\Omega)}^2\no.
		\end{alignat}
		It is true for any $l\leq l_1$,($l_1>0$ will be fixed below),  by $\Delta t\sum_{n=0}^{l_1}$ again, we can get
		\begin{alignat}{2}\label{139.4}
			\Delta t\sum_{n=0}^{l_1}\left\|e_{T}^{n,i}\right\|_{H^{1}(\Omega)}^{2} \leq l_{1} C \Delta t^2\sum_{n=0}^{l_1}\left\|e_{T}^{n,i-1}\right\|_{H^{1}(\Omega)}^{2}. 
		\end{alignat}
		Let $\Delta t=\frac{1}{2l_1C}$, this shows a contraction of the residuals from the Banach Fixed Point Theorem and therefore completes the proof.
	\end{proof}

	To derive the optimal order error estimates of the fully discrete multiphysics finite element method for any $\varphi \in L^{2}(\Omega)$, we firstly define $L^{2}(\Omega)$-projection operators $\mathcal{Q}_{h}: L^{2}(\Omega)\rightarrow X^{k}_{h}$  by
	\begin{eqnarray}
		(\mathcal{Q}_{h}\varphi,\psi_{h})=(\varphi,\psi_{h})~~~~~\psi_{h}\in X^{k}_{h},\label{eq210607-1}
	\end{eqnarray}
	where $X^{k}_{h}:=\{\psi_{h}\in C^{0};~\psi_{h}|_{E}\in P_{k}(E)~\forall E\in\mathcal{T}_{h}\}$, $k$ is the degree of piecewise polynomial on $E$.
	
	Next, for any $\varphi\in H^{1}(\Omega)$, we define its elliptic projection $\mathcal{S}_{h}: H^{1}(\Omega)\rightarrow X^{k}_{h}$ by
	\begin{align}
		(\bm{K}\nabla \mathcal{S}_{h}\varphi,&\nabla\varphi_{h})=(\bm{K}\nabla\varphi,\nabla\varphi_{h})~~~~~\forall \varphi_{h}\in X^{k}_{h},\\
		&(\mathcal{S}_{h}\varphi,1)=(\varphi,1).
	\end{align}
	Finally, for any $\textbf{v}\in\textbf{H}^{1}(\Omega)$, we define its elliptic projection $\mathcal{R}_{h}:\textbf{H}^{1}(\Omega)\rightarrow\textbf{V}^{k}_{h}$ by
	\begin{eqnarray}
		(\varepsilon(\mathcal{R}_{h}\textbf{v}),\varepsilon(\textbf{w}_{h}))=(\varepsilon(\textbf{v}),\varepsilon(\textbf{w}_{h}))~~~~~
		\forall\textbf{w}_{h}\in \textbf{V}^{k}_{h},
	\end{eqnarray}
	where $\textbf{V}^k_{h}:=\{\textbf{v}_{h} \in \textbf{C}^{0};~\textbf{v}_{h}|_{E}\in \textbf{P}_{k}(E), (\textbf{v}_{h}, \textbf{r})=0 ~\forall \textbf{r} \in \textbf{RM}\}$.
	From \cite{Quarteroni1997}, we know that  $\mathcal{Q}_{h},\mathcal{S}_{h}$ and $\mathcal{R}_{h}$ satisfy
	\begin{align}
		\|\mathcal{Q}_{h}\varphi-\varphi\|_{L^{2}(\Omega)}+&h\|\nabla(\mathcal{Q}_{h}\varphi-\varphi)\|_{L^{2}(\Omega)}\\\nonumber
		&\leq Ch^{s+1}\|\varphi\|_{H^{s+1}(\Omega)}~~\forall \varphi\in H^{s+1}(\Omega),~~  0\leq s\leq k,\label{eq-3-47}\\	\|\mathcal{S}_{h}\varphi-\varphi\|_{L^{2}(\Omega)}+&h\|\nabla(\mathcal{S}_{h}\varphi-\varphi)\|_{L^{2}(\Omega)}\\\nonumber	&\leq Ch^{s+1}\|\varphi\|_{H^{s+1}(\Omega)}~~\forall \varphi\in H^{s+1}(\Omega),~~  0\leq s\leq k,\label{eq-3-47-2}\\
		\|\mathcal{R}_{h}\textbf{v}-\textbf{v}\|_{L^{2}(\Omega)}+&h\|\nabla(\mathcal{R}_{h}\textbf{v}-\textbf{v})\|_{L^{2}(\Omega)}\\\nonumber
		&\leq Ch^{s+1}\|\textbf{v}\|_{H^{s+1}(\Omega)}~~\forall \textbf{v}\in \textbf{H}^{s+1}(\Omega),~~  0\leq s\leq k.\label{eq-3-47-3}
	\end{align}
	To derive error estimates, we introduce the following notations:
	\begin{eqnarray*}
		E^{n}_{\textbf{u}}:=\textbf{u}(t_{n})-\textbf{u}^{n}_{h},~~~~ E^{n}_{\xi}:=\xi(t_{n})-\xi^{n}_{h},~~~~E^{n}_{\eta}:=\eta(t_{n})-\eta^{n}_{h},\\
		E^{n}_{p}:=p(t_{n})-p^{n}_{h},~~~~E^{n}_{\gamma}:=\gamma(t_{n})-\gamma^{n}_{h},~~~~E^{n}_{q}:=q(t_{n})-q^{n}_{h}.
	\end{eqnarray*}
	It is easy to check that
	\begin{eqnarray}
		&&E^{n}_{p}=k_{4}E^{n}_{\xi}+k_{5}E^{n}_{\eta}+k_{2}E^{n}_{\gamma},~~
		E^{n}_{T}=k_{1}E^{n}_{\xi}+k_{2}E^{n}_{\eta}+k_{3}E^{n}_{\gamma},\nonumber\\
		&&E^{n}_{q}=-k_{6}E^{n}_{\xi}+k_{4}E^{n}_{\eta}+k_{1}E^{n}_{\gamma},
		\widehat{E}^{n+1}_{p}=k_{4}E^{n+1}_{\xi}+k_{5}E^{n+\theta}_{\eta}+k_{2}E^{n+\theta}_{\gamma},\nonumber\\
	&&\widehat{E}^{n+1}_{T}=k_{1}E^{n+1}_{\xi}+k_{2}E^{n+\theta}_{\eta}+k_{3}E^{n+\theta}_{\gamma}.
	\end{eqnarray}
	Also, we denote
	\begin{eqnarray*}
		&&E^{n}_{\textbf{u}}=\textbf{u}(t_{n})-\mathcal{R}_{h}(\textbf{u}(t_{n}))+\mathcal{R}_{h}(\textbf{u}(t_{n}))-\textbf{u}^{n}_{h}:=\varLambda^{n}_{\textbf{u}}+\varPi^{n}_{\textbf{u}},\\
		&&E^{n}_{\xi}=\xi(t_{n})-\mathcal{Q}_{h}(\xi(t_{n}))+\mathcal{Q}_{h}(\xi(t_{n}))-\xi^{n}_{h}:=\Phi^{n}_{\xi}+\Psi^{n}_{\xi},\\
		&&E^{n}_{\xi}=\xi(t_{n})-\mathcal{S}_{h}(\xi(t_{n}))+\mathcal{S}_{h}(\xi(t_{n}))-\xi^{n}_{h}:=\varLambda^{n}_{\xi}+\varPi^{n}_{\xi},\\
		&&E^{n}_{\eta}=\eta(t_{n})-\mathcal{Q}_{h}(\eta(t_{n}))+\mathcal{Q}_{h}(\eta(t_{n}))-\eta^{n}_{h}:=\Phi^{n}_{\eta}+\Psi^{n}_{\eta},\\
		&&E^{n}_{\gamma}:=\gamma(t_{n})-\mathcal{Q}_{h}(\gamma(t_{n}))+\mathcal{Q}_{h}(\gamma(t_{n}))-\gamma^{n}_{h}:=\Phi^{n}_{\gamma}+\Psi^{n}_{\gamma},\\	
		&&E^{n}_{p}=p(t_{n})-\mathcal{Q}_{h}(p(t_{n}))+\mathcal{Q}_{h}(p(t_{n}))-p^{n}_{h}:=\Phi^{n}_{p}+\Psi^{n}_{p},\\
		&&E^{n}_{p}=p(t_{n})-\mathcal{S}_{h}(p(t_{n}))+\mathcal{S}_{h}(p(t_{n}))-p^{n}_{h}:=\varLambda^{n}_{p}+\varPi^{n}_{p},\\
		&&E^{n}_{T}=T(t_{n})-\mathcal{Q}_{h}(T(t_{n}))+\mathcal{Q}_{h}(T(t_{n}))-T^{n}_{h}:=\Phi^{n}_{T}+\Psi^{n}_{T},\\
		&&E^{n}_{T}=T(t_{n})-\mathcal{S}_{h}(T(t_{n}))+\mathcal{S}_{h}(T(t_{n}))-T^{n}_{h}:=\varLambda^{n}_{T}+\varPi^{n}_{T}.
	\end{eqnarray*}
	\begin{lemma}\label{lem-3-4}
	{\rm	Let ${(\bm{{\rm u}}^{n}_{h},\xi^{n}_{h},\eta^{n}_{h},\gamma^{n}_{h})}$ be generated by the MFEM, then we have
		\begin{eqnarray}\label{eq-3-54}
			&&\varSigma^{l}_{h}+\Delta t\sum_{n=0}^{l}[(\bm{K}\nabla\widehat{\varPi}^{n+1}_{p},\nabla\widehat{\varPi}^{n+1}_{p})
			+(\bm{\Theta}\nabla\widehat{\varPi}^{n+1}_{T},\nabla\widehat{\varPi}^{n+1}_{T})
			+\frac{\Delta t}{2}(\mu\|d_{t}\varepsilon(\varPi^{n+1}_{\bm{{\rm u}}})\|^2_{L^{2}(\Omega)}\nonumber\\
			&&+k_{6}\|d_{t}\Psi^{n+1}_{\xi}\|^2_{L^{2}(\Omega)}
			+(k_{5}-k_{2})\|d_{t}\Psi^{n+\theta}_{\eta}\|^2_{L^{2}(\Omega)}+(k_{3}-k_{2})\|d_{t}\Psi^{n+\theta}_{\gamma}\|^2_{L^{2}(\Omega)})]\nonumber\\
			&&\leq \widehat{\varSigma}^{-1}_{h}+\Delta t\sum_{n=0}^{l} [(\Phi^{n+1}_{\xi},\operatorname{div} d_{t}\varPi^{n+1}_{\bm{{\rm u}}})-(\operatorname{div} d_{t}\varLambda^{n+1}_{\bm{{\rm u}}},\Psi^{n+1}_{\xi})]\nonumber\\
			&&+\Delta t\sum_{n=0}^{l}[(d_{t}\Psi^{n+\theta}_{\eta},\widehat{\varLambda}^{n+1}_{p}-\widehat{\Phi}^{n+1}_{p})+(d_{t}\Psi^{n+\theta}_{\gamma},\widehat{\varLambda}^{n+1}_{T}-\widehat{\Phi}^{n+1}_{T})]\nonumber\\
			&&+(1-\theta)(\Delta t)^{2}\sum_{n=0}^{l}[k_{4}(d^{2}_{t}\eta(t_{n+1}),\Psi^{n+1}_{\xi})
			+k_{1}(d^{2}_{t}\gamma(t_{n+1}),\Psi^{n+1}_{\xi})]\nonumber\\
			&&+(1-\theta)(\Delta t)^{2}\sum_{n=0}^{l}[k_{4}\Delta t(\bm{K}\nabla\widehat{\varPi}^{n+1}_{p}\,d_{t}\nabla\varPi^{n+1}_{\xi})
			+k_{1}\Delta t(\bm{\Theta}\nabla\widehat{\varPi}^{n+1}_{T}, d_{t}\nabla\varPi^{n+1}_{\xi})]\nonumber\\
			&&+\Delta t\sum_{n=0}^{l}[(\mathcal{N}(\nabla T^{n+\theta}_h)\cdot(\bm{K}\nabla E^{n+\theta}_{p}),\widehat{\varPi}^{n+1}_{T})+(\nabla E^{n+\theta}_{T}\cdot(\bm{K}\nabla p(t_{n+\theta})),\widehat{\varPi}^{n+1}_{T})]\nonumber\\
			&&+\Delta t\sum_{n=0}^{l}[(R^{n+\theta}_{\eta},\widehat{\varPi}^{n+1}_{p})+(R^{n+\theta}_{\gamma},\widehat{\varPi}^{n+1}_{T})],
		\end{eqnarray}
		where
		\begin{align}
			\widehat{\varPi}^{n+1}_{p}&:=k_{4} \varPi^{n+1}_{\xi}+k_{5}\varPi^{n+\theta}_{\eta}+k_{2} \varPi^{n+\theta}_{\gamma},\\
			\widehat{\varPi}^{n+1}_{T}&:=k_{1} \varPi^{n+1}_{\xi}+k_{2}\varPi^{n+\theta}_{\eta}+k_{3} \varPi^{n+\theta}_{\gamma},\\
			\varSigma^{l}_{h}:&=\frac{1}{2}(\mu\|\varepsilon(\varPi^{l+1}_{\bm{{\rm u}}})\|^2_{L^{2}(\Omega)}+k_{6}\|\Psi^{l+1}_{\xi}\|^2_{L^{2}(\Omega)}\\\nonumber
			&+(k_{5}-k_{2})\|\Psi^{l+\theta}_{\eta}\|^2_{L^{2}(\Omega)}+(k_{3}-k_{2})\|\Psi^{l+\theta}_{\gamma}\|^2_{L^{2}(\Omega)}),\\
			\widehat{\varSigma}^{-1}_{h}:&=\frac{1}{2}(\mu\|\varepsilon(\varPi^{0}_{\bm{{\rm u}}})\|^2_{L^{2}(\Omega)}+k_{6}\|\Psi^{0}_{\xi}\|^2_{L^{2}(\Omega)}\\\nonumber
			&+(k_{5}+k_{2})\|\Psi^{\theta-1}_{\eta}\|^2_{L^{2}(\Omega)}+(k_{3}+k_{2})\|\Psi^{\theta-1}_{\gamma}\|^2_{L^{2}(\Omega)}),\\
			R^{n+1}_{\eta}:&=-\frac{1}{\Delta t}\int_{t_{n}}^{t_{n+1}}(s-t_{n})\eta_{tt}(s)ds,~R^{n+1}_{\gamma}:=-\frac{1}{\Delta t}\int_{t_{n}}^{t_{n+1}}(s-t_{n})\gamma_{tt}(s)ds.
		\end{align}}
	\end{lemma}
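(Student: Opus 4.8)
The plan is to derive the fully discrete error equations, decompose each error through its projection, and then run the discrete energy argument with the test functions that already appear in the statement. First I would write the weak formulation \reff{eq-2-20}--\reff{eq-2-23-1} at time $t_{n+1}$ (and at $t_{n+\theta}$ for the two parabolic equations, replacing the exact time derivatives of $\eta$, $\gamma$ by the backward differences plus the Taylor remainders $R^{n+1}_{\eta}$, $R^{n+1}_{\gamma}$ defined in the statement), subtract the scheme \reff{eq-3-113}--\reff{eq-3-116}, and use the splittings $E^{n}_{\textbf{u}}=\varLambda^{n}_{\textbf{u}}+\varPi^{n}_{\textbf{u}}$, $E^{n}_{\xi}=\Phi^{n}_{\xi}+\Psi^{n}_{\xi}$, $E^{n}_{\eta}=\Phi^{n}_{\eta}+\Psi^{n}_{\eta}$, $E^{n}_{\gamma}=\Phi^{n}_{\gamma}+\Psi^{n}_{\gamma}$, together with the corresponding hatted combinations $\widehat{E}^{n+1}_{p}$, $\widehat{E}^{n+1}_{T}$ and their $\Phi$/$\varLambda$ parts $\widehat{\Phi}^{n+1}_{p}$, $\widehat{\varLambda}^{n+1}_{p}$, etc. By the definitions of $\mathcal{R}_{h}$, $\mathcal{Q}_{h}$ and $\mathcal{S}_{h}$, several projection-error contributions drop: $(\varepsilon(\varLambda^{n+1}_{\textbf{u}}),\varepsilon(\textbf{v}_{h}))=0$, $(\Phi^{n+1}_{\xi},\varphi_{h})=0$, $(d_{t}\Phi^{n+\theta}_{\eta},y_{h})=0$, $(d_{t}\Phi^{n+\theta}_{\gamma},z_{h})=0$, and $(\bm{K}\nabla\widehat{\varLambda}^{n+1}_{p},\nabla y_{h})=0$, $(\bm{\Theta}\nabla\widehat{\varLambda}^{n+1}_{T},\nabla z_{h})=0$ since $p$ and $T$ are the linear combinations \reff{eq-2-7} of $\xi,\eta,\gamma$ and the elliptic projection respects these combinations; this is exactly what leaves the consistency terms $(\Phi^{n+1}_{\xi},\operatorname{div} d_{t}\varPi^{n+1}_{\textbf{u}})$, $(\operatorname{div} d_{t}\varLambda^{n+1}_{\textbf{u}},\Psi^{n+1}_{\xi})$, $(d_{t}\Psi^{n+\theta}_{\eta},\widehat{\varLambda}^{n+1}_{p}-\widehat{\Phi}^{n+1}_{p})$ and $(d_{t}\Psi^{n+\theta}_{\gamma},\widehat{\varLambda}^{n+1}_{T}-\widehat{\Phi}^{n+1}_{T})$ on the right-hand side of \reff{eq-3-54}.

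Next I would choose $\textbf{v}_{h}=d_{t}\varPi^{n+1}_{\textbf{u}}$ in the (time-differentiated) elasticity equation, $\varphi_{h}=\Psi^{n+1}_{\xi}$ in the time-differentiated constraint equation, $y_{h}=\widehat{\varPi}^{n+1}_{p}=k_{4}\varPi^{n+1}_{\xi}+k_{5}\varPi^{n+\theta}_{\eta}+k_{2}\varPi^{n+\theta}_{\gamma}$ in the $\eta$-equation, and $z_{h}=\widehat{\varPi}^{n+1}_{T}=k_{1}\varPi^{n+1}_{\xi}+k_{2}\varPi^{n+\theta}_{\eta}+k_{3}\varPi^{n+\theta}_{\gamma}$ in the $\gamma$-equation, and add the four identities. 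Exactly as in the continuous proof of Theorem \ref{th-2-3}, the relations \reff{eq-2-7} make the $(\Psi^{n+1}_{\xi},\operatorname{div} d_{t}\varPi^{n+1}_{\textbf{u}})$ contributions cancel between the first two equations, so that only $\bm{K}$- and $\bm{\Theta}$-coercive terms and the forcing/consistency terms survive. Applying $\Delta t\sum_{n=0}^{l}$ and the elementary identity $(a^{n+1},d_{t}a^{n+1})=\tfrac12 d_{t}\|a^{n+1}\|^2_{L^{2}(\Omega)}+\tfrac{\Delta t}{2}\|d_{t}a^{n+1}\|^2_{L^{2}(\Omega)}$ produces $\varSigma^{l}_{h}$ on the left, the leftover initial term $\widehat{\varSigma}^{-1}_{h}$ on the right, and the $\tfrac{\Delta t}{2}$ square terms; the cross term $k_{2}(d_{t}\Psi^{n+\theta}_{\eta},\Psi^{n+\theta}_{\gamma})+k_{2}(d_{t}\Psi^{n+\theta}_{\gamma},\Psi^{n+\theta}_{\eta})$ is rewritten as $k_{2}d_{t}(\Psi^{n+\theta}_{\eta},\Psi^{n+\theta}_{\gamma})$ up to a $\tfrac{k_{2}\Delta t}{2}$-term, which is what turns $k_{5},k_{3}$ into $k_{5}-k_{2},k_{3}-k_{2}$ on the left (and $k_{5}+k_{2},k_{3}+k_{2}$ in $\widehat{\varSigma}^{-1}_{h}$).

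When $\theta=0$ the exact quantities $p(t_{n+\theta}),T(t_{n+\theta})$ and the discrete $p^{n+1}_{h},T^{n+1}_{h}$ live at different time levels, so after writing $\varPi^{n+1}_{\xi}=\varPi^{n}_{\xi}+\Delta t\,d_{t}\varPi^{n+1}_{\xi}$ inside $\widehat{\varPi}^{n+1}_{p},\widehat{\varPi}^{n+1}_{T}$ one generates the two families of $(1-\theta)(\Delta t)^{2}$ correction terms in \reff{eq-3-54}: one from the mass terms $(d_{t}\Psi^{n+\theta}_{\eta},\cdot)$, $(d_{t}\Psi^{n+\theta}_{\gamma},\cdot)$ paired with the second time differences $d^{2}_{t}\eta(t_{n+1})$, $d^{2}_{t}\gamma(t_{n+1})$, and one from the diffusion terms $k_{4}\Delta t(\bm{K}\nabla\widehat{\varPi}^{n+1}_{p},d_{t}\nabla\varPi^{n+1}_{\xi})$, $k_{1}\Delta t(\bm{\Theta}\nabla\widehat{\varPi}^{n+1}_{T},d_{t}\nabla\varPi^{n+1}_{\xi})$. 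The main obstacle, and where I would be most careful, is the nonlinear convective term: I would add and subtract $\nabla T(t_{n+\theta})\cdot(\bm{K}\nabla p(t_{n+\theta}))$, use that the exact fluxes are bounded so that $\mathcal{N}$ acts as the identity on them (as argued in the proof of the uniqueness theorem preceding this lemma), and regroup the remaining difference as $(\mathcal{N}(\nabla T^{n+\theta}_{h})\cdot(\bm{K}\nabla E^{n+\theta}_{p}),\widehat{\varPi}^{n+1}_{T})+(\nabla E^{n+\theta}_{T}\cdot(\bm{K}\nabla p(t_{n+\theta})),\widehat{\varPi}^{n+1}_{T})$, which are precisely the two convective residual terms in \reff{eq-3-54}; tracking which factor carries $\mathcal{N}$ and which carries the exact flux, and not dropping any term in the regrouping, is the delicate bookkeeping. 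Everything else — Cauchy--Schwarz, Young's inequality to move the $\bm{K},\bm{\Theta}$ terms, and rearrangement — is routine, and collecting all surviving terms yields the identity \reff{eq-3-54}.
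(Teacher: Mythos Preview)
Your plan is essentially the paper's own argument: derive the error equations \reff{eq-3-58}--\reff{eq-3-61}, split through the projections, test with $d_t\varPi^{n+1}_{\bm{{\rm u}}}$, $\Psi^{n+1}_{\xi}$ (in the $d_t$-differenced constraint), $\widehat{\varPi}^{n+1}_{p}$, $\widehat{\varPi}^{n+1}_{T}$, add, apply the identity $(a^{n+1},d_t a^{n+1})=\tfrac12 d_t\|a^{n+1}\|^2+\tfrac{\Delta t}{2}\|d_t a^{n+1}\|^2$, and sum over $n$. One slip to fix: the elasticity error equation is \emph{not} time-differentiated --- you test \reff{eq-3-63} as written with $\textbf{v}_h=d_t\varPi^{n+1}_{\bm{{\rm u}}}$; if you applied $d_t$ first you would get $\mu\|\varepsilon(d_t\varPi^{n+1}_{\bm{{\rm u}}})\|^2$ rather than $\mu(\varepsilon(\varPi^{n+1}_{\bm{{\rm u}}}),\varepsilon(d_t\varPi^{n+1}_{\bm{{\rm u}}}))$, and neither the energy $\varSigma^l_h$ nor the cancellation with the constraint equation that you describe would occur (your subsequent use of the identity shows you already intend the undifferentiated version). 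A second minor imprecision: the $(1-\theta)(\Delta t)^2$ terms with $d_t^2\eta(t_{n+1}),\,d_t^2\gamma(t_{n+1})$ do not come from the mass terms but from the time-level mismatch $(1-\theta)k_4\Delta t(d_t\eta(t_{n+1}),\varphi_h)$ already present in the constraint error equation \reff{eq-3-59}/\reff{eq-3-64}, which after applying $d_t$ and testing with $\Psi^{n+1}_\xi$ produces exactly those terms; the diffusion $(1-\theta)$ terms likewise appear directly in \reff{eq-3-60}--\reff{eq-3-61} rather than from rewriting $\varPi^{n+1}_\xi$ inside $\widehat{\varPi}^{n+1}_p$.
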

	\begin{proof}
		Subtracting (\ref{eq-3-113}) form (\ref{eq-2-20}), (\ref{eq-3-114}) from (\ref{eq-2-21}), (\ref{eq-3-115}) from  (\ref{eq-2-22}), (\ref{eq-3-116}) from (\ref{eq-2-23-1}), respectively, we get the following error equations
		\begin{align}
			\mu(\varepsilon(E_{\textbf{u}}^{n+1}),\varepsilon(\textbf{v}_{h}))-(E_{\xi}^{n+1},\nabla\cdot \textbf{v}_{h})=0 \quad\quad \forall ~~\textbf{v}_{h}\in \textbf{V}_h,\label{eq-3-58}\\
			k_{6}(E_{\xi}^{n+1},\varphi_{h})+(\nabla\cdot E_{\textbf{u}}^{n+1},\varphi_{h})=k_{4}(E_{\eta}^{n+\theta},\varphi_{h})
			+(1-\theta)k_{4}\Delta t(d_{t}\eta(t_{n+1}),\varphi_{h})\nonumber\\
			+k_{1}(E^{n+\theta}_{\gamma},\varphi_{h})+(1-\theta)k_{1}\Delta t(d_{t}\gamma(t_{n+1}),\varphi_{h}) \quad\quad \forall~~ \varphi_{h}\in M_h,\label{eq-3-59}\\
			(d_{t}E_{\eta}^{n+\theta},y_{h})+(\bm{K}\nabla(\widehat{E}^{n+1}_{p}),\nabla y_{h})-(1-\theta)k_4\Delta t(\bm{K}d_t\nabla E^{n+1}_\xi,\nabla y_h)\nonumber\\
			=(R^{n+\theta}_{\eta},y_{h}) \quad\quad\forall~~y_{h}\in W_h,\label{eq-3-60}\\
			(d_{t}E_{\gamma}^{n+\theta},z_{h})+(\bm{\Theta}\nabla(\widehat{E}^{n+1}_{T}),\nabla z_{h})-(1-\theta)k_1\Delta t(\bm{\Theta}d_t\nabla E^{n+1}_\xi,\nabla z_h)\no\\
			-(\nabla E^{n+\theta}_{T}\cdot(\bm{K}\nabla p(t_{n+\theta})),z_{h})
			-(\nabla T^{n+\theta}_{h}\cdot(\bm{K}\nabla E^{n+\theta}_{p}),z_{h})=(R^{n+\theta}_{\gamma},z_{h})\quad \forall~~z_{h}\in Z_h.\label{eq-3-61}
		\end{align}
		
		Using the definition of the projection operators $\mathcal{Q}_{h},\mathcal{S}_{h},\mathcal{R}_{h}$ and the above error equations, we have
		\begin{align}
			\mu(\varepsilon(\varPi_{\textbf{u}}^{n+1}),\varepsilon(\textbf{v}_{h}))-(\Psi_{\xi}^{n+1},\nabla\cdot \textbf{v}_{h})=(\Phi_{\xi}^{n+1},\nabla\cdot \textbf{v}_{h}) \quad\quad\forall ~~\textbf{v}_{h}\in V_h,\label{eq-3-63}\\
			k_{6}(\Psi_{\xi}^{n+1},\varphi_{h})+(\nabla\cdot \varPi_{\textbf{u}}^{n+1},\varphi_{h})
			=-(\nabla\cdot \varLambda^{n+1}_{\textbf{u}},\varphi_{h})+k_{4}(\Psi_{\eta}^{n+\theta},\varphi_{h})\nonumber\\
			+(1-\theta)k_{4}\Delta t(d_{t}\eta(t_{n+1}),\varphi_{h})
			+k_{1}(\Psi^{n+\theta}_{\gamma},\varphi_{h})\no \\
			(1-\theta)k_{1}\Delta t(d_{t}\gamma(t_{n+1}),\varphi_{h})  \quad\quad\forall~~ \varphi_{h}\in M_h,\label{eq-3-64}\\
			(d_{t}\Psi_{\eta}^{n+\theta},y_{h})+(\bm{K}\nabla(\widehat{\varPi}^{n+1}_{p}),\nabla y_{h})-(1-\theta)k_4\Delta t(\bm{K}d_t\nabla \varPi^{n+1}_\xi,\nabla y_h)\no\\
			=(R^{n+\theta}_{\eta},y_{h}) \quad\quad\forall~~y_{h}\in W_h,\label{eq-3-65}\\
		(d_{t}\Psi_{\gamma}^{n+\theta},z_{h})+(\bm{\Theta}\nabla(\widehat{\varPi}^{n+1}_{T}),\nabla z_{h})-(1-\theta)k_1\Delta t(\bm{\Theta}d_t\nabla \varPi^{n+1}_\xi,\nabla z_h)\no\\
			-(\nabla E^{n+\theta}_{T}\cdot(\bm{K}\nabla p(t_{n+\theta})),z_{h})
			-(\nabla T^{n+\theta}_{h}\cdot(\bm{K}\nabla E^{n+\theta}_{p}),z_{h})\nonumber\\=(R^{n+\theta}_{\gamma},z_{h}) \quad\quad\forall~~z_{h}\in Z_h.\label{eq-3-66}
		\end{align}
	
		Taking $\textbf{v}_{h}=d_{t}\varPi^{n+1}_{\textbf{u}}$ in (\ref{eq-3-63}), $\varphi_{h}=\Psi^{n+1}_{\xi}$ (after applying the difference operator $d_{t}$ to the equation (\ref{eq-3-64})), $y_{h}=\widehat{\varPi}^{n+1}_{p}=\widehat{\Phi}^{n+1}_{p}-\widehat{\varLambda}^{n+1}_{p}+k_{4}\Psi^{n+1}_{\xi}+k_{5}\Psi^{n+\theta}_{\eta}+k_{2}\Psi^{n+\theta}_{\gamma}$ in (\ref{eq-3-65}) and $z_{h}=\widehat{\varPi}^{n+1}_{T}=\widehat{\Phi}^{n+1}_{T}-\widehat{\varLambda}^{n+1}_{T}+k_{1}\Psi^{n+1}_{\xi}+k_{2}\Psi^{n+\theta}_{\eta}+k_{3}\Psi^{n+\theta}_{\gamma}$ in (\ref{eq-3-66}), we get
		\begin{align}\label{eq-3-68}
			\mu(\varepsilon(\varPi^{n+1}_{\textbf{u}}),\varepsilon(d_{t}\varPi^{n+1}_{\textbf{u}}))+k_{6}(d_{t}\Psi^{n+1}_{\xi},\Psi^{n+1}_{\xi})+k_{5}(d_{t}\Psi^{n+\theta}_{\eta},\Psi^{n+\theta}_{\eta})\nonumber\\
			+k_{3}(d_{t}\Psi^{n+\theta}_{\gamma},\Psi^{n+\theta}_{\gamma})+k_{2}(d_{t}\Psi^{n+\theta}_{\eta},\Psi^{n+\theta}_{\gamma})+k_{2}(d_{t}\Psi^{n+\theta}_{\gamma},\Psi^{n+\theta}_{\eta})\nonumber\\
			+(\bm{K}\nabla\widehat{\varPi}^{n+1}_{p},\nabla\widehat{\varPi}^{n+1}_{p})+(\bm{\Theta}\nabla\widehat{\varPi}^{n+1}_{T},\nabla\widehat{\varPi}^{n+1}_{T})\no\\
			-(1-\theta)k_{4}\Delta t(\bm{K}d_t\nabla \varPi^{n+1}_{\xi},\nabla\widehat{\varPi}^{n+1}_{p})-(1-\theta)k_{1}\Delta t(\bm{\Theta}d_t\nabla \varPi^{n+1}_{\xi},\nabla\widehat{\varPi}^{n+1}_{T})\nonumber\\
		=(\Phi^{n+1}_{\xi},\operatorname{div} d_{t}\varPi^{n+1}_{\textbf{u}})-(\operatorname{div} d_{t}\varLambda^{n+1}_{\textbf{u}},\Psi^{n+1}_{\xi})+(d_{t}\Psi^{n+\theta}_{\eta},\widehat{\varLambda}^{n+1}_{p}-\widehat{\Phi}^{n+1}_{p})\nonumber\\
			+(d_{t}\Psi^{n+\theta}_{\gamma},\widehat{\varLambda}^{n+1}_{T}-\widehat{\Phi}^{n+1}_{T})+(1-\theta)k_{4}\Delta t(d^{2}_{t}\eta(t_{n+1}),\Psi^{n+1}_{\xi})\nonumber\\
			+(1-\theta)k_{1}\Delta t(d^{2}_{t}\gamma(t_{n+1}),\Psi^{n+1}_{\xi})+(\nabla E^{n+\theta}_{T}\cdot(\bm{K}\nabla p(t_{n+\theta})), \widehat{\varPi}^{n+1}_{T})\nonumber\\
			+(\nabla T^{n+\theta}_{h}\cdot(\bm{K}\nabla E^{n+ \theta}_{p}), \widehat{\varPi}^{n+1}_{T})
			+(R^{n+\theta}_{\eta},\widehat{\varPi}^{n+1}_{p})+(R^{n+\theta}_{\gamma},\widehat{\varPi}^{n+1}_{T}).
		\end{align}
		It is easy to check
		\begin{eqnarray}
			\mu(\varepsilon(\varPi^{n+1}_{\textbf{u}}),\varepsilon(d_{t}\varPi^{n+1}_{\textbf{u}}))=\frac{\mu}{2}(d_{t}\|\varepsilon(\varPi^{n+1}_{\textbf{u}})\|^{2}_{L^{2}(\Omega)}+\Delta t\|d_{t}\varepsilon(\varPi^{n+1}_{\textbf{u}})\|^{2}_{L^{2}(\Omega)}),\label{eq-3-76-1}\\
			k_{6}(d_{t}\Psi^{n+1}_{\xi},\Psi^{n+1}_{\xi})=\frac{k_{6}}{2}(d_{t}\|\Psi^{n+1}_{\xi}\|^{2}_{L^{2}(\Omega)}+\Delta t\|d_{t}\Psi^{n+1}_{\xi}\|^{2}_{L^{2}(\Omega)}),\\
			k_{5}(d_{t}\Psi^{n+\theta}_{\eta},\Psi^{n+\theta}_{\eta})=\frac{k_{5}}{2}(d_{t}\|\Psi^{n+\theta}_{\eta}\|^{2}_{L^{2}(\Omega)}+\Delta t\|d_{t}\Psi^{n+\theta}_{\eta}\|^{2}_{L^{2}(\Omega)}),\\
			k_{3}(d_{t}\Psi^{n+\theta}_{\gamma},\Psi^{n+\theta}_{\gamma})=\frac{k_{3}}{2}(d_{t}\|\Psi^{n+\theta}_{\gamma}\|^{2}_{L^{2}(\Omega)}+\Delta t\|d_{t}\Psi^{n+\theta}_{\gamma}\|^{2}_{L^{2}(\Omega)}),\\
			k_{2}(d_{t}\Psi^{n+\theta}_{\eta},\Psi^{n}_{\gamma})+k_{2}(d_{t}\Psi^{n+\theta}_{\gamma},\Psi^{n+\theta}_{\eta})\leq
			\frac{k_{2}}{2}(\Delta t\|d_{t}\Psi^{n+\theta}_{\eta}\|^{2}_{L^{2}(\Omega)}\nonumber\\\label{eq-3-80-1}+\Delta t\|d_{t}\Psi^{n+\theta}_{\gamma}\|^{2}_{L^{2}(\Omega)})+k_{2}d_{t}(\Psi^{n+\theta}_{\eta},\Psi^{n}_{\gamma}).
		\end{eqnarray}
		
		Substituting (\ref{eq-3-76-1})-(\ref{eq-3-80-1}) into (\ref{eq-3-68}), we have
		{\allowdisplaybreaks[2]
			\begin{align}
				&\frac{1}{2}(\mu d_{t}\|\varepsilon(\varPi^{n+1}_{\textbf{u}})\|^2_{L^{2}(\Omega)}+k_{6}d_{t}\|\Psi^{n+1}_{\xi}\|^2_{L^{2}(\Omega)}+k_{5}d_{t}\|\Psi^{n+\theta}_{\eta}\|^2_{L^{2}(\Omega)}+k_{3}d_{t}\|\Psi^{n+\theta}_{\gamma}\|^2_{L^{2}(\Omega)})\nonumber\\
				&+\frac{\Delta t}{2}(\mu\|d_{t}\varepsilon(\varPi^{n+1}_{\textbf{u}})\|^2_{L^{2}(\Omega)}
				+k_{6}\|d_{t}\Psi^{n+1}_{\xi}\|^2_{L^{2}(\Omega)}+(k_{5}-k_{2})\|d_{t}\Psi^{n+\theta}_{\eta}\|^2_{L^{2}(\Omega)}\nonumber\\
				&+(k_{3}-k_{2})\|d_{t}\Psi^{n+\theta}_{\gamma}\|^2_{L^{2}(\Omega)})+k_{2}d_{t}(\Psi^{n+\theta}_{\eta},\Psi^{n+\theta}_{\gamma})	+(\bm{K}\nabla\widehat{\varPi}^{n+1}_{p},\nabla\widehat{\varPi}^{n+1}_{p})\nonumber\\
				&+(\bm{\Theta}\nabla\widehat{\varPi}^{n+1}_{T},\nabla\widehat{\varPi}^{n+1}_{T})-(1-\theta)k_{4}\Delta t(\bm{K}\nabla\widehat{\varPi}^{n+1}_{p},d_{t}\nabla\varPi^{n+1}_{\xi})\no\\
				&-(1-\theta)k_{1}\Delta t(\bm{\Theta}\nabla\widehat{\varPi}^{n+1}_{T}, d_{t}\nabla\varPi^{n+1}_{\xi})
				\leq(\Phi^{n+1}_{\xi},\operatorname{div} d_{t}\varPi^{n+1}_{\textbf{u}})-(\operatorname{div} d_{t}\varLambda^{n+1}_{\textbf{u}},\Psi^{n+1}_{\xi})\nonumber\\
				&+(d_{t}\Psi^{n+\theta}_{\eta},\widehat{\varLambda}^{n+1}_{p}-\widehat{\Phi}^{n+1}_{p})
				+(d_{t}\Psi^{n+\theta}_{\gamma},\widehat{\varLambda}^{n+1}_{T}-\widehat{\Phi}^{n+1}_{T})
				+(1-\theta)\Delta t k_{4}(d^{2}_{t}\eta(t_{n+1}),\Psi^{n+1}_{\xi})\nonumber\\
				&+(1-\theta)\Delta t k_{1}(d^{2}_{t}\gamma(t_{n+1}),\Psi^{n+1}_{\xi})
				+(\nabla E^{n+\theta}_{T}\cdot(\bm{K}\nabla p(t_{n+\theta})), \widehat{\varPi}^{n+1}_{T})\nonumber\\
				&+(\mathcal{N}(\nabla T^{n+\theta}_{h})\cdot(\bm{K}\nabla E^{n+\theta}_{p}), \widehat{\varPi}^{n+1}_{T})
				+(R^{n+\theta}_{\eta},\widehat{\varPi}^{n+1}_{p})+(R^{n+\theta}_{\gamma},\widehat{\varPi}^{n+1}_{T}).\label{eq-3-80-11}
		\end{align}}
		Applying the summation operation $\Delta t\sum_{n=0}^{l}$ to both sides of (\ref{eq-3-80-11}), we implies that  (\ref{eq-3-54}) holds. The proof is complete.
	\end{proof}
	\begin{theorem}\label{th-3-5}
		Assume that $\Delta t=O(h^{2})$ when $\theta=0$ and $\Delta t>0$ when $\theta=1$, then there holds the error estimate 
		
		\begin{eqnarray}\label{eq-3-75}
			&&\max_{0\leq n\leq l}[\sqrt{\mu}\|\varepsilon(\varPi^{n+1}_{\bm{{\rm u}}})\|_{L^{2}(\Omega)}+\sqrt{k_{6}}\|\Psi^{n+1}_{\xi}\|_{L^{2}(\Omega)}+\sqrt{k_{5}-k_{2}}\|\Psi^{n+\theta}_{\eta}\|_{L^{2}(\Omega)}\\
			&&+\sqrt{k_{3}-k_{2}}\|\Psi^{n+\theta}_{\gamma}\|_{L^{2}(\Omega)}]
			+[\Delta t \sum_{n=0}^{l}\|\widehat{\varPi}^{n+1}_{p}\|^{2}_{L^{2}(\Omega)}+\|\widehat{\varPi}^{n+1}_{T}\|^{2}_{L^{2}(\Omega)}]^{\frac{1}{2}}\nonumber\\
			&&\leq C_{1}(\tau)\Delta t+C_{2}(\tau)h^{2},\nonumber
		\end{eqnarray}
		where
		\begin{eqnarray*}
			&&C_{1}(\tau)=C(\|\eta_{t}\|^{2}_{L^{2}((0,\tau);L^{2}(\Omega))}+\|\gamma_{t}\|^{2}_{L^{2}((0,\tau);L^{2}(\Omega))})\nonumber\\
			&&+C(\|\eta_{tt}\|^{2}_{L^{2}((0,\tau);H^{1}(\Omega)')}+\|\gamma_{tt}\|^{2}_{L^{2}((0,\tau);H^{1}(\Omega)')}),\\
			&&C_{2}(\tau)=C(\|\xi\|_{L^{\infty}((0,\tau);H^{2}(\Omega))}+\|p\|_{L^{\infty}((0,\tau);H^{2}(\Omega))}+\|T\|_{L^{\infty}((0,\tau);H^{2}(\Omega))})\nonumber\\
			&&+C(\|\xi_{t}\|_{L^{2}((0,\tau);H^{2}(\Omega))}+\|p\|_{L^{2}((0,\tau);H^{2}(\Omega))}\nonumber\\
			&&+\|T\|_{L^{2}((0,\tau);H^{2}(\Omega))}
			+\|\operatorname{div}(\bm{{\rm u}})_{t}\|_{L^{2}((0,\tau);H^{2}(\Omega))})\nonumber\\
			&&+C(\|p_{t}\|_{L^{2}((0,\tau);H^{2}(\Omega))}+\|T_{t}\|_{L^{2}((0,\tau);H^{2}(\Omega))}).
		\end{eqnarray*}	
	\end{theorem}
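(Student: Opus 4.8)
The plan is to estimate the right-hand side of the error identity established in Lemma~\ref{lem-3-4}, absorb the ``good'' terms into the left-hand side, and then invoke the discrete Gronwall inequality. First I would bound each of the projection-error terms $\Phi,\varLambda$ that appear: by the approximation estimates for $\mathcal{Q}_h,\mathcal{S}_h,\mathcal{R}_h$ together with the regularity of the continuous solution from Theorems~\ref{th-2-3} and \ref{th-2-2-3-1}, all of these are $O(h^{s+1})$; the truncation terms $R^{n+\theta}_\eta,R^{n+\theta}_\gamma$ contribute $O(\Delta t)$ via the standard Taylor-remainder estimate $\|R^{n+1}_\eta\|_{H^1(\Omega)'}\le C\sqrt{\Delta t}\,\|\eta_{tt}\|_{L^2(t_n,t_{n+1};H^1(\Omega)')}$ (and the $\theta=1$ backward-Euler case is analogous with the appropriate remainder). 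For the terms involving $\operatorname{div} d_t\varPi^{n+1}_{\bu}$ and $\operatorname{div} d_t\varLambda^{n+1}_{\bu}$, I would apply the inf--sup condition exactly as in the proof of the uniqueness/stability theorem (see the estimate leading to \reff{118.6}) to trade $\|\Psi^{n+1}_\xi-\Psi^n_\xi\|_{L^2(\Omega)}$ against $\mu\Delta t\|d_t\varepsilon(\varPi^{n+1}_{\bu})\|_{L^2(\Omega)}$, which is controlled by the $\frac{\Delta t}{2}\mu\|d_t\varepsilon(\varPi^{n+1}_{\bu})\|^2_{L^2(\Omega)}$ term on the left after summation; this is precisely where the CFL-type condition $\Delta t=O(h^2)$ enters when $\theta=0$, because the $(1-\theta)$ terms $k_4\Delta t(\bm K d_t\nabla\varPi^{n+1}_\xi,\nabla\widehat\varPi^{n+1}_p)$ etc.\ must be handled with the inverse inequality \reff{eq-3-111} and then absorbed.

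Next I would treat the nonlinear convective terms $(\mathcal{N}(\nabla T^{n+\theta}_h)\cdot(\bm K\nabla E^{n+\theta}_p),\widehat\varPi^{n+1}_T)$ and $(\nabla E^{n+\theta}_T\cdot(\bm K\nabla p(t_{n+\theta})),\widehat\varPi^{n+1}_T)$. For the first, boundedness of the cut-off operator $\mathcal{N}$ by $N$ gives $|(\mathcal{N}(\nabla T^{n+\theta}_h)\cdot(\bm K\nabla E^{n+\theta}_p),\widehat\varPi^{n+1}_T)|\le Nk_M\|\nabla E^{n+\theta}_p\|_{L^2(\Omega)}\|\widehat\varPi^{n+1}_T\|_{L^2(\Omega)}$, and then $\nabla E^{n+\theta}_p=\nabla\widehat\varPi^{n+1}_p+\nabla(\text{projection errors})$, so a Young-inequality split puts a small multiple of $\|\nabla\widehat\varPi^{n+1}_p\|^2_{L^2(\Omega)}$ (absorbed into the $k_m$ term on the left) plus $O(h^{2(s+1)})$ plus a term $\|\widehat\varPi^{n+1}_T\|^2_{L^2(\Omega)}$ to be fed to Gronwall. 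For the second term I would use the assumed bound $\|\nabla p(t_{n+\theta})\|_{L^\infty(\Omega)}\le C$ (from the regularity theory, cf.\ the Remark after \reff{eq-22-52}) and again split $\nabla E^{n+\theta}_T=\nabla\widehat\varPi^{n+1}_T+\text{(projection errors)}$, absorbing the $\|\nabla\widehat\varPi^{n+1}_T\|^2_{L^2(\Omega)}$ part into the $\theta_m$ term. After collecting everything, I would apply $\Delta t\sum_{n=0}^l$, note $\widehat\varSigma^{-1}_h=0$ by the choice of initial data in \reff{eq-33-3} (the discrete initial values equal the projections), and arrive at an inequality of the form $\varSigma^l_h+\text{(dissipation)}\le C(\Delta t^2+h^{2(s+1)})+C\Delta t\sum_{n}\big(\|\Psi^{n+\theta}_\eta\|^2+\|\Psi^{n+\theta}_\gamma\|^2+\cdots\big)$, to which the discrete Gronwall lemma applies (valid for $\Delta t$ small enough, automatically compatible with the stated restrictions), yielding the stated bound with $s=1$.

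The main obstacle I anticipate is the simultaneous treatment, in the $\theta=0$ (Crank--Nicolson-type explicit coupling) case, of the terms $(1-\theta)k_4\Delta t(\bm K\nabla\widehat\varPi^{n+1}_p,d_t\nabla\varPi^{n+1}_\xi)$ and $(1-\theta)k_1\Delta t(\bm\Theta\nabla\widehat\varPi^{n+1}_T,d_t\nabla\varPi^{n+1}_\xi)$ together with the analogous $(1-\theta)(\Delta t)^2$ sums: these mix a gradient of $\widehat\varPi$ at the current level with a discrete time-difference of $\nabla\varPi_\xi$, so one cannot simply absorb them into the elliptic dissipation without first using $\|d_t\varPi^{n+1}_\xi\|_{L^2(\Omega)}\le \|d_t\Psi^{n+1}_\xi\|_{L^2(\Omega)}+\|d_t(\text{proj err})\|_{L^2(\Omega)}$, converting the spatial gradient of the discrete difference via the inverse inequality $\|\nabla d_t\varPi^{n+1}_\xi\|_{L^2}\le c_1h^{-1}\|d_t\varPi^{n+1}_\xi\|_{L^2}$ (as in \reff{117.5}--\reff{117.6}), and only then balancing the resulting $h^{-2}\Delta t^2$ factor against the $\frac{\Delta t}{2}k_6\|d_t\Psi^{n+1}_\xi\|^2_{L^2}$ term via the hypothesis $\Delta t=O(h^2)$. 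Getting the constants to line up so that the absorption is legitimate --- and checking that the remaining lower-order contributions really are $O(\Delta t^2+h^4)$ rather than $O(\Delta t)$ --- is the delicate bookkeeping step; the $\theta=1$ case is comparatively routine because these cross terms vanish and one is left with an unconditionally stable backward-Euler argument.
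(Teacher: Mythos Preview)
Your overall strategy matches the paper's proof: start from Lemma~\ref{lem-3-4}, bound each right-hand term, absorb the small multiples of $\|\nabla\widehat\varPi^{n+1}_p\|^2$ and $\|\nabla\widehat\varPi^{n+1}_T\|^2$ into the dissipation, use the cut-off bound $N$ and the $L^\infty$ control of $\nabla p$ for the convective terms, and close with discrete Gronwall. Your treatment of the $(1-\theta)$ cross terms (inverse inequality $+$ inf--sup $+$ the hypothesis $\Delta t=O(h^2)$) is exactly what the paper does, and you correctly identify this as the place where the CFL restriction enters.

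One point to correct: the inf--sup condition is \emph{not} the right tool for the first term $\Delta t\sum[(\Phi^{n+1}_\xi,\operatorname{div} d_t\varPi^{n+1}_{\bu})-(\operatorname{div} d_t\varLambda^{n+1}_{\bu},\Psi^{n+1}_\xi)]$. If you try to bound $(\Phi^{n+1}_\xi,\operatorname{div} d_t\varPi^{n+1}_{\bu})$ directly by Young's inequality and absorb $\|d_t\varepsilon(\varPi^{n+1}_{\bu})\|^2$ into the left side's $\tfrac{\Delta t}{2}\mu\|d_t\varepsilon(\varPi^{n+1}_{\bu})\|^2$, the required Young parameter is $O(\Delta t)$, which blows the $\|\Phi^{n+1}_\xi\|^2$ contribution up to $O(h^4/\Delta t)$ after summation --- not acceptable. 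The paper instead performs a summation by parts (Abel summation),
\[
\Delta t\sum_{n=0}^l(\Phi^{n+1}_\xi,\operatorname{div} d_t\varPi^{n+1}_{\bu})
=(\Phi^{l+1}_\xi,\operatorname{div}\varPi^{l+1}_{\bu})-\Delta t\sum_{n=0}^l(d_t\Phi^{n+1}_\xi,\operatorname{div}\varPi^{n}_{\bu}),
\]
shifting $d_t$ onto the projection error $\Phi_\xi$ and leaving only $\|\varepsilon(\varPi_{\bu})\|^2$ (not its time difference) to go to Gronwall. The same summation-by-parts trick is used for the second term $\Delta t\sum(d_t\Psi^{n+\theta}_\eta,\widehat\varLambda^{n+1}_p-\widehat\Phi^{n+1}_p)$ and, when $\theta=0$, for the third term $(\Delta t)^2\sum k_4(d_t^2\eta(t_{n+1}),\Psi^{n+1}_\xi)$; in the latter case it converts $d_t^2\eta$ to $d_t\eta$ and produces the $\|\eta_t\|_{L^2(L^2)}$ contribution to $C_1(\tau)$. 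With these Abel-summation steps in place, the rest of your plan goes through as written.
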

	\begin{proof}
		To derive the above inequality, we need to bound each term on the right-hand side of (\ref{eq-3-54}). Using the fact $\varPi^{0}_{\textbf{u}}, \varPi^{0}_{\xi}=0, \varPi^{-1}_{\eta}=0$ and $\varPi^{-1}_{\gamma}=0$, we have
		{\allowdisplaybreaks[2]
			\begin{align}\label{eq-3-77}
				&\varSigma^{l}_{h}+\Delta t\sum_{n=0}^{l}[(\bm{K}\nabla\widehat{\varPi}^{n+1}_{p},\nabla\widehat{\varPi}^{n+1}_{p})
				+(\bm{\Theta}\nabla\widehat{\varPi}^{n+1}_{T},\nabla\widehat{\varPi}^{n+1}_{T})\nonumber\\
				&+\frac{\Delta t}{2}(\mu\|d_{t}\varepsilon(\varPi^{n+1}_{\bm{{\rm u}}})\|^2_{L^{2}(\Omega)}+k_{6}\|d_{t}\Psi^{n+1}_{\xi}\|^2_{L^{2}(\Omega)}\nonumber\\
				&+(k_{5}-k_{2})\|d_{t}\Psi^{n+\theta}_{\eta}\|^2_{L^{2}(\Omega)}+(k_{3}-k_{2})\|d_{t}\Psi^{n+\theta}_{\gamma}\|^2_{L^{2}(\Omega)})]\nonumber\\
				&\leq \Delta t\sum_{n=0}^{l} [(\Phi^{n+1}_{\xi},\operatorname{div} d_{t}\varPi^{n+1}_{\bm{{\rm u}}})-(\operatorname{div} d_{t}\varLambda^{n+1}_{\bm{{\rm u}}},\Psi^{n+1}_{\xi})]\nonumber\\
				&+\Delta t\sum_{n=0}^{l}[(d_{t}\Psi^{n+\theta}_{\eta},\widehat{\varLambda}^{n+1}_{p}-\widehat{\Phi}^{n+1}_{p})+(d_{t}\Psi^{n+\theta}_{\gamma},\widehat{\varLambda}^{n+1}_{T}-\widehat{\Phi}^{n+1}_{T})]\nonumber\\
				&+(1-\theta)(\Delta t)^{2}\sum_{n=0}^{l}[k_{4}(d^{2}_{t}\eta(t_{n+1}),\Psi^{n+1}_{\xi})
				+k_{1}(d^{2}_{t}\gamma(t_{n+1}),\Psi^{n+1}_{\xi})]\nonumber\\
				&+(1-\theta)(\Delta t)^{2}\sum_{n=0}^{l}[k_{4}(\bm{K}d_{t}\nabla \varPi^{n+1}_{\xi},\nabla\widehat{\varPi}^{n+1}_{p})+k_{1}(\bm{\Theta} d_{t}\nabla \varPi^{n+1}_{\xi},\nabla\widehat{\varPi}^{n+1}_{T})]\nonumber\\
				&+\Delta t\sum_{n=0}^{l}[(\nabla E^{n+\theta}_{T}\cdot(\bm{K}\nabla p(t_{n+\theta})), \widehat{\varPi}^{n+1}_{T})
				+(\nabla T^{n+\theta}_{h}\cdot(\bm{K}\nabla E^{n+\theta}_{p}), \widehat{\varPi}^{n+1}_{T})]\nonumber\\
				&+\Delta t\sum_{n=0}^{l}[(R^{n+\theta}_{\eta},\widehat{\varPi}^{n+1}_{p})+(R^{n+\theta}_{\gamma},\widehat{\varPi}^{n+1}_{T})].
		\end{align}}
		We now estimate each term on the right-hand side of (\ref{eq-3-77}). The last term on the right-hand side of (\ref{eq-3-77}) can be bounded by
		\begin{eqnarray}\label{eq-3-78}
			&&|(R^{n+\theta}_{\eta},\widehat{\varPi}^{n+1}_{p})|=|(R^{n+\theta}_{\eta}, \widehat{\Phi}^{n+1}_{p}+\widehat{\Psi}^{n+1}_{p}-\widehat{\varLambda}^{n+1}_{p})|\leq\frac{3}{2}\|R^{n+\theta}_{\eta}\|^{2}_{L^{2}(\Omega)}\nonumber\\
			&&+\frac{1}{2}\|\widehat{\Phi}^{n+1}_{p}\|^{2}_{L^{2}(\Omega)}+\frac{1}{2}\|\widehat{\Psi}^{n+1}_{p}\|^{2}_{L^{2}(\Omega)}+\frac{1}{2}\|\widehat{\varLambda}^{n+1}_{p}\|^{2}_{L^{2}(\Omega)}\nonumber\\
			&&\leq \frac{\Delta t}{2}\|\eta_{tt}\|^{2}_{L^{2}((t_{n+\theta-1},t_{n+\theta});H^{1}(\Omega)')}+\frac{1}{2}\|\widehat{\Phi}^{n+1}_{p}\|^{2}_{L^{2}(\Omega)}\nonumber\\
			&&+\frac{1}{2}\|\widehat{\Psi}^{n+1}_{p}\|^{2}_{L^{2}(\Omega)}+\frac{1}{2}\|\widehat{\varLambda}^{n+1}_{p}\|^{2}_{L^{2}(\Omega)},
		\end{eqnarray}
		where we have used the fact that
		\begin{eqnarray}
			\|R^{n+\theta}_{\eta}\|^{2}_{H^{1}(\Omega)'}\leq\frac{\Delta t}{3}\int^{t_{n+\theta}}_{t_{n+\theta-1}}\|\eta_{tt}\|^{2}_{H^{1}(\Omega)'} dt.
		\end{eqnarray}
		Similarly, we have
		\begin{eqnarray}
			&&|(R^{n+\theta}_{\gamma},\widehat{\varPi}^{n+1}_{T})|=|(R^{n+\theta}_{\gamma}, \widehat{\Phi}^{n+1}_{T}+\widehat{\Psi}^{n+1}_{T}-\widehat{\varLambda}^{n+1}_{T})|\nonumber\\
			&&\leq \frac{\Delta t}{2}\|\gamma_{tt}\|^{2}_{L^{2}((t_{n+\theta-1},t_{n+\theta});H^{1}(\Omega)')}+\frac{1}{2}\|\widehat{\Phi}^{n+1}_{T}\|^{2}_{L^{2}(\Omega)}\nonumber\\
			&&+\frac{1}{2}\|\widehat{\Psi}^{n+1}_{T}\|^{2}_{L^{2}(\Omega)}+\frac{1}{2}\|\widehat{\varLambda}^{n+1}_{T}\|^{2}_{L^{2}(\Omega)}.
		\end{eqnarray}
		The first term on the right-hand side of (\ref{eq-3-77}) can be bounded by
		\begin{eqnarray}
			&&\Delta t\sum_{n=0}^{l} [(\Phi^{n+1}_{\xi},\operatorname{div} d_{t}\varPi^{n+1}_{\textbf{u}})-(\operatorname{div} d_{t}\varLambda^{n+1}_{\textbf{u}},\Psi^{n+1}_{\xi})]\nonumber\\
			&&=(\Phi^{l+1}_{\xi},\operatorname{div}\varPi^{l+1}_{\textbf{u}})-\Delta t\sum_{n=0}^{l}[(d_{t}\Phi^{n+1}_{\xi},\operatorname{div} \varPi^{n+1}_{\textbf{u}})+(\operatorname{div} d_{t}\varLambda^{n+1}_{\textbf{u}},\Psi^{n+1}_{\xi})]\nonumber\\
			&&\leq\frac{C}{\mu}\|\Phi^{l+1}_{\xi}\|^{2}_{L^{2}(\Omega)}+\frac{\mu}{4}\|\varepsilon(\varPi^{l+1}_{\textbf{u}})\|^{2}_{L^{2}(\Omega)}+\Delta t\sum_{n=0}^{l}\Big[\frac{C}{\mu}\|d_{t}\Phi^{n+1}_{\xi}\|^{2}_{L^{2}(\Omega)}\nonumber\\
			&&+\frac{\mu}{4}\|\varepsilon(\varPi^{n+1}_{\textbf{u}})\|^{2}_{L^{2}(\Omega)}+\frac{1}{k_{6}}\|\operatorname{div} d_{t}\varLambda^{n+1}_{\textbf{u}}\|^{2}_{L^{2}(\Omega)}+\frac{k_{6}}{4}\|\Psi^{n+1}_{\xi}\|^{2}_{L^{2}(\Omega)}\Big].
		\end{eqnarray}
		The second term on the right-hand side of (\ref{eq-3-77}) can be bounded by
		\begin{align}
			&\Delta t\sum_{n=0}^{l}(d_{t}\Psi^{n+\theta}_{\eta},\widehat{\varLambda}^{n+1}_{p}-\widehat{\Phi}^{n+1}_{p})\nonumber\\
			&=\Delta t\sum_{n=0}^{l}[\Delta t(d_t\Psi_{\eta}^{n+\theta},d_t(\widehat{\varLambda}^{n+1}_{p}-\widehat{\Phi}^{n+1}_{p}))
			+d_t(\Psi^{n+\theta}_{\eta},\widehat{\varLambda}^{n+1}_{p}-\widehat{\Phi}^{n+1}_{p})\nonumber\\
			&-(\Psi^{n+\theta}_{\eta},d_t(\widehat{\varLambda}^{n+1}_{p}-\widehat{\Phi}^{n+1}_{p}))]
			\leq (\Delta t)^{2}\sum^{l}_{n=0}\big[\frac{k_5-k_2}{4}\|d_t\Psi^{n+\theta}_{\eta}\|^{2}_{L^{2}(\Omega)}\nonumber\\
			&+\frac{2}{k_5-k_2}(\|d_t\widehat{\varLambda}^{n+1}_{p}\|^{2}_{L^{2}(\Omega)}+\|d_t\widehat{\Phi}^{n+1}_{p}\|^{2}_{L^{2}(\Omega)})\big]+\frac{k_5-k_2}{4}\|\Psi^{l+\theta}_{\eta}\|^{2}_{L^{2}(\Omega)}\nonumber\\
			&+\frac{2}{k_5-k_2}(\|\widehat{\varLambda}^{l+1}_{p}\|^{2}_{L^{2}(\Omega)}+\|\widehat{\Phi}^{l+1}_{p}\|^{2}_{L^{2}(\Omega)})+\Delta t\sum^{l}_{n=0}\big[\frac{k_5-k_2}{4}\|\Psi^{n+\theta}_{\eta}\|^{2}_{L^{2}(\Omega)}\nonumber\\
			&+\frac{2}{k_5-k_2}(\|d_t\widehat{\varLambda}^{n+1}_{p}\|^{2}_{L^{2}(\Omega)}+\|d_t\widehat{\Phi}^{n+1}_{p}\|^{2}_{L^{2}(\Omega)})\big],	
		\end{align}
		and
		\begin{align}
			&\Delta t\sum_{n=0}^{l}(d_{t}\Psi^{n+\theta}_{\gamma},\widehat{\varLambda}^{n+1}_{T}-\widehat{\Phi}^{n+1}_{T})\nonumber\\
			&\leq (\Delta t)^{2}\sum^{l}_{n=0}\big[\frac{k_3-k_2}{4}\|d_t\Psi^{n+\theta}_{\gamma}\|^{2}_{L^{2}(\Omega)}+\frac{2}{(k_3-k_2)}(\|d_t\widehat{\varLambda}^{n+1}_{T}\|^{2}_{L^{2}(\Omega)}\nonumber\\
			&+\|d_t\widehat{\Phi}^{n+1}_{T}\|^{2}_{L^{2}(\Omega)})\big]+\frac{k_3-k_2}{4}\|\Psi^{l+\theta}_{\gamma}\|^{2}_{L^{2}(\Omega)}\nonumber\\
			&
			+\frac{2}{k_3-k_2}(\|\widehat{\varLambda}^{l+1}_{T}\|^{2}_{L^{2}(\Omega)}+\|\widehat{\Phi}^{l+1}_{T}\|^{2}_{L^{2}(\Omega)})+\Delta t\sum^{l}_{n=0}\big[\frac{k_3-k_2}{4}\|\Psi^{n+\theta}_{\gamma}\|^{2}_{L^{2}(\Omega)}\nonumber\\
			&+\frac{2}{k_3-k_2}(\|d_t\widehat{\varLambda}^{n+1}_{T}\|^{2}_{L^{2}(\Omega)}+\|d_t\widehat{\Phi}^{n+1}_{T}\|^{2}_{L^{2}(\Omega)})\big].	
		\end{align}
	When $\theta=0$, the third and fourth terms of formula (\ref{eq-3-77}), to bound the third term on the right-hand side of (\ref{eq-3-77}), we firstly use the summation by parts formula and $d_{t}\eta_{h}(t_{0})=0$ and $d_{t}\gamma_{h}(t_{0})=0$ to get
		\begin{eqnarray}\label{eq-3-82}
			\sum_{n=0}^{l}(d^{2}_{t}\eta(t_{n+1}),\Psi^{n+1}_{\xi})=\frac{1}{\Delta t}(d_{t}\eta(t_{l+1}),\Psi^{l+1}_{\xi})-\sum^{l}_{n=1}(d_{t}\eta(t_{n}),d_{t}\Psi^{n+1}_{\xi}),
		\end{eqnarray}
		\begin{eqnarray}\label{eq-3-82-2}
			\sum_{n=0}^{l}(d^{2}_{t}\gamma(t_{n+1}),\Psi^{n+1}_{\xi})=\frac{1}{\Delta t}(d_{t}\gamma(t_{l+1}),\Psi^{l+1}_{\xi})-\sum^{l}_{n=1}(d_{t}\gamma(t_{n}),d_{t}\Psi^{n+1}_{\xi}).
		\end{eqnarray}
		Now, we bound each term on the right-hand side of (\ref{eq-3-82}) and (\ref{eq-3-82-2}) as follows:
		\begin{eqnarray}
			&&\frac{1}{\Delta t}(d_{t}\eta(t_{l+1}),\Psi^{l+1}_{\xi})\leq\frac{1}{\Delta t}\|d_{t}\eta(t_{l+1})\|_{L^{2}(\Omega)}\|\Psi^{l+1}_{\xi}\|_{L^{2}(\Omega)}\nonumber\\
			&&\leq\frac{2k_{4}}{k_{6}}\|\eta_{t}\|^{2}_{L^{2}((t_{l},t_{l+1});L^{2}(\Omega))}
			+\frac{k_{6}}{8k_{4}(\Delta t)^{2}}\|\Psi^{l+1}_{\xi}\|^{2}_{L^{2}(\Omega)},
		\end{eqnarray}
		and
		\begin{eqnarray}
			\frac{1}{\Delta t}(d_{t}\gamma(t_{l+1}),\Psi^{l+1}_{\xi}) \leq\frac{2k_{1}}{k_{6}}\|\gamma_{t}\|^{2}_{L^{2}((t_{l},t_{l+1});L^{2}(\Omega))}+\frac{k_{6}}{8k_{1}(\Delta t)^{2}}\|\Psi^{l+1}_{\xi}\|^{2}_{L^{2}(\Omega)}.
		\end{eqnarray}
		\begin{eqnarray}
			&&\sum^{l}_{n=1}(d_{t}\eta(t_{n}),d_{t}\Psi^{n+1}_{\xi})\leq\sum_{n=1}^{l}\|d_{t}\eta(t_{n})\|_{L^{2}(\Omega)}\|d_{t}\Psi^{n+1}_{\xi}\|_{L^{2}(\Omega)}\nonumber\\
			&&\leq \sum_{n=1}^{l}\big(\frac{k_{4}}{k_{6}}\|d_{t}\eta(t_{n})\|^{2}_{L^{2}(\Omega)}+\frac{k_{6}}{4k_{4}}\|d_{t}\Psi^{n+1}_{\xi}\|^{2}_{L^{2}(\Omega)}\big)\nonumber\\
			&&\leq\frac{k_{4}}{k_{6}}\|\eta_{t}\|^{2}_{L^{2}((0,\tau);L^{2}(\Omega))}+\sum_{n=1}^{l}\frac{k_{6}}{4k_{4}}\|d_{t}\Psi^{n+1}_{\xi}\|^{2}_{L^{2}(\Omega)},
		\end{eqnarray}
		and
		\begin{eqnarray}
			\sum^{l}_{n=1}(d_{t}\gamma(t_{n}),d_{t}\Psi^{n+1}_{\xi})
			\leq\frac{k_{1}}{k_{6}}\|\gamma_{t}\|^{2}_{L^{2}((0,\tau);L^{2}(\Omega))}+\sum_{n=1}^{l}\frac{k_{6}}{4k_{1}}\|d_{t}\Psi^{n+1}_{\xi}\|^{2}_{L^{2}(\Omega)}.
		\end{eqnarray}
		The fourth term of the right-hand side of (\ref{eq-3-77}) can be bounded by
		\begin{align}
			&\Delta t\sum_{n=0}^{l}k_{4}(\bm{K}\nabla\widehat{\varPi}^{n+1}_{p},d_{t}\nabla \varPi^{n+1}_{\xi})\nonumber\\
			&\leq \sum_{n=0}^{l}\frac{c_{1}k_{4}k_{M}\Delta t}{h}\|d_{t}\varPi^{n+1}_{\xi}\|_{L^{2}(\Omega)}\|\nabla\widehat{\varPi}^{n+1}_{p}\|_{L^{2}(\Omega)}\nonumber\\
			&\leq\frac{c_{1}k_{4}k_{M}\Delta t}{h\beta_{1}}\sum_{n=0}^{l}\sup_{\textbf{v}_{h}\in \textbf{V}_{h}}\frac{\mu(d_{t}\varepsilon(\varPi_{\textbf{u}}^{n+1}),\varepsilon(\textbf{v}_{h}))-(d_{t}\varLambda_{\xi}^{n+1},\nabla\cdot\textbf{v}_{h})}{\|\nabla\textbf{v}_{h}\|_{L^{2}(\Omega)}}\|\nabla\widehat{\varPi}^{n+1}_{p}\|_{L^{2}(\Omega)}\nonumber\\
			&\leq\sum_{n=0}^{l}\Big[\frac{4c^{2}_{1}k^{2}_{4}k^{2}_{M}\Delta t^{2}\mu^{2}}{k_{m}h^{2}\beta^{2}_{1}}\|d_{t}\varepsilon(\varPi_{\textbf{u}}^{n+1})\|^{2}_{L^{2}(\Omega)}+\frac{4c^{2}_{1}k^{2}_{4}k^{2}_{M}\Delta t^{2}}{k_{m}h^{2}\beta^{2}_{1}}\|d_{t}\varLambda_{\xi}^{n+1}\|^{2}_{L^{2}(\Omega)}\nonumber\\
			&+\frac{k_{m}}{8}\|\nabla\widehat{\varPi}^{n+1}_{p}\|^{2}_{L^{2}(\Omega)}\Big],
		\end{align}
		and
		\begin{eqnarray}\label{eq-3-91}
			&&\Delta t\sum_{n=0}^{l}k_{1}(\bm{\Theta}\nabla\widehat{\varPi}^{n+1}_{T}, d_{t}\nabla \varPi^{n+1}_{\xi})\nonumber\\
			&&\leq\sum_{n=0}^{l}\Big[\frac{4c^{2}_{1}k^{2}_{1}\theta^{2}_{M}\Delta t^{2}\mu^{2}}{\theta_{m}h^{2}\beta^{2}_{1}}\|d_{t}\varepsilon(\varPi_{\textbf{u}}^{n+1})\|^{2}_{L^{2}(\Omega)}+\frac{4c^{2}_{1}k^{2}_{1}\theta^{2}_{M}\Delta t^{2}}{\theta_{m}h^{2}\beta^{2}_{1}}\|d_{t}\varLambda_{\xi}^{n+1}\|^{2}_{L^{2}(\Omega)}\nonumber\\
			&&+\frac{\theta_{m}}{8}\|\nabla\widehat{\varPi}^{n+1}_{T}\|^{2}_{L^{2}(\Omega)}\Big].
		\end{eqnarray}
 	The fiveth term can be bounded by using \reff{eq-33-4}.
	\begin{align}
		&(\mathcal{N}(\nabla T^{n
		+\theta}_{h})\cdot(\bm{K}\nabla E^{n+\theta}_{p}), \widehat{\varPi}^{n+1}_{T})\nonumber\\
		&\leqslant(\mathcal{N}(\nabla T^{n+\theta}_{h})\cdot\bm{K}\nabla\widehat{E}^{n+1}_{p}, \widehat{\Phi}^{n+1}_{T}+\widehat{\Psi}^{n+1}_{T}-\widehat{\varLambda}^{n+1}_{T})\nonumber\\
		&=(\mathcal{N}(\nabla T^{n+\theta}_{h})\cdot\bm{K}\nabla\widehat{\varPi}^{n+1}_{p}, \widehat{\Phi}^{n+1}_{T}+\widehat{\Psi}^{n+1}_{T}-\widehat{\varLambda}^{n+1}_{T})\nonumber\\	
		&+(\mathcal{N}(\nabla T^{n+\theta}_{h})\cdot\bm{K}\nabla \widehat{\varLambda}^{n+1}_{p}, \widehat{\Phi}^{n+1}_{T}+\widehat{\Psi}^{n+1}_{T}-\widehat{\varLambda}^{n+1}_{T})\nonumber\\
		&\leq \frac{k_{m}}{8}\|\nabla\widehat{\varPi}^{n+1}_{p}\|^{2}_{L^{2}(\Omega)}
		+\frac{6N^2k^2_{M}}{k_{m}}(\|\widehat{\Phi}^{n+1}_{T}\|^{2}_{L^{2}(\Omega)}
		+\|\widehat{\Psi}^{n+1}_{T}\|^{2}_{L^{2}(\Omega)}+\|\widehat{\varLambda}^{n+1}_{T}\|^{2}_{L^{2}(\Omega)})\nonumber\\
		&+\frac{3N^2k^2_{M}}{2}\|\nabla\widehat{\varLambda}^{n+1}_{p}\|^{2}_{L^{2}(\Omega)}
		+\frac{1}{2}(\|\widehat{\Phi}^{n+1}_{T}\|^{2}_{L^{2}(\Omega)}
		+\|\widehat{\Psi}^{n+1}_{T}\|^{2}_{L^{2}(\Omega)}+\|\widehat{\varLambda}^{n+1}_{T}\|^{2}_{L^{2}(\Omega)}),
	\end{align}
and
		\begin{eqnarray}
		   	&&(\nabla E^{n+\theta}_{T}\cdot\bm{K}\nabla p(t_{n+\theta}), \widehat{\varPi}^{n+1}_{T})\nonumber\\
			&&\leqslant(\nabla \widehat{E}^{n+1}_{T}\cdot\bm{K}\nabla p(t_{n+\theta}),\widehat{\Phi}^{n+1}_{T}+\widehat{\Psi}^{n+1}_{T}-\widehat{\varLambda}^{n+1}_{T})\nonumber\\
			&&=(\nabla \widehat{\varPi}^{n+1}_{T}\cdot\bm{K}\nabla p(t_{n+\theta}),\widehat{\Phi}^{n+1}_{T}+\widehat{\Psi}^{n+1}_{T}-\widehat{\varLambda}^{n+1}_{T})\nonumber\\
			&&+(\nabla \widehat{\varLambda}^{n+1}_{T}\cdot\bm{K}\nabla p(t_{n+\theta}),\widehat{\Phi}^{n+1}_{T}+\widehat{\Psi}^{n+1}_{T}-\widehat{\varLambda}^{n+1}_{T})\nonumber\\
			&&\leq \|\nabla \widehat{\varPi}^{n+1}_{T}\|_{L^{2}(\Omega)}\|\bm{K}\nabla p(t_{n+\theta})\|_{L^{\infty}(\Omega)}\big(\|\widehat{\Phi}^{n+1}_{T}\|_{L^{2}(\Omega)}
			+\|\widehat{\Psi}^{n+1}_{T}\|_{L^{2}(\Omega)}+\|\widehat{\varLambda}^{n+1}_{T}\|_{L^{2}(\Omega)}\big)\nonumber\\
			&&+ \|\nabla \widehat{\varLambda}^{n+1}_{T}\|_{L^{2}(\Omega)}\|\bm{K}\nabla p(t_{n+\theta})\|_{L^{\infty}(\Omega)}\big(\|\widehat{\Phi}^{n+1}_{T}\|_{L^{2}(\Omega)}
			+\|\widehat{\Psi}^{n+1}_{T}\|_{L^{2}(\Omega)}+\|\widehat{\varLambda}^{n+1}_{T}\|_{L^{2}(\Omega)}\big)\nonumber\\
			&&\leq \frac{\theta_{m}}{8}\|\nabla\widehat{\varPi}^{n+1}_{T}\|^{2}_{L^{2}(\Omega)}
			+\frac{6\delta^2_{1}k^2_{M}}{\theta_{m}}(\|\widehat{\Phi}^{n+1}_{T}\|^{2}_{L^{2}(\Omega)}
			+\|\widehat{\Psi}^{n+1}_{T}\|^{2}_{L^{2}(\Omega)}+\|\widehat{\varLambda}^{n+1}_{T}\|^{2}_{L^{2}(\Omega)})\nonumber\\
			&&+ \frac{3\delta^2_{1}k^2_{M}}{2}\|\nabla\widehat{\varLambda}^{n+1}_{T}\|^{2}_{L^{2}(\Omega)}
			+\frac{1}{2}(\|\widehat{\Phi}^{n+1}_{T}\|^{2}_{L^{2}(\Omega)}
			+\|\widehat{\Psi}^{n+1}_{T}\|^{2}_{L^{2}(\Omega)}+\|\widehat{\varLambda}^{n+1}_{T}\|^{2}_{L^{2}(\Omega)}).\label{1.1111}
		\end{eqnarray}
		Substituting (\ref{eq-3-78})-(\ref{1.1111}) into (\ref{eq-3-77}), using the discrete Gronwall lemma, we get
		{\allowdisplaybreaks[2]
			\begin{align}
				&\mu\|\varepsilon(\varPi^{l+1}_{\textbf{u}})\|^{2}_{L^{2}(\Omega)}+k_{6}\|\Psi^{l+1}_{\xi}\|^{2}_{L^{2}(\Omega)}+(k_{5}-k_{2})\|\Psi^{l+\theta}_{\eta}\|^{2}_{L^{2}(\Omega)}+(k_{3}-k_{2})\|\Psi^{l+\theta}_{\gamma}\|^{2}_{L^{2}(\Omega)}\nonumber\\
				&+\Delta t\sum_{n=0}^{l}(\|\bm{K}\nabla\widehat{\varPi}^{n+1}_{p}\|^{2}_{L^{2}(\Omega)}
				+\|\bm{\Theta}\nabla\widehat{\varPi}^{n+1}_{T}\|^{2}_{L^{2}(\Omega)})\nonumber\\
				&\leq \frac{\Delta t^{2}}{2}\|\eta_{tt}\|^{2}_{L^{2}((0,\tau);H^{1}(\Omega)^{'})}+\frac{\Delta t^{2}}{2}\|\gamma_{tt}\|^{2}_{L^{2}((0,\tau);H^{1}(\Omega)^{'})}\nonumber\\
				&+\Delta t^2(\frac{2k^{2}_{4}}{k_{6}}\|\eta_{t}\|^{2}_{L^{2}((0,\tau);L^{2}(\Omega))}+\frac{2k^{2}_{1}}{k_{6}}\|\gamma_{t}\|^{2}_{L^{2}((0,\tau);L^{2}(\Omega))})\nonumber\\
				&+C\big[\|\Phi^{l+1}_{\xi}\|^{2}_{L^{2}(\Omega)}
				+\|\widehat{\varLambda}^{l+1}_{p}\|^{2}_{L^{2}(\Omega)}+\|\widehat{\Phi}^{l+1}_{p}\|^{2}_{L^{2}(\Omega)}+\|\widehat{\varLambda}^{l+1}_{T}\|^{2}_{L^{2}(\Omega)}+\|\widehat{\Phi}^{l+1}_{T}\|^{2}_{L^{2}(\Omega)}\nonumber\\
				&+\Delta t\sum_{n=1}^{l}(\|d_{t}\varLambda^{n+1}_{\xi}\|^2_{L^{2}(\Omega)}+\|d_{t}\Phi^{n+1}_{\xi}\|^2_{L^{2}(\Omega)})+
				\Delta t\sum_{n=0}^{l}\|\operatorname{div} d_{t}\varLambda^{n+1}_{\textbf{u}}\|^{2}_{L^{2}(\Omega)}\nonumber\\
				&+\Delta t\sum_{n=0}^{l}(\|d_{t}\widehat{\varLambda}^{n+1}_{p}\|^{2}_{L^{2}(\Omega)}+\|d_{t}\widehat{\Phi}^{l+1}_{p}\|^{2}_{L^{2}(\Omega)}+\|d_{t}\widehat{\varLambda}^{n+1}_{T}\|^{2}_{L^{2}(\Omega)}+\|d_{{t}}\widehat{\Phi}^{n+1}_{T}\|^{2}_{L^{2}(\Omega)})\no\\
				&+\Delta t\sum_{n=0}^{l}(\|\widehat{\varLambda}^{n+1}_{p}\|^2_{L^{2}(\Omega)}+\|\widehat{\Phi}^{n+1}_{p}\|^2_{L^{2}(\Omega)}
				+\|\widehat{\varLambda}^{n+1}_{T}\|^2_{L^{2}(\Omega)}+\|\widehat{\Phi}^{n+1}_{T}\|^2_{L^{2}(\Omega)})\nonumber\\
				&+(\Delta t)^{2}\sum_{n=0}^{l}(\|\widehat{\varLambda}^{n+1}_{p}\|^2_{L^{2}(\Omega)}+\|\widehat{\Phi}^{n+1}_{p}\|^2_{L^{2}(\Omega)}
				+\|\widehat{\varLambda}^{n+1}_{T}\|^2_{L^{2}(\Omega)}+\|\widehat{\Phi}^{n+1}_{T}\|^2_{L^{2}(\Omega)})\big]
		\end{align}}
		provided that $\Delta t <
		\frac{k_{m}\theta_{m}\beta^{2}_{1}h^{2}}{8c^{2}_{1}\mu(k^{2}_{M}\theta_{m}k^{2}_{4}+k^{2}_{1}\theta^{2}_{M}k_{m})}$ when $\theta=0$, but it hold for all $\Delta t>0$ when $\theta=1$. Hence, (\ref{eq-3-75}) follows from the approximation properties of the projection operators $\mathcal{Q}_{h},\mathcal{R}_{h}$ and $\mathcal{S}_{h}$. The proof is complete. 
	\end{proof}

	We conclude this section by stating the main theorem as follows.
\begin{theorem}\label{th-3-6}
	Under the assumption of Theorem \ref{th-3-5}, the solution of the MFEM satisfies the following error estimates
	\begin{align}
		&\max_{0\leq n\leq l}[\sqrt{\mu}\|\nabla(\bm{{\rm u}}(t_{n})-\bm{{\rm u}}^{n}_{h})\|_{L^{2}(\Omega)}+\sqrt{k_{6}}\|\xi(t_{n})-\xi^{n}_{h}\|_{L^{2}(\Omega)}\nonumber\\
		&+\sqrt{k_{5}-k_{2}}\|\eta(t_{n})-\eta^{n}_{h}\|_{L^{2}(\Omega)}+\sqrt{k_{3}-k_{2}}\|\gamma(t_{n})-\gamma^{n}_{h}\|_{L^{2}(\Omega)}]\nonumber\\
		&\leq\widehat{C}_{1}(\tau)\Delta t+\widehat{C}_{2}(\tau)h^{2},\\
		&[\Delta t\sum_{n=0}^{l}\|\nabla(p(t_{n})-p^{n}_{h})\|^2_{L^{2}(\Omega)}]+\|\nabla(T(t_{n})-T^{n}_{h})\|^2_{L^{2}(\Omega)}]^{\frac{1}{2}}\nonumber\\
		&\leq \widehat{C}_{1}(\tau)\Delta t+\widehat{C}_{2}(\tau)h
	\end{align}
	provided that $\Delta t=O(h^{2})$ when $\theta=0$ and $\Delta t>0$ when $\theta=1$, where
	\begin{align*}
		\widehat{C}_{1}(\tau):&=C_{1}(\tau),\\
		\widehat{C}_{2}(\tau):&=C_{2}(\tau)+\|\xi\|_{L^{\infty}((0,\tau);H^{2}(\Omega))}+\|\eta\|_{L^{\infty}((0,\tau);H^{2}(\Omega))}\\
		&+\|\gamma\|_{L^{\infty}((0,\tau);H^{2}(\Omega))}+\|\nabla \bm{{\rm u}}\|_{L^{\infty}((0,\tau);H^{2}(\Omega))}.
	\end{align*}
\end{theorem}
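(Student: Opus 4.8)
The plan is to obtain Theorem~\ref{th-3-6} from Theorem~\ref{th-3-5} by a triangle inequality, splitting every global error into a projection part (controlled by the approximation properties of $\mathcal{Q}_h,\mathcal{S}_h,\mathcal{R}_h$) and a discrete part (controlled by Theorem~\ref{th-3-5}). First I would use the decompositions introduced just before Lemma~\ref{lem-3-4},
\[
\bu(t_n)-\bu^n_h=\varLambda^n_{\bu}+\varPi^n_{\bu},\qquad \xi(t_n)-\xi^n_h=\Phi^n_\xi+\Psi^n_\xi,
\]
and likewise $\eta(t_n)-\eta^n_h=\Phi^n_\eta+\Psi^n_\eta$, $\gamma(t_n)-\gamma^n_h=\Phi^n_\gamma+\Psi^n_\gamma$, together with $p(t_n)-p^n_h=\varLambda^n_p+\varPi^n_p$ and $T(t_n)-T^n_h=\varLambda^n_T+\varPi^n_T$. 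Then $\sqrt\mu\|\nabla(\bu(t_n)-\bu^n_h)\|_{L^2(\Omega)}\le\sqrt\mu\|\nabla\varLambda^n_{\bu}\|_{L^2(\Omega)}+\sqrt\mu\|\nabla\varPi^n_{\bu}\|_{L^2(\Omega)}$, and analogously for the other unknowns, so it suffices to bound the two families of terms separately.

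For the projection parts I would invoke only the standard estimates recalled above. Since the Taylor--Hood pair uses $\bP_2$ for $\bu$ and $P_1$ for $\xi,\eta,\gamma$ (hence for $p,T$), and since the regularity gathered in $\widehat C_2(\tau)$ includes $\nabla\bu\in L^\infty(0,\tau;\bH^2(\Omega))$ and $\xi,\eta,\gamma,p,T\in L^\infty(0,\tau;H^2(\Omega))$, one gets $\|\nabla\varLambda^n_{\bu}\|_{L^2(\Omega)}\le Ch^2\|\bu\|_{H^3(\Omega)}$ and $\|\Phi^n_\xi\|_{L^2(\Omega)}+\|\Phi^n_\eta\|_{L^2(\Omega)}+\|\Phi^n_\gamma\|_{L^2(\Omega)}\le Ch^2$, while for the fluxes only first order survives, $\|\nabla\varLambda^n_p\|_{L^2(\Omega)}+\|\nabla\varLambda^n_T\|_{L^2(\Omega)}\le Ch$, which is exactly why the $p,T$ gradient estimate is only $O(h)$. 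Taking $L^\infty$ in $t$ of the exact solution makes these uniform in $n$. For the discrete parts, Theorem~\ref{th-3-5} gives, uniformly in $l$,
\[
\max_{0\le n\le l}\Big[\sqrt\mu\|\varepsilon(\varPi^{n+1}_{\bu})\|_{L^2(\Omega)}+\sqrt{k_6}\|\Psi^{n+1}_\xi\|_{L^2(\Omega)}+\sqrt{k_5-k_2}\|\Psi^{n+\theta}_\eta\|_{L^2(\Omega)}+\sqrt{k_3-k_2}\|\Psi^{n+\theta}_\gamma\|_{L^2(\Omega)}\Big]+\Big[\Delta t\sum_{n=0}^l\big(\|\nabla\widehat\varPi^{n+1}_p\|_{L^2(\Omega)}^2+\|\nabla\widehat\varPi^{n+1}_T\|_{L^2(\Omega)}^2\big)\Big]^{1/2}\le C_1(\tau)\Delta t+C_2(\tau)h^2 .
\]
Korn's inequality on $\bH^1_\perp(\Omega)$ (so $\|\nabla\varPi^n_{\bu}\|_{L^2(\Omega)}\le c_2(1+c_1)\|\varepsilon(\varPi^n_{\bu})\|_{L^2(\Omega)}$, valid since $\varPi^n_{\bu}\in\bV^k_h\subset\bH^1_\perp(\Omega)$) converts the $\varepsilon$-bound into a gradient bound for $\bu$, the $\Psi$-terms give the $\xi,\eta,\gamma$ bounds once the $\theta$-shifted superscripts $n+\theta$ are absorbed into the maximum over $n$, and the $n=0$ initialization error is handled by the approximation properties applied to $\bu_0,p_0,T_0$ in \reff{eq-33-3}.

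The step requiring genuine care --- and what I expect to be the main obstacle --- is translating the hatted quantities $\widehat\varPi^{n+1}_p=k_4\varPi^{n+1}_\xi+k_5\varPi^{n+\theta}_\eta+k_2\varPi^{n+\theta}_\gamma$ and $\widehat\varPi^{n+1}_T$ of Theorem~\ref{th-3-5} into gradient errors of $p$ and $T$ at the grid times $t_n$, for both choices of $\theta$. When $\theta=1$, \reff{100.3} gives $p^{n+1}_h=k_4\xi^{n+1}_h+k_5\eta^{n+1}_h+k_2\gamma^{n+1}_h$, so $p(t_{n+1})-p^{n+1}_h=\widehat E^{n+1}_p$ splits into a projection part (bounded by $Ch$ in $H^1$ via the approximation estimates for $\mathcal{Q}_h$ and $\mathcal{S}_h$) plus $\widehat\varPi^{n+1}_p$, whose $\Delta t\sum$-norm is precisely what the displayed bound controls; one only needs to note that $\widehat\varPi_p$ mixes the elliptic projection of $\xi$ with the $L^2$ projections of $\eta,\gamma$, the discrepancy between $\mathcal{Q}_h$ and $\mathcal{S}_h$ on $\xi$ being $O(h)$ in $H^1$ and hence harmless. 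When $\theta=0$, the same identity with a one-step superscript lag makes $p(t_n)-p^n_h$ equal to $k_4E^n_\xi+k_5E^{n-1}_\eta+k_2E^{n-1}_\gamma$, i.e.\ a $\widehat E$-type quantity, and the mesh constraint $\Delta t=O(h^2)$ --- the very hypothesis under which Theorem~\ref{th-3-5} holds in that case --- is what lets the discrete contribution be absorbed. Collecting the $O(\Delta t)$ terms into $\widehat C_1(\tau)=C_1(\tau)$ and the remaining $O(h^2)$ (resp.\ $O(h)$) terms into $\widehat C_2(\tau)$, augmented by the projection constants for $\xi,\eta,\gamma,\nabla\bu$ exactly as in the statement, then yields the two asserted error estimates, and the proof is complete.
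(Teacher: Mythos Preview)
Your proposal is correct and follows essentially the same approach as the paper: split each error via the triangle inequality into a projection part handled by the approximation properties of $\mathcal{Q}_h,\mathcal{S}_h,\mathcal{R}_h$ and a discrete part handled by Theorem~\ref{th-3-5}. You are in fact more careful than the paper's two-line proof, which simply cites the decompositions and the projection estimates without spelling out the Korn step, the $\theta$-shifted superscripts, or the passage from the hatted quantities $\widehat\varPi^{n+1}_p,\widehat\varPi^{n+1}_T$ to the actual $p,T$ errors.
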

\begin{proof}
	The above estimates follow immediately from an application of the triangle inequality on
	\begin{eqnarray*}
		&&\textbf{u}(t_{n})-\textbf{u}^{n}_{h}=\varLambda^{n}_{\textbf{u}}+\varPi^{n}_{\textbf{u}},
		\quad\xi(t_{n})-\xi^{n}_{h}=\Phi^{n}_{\xi}+\Psi^{n}_{\xi},\\
		&&\xi(t_{n})-\xi^{n}_{h}=\varLambda^{n}_{\xi}+\varPi^{n}_{\xi},
		\quad\eta(t_{n})-\eta^{n}_{h}=\Phi^{n}_{\eta}+\Psi^{n}_{\eta},\\
		&&\gamma(t_{n})-\gamma^{n}_{h}=\Phi^{n}_{\gamma}+\Psi^{n}_{\gamma},
		\quad p(t_{n})-p^{n}_{h}=\Phi^{n}_{p}+\Psi^{n}_{p},\\
		&&p(t_{n})-p^{n}_{h}=\varLambda^{n}_{p}+\varPi^{n}_{p},
		\quad T(t_{n})-T^{n}_{h}=\Phi^{n}_{T}+\Psi^{n}_{T},\\
		&&T(t_{n})-T^{n}_{h}=\varLambda^{n}_{T}+\varPi^{n}_{T}
	\end{eqnarray*}
	and appealing to (\ref{eq-3-47}), (\ref{eq-3-47-2}), (\ref{eq-3-47-3}) and Theorem \ref{th-3-5}. The proof is complete.
\end{proof}

	\section{ Numerical tests}\label{sec-4}		
  	 In this section, we present three numerical tests to verify
	the theoretical results for the proposed numerical methods.
	
		\textbf{Test 1.} This test problem is same as one of \cite{Brun2020}, we take $\Omega=[0,1]\times[0,1]$ , $\Gamma_{1} = \{(x,0);~0\leq x\leq1\}$,\ $~\Gamma_{2}= \{(1,y);~0\leq y\leq1 \}$,\ $\Gamma_{3}= \{(x,1);~0\leq x\leq1 \}$,\ $~\Gamma_{4}= \{ (0,y);~0\leq y\leq1 \}$, 
		 and prescribe the following smooth solutions for the temperature,  pressure and displacement:
	\begin{equation}
		\begin{array}{l}
			T(x,y, t)=t x\left(1-x\right) y\left(1-y\right), \\
			p(x,y, t)=t x\left(1-x\right) y\left(1-y\right) ,\\
			\bm{{\rm u}}(x,y, t)=t x\left(1-x\right) y\left(1-y\right)(1,1)^{'},
		\end{array}
	\end{equation}
\begin{table}[H]
	\centering
	\caption{ Physical parameters}\label{table1}
	\begin{tabular}{c l c }
		\hline
		Parameter   &  \quad Description      &   \quad   Value    \\
		\hline
		$a_0$       &\quad Effective thermal capacity               &  \quad 2e5\\
		$b_0$       &\quad Thermal dilation coefficient              &  \quad 1e5 \\
		$c_0$       &\quad Constrained specific storage coefficient &  \quad 2e5 \\
		$\alpha$    &\quad Biot-Willis constant                     &  \quad  0.01 \\
		$\beta$     &\quad Thermal stress coefficient.               &  \quad  0.01\\
		$\bm{K}$         &\quad Permeability tensor                      &  \quad $0.1I$\\
		$\bm{\Theta}$     &\quad Effective thermal conductivity           &  \quad $0.1I$\\
		$E$         &\quad Young's modulus                          &  \quad  1.25e5\\
		$\nu$       &\quad Poisson ratio                            &  \quad  0.25\\
		\hline
	\end{tabular}
\end{table}
	 We consider the problem (\ref{eq-2-1})-(\ref{eq-2-3}) with the following source functions:
	 	\begin{align*}
	 	\bm{{\rm f}_1} &=2t(\mu+\lambda)(y-y^2)-\frac{\mu }{2}(-2t(x-x^2)+t(1-2x)(1-2y))\no\\
	 	&-\lambda t(1-2x)(1-2y)+(\alpha+\beta)t(1-2x)(y-y^2),\\
	 	\bm{{\rm f}_2} &=2t(\mu+\lambda)(x-x^2)-\frac{\mu }{2}(-2t(y-y^2)+t(1-2x)(1-2y))\no\\
	 	&-\lambda t(1-2x)(1-2y)+(\alpha+\beta)t(1-2y)(x-x^2),\\
	 	\phi &=(a_{0}-b_{0}(x-x^{2})(y-y^{2})+\beta(1-2x)(y-y^{2})\no\\
	 	&+\beta(x-x^{2})(1-2y)+0.2t((y-y^{2})+(x-x^{2}))\no\\
	 	&-0.1(t^{2}(1-2x)^{2}(y-y^{2})^{2}+t^{2}(x-x^{2})^{2})(1-2y)^{2}),\\
	 	g &=(c_{0}-b_{0})(x-x^{2})(y-y^{2})+\alpha(1-2x)(y-y^{2})\no\\
	 	&+\alpha(x-x^{2})(1-2y)+0.2t((y-y^{2})+(x-x^{2})).
	 	\end{align*}
	The boundary and initial conditions are given by
	\begin{eqnarray}
		p=tx(1-x)y(1-y)  \quad &\mathrm{on} & \quad\partial\Omega_{\tau},\nonumber\\
		T=tx(1-x)y(1-y)  \quad&\mathrm{on} & \quad\partial\Omega_{\tau},\nonumber\\
		u_1=tx(1-x)y(1-y) \quad&\mathrm{on}& \quad\Gamma_j\times (0,\tau),~j=3,~4,\nonumber\\
		u_2=tx(1-x)y(1-y)  \quad&\mathrm{on} & \quad\Gamma_j\times(0,\tau),~j=3,~4, \nonumber\\
		\sigma(\pmb{\tau})\bm{n}-\alpha pI\bm{n}-\beta TI\bm{n}=\textbf{f}_1 \quad &\mathrm{on}& \quad\partial\Omega_{\tau},\nonumber\\
		\textbf{u}(x,y,0)=\bm{0},~ p(x,y,0)=0,~T(x,y,0)=0 \quad&\mathrm{in}& \quad\Omega.\nonumber
	\end{eqnarray}
where $\bm{{\rm f}}_{1}=(((\lambda+\mu)t(1-2x)(y-y^{2})+\lambda t(1-2y)(x-x^{2}))n_{1}+(\frac{\mu}{2}t(x-x^{2})(1-2y)+\frac{\mu}{2}t(1-2x)(y-y^{2}))n_{2},\\
~~~~~~~~~~~~~~~~~((\lambda+\mu)t(1-2y)(x-x^{2})+\lambda t(1-2x)(y-y^{2}))n_{1}+(\frac{\mu}{2}t(x-x^{2})(1-2y)+\frac{\mu}{2}t(1-2x)(y-y^{2}))n_{2})^{'}$.

	\begin{table}[htbp]
	\vspace{-2.0em}
	\begin{center}
		\caption{Error and convergence rates of $u_h^n$, $p_h^n$, $T_h^n$}\label{table2}
		\resizebox{\textwidth}{12mm}{
			\begin{tabular}{ccccccccccccc}
				\hline
				$h$  & $\frac{\|e_u\|_{L^2(\Omega)}}{\|u\|_{L^2(\Omega)}}$  &  CR  &  $\frac{\|e_u\|_{H^1(\Omega)}}{\|u\|_{H^1(\Omega)}}$  &  CR & $\frac{\|e_p\|_{L^2(\Omega)}}{\|p\|_{L^2(\Omega)}}$ & CR  &  $\frac{\|e_p\|_{H^1(\Omega)}}{\|p\|_{H^1(\Omega)}}$  &  CR  &  $\frac{\|e_T\|_{L^2(\Omega)}}{\|T\|_{L^2(\Omega)}}$  &  CR  & $\frac{\|e_T\|_{H^1(\Omega)}}{\|T\|_{H^1(\Omega)}}$ & CR \\ 
				\hline
				$1/4$   &0.0079&      &0.0541   &      &0.0779    &      &0.4324&      &0.0779&      &0.4324&    \\
				$1/8$   &9.3526e-04&  3.0829    &0.0139    &  1.9590    &0.0168    & 2.2123     &0.2099&  1.0424    &0.0168& 2.2123     &0.2099& 1.0424   \\
				$1/16$  &1.1146e-04&3.0689 &0.0035    &1.9821&0.0038    &2.1379&0.1034&1.0215&0.0038&2.1379&0.1034&1.0215\\
				$1/32$  &1.3597e-05&3.0352&8.8573e-04&1.9910&9.0225e-04&2.0817&0.0514&1.0099&9.0225e-04&2.0817&0.0514&1.0099\\
				$1/64$  &1.6818e-06&3.0152&2.2214e-04&1.9954&2.1868e-04&2.0451&0.0256&1.0046&2.1868e-04&2.0451&0.0256&1.0046\\
				\hline
		\end{tabular}}
	\end{center}
\end{table}
\begin{table}[htbp]
	\vspace{-1.0em}
	\begin{center}
		\caption{Order of convergence of time discretization of Test 1 }\label{table211}
		\resizebox{\textwidth}{12mm}{
			\begin{tabular}{ccccccccccc}
				\hline
				$\Delta t$ & $\left\|\mathbf{u}_{h}^{\Delta t}-\mathbf{u}_{h}^{\frac{1}{2} \Delta t}\right\|_{L^2(\Omega)}$ & $\rho_{\Delta t, \mathbf{u}_{h}}$ & $\left\|p_{h}^{\Delta t}-p_{h}^{ \frac{1}{2} \Delta t}\right\|_{L^2(\Omega)}$ & $\rho_{\Delta t, p_h}$ & $\left\|T_{h}^{\Delta t}-T_{h}^{\frac{1}{2} \Delta t}\right\|_{L^2(\Omega)}$ & $\rho_{\Delta t, T_h}$ \\
				\hline
				$\frac{1}{10}$ & $1.4312e-10$ & $2.0000$ & $2.8438e-09$ & $1.9997$ & $2.8310e-09$ & $1.9995$ \\
				$\frac{1}{20}$ & $7.1559e-11$ & $2.0000$ & $1.4221e-09$ & $1.9999$ & $1.4159e-09$ & $1.9998$ \\
				$\frac{1}{40}$ & $3.5779e-11$ & $2.0000$ & $7.1109e-10$ & $1.9999$ & $7.0802e-10$ & $1.9999$ \\
				$\frac{1}{80}$ & $1.7890e-11$ &          & $3.5556e-10$ &          & $3.5403e-10$ & \\
				\hline
		\end{tabular}}
	\end{center}
\end{table}
		\begin{figure}[htbp]
		\subfigure[]{
			\centering
			\includegraphics[width=2.5in]{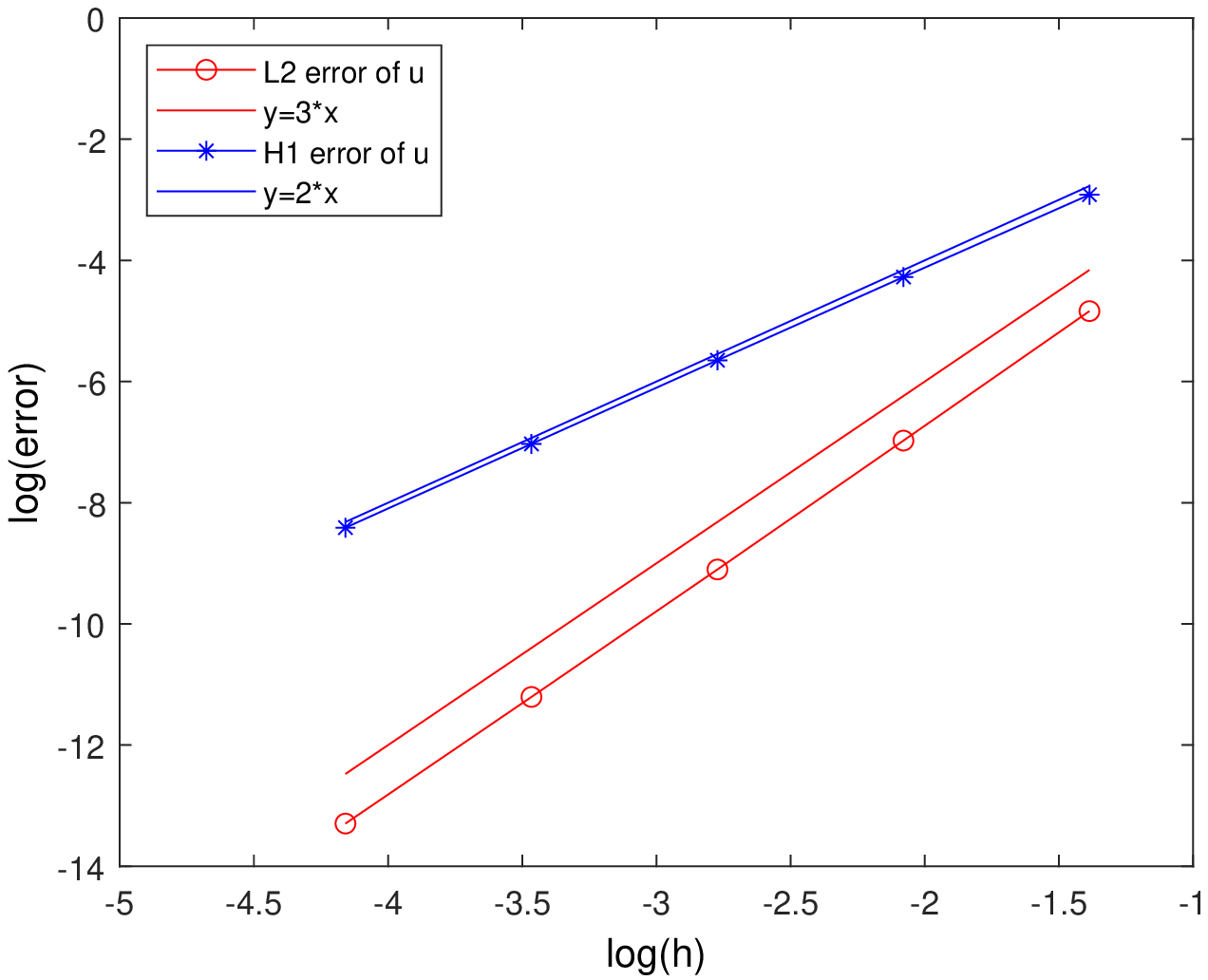}
			\label{fig111}
		}
		\subfigure[]{
			\centering
			\includegraphics[width=2.5in]{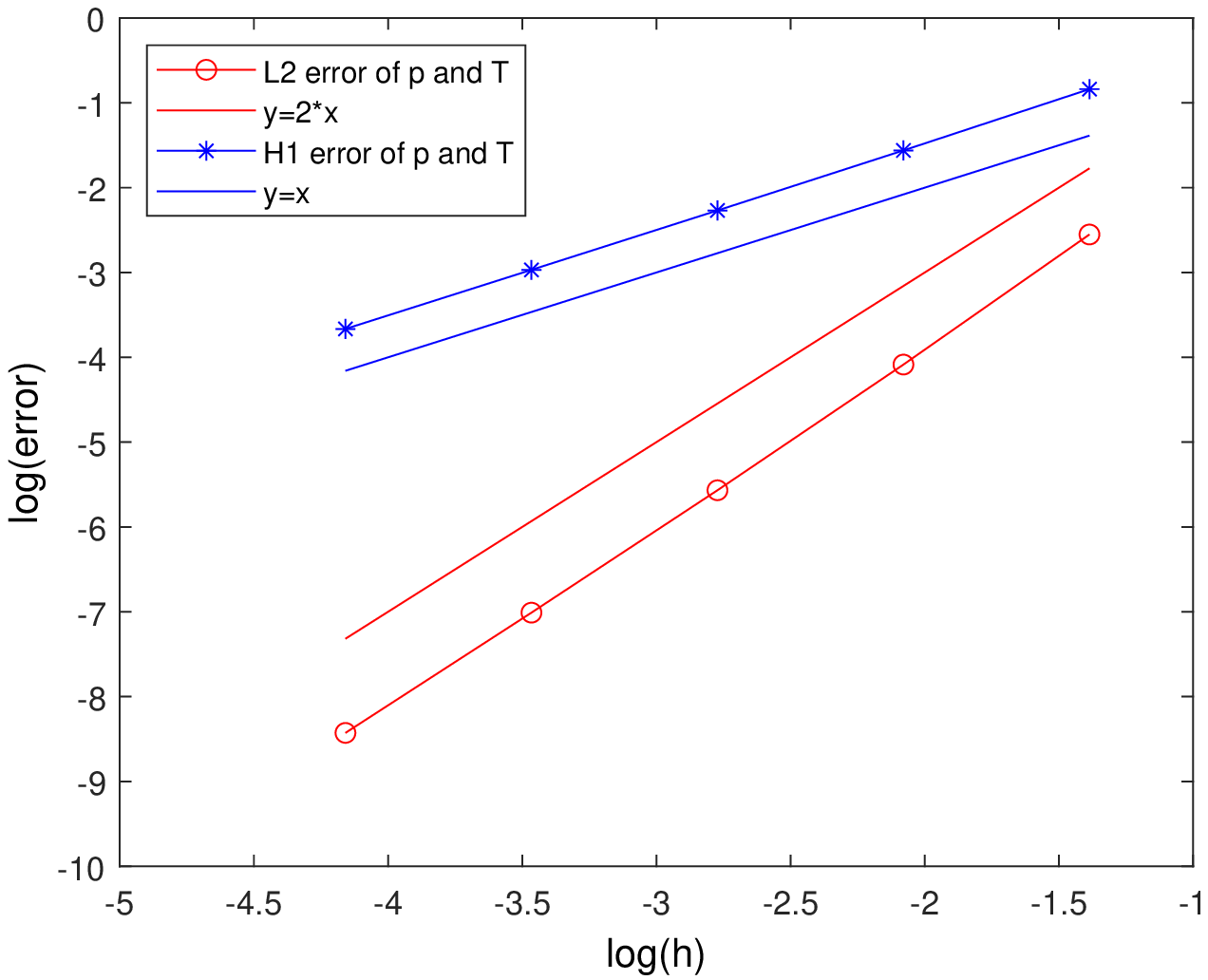}
			\label{fig211}
		}
		\caption{(a) spatial convergence order for $u^{n}_{h}$, (b) space convergence rate for $p^{n}_{h}, T^{n}_{h}$.}
	\end{figure}
	\begin{figure}[H]
		\subfigure[]{
			\centering
			\includegraphics[width=2.5in]{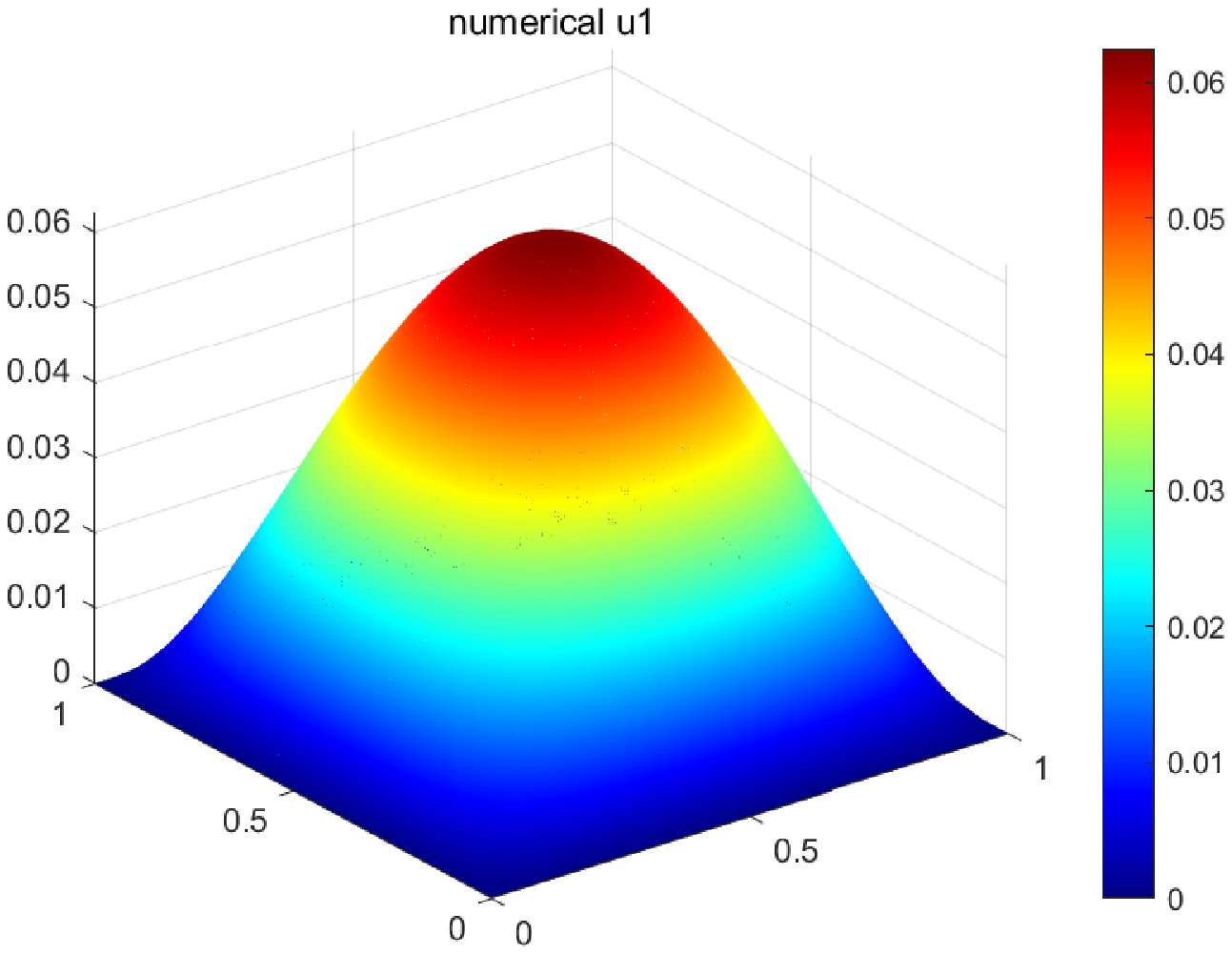}
			\label{fig6}
		}
		\subfigure[]{
			\centering
			\includegraphics[width=2.5in]{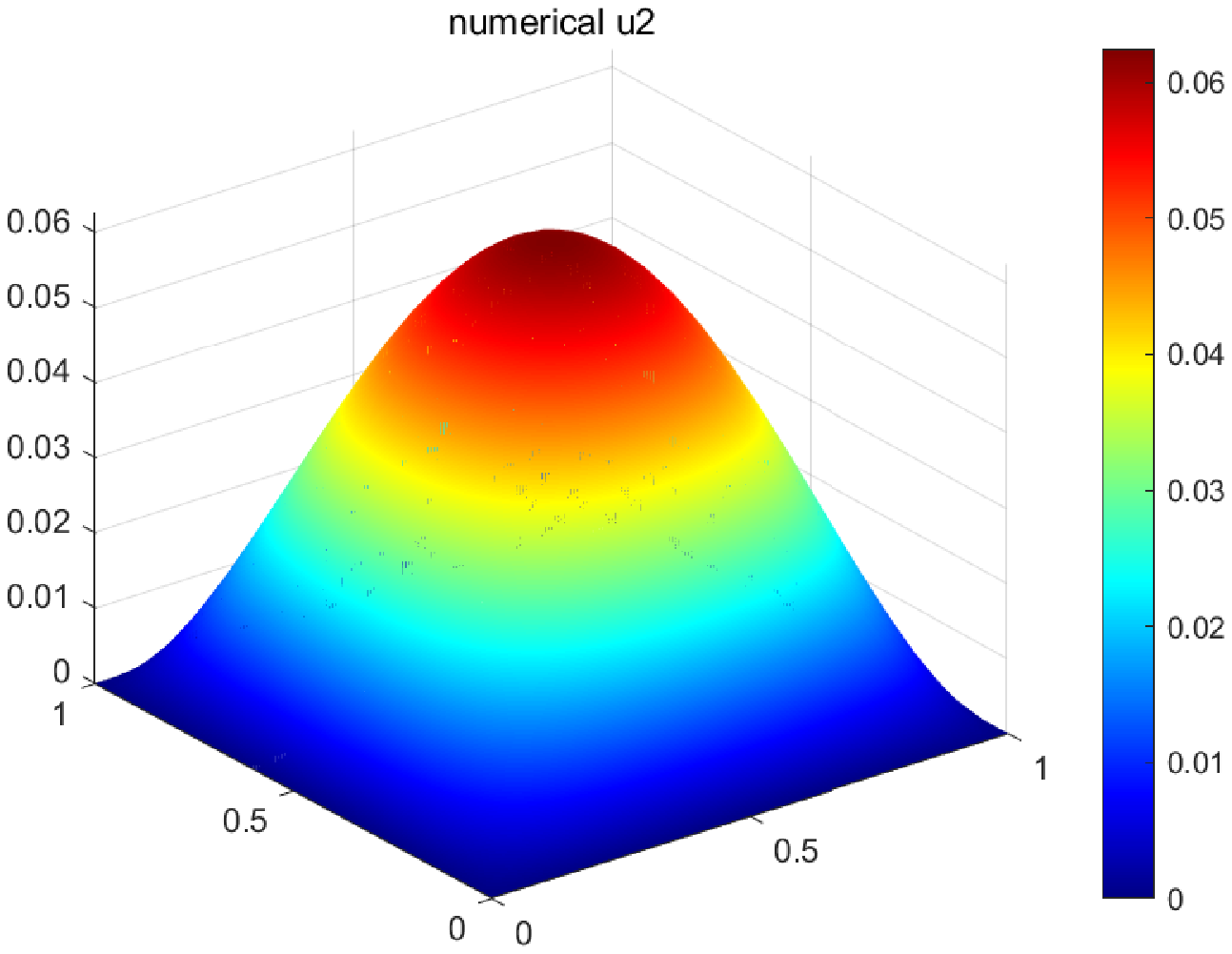}
			\label{fig7}
		}
		\caption{(a) and (b) are Surface plot of $u_{1h}^{n}$ and $u_{2h}^{n}$ at the terminal time $\tau$ respectively.}
	\end{figure}
	
	\begin{figure}[H]
		\centering
		\includegraphics[height=5cm,width=7cm]{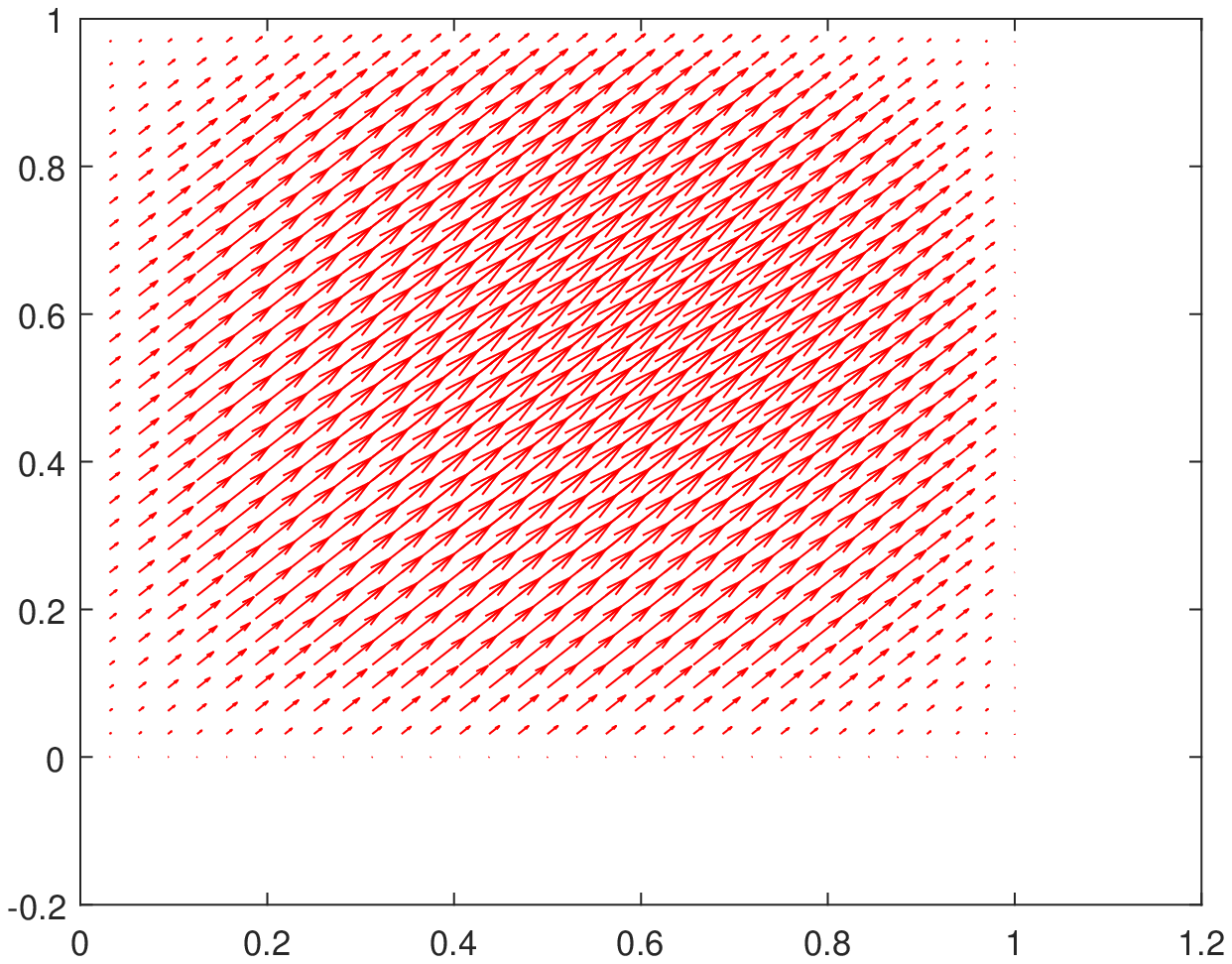}
		\caption{Arrow plot of the computed displacement $\textbf{u}_h^n$.}
		\label{fig8}
	\end{figure}
	\vspace{-0.8cm}
	\begin{figure}[H]
		\centering
		\subfigure[]{
			\centering
			\includegraphics[width=2.5in]{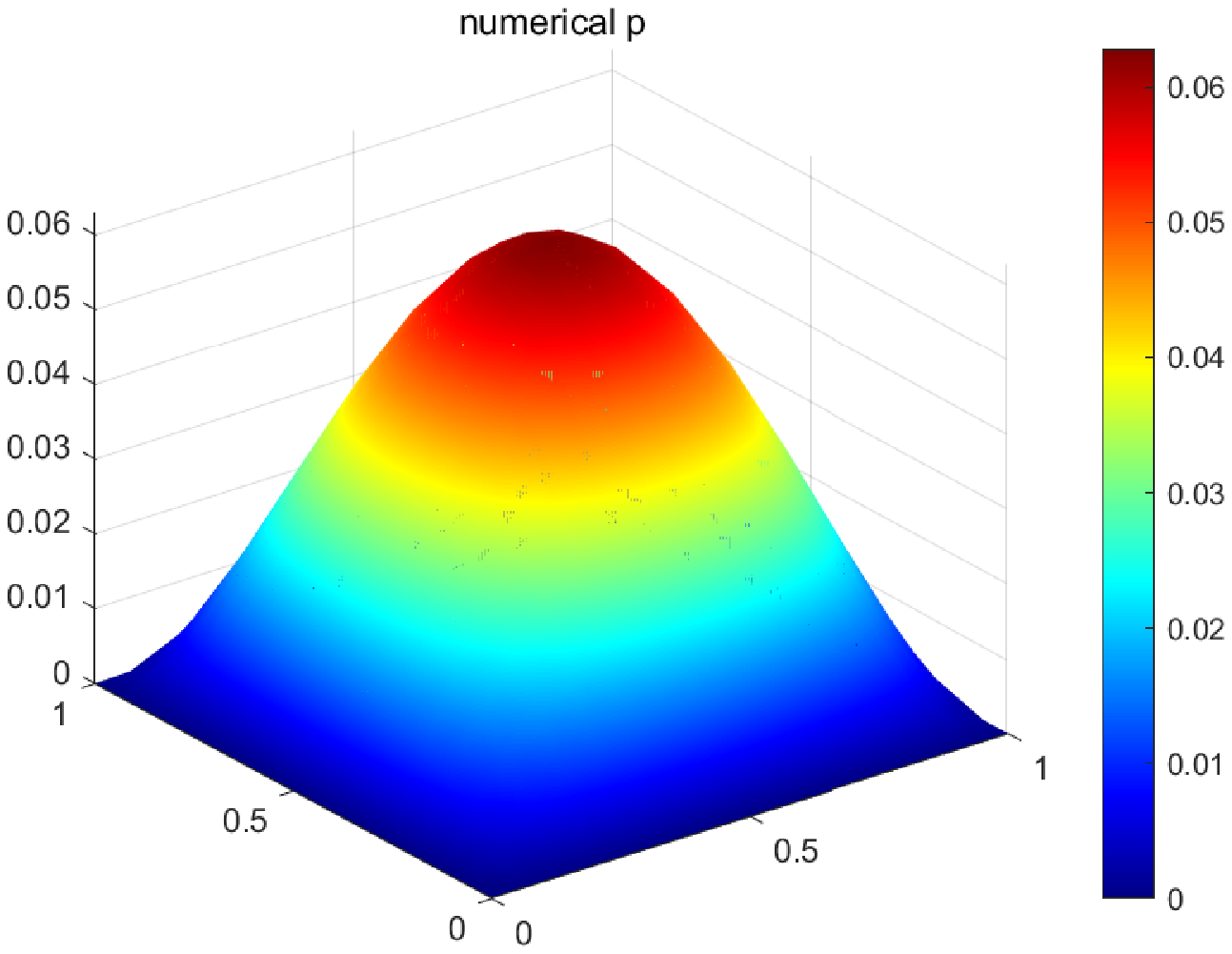}
			\label{fig9}}%
		\subfigure[]{
			\centering
			\includegraphics[width=2.5in]{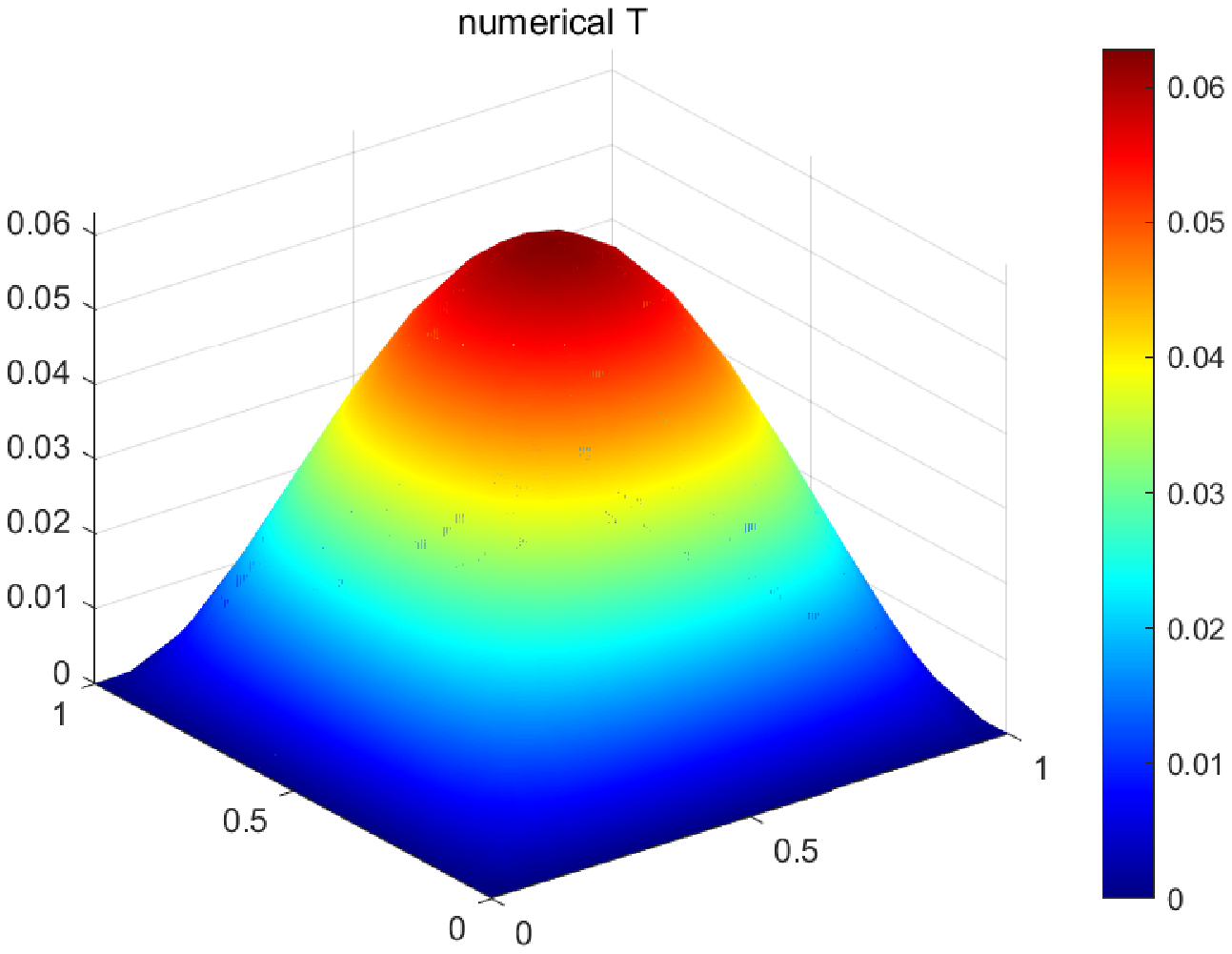}
			\label{fig10}}%
		\centering
		\caption{ (a) and (b) are surface plot of the pressure $p_h^n$ and temperature $T_h^n$ at the terminal time $\tau$ respectively.}
	\end{figure}
	Table \ref{table2} displays the $L^{\infty}(0,\tau;L^2(\Omega))$ and $L^{\infty}(0,\tau;H^1(\Omega))$-norm errors of $\textbf{u}$, $p$ and $T$ and shows that the convergence order with respect to $h$ is optimal, which verify the Theorem \ref{th-3-6} and Table \ref{table211} give the convergence order with respect to $\Delta t$ is optimal when $h=\frac{1}{8}$ and $\tau = 1$.

		Figure \ref{fig111} and Figure \ref{fig211} describe the spatial convergence order of $u^{n}_{h}, p^{n}_{h}$, $T^{n}_{h}$.
	Figure \ref{fig6}, Figure \ref{fig7} and Figure \ref{fig9} and Figure \ref{fig10} show, respectively, the surface plot of the computed $p^{n}_h$, $T^{n}_h$, $u^{n}_{1h}$ and $u^{n}_{2h}$ at the terminal time $\tau$, Figure \ref{fig8} show the arrow plot of the computed displacement $\textbf{u}$.
	
	\textbf{Test 2.}
	Let \ $\Omega= [0,1]\times[0,1]$. Let $\Gamma_{j}$ be same as in \textbf{Test 1} and $\tau=1e-4,\Delta t=1e-5$. We consider the problem (\ref{eq-2-7})-(\ref{eq-2-12}) with the following source functions:
	\begin{table}[H]
		\centering
		\caption{ Physical parameters}\label{table 5}
		\begin{tabular}{c l c }
			\hline
			Parameter   &  \quad Description      &   \quad   Value    \\
			\hline
			$a_0$       &\quad Effective thermal capacity               &  \quad 2e-1\\
			$b_0$       &\quad Thermal dilation coefficient              &  \quad 1e-1 \\
			$c_0$       &\quad Constrained specific storage coefficient &  \quad 2e-1 \\
			$\alpha$    &\quad Biot-Willis constant                     &  \quad  0.01 \\
			$\beta$     &\quad Thermal stress coefficient.               &  \quad  0.01\\
			$\bm{K}$         &\quad Permeability tensor                      &  \quad 1e-5$I$\\
			$\bm{\Theta}$     &\quad Effective thermal conductivity           &  \quad 1e-5$I$\\
			$E$         &\quad Young's modulus                          &  \quad  1.25e4\\
			$\nu$       &\quad Poisson ratio                            &  \quad  0.25\\
			\hline
		\end{tabular}
	\end{table}
	\begin{align*}
		\bm{{\rm f}_1} &=(\mu\pi^3+\frac{3\lambda\pi^3}{4}+(\alpha+\beta)\pi)e^t\cos(\pi x)\cos(\frac{\pi y}{2}),\\
		\bm{{\rm f}_2} &=(\frac{\mu\pi^3}{8}-\frac{3\lambda\pi^3}{8}-\frac{\pi}{2}(\alpha+\beta))e^t\sin(\pi x)\sin(\frac{\pi y}{2}),\\
		\phi &=(a_{0}-b_{0}+(\frac{3\pi^2\beta}{4})+\frac{5\pi^2\times10^{-5}}{4})e^t\sin(\pi x)\cos(\frac{\pi y}{2})\\
		&-10^{-5}\times((\pi e^t\cos(\pi x)\cos(\frac{\pi y}{2}))^2+(\frac{\pi}{2}e^t\sin(\pi x)\sin(\frac{\pi y}{2}))^2),\\
		g 
		&=(c_0-b_0-\frac{3\pi^2\alpha}{4}+\frac{5\pi^2\times10^{-5}}{4})e^t\sin(\pi x)\cos(\frac{\pi y}{2}),
	\end{align*}
	and the following boundary and initial conditions:
	\begin{eqnarray}
		p=e^t\mathrm{sin}(\pi x)\mathrm{cos}(\frac{\pi y}{2}) \quad  &\mathrm{on} & \quad\partial\Omega_{\tau},\nonumber\\
		T=e^t\mathrm{sin}(\pi x)\mathrm{cos}(\frac{\pi y}{2})  \quad &\mathrm{on} & \quad\partial\Omega_{\tau},\nonumber\\
		u_1=\pi e^t\mathrm{cos}(\pi x)\mathrm{cos}(\frac{\pi y}{2}) \quad &\mathrm{on}& \quad\Gamma_j\times (0,\tau),~j=2,~4,\nonumber\\
		u_2=\frac{\pi}{2} e^t\mathrm{sin}(\pi x)\mathrm{sin}(\frac{\pi y}{2}) \quad &\mathrm{on} & \quad\Gamma_j\times(0,\tau),~j=1,~3, \nonumber\\
		\sigma(\pmb{u})\bm{n}-\alpha pI\bm{n}=\textbf{f}_1 \quad  &\mathrm{on}& \quad\partial\Omega_{\tau},\nonumber\\
		\textbf{u}(x,y,0)=\pi \big(\mathrm{cos}(\pi x)\mathrm{cos}(\frac{\pi y}{2}),\frac{1}{2} \mathrm{sin}(\pi x)\mathrm{sin}(\frac{\pi y}{2}) \big)^{'}\quad&\mathrm{in}& \quad\Omega, \nonumber\\
		p(x,y,0)=\mathrm{sin}(\pi x)\mathrm{cos}(\frac{\pi y}{2}),~T(x,y,0)=\mathrm{sin}(\pi x)\mathrm{cos}(\frac{\pi y}{2}) \quad&\mathrm{in}& \quad\Omega,\nonumber
	\end{eqnarray}
	where $\bm{{\rm f}}_{1}=e^t\mathrm{sin}(\pi x)\mathrm{cos}(\frac{\pi y}{2})((-\mu \pi^2-\frac{3}{4}\pi^{2}\lambda-(\alpha+\beta))n_1,(\frac{\pi^{2}\mu}{4}-\frac{3\pi^2\lambda}{4}-(\alpha+\beta))n_2)^{'}$.
	
	It is easy to check that the exact solution are
	\begin{align*}
		\bm{{\rm u}}=(\pi e^t\mathrm{cos}(\pi x)\mathrm{cos}(\frac{\pi y}{2}), \frac{\pi}{2} e^t\mathrm{sin}(\pi x)\mathrm{sin}(\frac{\pi y}{2}))^{'},\\
		p=e^t\mathrm{sin}(\pi x)\mathrm{cos}(\frac{\pi y}{2}),\quad T=e^t\mathrm{sin}(\pi x)\mathrm{cos}(\frac{\pi y}{2}).
	\end{align*}

\begin{table}[htbp]
	\vspace{-2.0em}
	\begin{center}
		\caption{Error and convergence rates of $u_h^n$, $p_h^n$, $T_h^n$}\label{table 4}
		\resizebox{\textwidth}{12mm}{
			\begin{tabular}{ccccccccccccc}
				\hline
				$h$  & $\frac{\|e_u\|_{L^2(\Omega)}}{\|u\|_{L^2(\Omega)}}$  &  CR  &  $\frac{\|e_u\|_{H^1(\Omega)}}{\|u\|_{H^1(\Omega)}}$  &  CR & $\frac{\|e_p\|_{L^2(\Omega)}}{\|p\|_{L^2(\Omega)}}$ & CR  &  $\frac{\|e_p\|_{H^1(\Omega)}}{\|p\|_{H^1(\Omega)}}$  &  CR  &  $\frac{\|e_T\|_{L^2(\Omega)}}{\|T\|_{L^2(\Omega)}}$  &  CR  & $\frac{\|e_T\|_{H^1(\Omega)}}{\|T\|_{H^1(\Omega)}}$ & CR \\ 
				\hline
				$1/4$   &0.0115&      &0.0390    &      &0.0488    &      &0.2983&      &0.0488    &      &0.2983&    \\
				$1/8$   &0.0014&  3.0164    &0.0093    &   2.0713   &0.0106    &   2.2047   &0.1475& 1.0158     &0.0106    &  2.2047    &0.1475& 1.0158   \\
				$1/16$  &1.7799e-04&3.0018&0.0023    &2.0412&0.0024    & 2.1152 &0.0733&1.0100&0.0024    & 2.1152&0.0733&1.0100\\
				$1/32$  &2.2250e-05&2.9999&5.5480e-04& 2.0218&5.8491e-04& 2.0616 &0.0365&1.0047&5.8491e-04&2.0616&0.0365&1.0047\\
				$1/64$  &2.7819e-06&2.9997 &1.3763e-04& 2.0111  &1.4302e-04&2.0320&0.0182& 1.0022 &1.4302e-04&2.0320&0.0182&1.0022\\
				\hline
		\end{tabular}}
	\end{center}
\end{table}
\begin{table}[htbp]
	\vspace{-1.0em}
	\begin{center}
		\caption{Order of convergence of time discretization of Test 1 }\label{table311}
		\resizebox{\textwidth}{12mm}{
		\begin{tabular}{ccccccccccc}
		\hline
		$\Delta t$ & $\left\|\mathbf{u}_{h}^{\Delta t}-\mathbf{u}_{h}^{\frac{1}{2} \Delta t}\right\|_{L^2(\Omega)}$ & $\rho_{\Delta t, \mathbf{u}_{h}}$ & $\left\|p_{h}^{\Delta t}-p_{h}^{ \frac{1}{2} \Delta t}\right\|_{L^2(\Omega)}$ & $\rho_{\Delta t, p_h}$ & $\left\|T_{h}^{\Delta t}-T_{h}^{\frac{1}{2} \Delta t}\right\|_{L^2(\Omega)}$ & $\rho_{\Delta t, T_h}$ \\
		\hline
		$\frac{1}{10}$ & 3.1223e-10 &        & 0.0055 &        & 0.0055 &  \\
		$\frac{1}{20}$ &1.5704e-10 & 1.9882 & 0.0027 & 2.0244 & 0.0027 & 2.0245\\
		$\frac{1}{40}$ & 7.8750e-11 & 1.9942 & 0.0013 & 2.0123 &0.0013 & 2.0123\\
		$\frac{1}{80}$ &3.9434e-11 & 1.9970 & 6.7231e-04 & 2.0062 & 6.7253e-04 & 2.0062\\
		\hline
\end{tabular}}
	\end{center}
\end{table}
	\begin{figure}[htbp]
		\subfigure[]{
			\centering
			\includegraphics[width=2.5in]{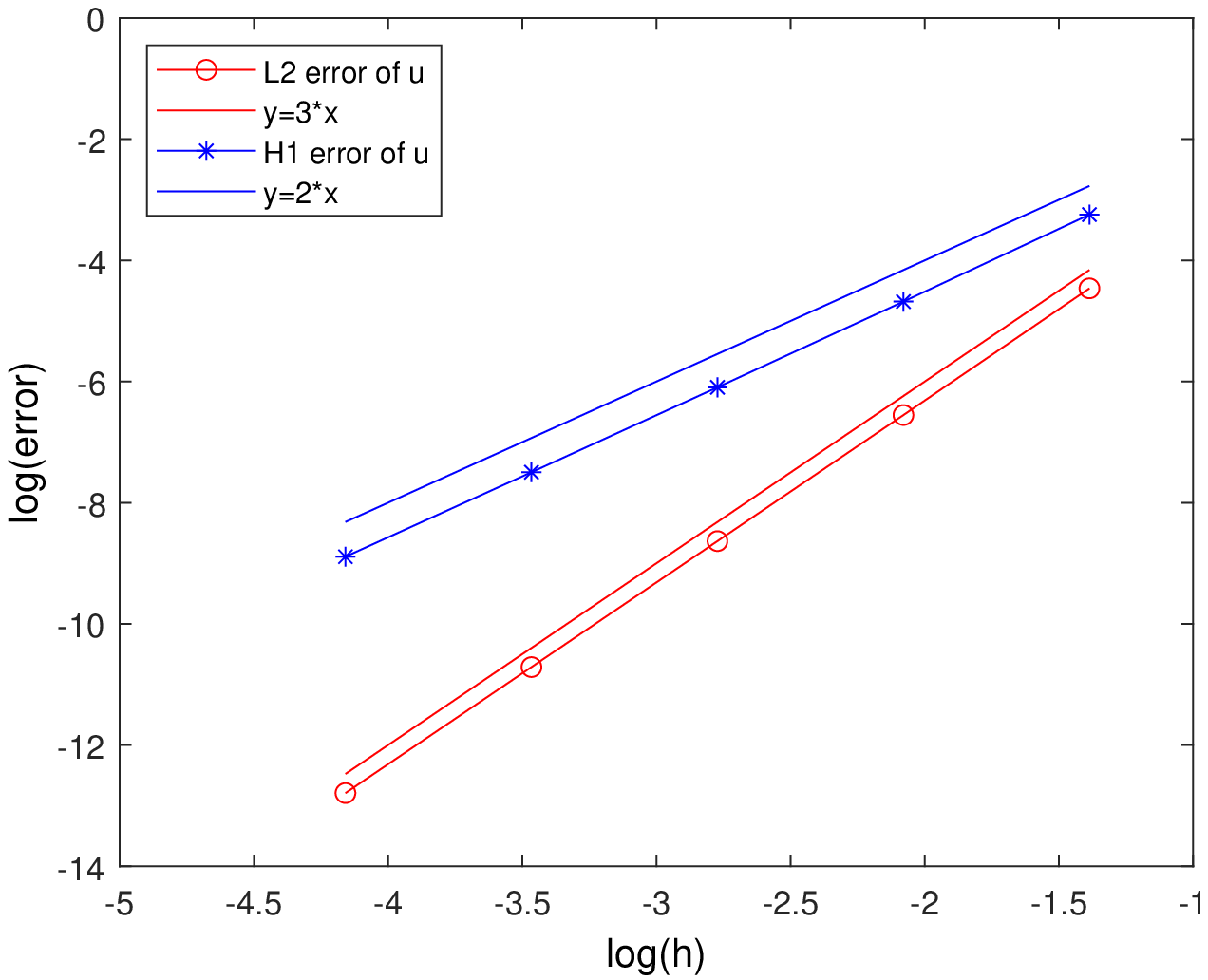}
			\label{fig1}
		}
		\subfigure[]{
			\centering
			\includegraphics[width=2.5in]{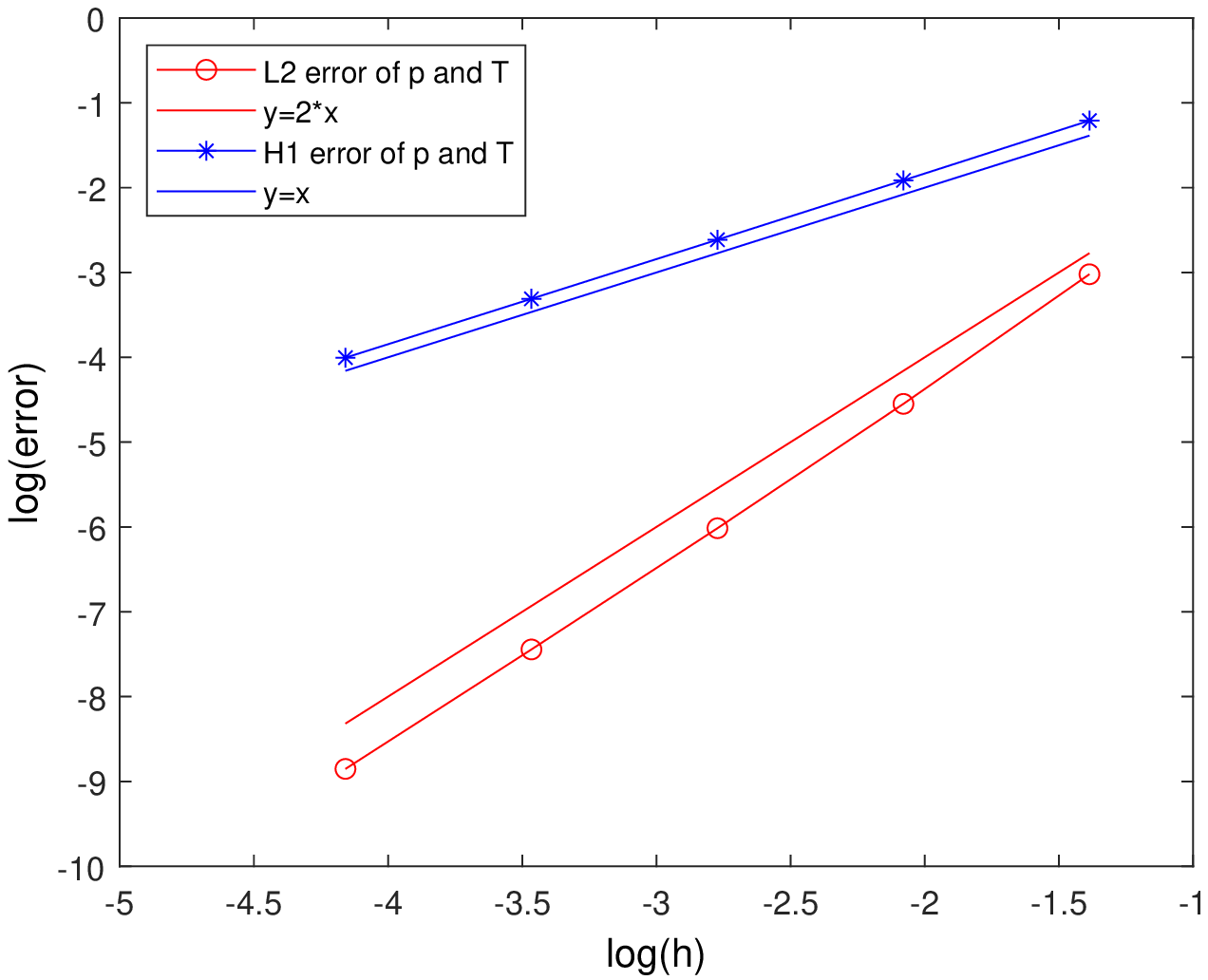}
			\label{fig2}
		}
		\caption{(a) spatial convergence order for $u^{n}_{h}$, (b) space convergence rate for $p^{n}_{h}, T^{n}_{h}$.}
	\end{figure}
	\begin{figure}[H]
		\subfigure[]{
			\centering
			\includegraphics[width=2.5in]{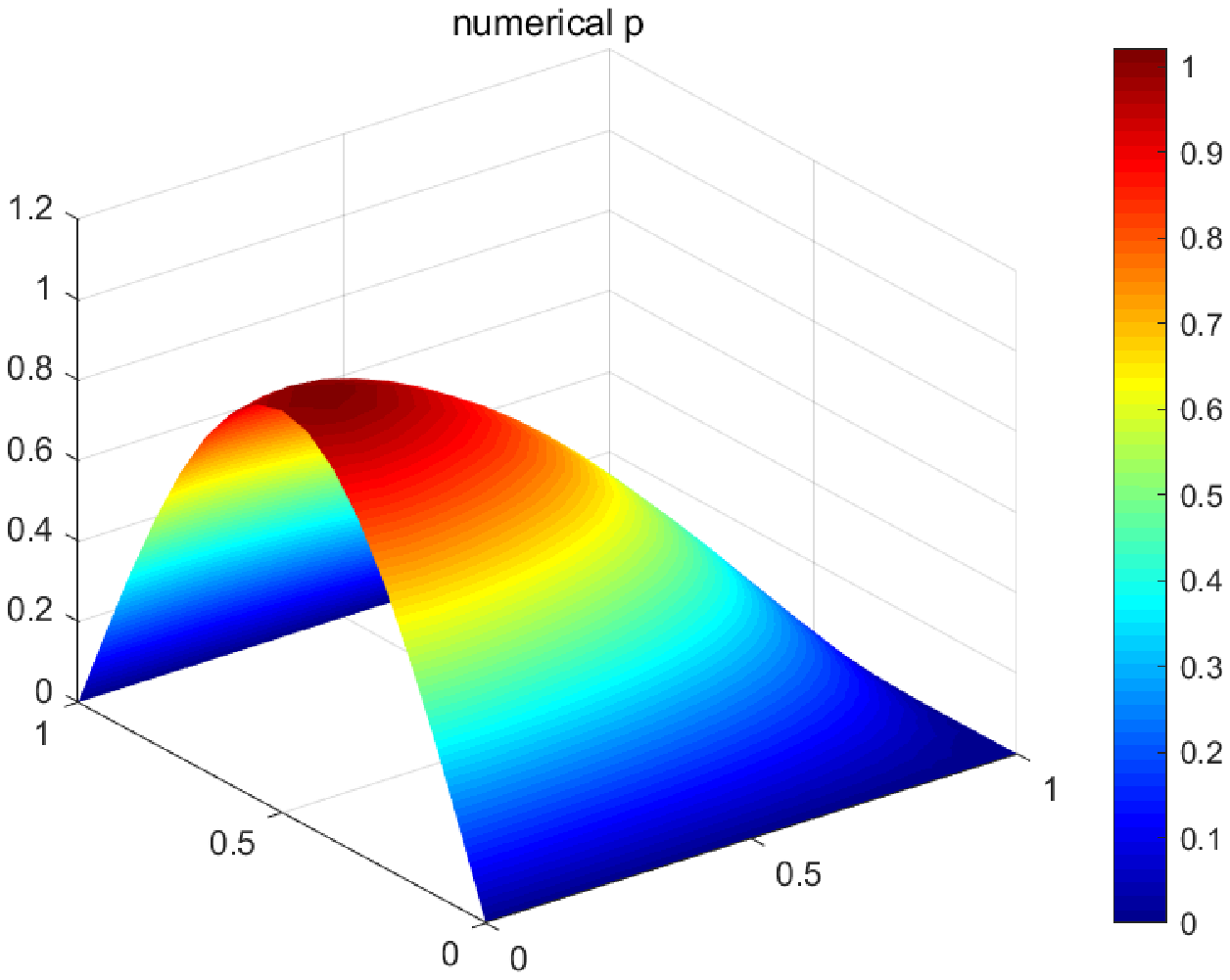}
			\label{fig51}
		}
		\subfigure[]{
			\centering
			\includegraphics[width=2.5in]{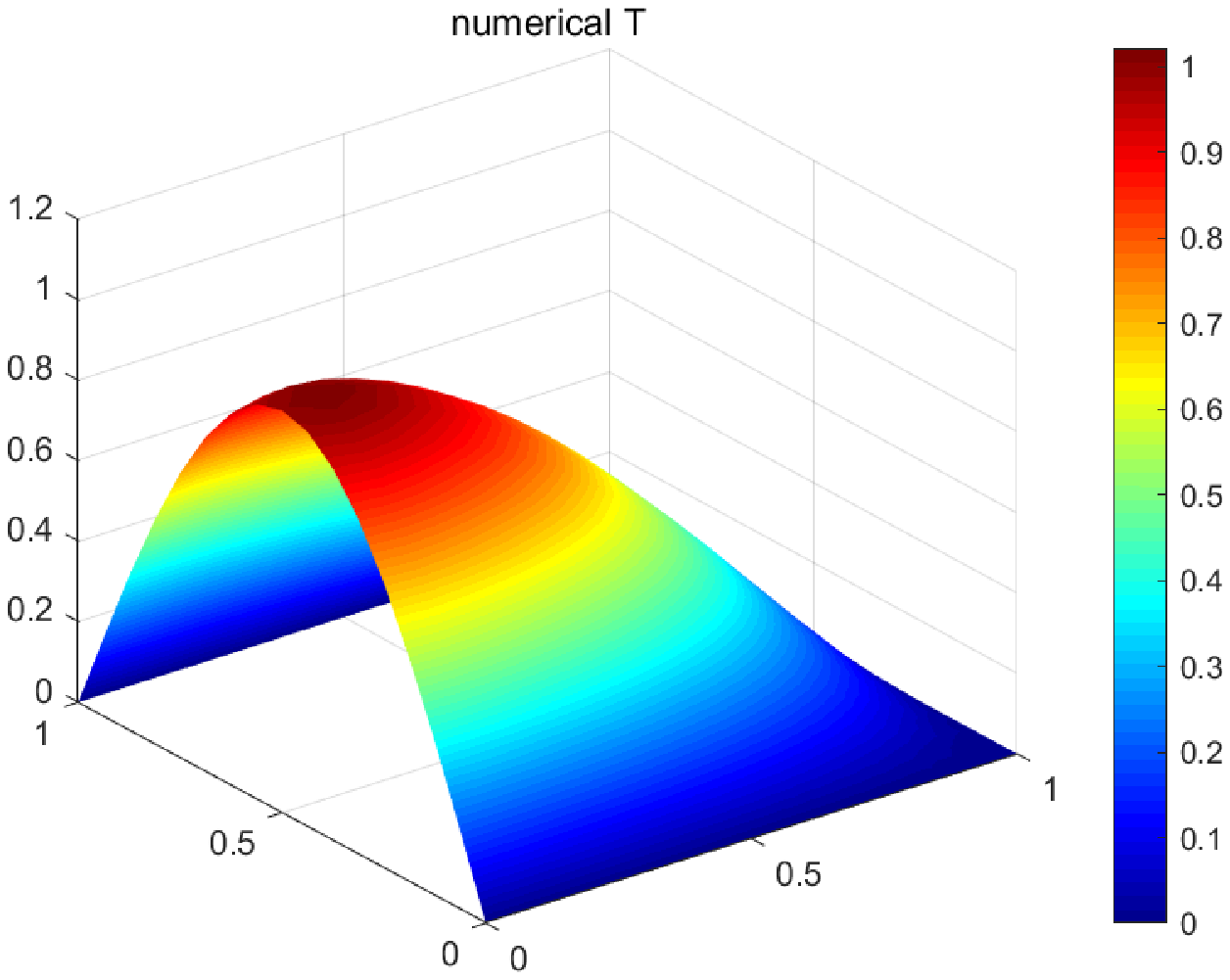}
			\label{fig61}
		}
		\caption{(a) surface plot of $p_h^n$ at the terminal time $\tau$, (b) surface plot of $T_h^n$ at the terminal time $\tau$.}
	\end{figure}
	
	\begin{figure}[H]
		\centering
		\subfigure[]{
			\centering
			\includegraphics[width=2.5in]{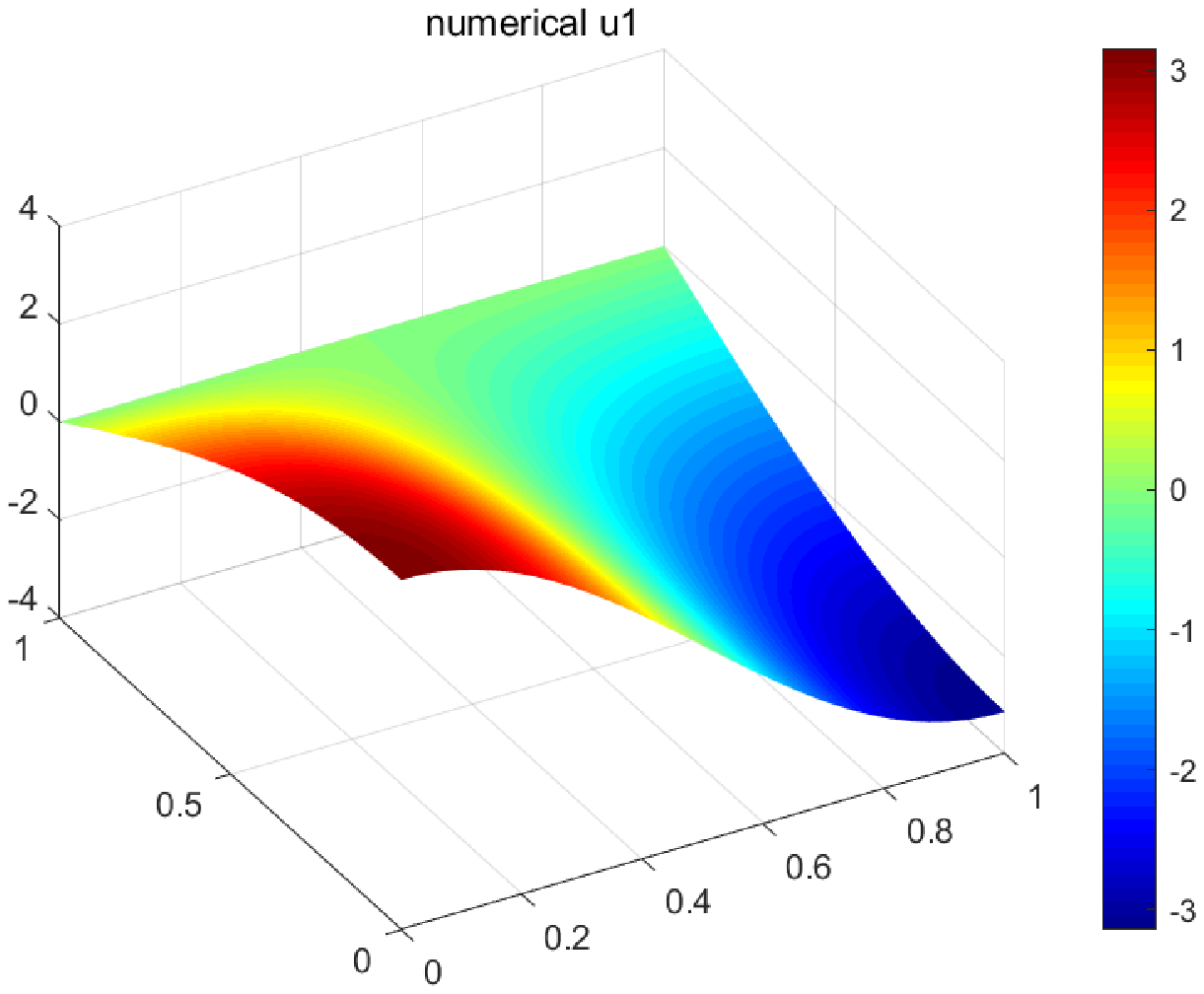}
			\label{fig81}}%
		\subfigure[]{
			\centering
			\includegraphics[width=2.5in]{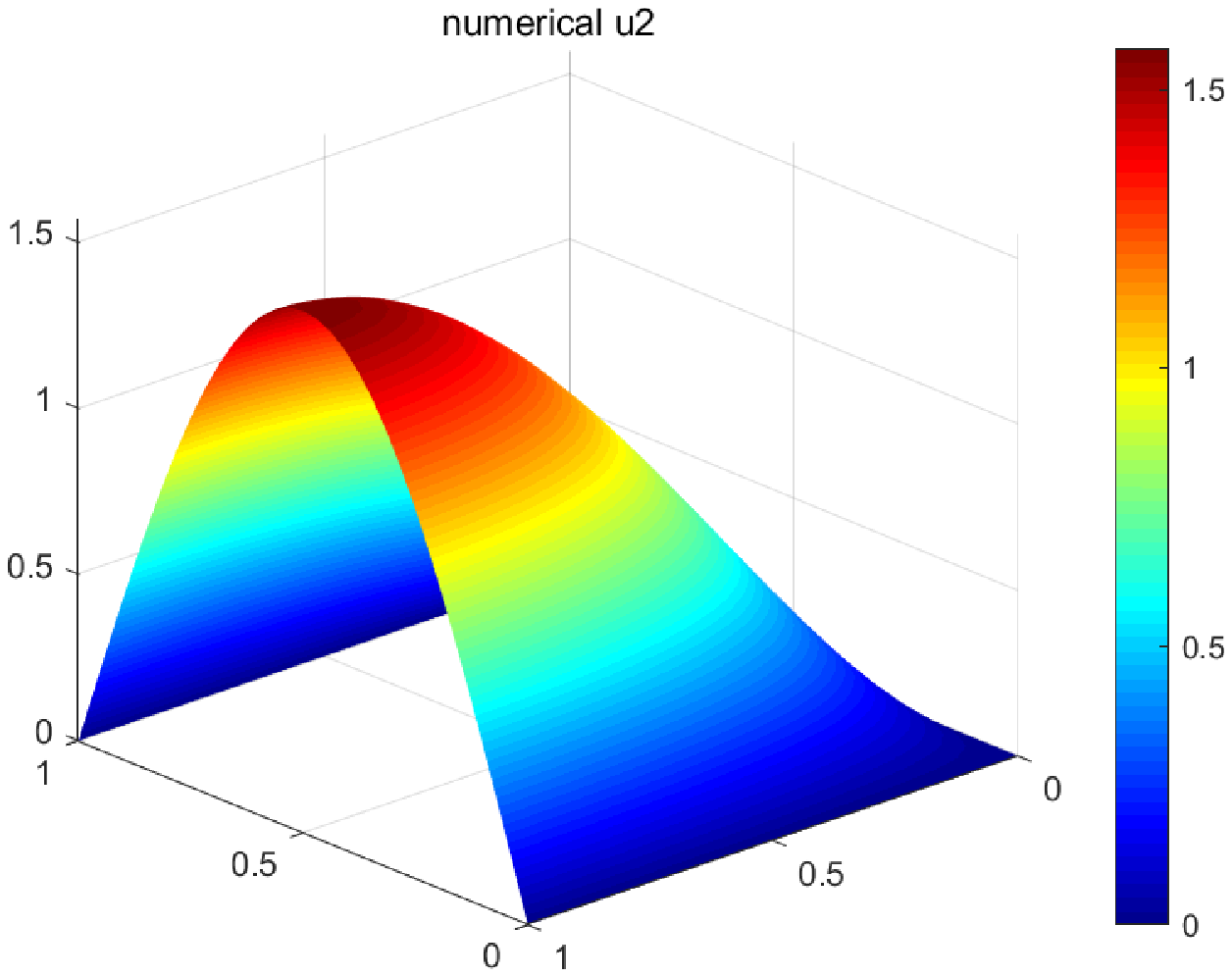}
			\label{fig91}}%
		\centering
		\caption{(a) surface plot of $u_{1h}^{n}$ at the terminal time $\tau$, (b) surface plot of $u_{2h}^{n}$ at the terminal time $\tau$.}
		\vspace{-2em}
	\end{figure}
\begin{figure}[H]
	\centering
	\includegraphics[height=5cm,width=7cm]{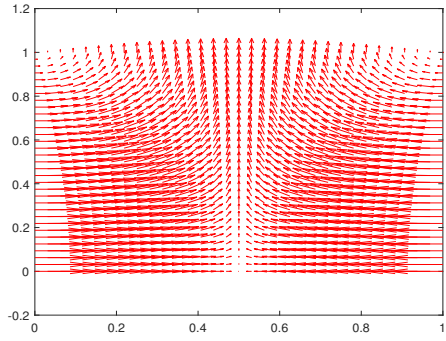}
	\caption{Arrow plot of the displacement $\textbf{u}_h^n$.}
	\label{fig71}
\end{figure}
	
	Table \ref{table2} displays the $L^{\infty}(0,T;L^{2}(\Omega))$-norm error and $L^{\infty}(0,T;H^{1}(\Omega))$-norm error of $u$, $p$, $T$ and the convergence order with respect to $h$ at terminal time $\tau$. Evidently, the spatial rates of convergence are consistent with Theorem \ref{th-3-6}.
	
	Figure \ref{fig1} and Figure \ref{fig2} describe the spatial convergence order of $u^{n}_{h}, p^{n}_{h}$, $T^{n}_{h}$. Figure \ref{fig51}, Figure \ref{fig61}, Figure \ref{fig81} and Figure \ref{fig91} show, respectively, the surface plot of $p^{n}_{h}$, $T^{n}_{h}$, $u^{n}  _{1h}$ and $u^{n}_{2h}$ at the terminal time $\tau$ and Figure \ref{fig71} shows arrow plot of  $\textbf{u}_h^n$. They coincide with the theoretical results.

	\textbf{Test 3.} 
	In this test, we consider Barry-Mercer's problem (cf. \cite{Wheeler2007}). The computational domain is $\Omega=[0,1]\times[0,1]$, \ $\Gamma_{1} = \{(1,y);~0\leq y\leq1\}$,\ $~\Gamma_{2}= \{(x,0);~0\leq x\leq1 \}$,\ $\Gamma_{3}= \{(0,y);~0\leq y\leq1 \}$,\ $~\Gamma_{4}= \{ (x,1);~0\leq x\leq1 \}$,and $\tau=1$. Barry-Mercer's problem has no source, that is, $\textbf{f}\equiv0$ and $g\equiv0$, we prescribe homogeneous boundary conditions and zero source term and initial condition for the heat problem and takes the following boundary and initial conditions
	\begin{eqnarray*}
		p=0  \quad &\mathrm{on}&\quad \Gamma_j\times (0,\tau),~j=1,~3,~4,\\
		p=p_2 \quad&\mathrm{on}& \quad\Gamma_j\times (0,\tau),~j=2,\\
		T=0 \quad &\mathrm{on}&\quad \Gamma_j\times (0,\tau),~j=1,~3,~4,\\
		T=T_2 \quad&\mathrm{on}& \quad\Gamma_j\times (0,\tau),~j=2,\\
		u_1=0\quad&\mathrm{on}& \quad\Gamma_j\times (0,\tau),~j=2,~4,\\
		u_2=0 \quad &\mathrm{on} & \quad\Gamma_j\times(0,\tau),~j=1,~3, \\
		\sigma(\pmb{\tau})\bm{n}-\alpha pI\bm{n}=\textbf{f}_1:=(0,\alpha p+\beta T)^{'} \quad &\mathrm{on}&\quad \partial\Omega_{\tau},\\
		\textbf{u}(x,0)=\bm{0},~ p(x,0)=0,~T(x,0)=0 \quad&\mathrm{in}&\quad \Omega,\nonumber
	\end{eqnarray*}
	where
	\begin{equation*}
		p_{2}\left(x, t\right)=\left\{\begin{array}{ll}
			\sin t &  { \rm if }\  x \in[0.2,0.8) \times(0, T), \\
			0 & \text { others. }
		\end{array}\right.
		T_{2}\left(x, t\right)=\left\{\begin{array}{ll}
			
			\sin t &  { \rm if }\  x \in[0.2,0.8) \times(0, T), \\
			0 & \text { others. }
		\end{array}\right.
	\end{equation*}
	\begin{table}[H]
		\centering
		\caption{ Physical parameters}\label{table9}
		\begin{tabular}{c l c }
			\hline
			Parameter    &\quad Description                              &  \quad Value    \\
			\hline
			$a_0$        &\quad Effective thermal capacity               &  \quad 0 or 1e-10\\
			$b_0$        &\quad Thermal dilation coefficient             &  \quad 0\\
			$c_0$        &\quad Constrained specific storage coefficient &  \quad 1e-10 or 0\\
			$\alpha$     &\quad Biot-Willis constant                     &  \quad 1 \\
			$\beta$      &\quad Thermal stress coefficient              &  \quad 1\\
			$\bm{K}$     &\quad Permeability tensor                      &  \quad 1e-7$I$\\
			$\bm{\Theta}$&\quad Effective thermal conductivity           &  \quad 1e-7$I$\\
			$E$          &\quad Young's modulus                          &  \quad  1.25e6\\
			$\nu$        &\quad Poisson ratio                            &  \quad  0.25\\
			\hline
		\end{tabular}
	\end{table}
	
	\begin{figure}[H]
		\subfigure[]{
			\centering
			\includegraphics[width=2.3in]{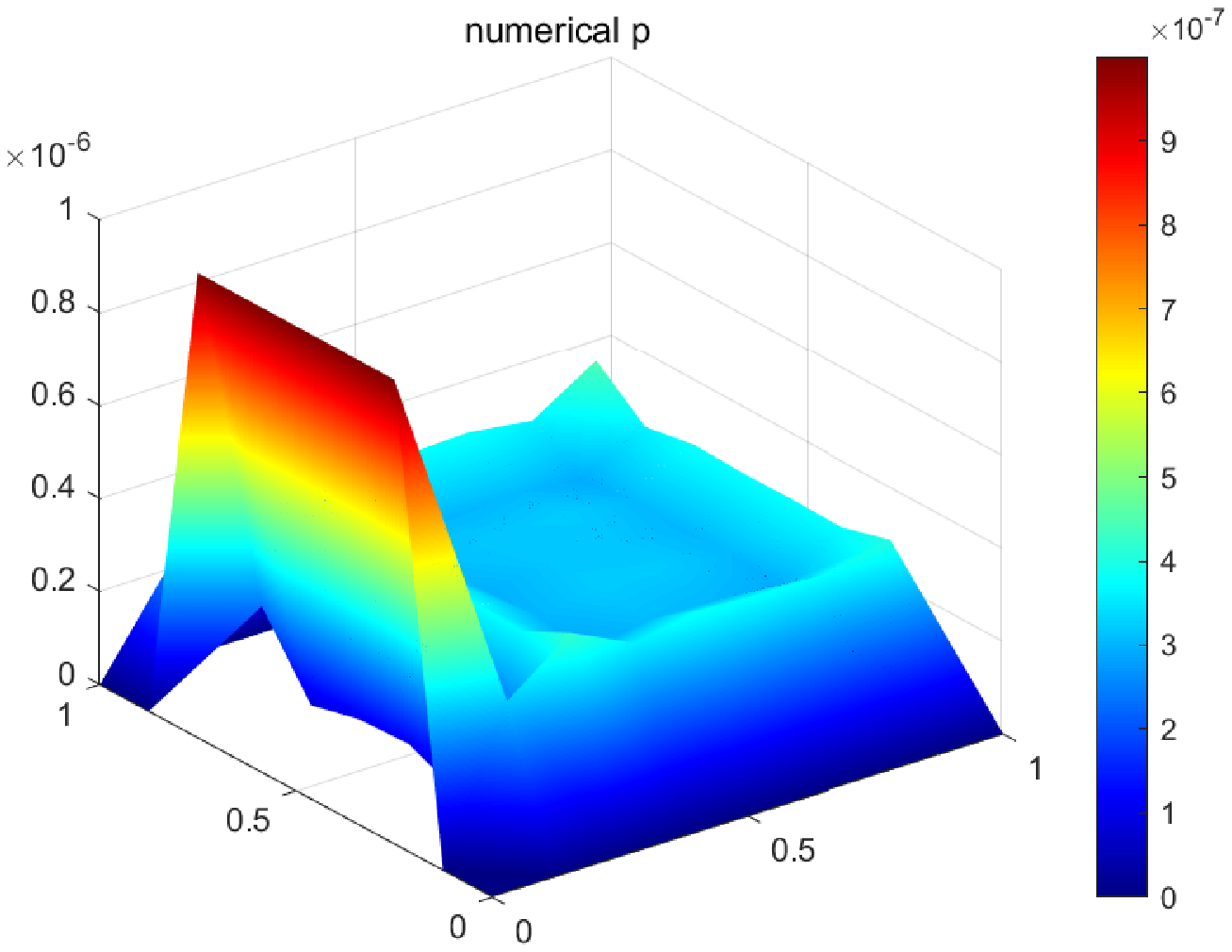}
			\label{fig11}
		}
		\subfigure[]{
			\centering
			\includegraphics[width=2.3in]{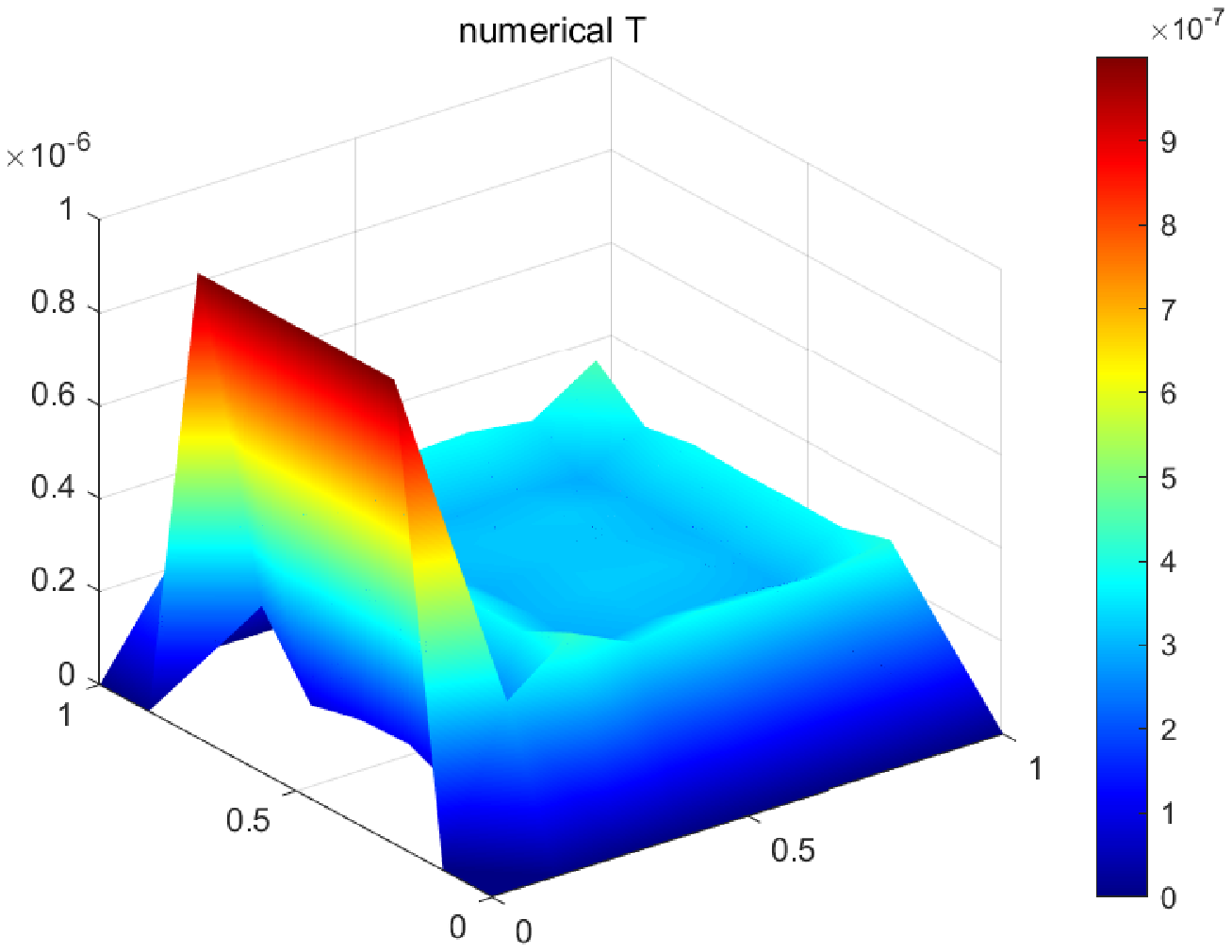}
			\label{fig12}}\caption {The numerical results by using $P_2-P_1-P_1$ element pair for the variables of  $\textbf{u}, p$ and $T$ of the problem \reff{eq-2-1}-\reff{eq-2-3}: (a) locking in pressure field, (b) locking in temperature field.}
	\end{figure}
	
	\begin{figure}[htbp]
		\subfigure[]{
			\centering
			\includegraphics[width=2.3in]{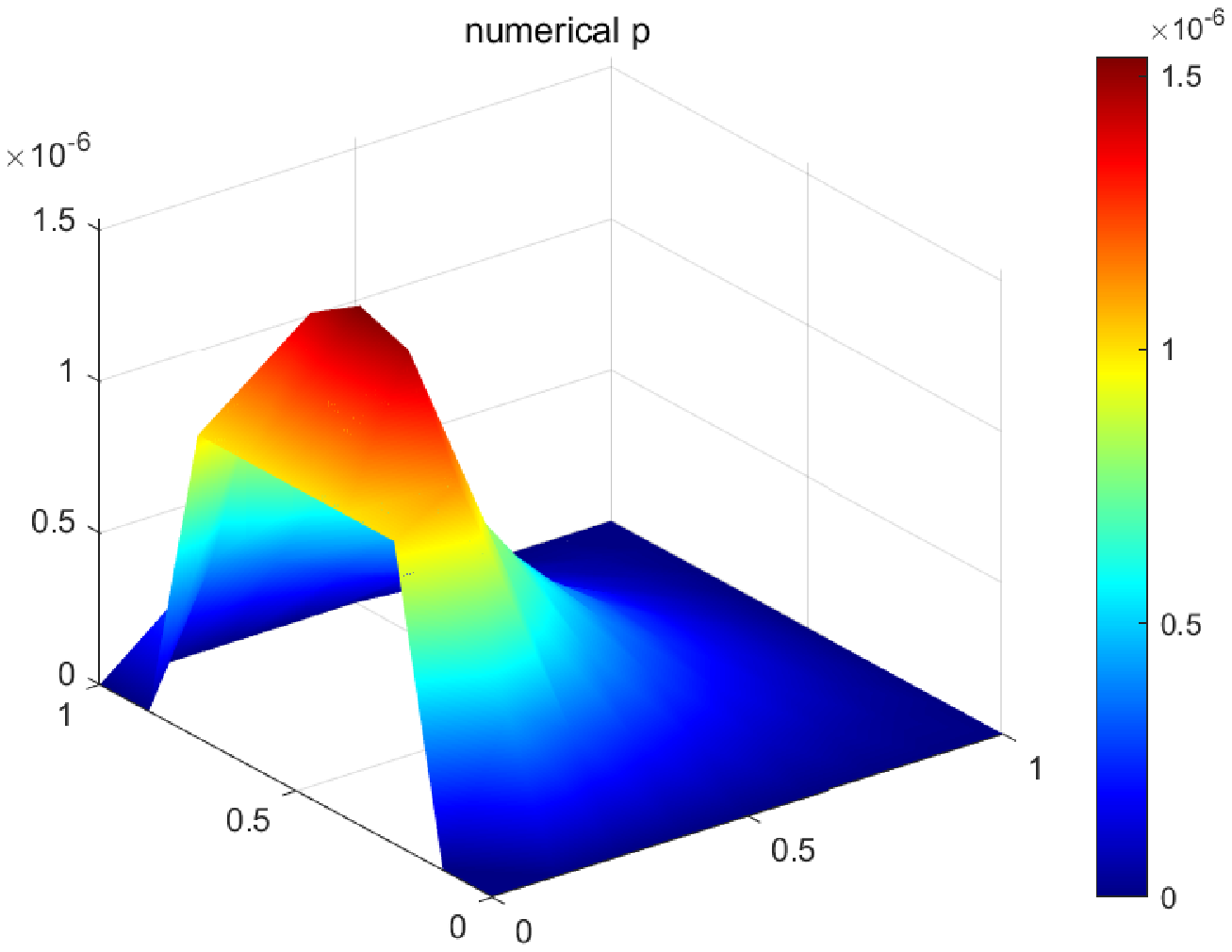}
			\label{fig19}
		}
		\subfigure[]{
			\centering
			\includegraphics[width=2.3in]{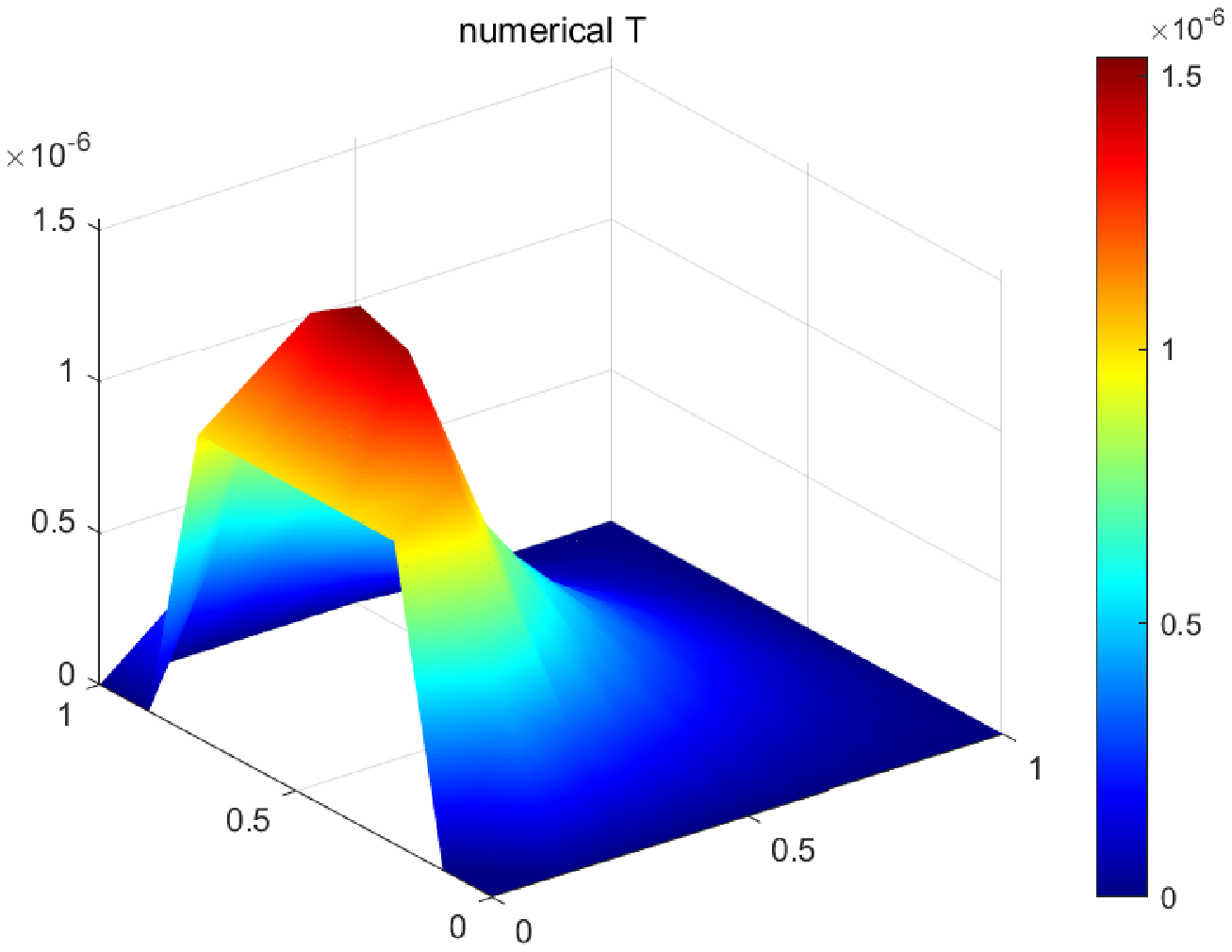}
			\label{fig20}}
		\caption{The numerical results by using $P_2-P_1-P_1-P_1$ element pair for the variables of  $\textbf{u}, \xi, \eta$ and $\gamma$ of the proposed MFEM: (a) no locking in the pressure field, (b) no locking in the temperature field.}
	\end{figure}
	\begin{figure}[H]
		\centering
		\subfigure[]{
			\centering
			\includegraphics[width=2.5in]{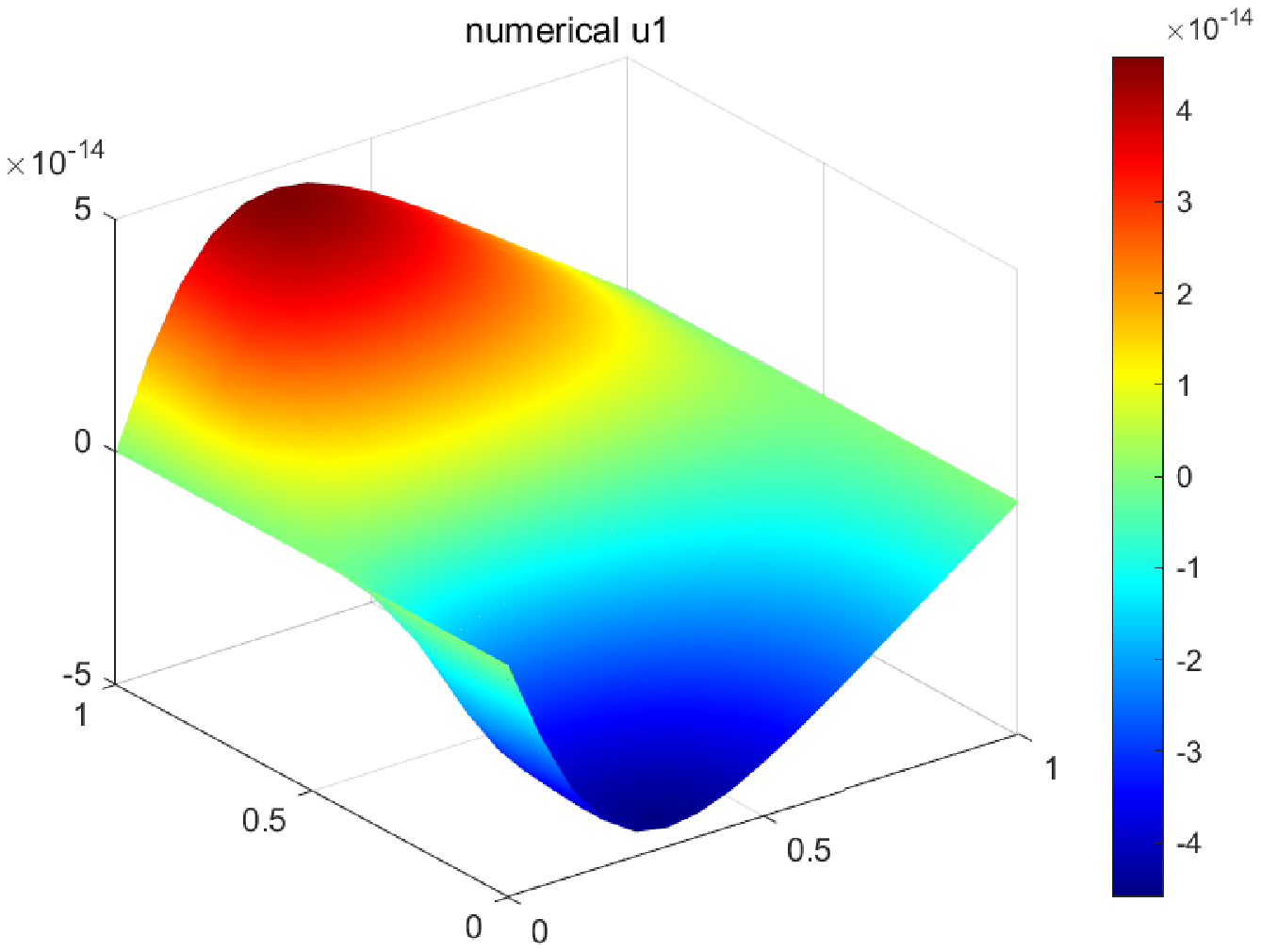}
			\label{fig117}}%
		\subfigure[]{
			\centering
			\includegraphics[width=2.5in]{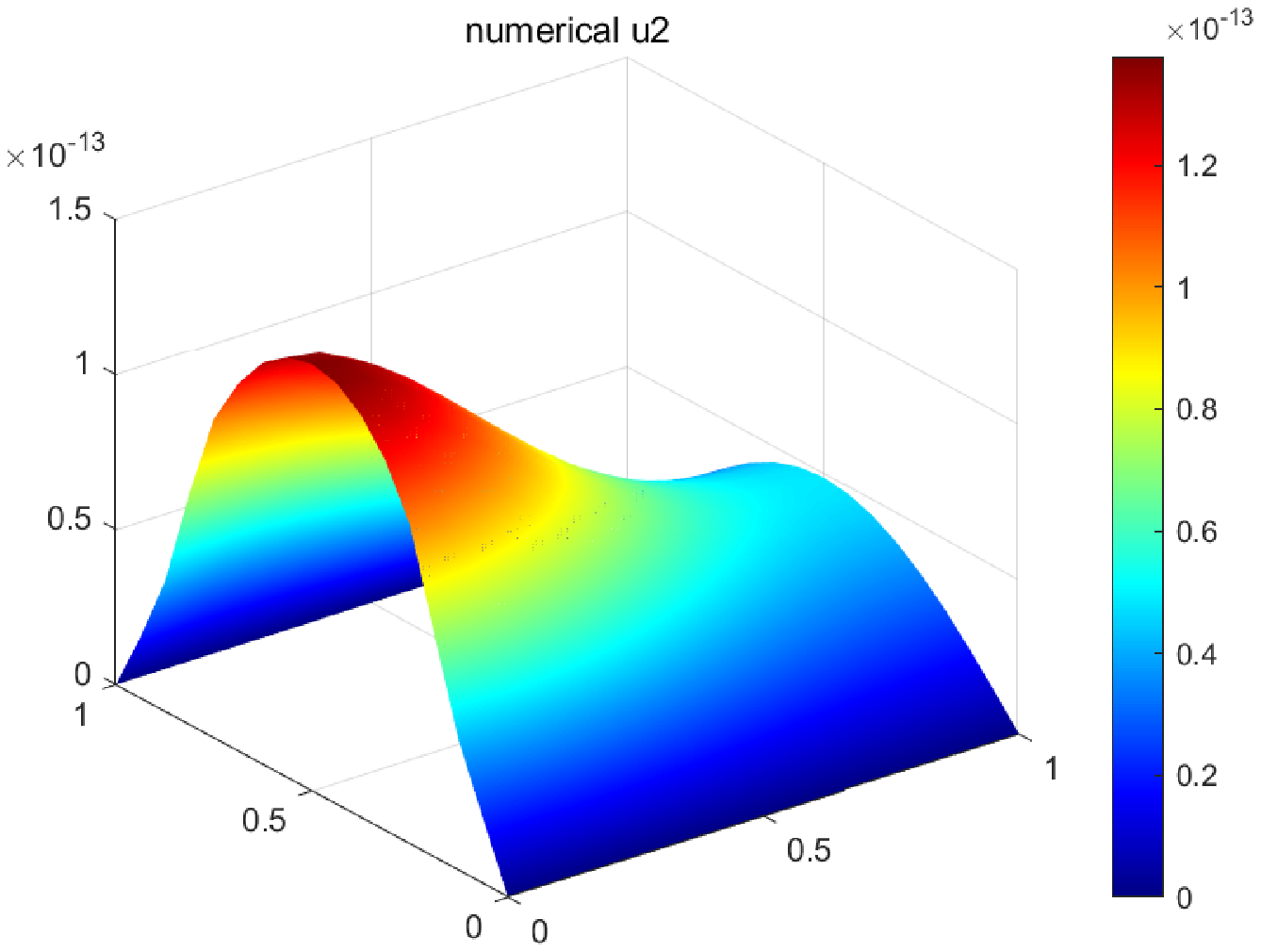}
			\label{fig118}}%
		\centering
		\caption{ (a) and (b) are the surface plot of  $u_{1h}^{n}$ and $u_{2h}^{n}$  at time $\tau$, respectively.}
	\end{figure}
	\begin{figure}[H]
		\centering
		\includegraphics[height=5cm,width=7cm]{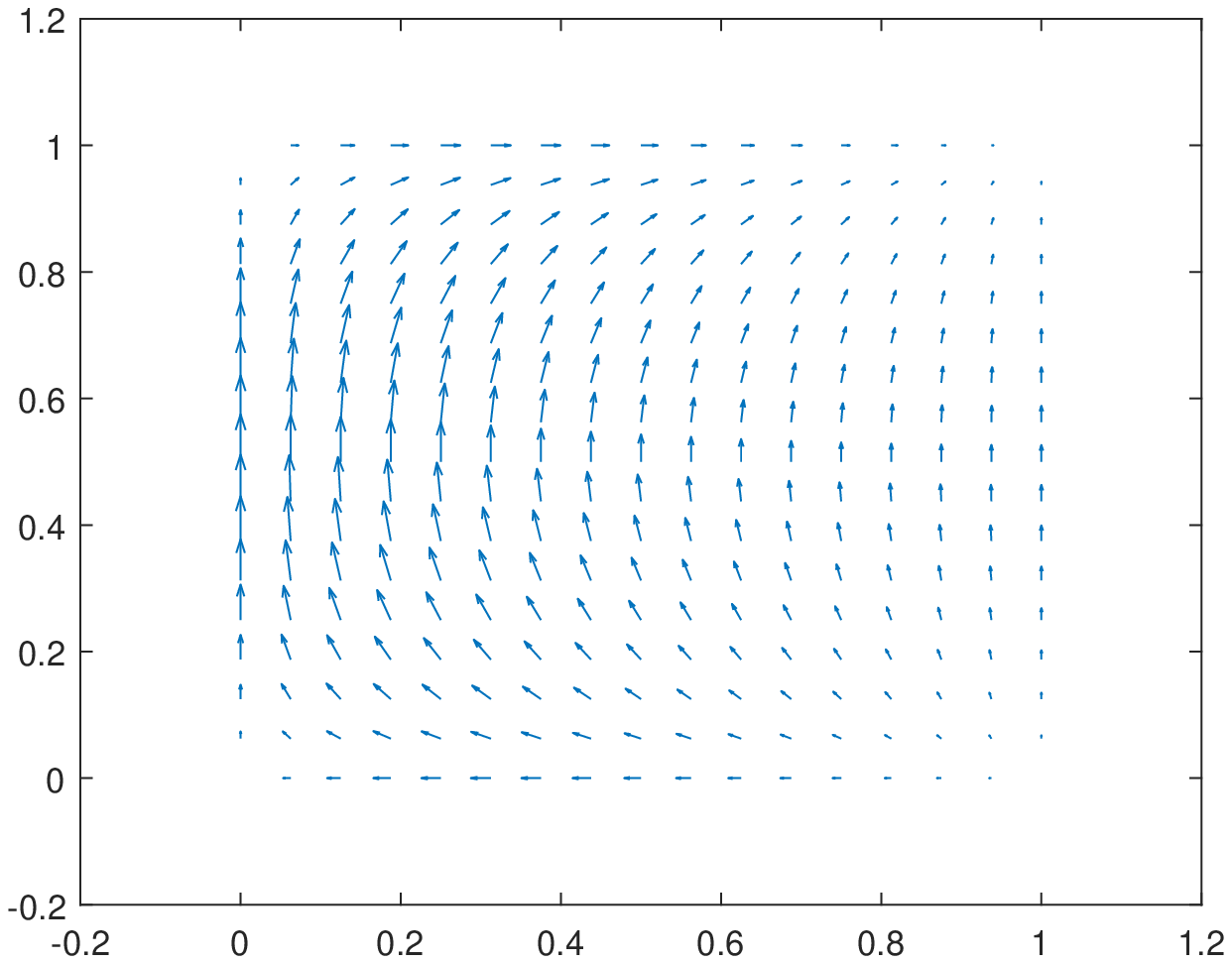}
		\caption{Arrow plot of the displacement $\textbf{u}_h^n$.}
		\label{fig119}
	\end{figure}
	Figure \ref{fig11} shows that pressure oscillations  occur by using $P_2-P_1-P_1$ element pair for the variables of  $\textbf{u}, p$ and $T$ of the problem \reff{eq-2-1}-\reff{eq-2-3} when $c_{0}=0, b_{0}=0$, and the permeability is very small for very short times, Figure \ref{fig12} shows that temperature oscillations occur by using $P_2-P_1-P_1$ element pair for the variables of  $\textbf{u}, p$ and $T$ of the problem \reff{eq-2-1}-\reff{eq-2-3} when $a_{0}=0, b_{0}=0$, and the thermal conductivity is very small for very short times. From Figure \ref{fig19} and Figure \ref{fig20}, we see that there is no locking phenomenon, which confirms that our approach and numerical methods have a built-in mechanism to prevent the "locking phenomenon". Figure \ref{fig117} and Figure \ref{fig118} display the surface plot of $u^{n}_{1h}$ and $u^{n}_{2h}$ at the terminal time $\tau$, Figure \ref{fig119} show the arrow plot of the computed displacement $\textbf{u}^{n}_{h}$.

	\textbf{Test 4.} 
	In order to check the robustness of the proposed schemes with respect to nonlinearity, we select the following parameters, in order to make this term dominate. 
	This test problem is same as \textbf{Test 1.}
		Table \ref{table201} displays the $L^{\infty}(0,T;L^{2}(\Omega))$-norm error and $L^{\infty}(0,T;H^{1}(\Omega))$-norm error of $u$, $p$, $T$ and the convergence order with respect to $h$ at terminal time $\tau=1$. Evidently, the spatial rates of convergence are consistent with Theorem \ref{th-3-6}.Figure \ref{fig1111}, Figure \ref{fig1112}, Figure \ref{fig1114} and Figure \ref{fig1117} show, respectively, the surface plot of $p^{n}_{h}$, $T^{n}_{h}$, $u^{n}  _{1h}$ and $u^{n}_{2h}$ at the terminal time $\tau$ and Figure \ref{fig1113} shows arrow plot of  $\textbf{u}_h^n$. They coincide with the theoretical results. We also compare the results when no stabilization is applied.
	 \begin{table}[H]
	 	\centering
	 	\caption{ Physical parameters}\label{table100}
	 	\begin{tabular}{c l c }
	 		\hline
	 		Parameter   &  \quad Description      &   \quad   Value    \\
	 		\hline
	 		$a_0$       &\quad Effective thermal capacity               &  \quad 2\\
	 		$b_0$       &\quad Thermal dilation coefficient              &  \quad 1 \\
	 		$c_0$       &\quad Constrained specific storage coefficient &  \quad 2 \\
	 		$\alpha$    &\quad Biot-Willis constant                     &  \quad  1 \\
	 		$\beta$     &\quad Thermal stress coefficient.               &  \quad  1\\
	 		$\bm{K}$         &\quad Permeability tensor                      &  \quad $2I$\\
	 		$\bm{\Theta}$     &\quad Effective thermal conductivity           &  \quad $1e-10I$\\
	 		$E$         &\quad Young's modulus                          &  \quad  1.25e5\\
	 		$\nu$       &\quad Poisson ratio                            &  \quad  0.25\\
	 		\hline
	 	\end{tabular}
	 \end{table}
	 \begin{table}[htbp]
	 	\vspace{-2.0em}
	 	\begin{center}
	 		\caption{Error and convergence rates of $u_h^n$, $p_h^n$, $T_h^n$($L$-type iterative schemes)}\label{table201}
	 		\resizebox{\textwidth}{12mm}{
	 			\begin{tabular}{ccccccccccccc}
	 				\hline
	 				$h$  & $\frac{\|e_u\|_{L^2(\Omega)}}{\|u\|_{L^2(\Omega)}}$  &  CR  &  $\frac{\|e_u\|_{H^1(\Omega)}}{\|u\|_{H^1(\Omega)}}$  &  CR & $\frac{\|e_p\|_{L^2(\Omega)}}{\|p\|_{L^2(\Omega)}}$ & CR  &  $\frac{\|e_p\|_{H^1(\Omega)}}{\|p\|_{H^1(\Omega)}}$  &  CR  &  $\frac{\|e_T\|_{L^2(\Omega)}}{\|T\|_{L^2(\Omega)}}$  &  CR  & $\frac{\|e_T\|_{H^1(\Omega)}}{\|T\|_{H^1(\Omega)}}$ & CR \\ 
	 				\hline
	 				$1/4$   &0.0079&      &0.0541   &      &0.1617    &      &0.4081&      &0.1026&      &0.4328&    \\
	 				$1/8$   &9.3514e-04&  3.0829    &0.0139    &  1.9590    &0.0420   & 1.9436     &0.2036&  1.0031   &0.0250& 2.0363     &0.2117& 1.0312  \\
	 				$1/16$  &1.1144e-04&3.0689 &0.0035    &1.9821&0.0106    &1.9839&0.1020&0.9976&0.0063&2.0001&0.1049&1.0133\\
	 				$1/32$  &1.3594e-05&3.0353&8.8573e-04&1.9910&0.0027&1.9959&0.0510&0.9991&0.0016&1.9890&0.0524&1.0019\\
	 				\hline
	 		\end{tabular}}
	 	\end{center}
	 \end{table}
 \begin{figure}[H]
 	\subfigure[]{
 		\centering
 		\includegraphics[width=2.5in]{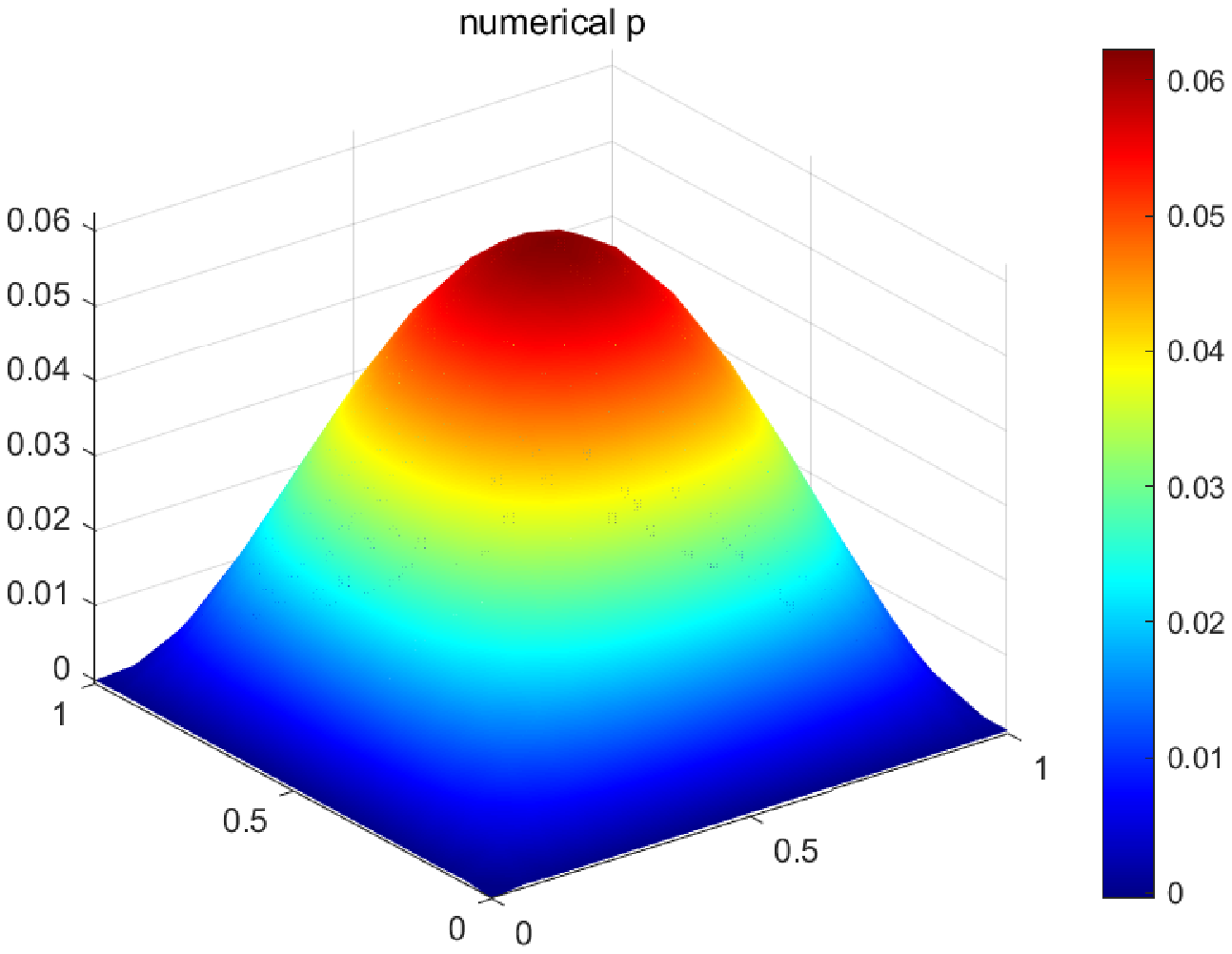}
 		\label{fig1111}
 	}
 	\subfigure[]{
 		\centering
 		\includegraphics[width=2.5in]{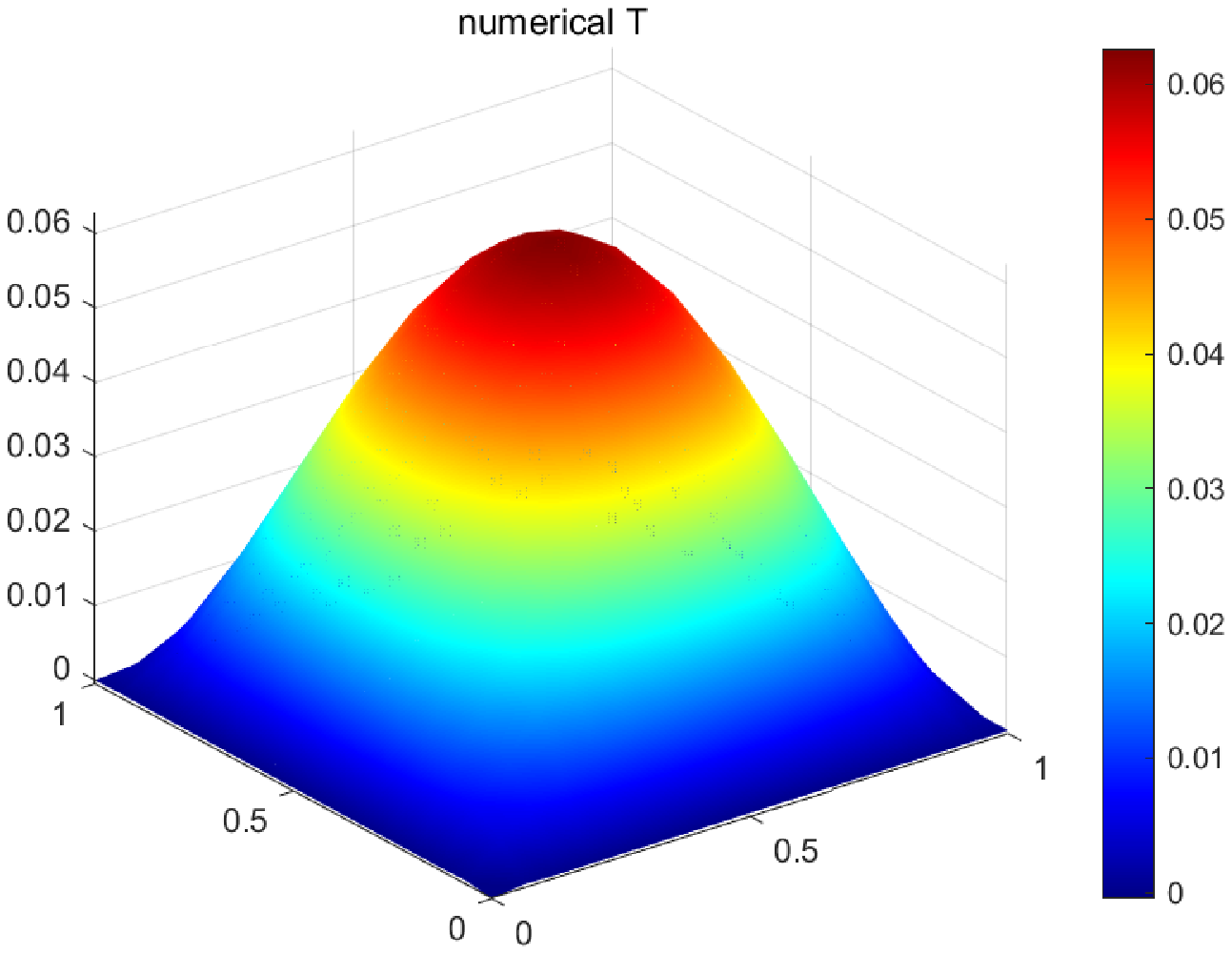}
 		\label{fig1112}
 	}
 	\caption{(a) and (b) are surface plot of the pressure $p_h^n$ and temperature $T_h^n$ at the terminal time $\tau$ respectively.}
 \end{figure}
 
 \begin{figure}[H]
 	\centering
 	\includegraphics[height=5cm,width=7cm]{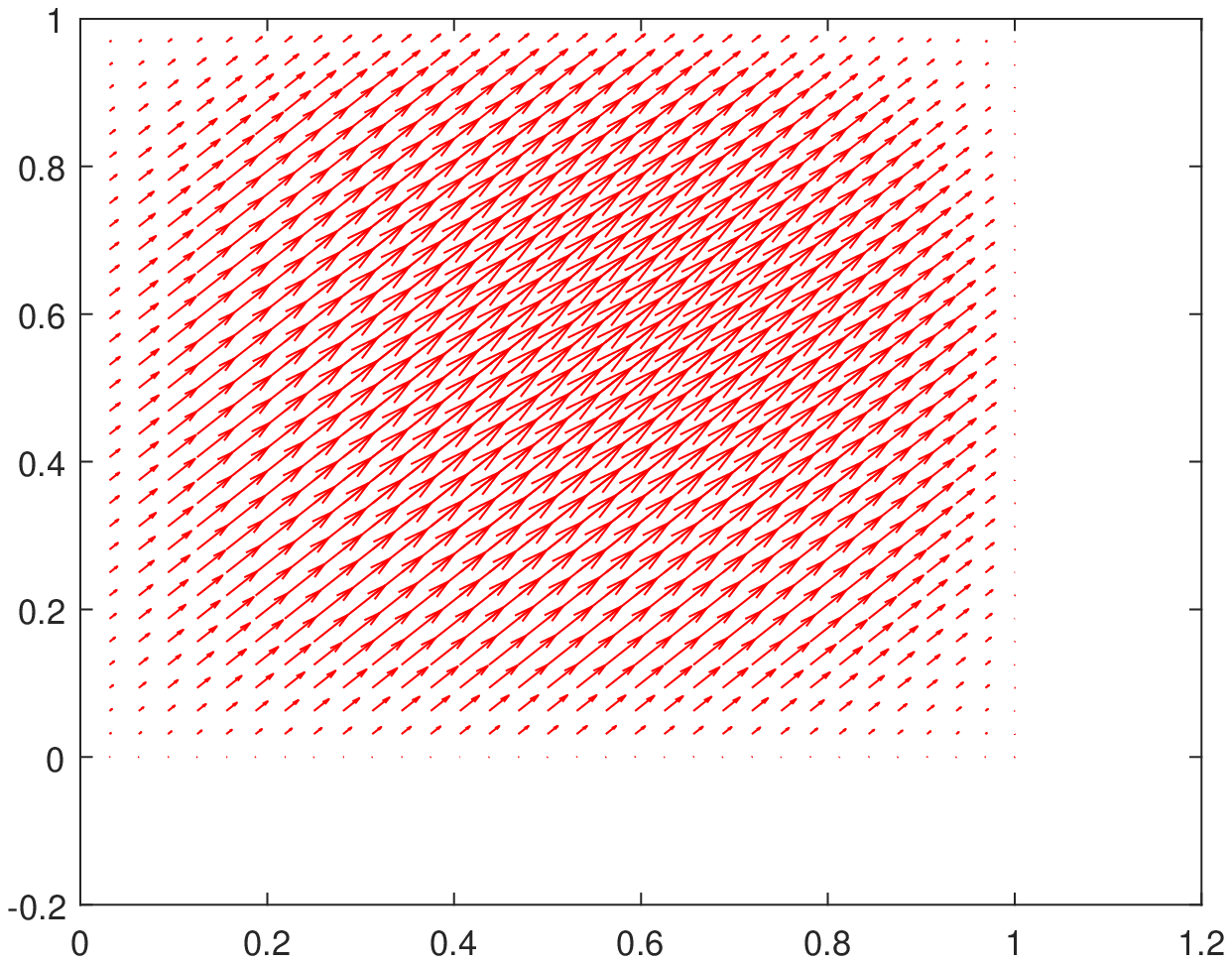}
 	\caption{Arrow plot of the computed displacement $\textbf{u}_h^n$.}
 	\label{fig1113}
 \end{figure}
 \vspace{-0.8cm}
 \begin{figure}[H]
 	\centering
 	\subfigure[]{
 		\centering
 		\includegraphics[width=2.5in]{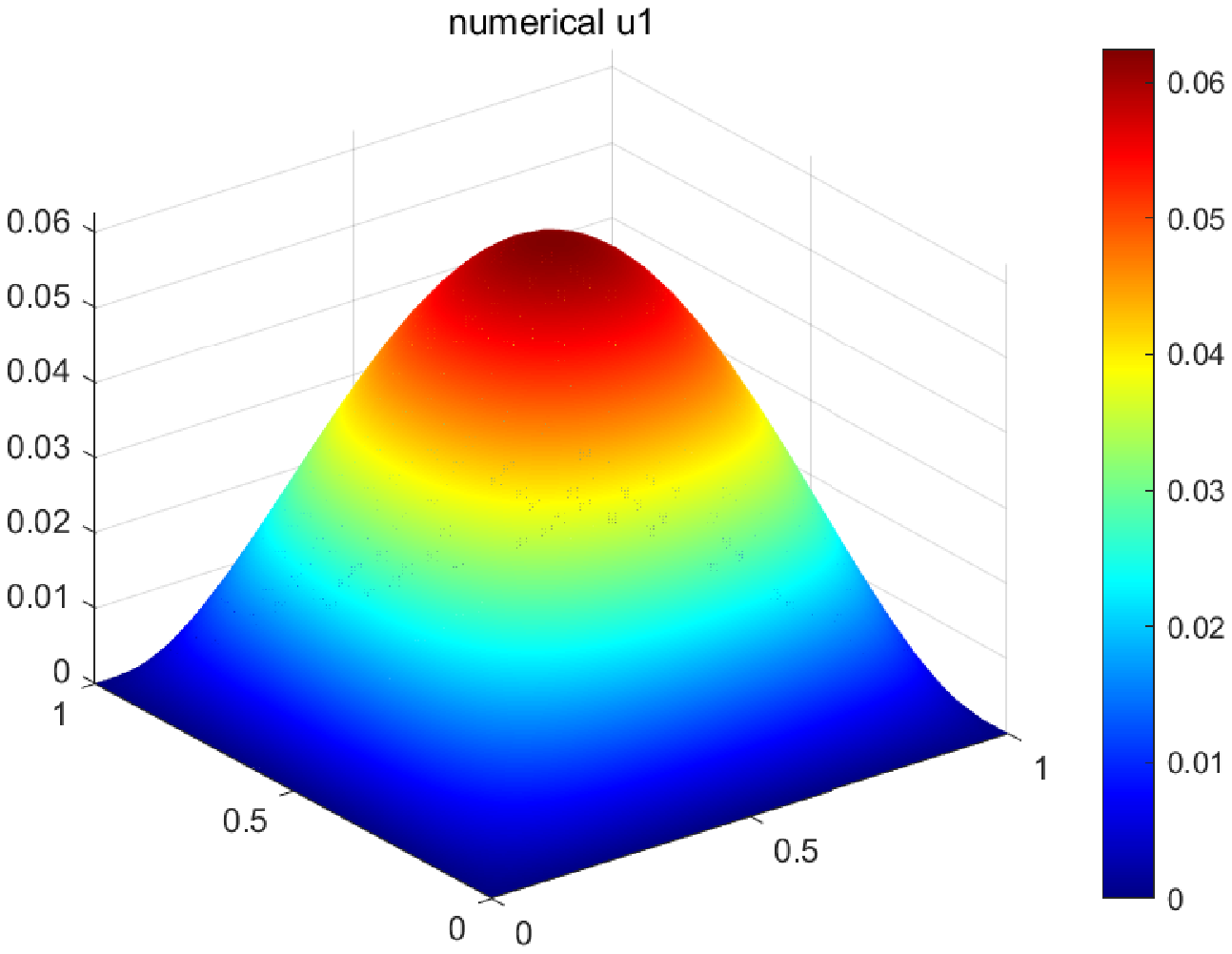}
 		\label{fig1114}}%
 	\subfigure[]{
 		\centering
 		\includegraphics[width=2.5in]{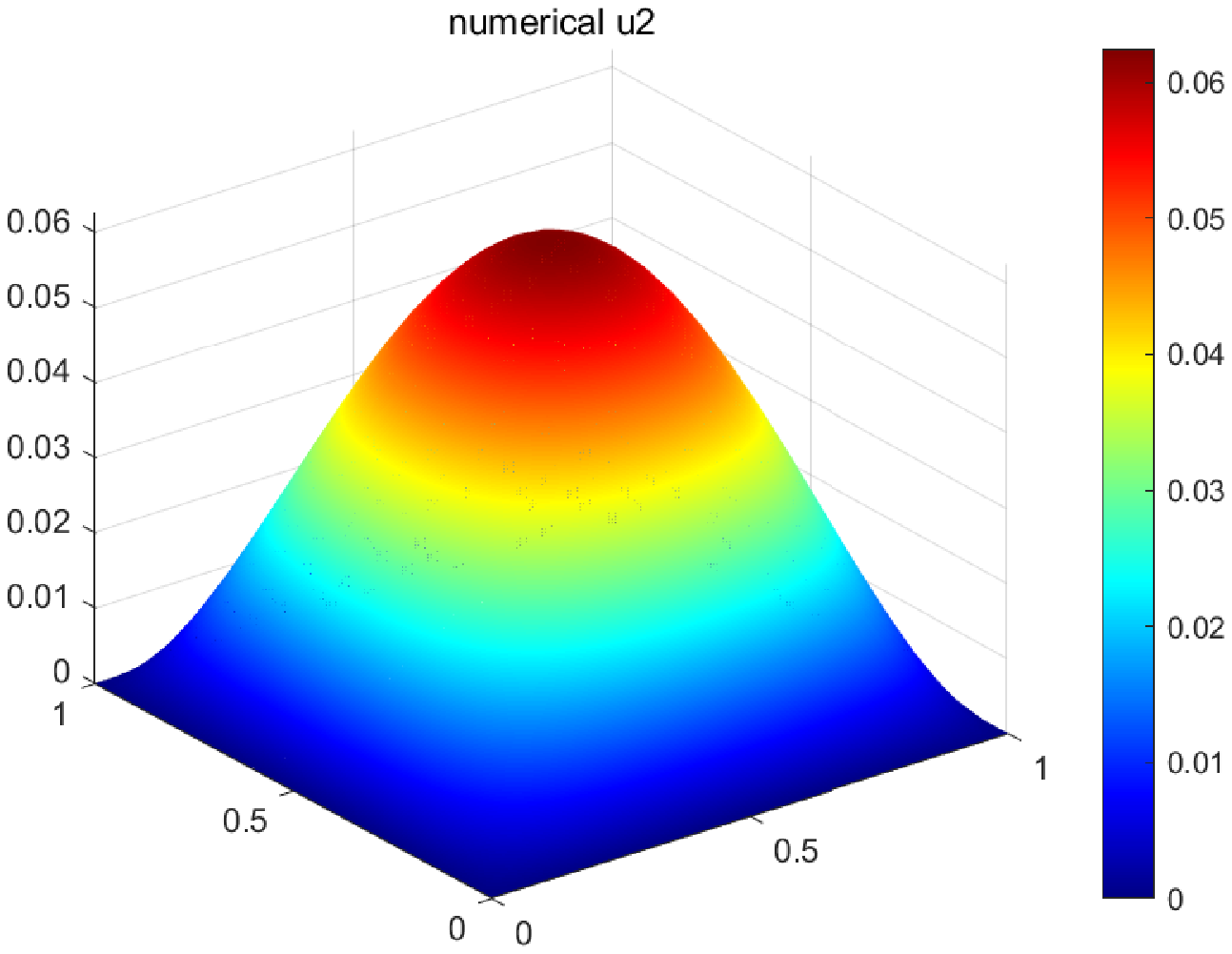}
 		\label{fig1117}}%
 	\centering
 	\caption{ (a) and (b) are Surface plot of $u_{1h}^{n}$ and $u_{2h}^{n}$ at the terminal time $\tau$ respectively.}
 \end{figure}

	  \begin{table}[htbp]
	  	\vspace{-2.0em}
	  	\begin{center}
	  		\caption{Error and convergence rates of $u_h^n$, $p_h^n$, $T_h^n$( There is no stabilizing term)}\label{table203}
	  		\resizebox{\textwidth}{12mm}{
	  			\begin{tabular}{ccccccccccccc}
	  				\hline
	  				$h$  & $\frac{\|e_u\|_{L^2(\Omega)}}{\|u\|_{L^2(\Omega)}}$  &  CR  &  $\frac{\|e_u\|_{H^1(\Omega)}}{\|u\|_{H^1(\Omega)}}$  &  CR & $\frac{\|e_p\|_{L^2(\Omega)}}{\|p\|_{L^2(\Omega)}}$ & CR  &  $\frac{\|e_p\|_{H^1(\Omega)}}{\|p\|_{H^1(\Omega)}}$  &  CR  &  $\frac{\|e_T\|_{L^2(\Omega)}}{\|T\|_{L^2(\Omega)}}$  &  CR  & $\frac{\|e_T\|_{H^1(\Omega)}}{\|T\|_{H^1(\Omega)}}$ & CR \\ 
	  				\hline
	  				$1/4$   &0.0079&      &0.0541   &      &0.2248    &      &0.4157&      &-&      &-&    \\
	  				$1/8$   &9.3504e-04&  3.0831    &0.0139    &  1.9590    &0.1117   & 1.0094     &0.2181&  0.9305   &-& -    &-& -\\
	  				$1/16$  &1.1146e-04&3.0685 &0.0035    &1.9821&0.0850    &0.3934&0.1293&0.7543&-&-&-&-\\
	  				$1/32$  &1.4358e-05&2.9566&8.8574e-04&1.9910&0.0784&0.1168&0.0945&0.4518&-&-&-&-\\
	  				\hline
	  		\end{tabular}}
	  	\end{center}
	  \end{table}
  \begin{figure}[H]
  	\subfigure[]{
  		\centering
  		\includegraphics[width=2.5in]{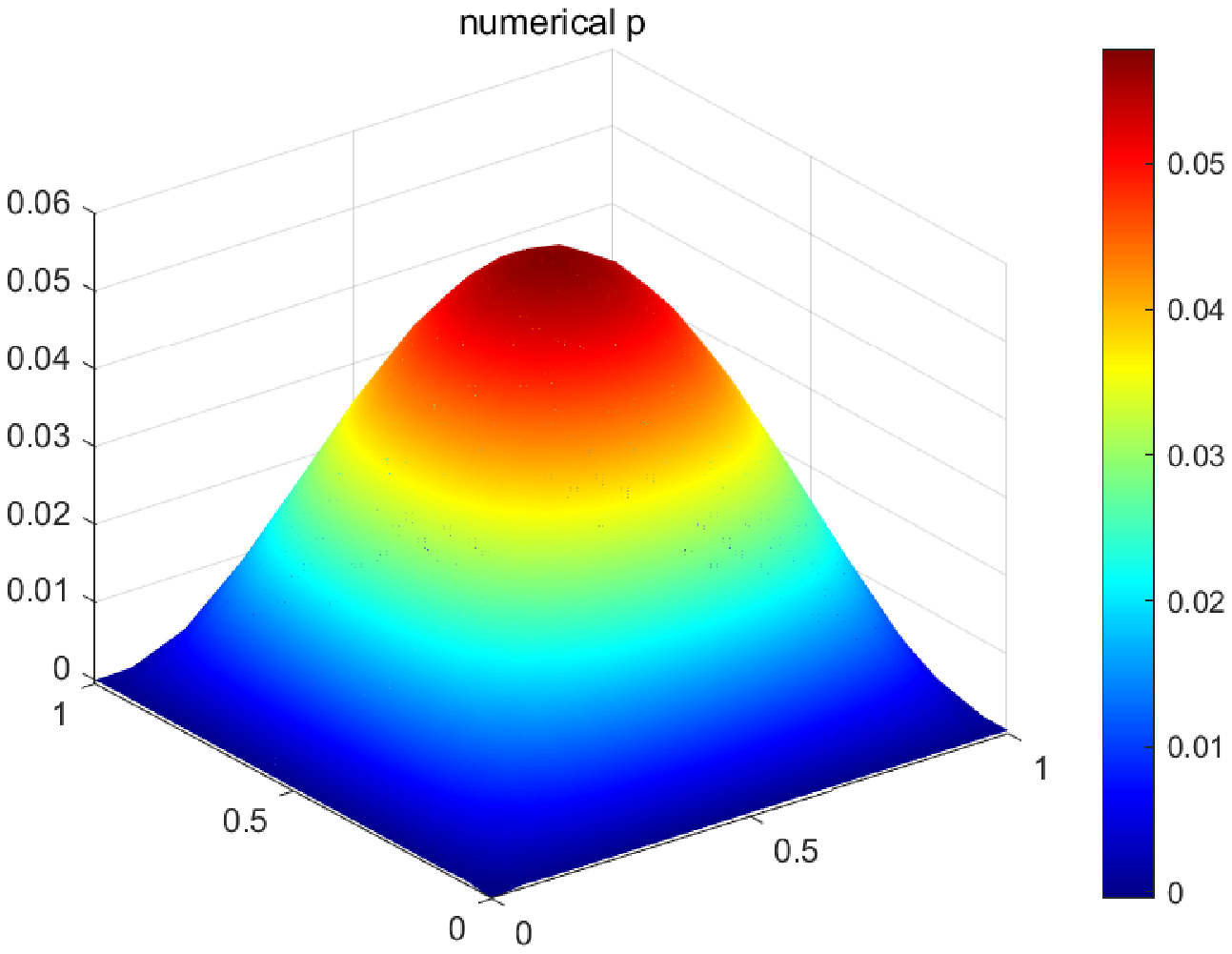}
  		\label{fig11118}
  	}
  	\subfigure[]{
  		\centering
  		\includegraphics[width=2.5in]{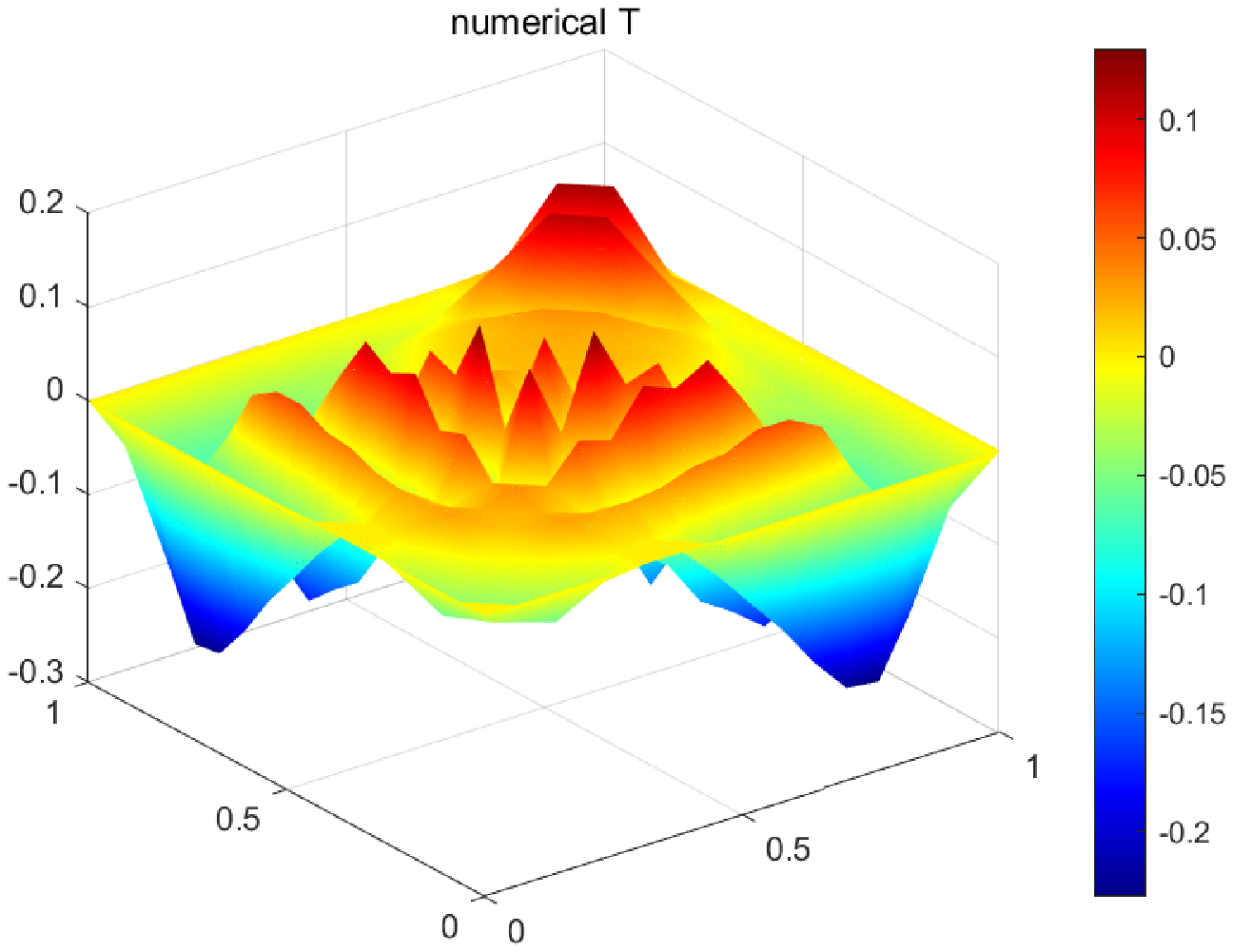}
  		\label{fig1119}
  	}
  	\caption{(a) and (b) are surface plot of the pressure $p_h^n$ and temperature $T_h^n$ at the terminal time $\tau$ respectively( There is no stabilizing term)}.
  \end{figure}
  
  \begin{figure}[H]
  	\centering
  	\includegraphics[height=5cm,width=7cm]{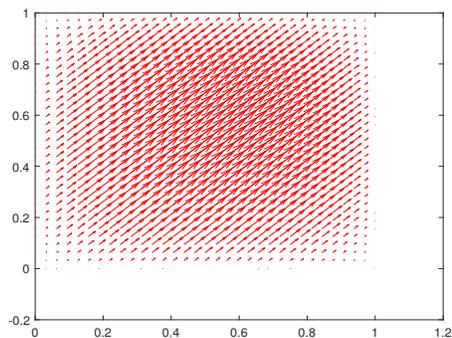}
  	\caption{Arrow plot of the computed displacement $\textbf{u}_h^n$( There is no stabilizing term)}.
  	\label{fig11110}
  \end{figure}
  \vspace{-0.8cm}
  \begin{figure}[H]
  	\centering
  	\subfigure[]{
  		\centering
  		\includegraphics[width=2.5in]{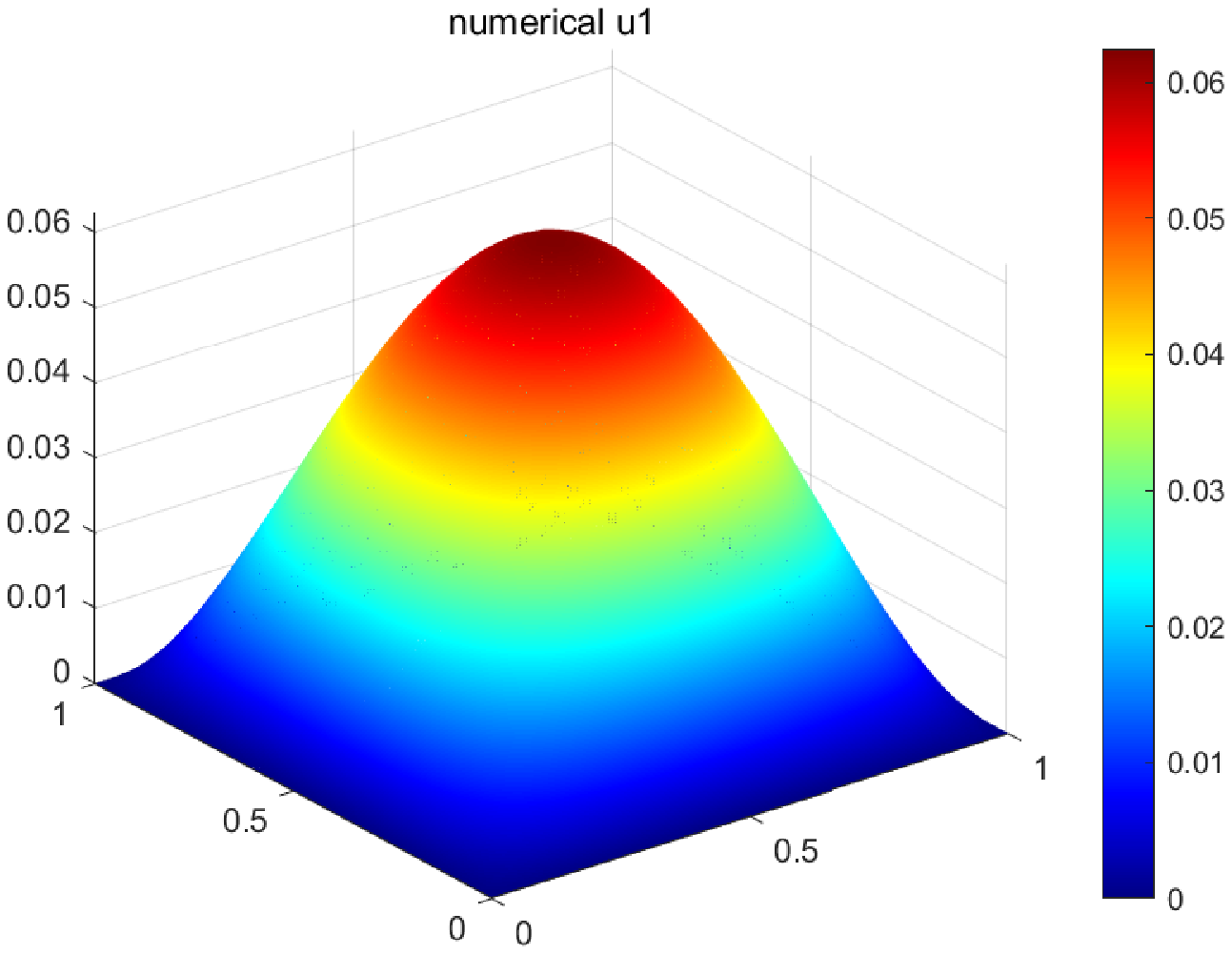}
  		\label{fig11111}}%
  	\subfigure[]{
  		\centering
  		\includegraphics[width=2.5in]{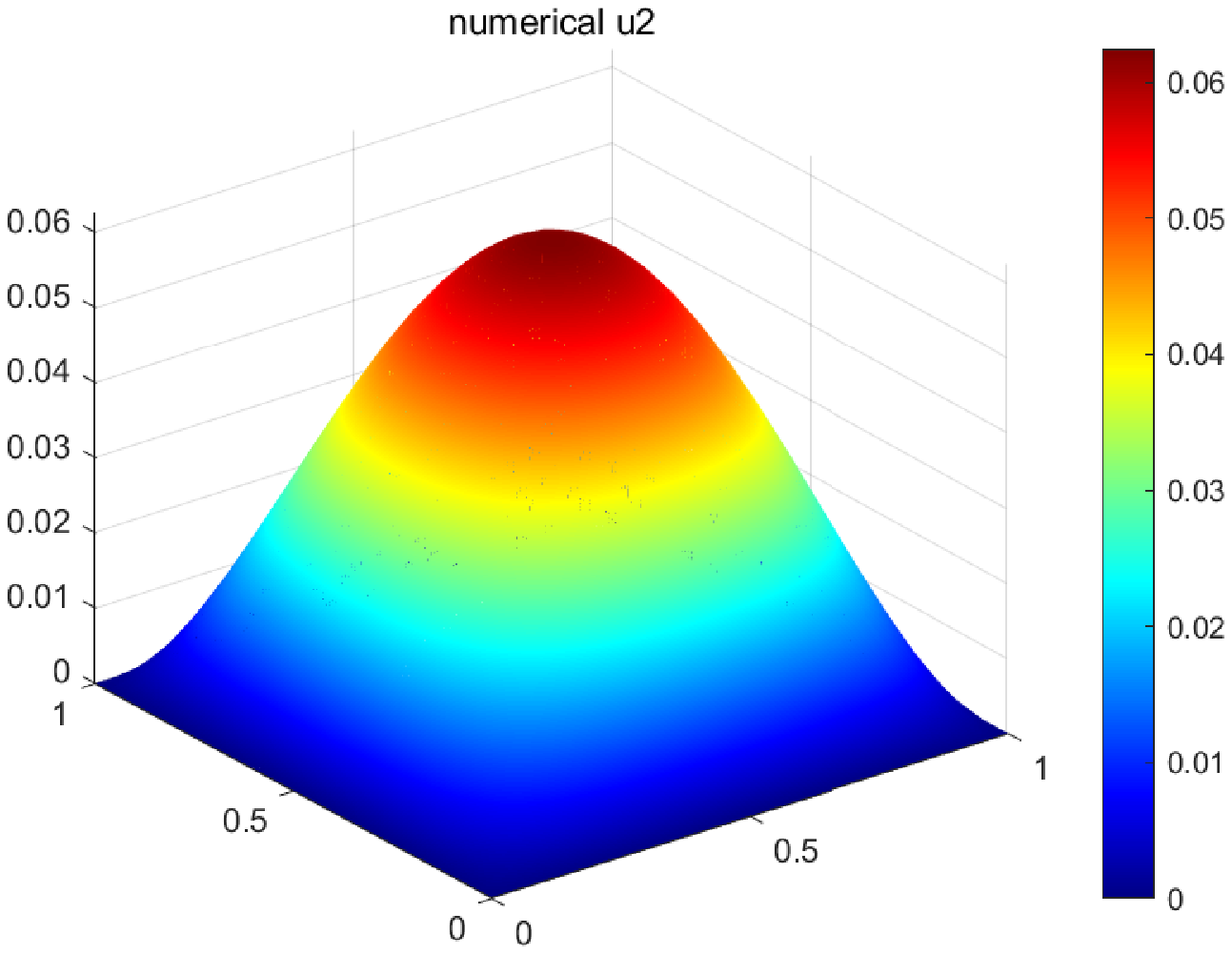}
  		\label{fig11112}}%
  	\centering
  	\caption{ (a) and (b) are Surface plot of $u_{1h}^{n}$ and $u_{2h}^{n}$ at the terminal time $\tau$ respectively( There is no stabilizing term)}.
  	
  \end{figure}
Table \ref{table203} displays the $L^{\infty}(0,T;L^{2}(\Omega))$-norm error and $L^{\infty}(0,T;H^{1}(\Omega))$-norm error of $u$, $p$, $T$ and the convergence order with respect to $h$ at terminal time $\tau=1$. From the above table, it can be seen that $T$ does not converge and $p$ has no optimal convergence order. Figure \ref{fig11118}, Figure \ref{fig1119}, Figure \ref{fig11111} and Figure \ref{fig11112} show, respectively, the surface plot of $p^{n}_{h}$, $T^{n}_{h}$, $u^{n}  _{1h}$ and $u^{n}_{2h}$ at the terminal time $\tau$ and Figure \ref{fig11110} shows arrow plot of  $\textbf{u}_h^n$. 
\section{Conclusion}
In this paper, we study a quasi-static nonlinear thermo-poroelasticity model with a nonlinear convective transport term in the energy equation . This makes analysis challenging. The main contributions are as follows: in order to clearly reveal the multi-physical process and overcome the "locking phenomenon" in the calculation, we introduce three new variables to reformulate the original model; in order to obtain the well-posedness of the nonlinear model, the Newton's solution procedure is introduced based on linearizing the heat flux term, which is shown to be well-defined, and which converges to the weak solution of the nonlinear problem in adequate norms; we propose a multiphysics finite element method with Newton's iterative algorithm, which is equivalent to a stabilized method, can effectively overcome the numerical oscillation caused by the nonlinear thermal convection term, the error estimate of the method is given. At the same time, it is proved that the method has the optimal convergence order. Finally, some numerical examples are given to verify the theoretical results. No "locking phenomenon" occurs in our numerical method, and the numerical oscillation caused by nonlinear convection term is overcome. This proves that both our method and the numerical method have built-in mechanisms to prevent "locking phenomenon".


\end{document}